\newcommand{\qtq}[1]{\quad\text{#1}\quad}
\newcommand{\supp}{\mathop{\mathrm{supp}}}
\renewcommand{\Re}{\mathop{\mathrm{Re}}}
\renewcommand{\Im}{\mathop{\mathrm{Im}}}
\renewcommand{\le}{\leqslant}
\renewcommand{\leq}{\leqslant}
\renewcommand{\ge}{\geqslant}
\renewcommand{\geq}{\geqslant}
\DeclareMathOperator*{\wwlim}{w-lim}
\newcommand{\Jap}[1]{\langle{#1}\rangle}
\newcommand{\TLn}{\mathbb{T}_{L_n}}
\newcommand{\PLKn}{\McP_{K_n}^{L_n}}
\newcommand{\PKn}{\McP_{K_n}}
\newcommand{\Veps}{\varepsilon}
\newcommand{\TsnR}[2]{L^{#1}_t( \R , L^{#2}_x(\R) )}
\newcommand{\Tsnself}[3]{L^{#1}_t( #3 , L^{#2}_x(\R) )}
\newcommand{\LpnR}[1]{L^{#1}_x(\R)}
\newcommand{\LpnTLn}[1]{L^{#1}_x(\T_{L_n})}
\newcommand{\Eidelta}[1]{e^{i#1\Delta}}
\newcommand{\PD}{\McP_{D}}
\newcommand{\PL}{\McP^{L}}
\newcommand{\Lf}{\left}
\newcommand{\Rt}{\right}
\newcommand{\Vt}{\Vert}
\newcommand{\vt}{\vert}
\newcommand{\McN}{\mathcal{N}}
\newcommand{\McS}{\mathcal{S}}
\newcommand{\McP}{\mathcal{P}}
\newcommand{\McM}{\mathcal{M}}
\newcommand{\wt}[1]{\widetilde{#1}}
\newcommand{\Fpara}[1]{\lambda_{#1},\xi_{#1},x_{#1},t_{#1}}
\newcommand{\Ind}[1]{\mathrm{d}#1}
\newcommand{\Dist}{\mathrm{dist}}
\newcommand{\Pl}{P_{low}}
\newcommand{\Pm}{P_{med}}
\newcommand{\Ph}{P_{high}}
\newcommand{\Ptm}{\wt{P}_{med}}
\newcommand{\Ptl}{\wt{P}_{low}}
\newcommand{\Pth}{\wt{P}_{high}}
\newcommand{\Utl}{\wt{u}_{low}}
\newcommand{\Uth}{\wt{u}_{high}}
\newcommand{\R}{\mathbb{R}}
\newcommand{\C}{\mathbb{C}}
\newcommand{\T}{\mathbb{T}}
\newcommand{\Z}{\mathbb{Z}}
\theoremstyle{plain}
\newtheorem{theorem}{Theorem}
\newtheorem{proposition}[theorem]{Proposition}
\newtheorem{lemma}[theorem]{Lemma}
\newtheorem{corollary}[theorem]{Corollary}
\theoremstyle{definition}
\newtheorem{definition}[theorem]{Definition}
\newtheorem{remark}[theorem]{Remark}
\numberwithin{equation}{section}
\numberwithin{theorem}{section}
\numberwithin{equation}{section}
\begin{document}
	
	\onehalfspacing

	\title[Asymptotic behavior for 1D quintic NLS]{Asymptotic behavior of mass-critical Schr\"odinger equation in $ \R $}
	
	\author{Fanfei Meng}
	\address{Fanfei Meng
		\newline \indent College of Science, China Agricultural University,
		Beijing 100083,\ P. R. China}
	\email{meng\_fanfei@cau.edu.cn}
	
	\author{Yilin Song}
	\address{Yilin Song
		\newline \indent The Graduate School of China Academy of Engineering Physics,
		Beijing 100088,\ P. R. China}
	\email{songyilin21@gscaep.ac.cn}
	
	\author{Ruixiao Zhang}
	\address{Ruixiao Zhang
		\newline \indent The Graduate School of China Academy of Engineering Physics,
		Beijing 100088,\ P. R. China}
	\email{zhangruixiao21@gscaep.ac.cn}
	
	\subjclass[2010]{Primary 35Q55; Secondary 35P25, 47J35}
	
	\date{\today}
	
	\keywords{Non-squeezing; homogenization; mass-critical; scattering.}
	
	\begin{abstract} \noindent
		In this paper, we study the long-time behavior for the mass-critical nonlinear Schr\"odinger equation on the line
		\begin{equation*}
			\left\{
			\begin{aligned}
				& i\partial_t u + \partial_x^2 u = |u|^4 u, \\
				& u(0, x) = u_0 \in L_x^2(\R),
			\end{aligned}
			\right. \qquad (t, x) \in \R \times \R.
		\end{equation*}
		The global well-posedness and scattering for this equation was solved in Dodson [Amer. J. Math. (2016)]. Inspired by the pioneering work of Killip-Visan-Zhang [Amer. J. Math. (2021)], we show that solution  can be approximated by
		a finite-dimensional Hamiltonian system. This system is the nonlinear Schr\"odinger equation on the rescaled torus $\R/(L_n\Z)$ with Fourier truncated nonlinear term.   To prove this, we introduce the Fourier truncated mass-critical NLS on $\R$. First,  we establish the uniformly global space-time bound for this truncated model on $\R$. Second, we show that the truncated NLS on rescaled torus can be approximated by the truncated equation on $\R$. Then, using the Gromov theorem, we can show the non-squeezing property for the truncated NLS on torus. The last step to show  the non-squeezing property for original NLS is to connect the solution with truncated nonlinearity  and a single equation in $\R$, which can be done by performing the nonlinear profile decomposition.

		Our second result is to study the homogenization of the mass-critical inhomogeneous NLS, where we add a $L^\infty$ function $h(nx)$ in front of the nonlinear term. Based on the method of Ntekoume [Comm. PDE, (2020)], we give the sufficient condition on $h$ such that the scattering holds for this inhomogeneous model and show that the solution to inhomogeneous converges to the homogeneous model when $n\to\infty$. As a corollary, we can transfer the non-squeezing property from homogeneous model to inhomogeneous.
	\end{abstract}
	\maketitle
	
	\section{Introduction}
	The non-squeezing phenomenon is a core concept in symplectic geometry, first proposed by mathematician Mikhail Gromov \cite{Gromov-Invent}.
	It describes that in a symplectic manifold, certain regions (such as spheres) cannot be compressed into cylinders smaller than their radius through symplectic morphism, even if this compression is feasible in terms of volume.
	As a milestone result in symplectic geometry, and the classical form of Gromov's theorem is as follows:
	
	\begin{theorem}[Gromov's non-squeezing theorem, \cite{Gromov-Invent}]\label{thm-Gromv}
		In the symplectic manifold $ \mathbb{R}^{2n} $, if $ B^{2n}(R) $, a sphere with a radius of $ R $, is mapped through symplectic homeomorphism to a cylinder $ Z^{2n}(r) := B^2(r) \times \mathbb{R}^{2n - 2} $, then $ r \ge R $.
	\end{theorem}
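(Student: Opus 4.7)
The plan is to follow Gromov's original pseudoholomorphic curve approach: a hypothetical symplectic embedding $\varphi : B^{2n}(R) \to Z^{2n}(r)$ is obstructed by a sharp area comparison between a globally constructed $J$-holomorphic sphere and its pullback through $\varphi$.

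First, I would compactify the target. Since $B^{2n}(R)$ is relatively compact and $\varphi$ is continuous, $\varphi(B^{2n}(R))$ sits inside $B^2(r) \times [-L,L]^{2n-2}$ for some $L > 0$. Modding out the $\mathbb{R}^{2n-2}$ factor by a sufficiently large lattice allows me to regard $\varphi$ as a symplectic embedding into $B^2(r) \times \mathbb{T}^{2n-2}$, and embedding $B^2(r)$ symplectically into a sphere of total area $\pi r^2 + \varepsilon$ yields an embedding $\widetilde{\varphi} : B^{2n}(R) \hookrightarrow (M,\omega)$ where $M = S^2 \times \mathbb{T}^{2n-2}$ is closed. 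Next I would choose an almost complex structure $J$ on $M$ that is tamed by $\omega$ and that agrees on $\widetilde{\varphi}(B^{2n}(R))$ with $\widetilde{\varphi}_* J_0$, where $J_0$ is the standard structure on $\mathbb{R}^{2n} = \mathbb{C}^n$.

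The core analytic input is the existence, for every $\omega$-tame $J$, of a $J$-holomorphic sphere in the homology class $A = [S^2 \times \{\mathrm{pt}\}]$ passing through $\widetilde{\varphi}(0)$. For the split complex structure this is immediate (the slice through the image point itself works), and I would propagate existence to arbitrary tame $J$ by a continuation/cobordism argument on the space of tame almost complex structures, combined with Gromov compactness. The minimality of the class $A$ in $H_2(M;\mathbb{Z})$, i.e.\ the impossibility of splitting $A$ as a sum of two nontrivial $\omega$-positive classes, is what excludes bubbling in the limit and keeps the representative curve in class $A$. Once that is secured, I would pull the resulting $J$-holomorphic sphere back via $\widetilde{\varphi}^{-1}$ on the preimage, producing a proper $J_0$-holomorphic surface $\Sigma \subset B^{2n}(R)$ through the origin with $\partial \Sigma$ on $\partial B^{2n}(R)$. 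Since $J_0$-holomorphic surfaces are calibrated by the standard K\"ahler form, the monotonicity formula for minimal surfaces through a central point gives $\mathrm{Area}(\Sigma) \geq \pi R^2$, while the symplectic area of the whole curve equals $\omega(A) = \pi r^2 + \varepsilon$. Combining these and sending $\varepsilon \to 0$ yields $r \geq R$.

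The hard part is, unsurprisingly, the existence of the $J$-holomorphic sphere for an \emph{arbitrary} tame $J$: this requires the full PDE package -- transversality for the linearized Cauchy--Riemann operator on $M$, elliptic regularity for pseudoholomorphic maps, and Gromov compactness to pass from the split structure to a general tame one without losing the curve to bubbling. The subsequent area comparison is elementary and purely geometric once that existence statement is in hand.
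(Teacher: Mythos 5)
The paper does not prove this theorem; it is quoted verbatim from Gromov's 1985 paper as classical background, so there is no in-paper argument to compare against. Your sketch is the standard pseudoholomorphic-curve proof (compactify the cylinder to $S^2\times\T^{2n-2}$, produce a $J$-holomorphic sphere in the indecomposable class $A=[S^2\times\{\mathrm{pt}\}]$ through the image of the center for a tame $J$ extending $\widetilde{\varphi}_*J_0$, pull back and apply monotonicity), and it is essentially correct as an outline. One small point to tighten: the image $\varphi(B^{2n}(R))$ of the \emph{open} ball need not be bounded in the $\R^{2n-2}$ directions, so "relatively compact, hence contained in $B^2(r)\times[-L,L]^{2n-2}$" is not automatic; the standard fix is to run the argument on the closed ball $\overline{B^{2n}(R')}$ for each $R'<R$ (whose image is compact) and let $R'\to R$ at the end alongside $\varepsilon\to 0$. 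With that adjustment, and granting the existence statement for the sphere in class $A$ (which, as you say, carries the real analytic weight via Gromov compactness and the indecomposability of $A$ excluding bubbling), the area comparison $\pi R'^2\le\mathrm{Area}(\Sigma)\le\omega(A)=\pi r^2+\varepsilon$ closes the proof.
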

	
	The symplectic structure is maintained by symplectic embryos, and symplectic structure imposes stronger geometric constraints on phase space than volume.
	The non-squeezing phenomenon indicates that symplectic embryos cannot ``compress" certain geometric objects (such as spheres) into smaller cylinders in phase space, even if their volume may allow for such compression.
	This phenomenon reveals the rigidity of phase space in symplectic geometry, which is closely related to Hamiltonian systems.
	Usually, in Hamiltonian systems, non-squeezing refers to certain regions in phase space that cannot be compressed into smaller regions, even if their volumes remain constant.
	In the study of nonlinear Schr\"odinger equations, it may involve the propagation characteristics of the solution, such as whether the wave packet can be compressed in some way, or whether the energy distribution is limited.
	This may be related to conservation laws, such as the conservation of mass, momentum, and energy, which typically play important roles in Hamiltonian systems.
	
	\begin{remark}
		Volume preservation (such as Liouville's theorem) only requires the volume of the phase space to remain unchanged, while the non-squeezing phenomenon further requires that geometric shapes (such as the mapping from a sphere to a cylinder) cannot deform arbitrarily.
	\end{remark}
	
	Here, we consider the mass-critical nonlinear Schr\"odinger equation on the line.
	\begin{equation}\label{NLS}\tag{NLS}
		\left\{
		\begin{aligned}
			& i u_t + \partial_x^2 u = \lambda \vert u \vert^4 u, \\
			& u(0, x) = u_0 \in L_x^2(\R),
		\end{aligned}
		\right. \qquad (t, x) \in \R \times \R,
	\end{equation}
	where $ \lambda > 0 $ and $ u : \R \times \R \to \C $ is the unknown function.
	
	\eqref{NLS} arises from the study of quantum mechanics, which  can describe the propagation of laser beams in the homogeneous Kerr medium.
	The solution to \eqref{NLS} satisfies the following two conservation laws:
	\begin{gather*}
		\begin{aligned}
			M(u(t)) = & ~ \int_{-\infty}^{+\infty} \vert u(t, x) \vert^2 {\rm d}x = M(u_0), \\
			E(u(t)) = & ~ \frac12 \int_{-\infty}^{+\infty} \vert \partial_x u(t, x) \vert^2 {\rm d}x + \frac{\lambda}6 \int_{-\infty}^{+\infty} \vert u(t, x) \vert^6 {\rm d}x = E(u_0).
		\end{aligned}
	\end{gather*}
	
	The global well-posedness and scattering of \eqref{NLS} was solved by Dodson in 2016.
	
	\begin{theorem}[Global space-time bound, \cite{Dodson-d=1}]\label{thm-globalbound}
		The Cauchy problem \eqref{NLS} is globally well-posed for $ u_0 \in L^2(\R) $.
		Moreover, there exists a constant $ C := C(\lambda,\Vert u_0 \Vert_{L^2}) $ such that
		\begin{equation*}
			\Vert u \Vert_{L_{t, x}^6 (\R \times \R)} \lesssim C.
		\end{equation*}
	\end{theorem}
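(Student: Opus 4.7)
The plan is to follow the concentration-compactness / rigidity framework pioneered by Kenig--Merle, in its adaptation to mass-critical problems by Killip--Tao--Visan and by Tao--Visan--Zhang. The starting point is the standard $L^2$ small-data theory: the Strichartz estimates on $\R \times \R$ give a local well-posedness theory for \eqref{NLS} in $L^2_x(\R)$ and show that any solution with sufficiently small mass scatters with $\Vt u \Vt_{L^6_{t,x}} \lesssim \Vt u_0 \Vt_{L^2}$. One then introduces
\[
	L(M) := \sup \{ \Vt u \Vt_{L^6_{t,x}(\R \times \R)} : u \text{ solves } \eqref{NLS},\ \Vt u_0 \Vt_{L^2} \leq M \}, \qquad M_* := \sup \{ M \geq 0 : L(M) < \infty \},
\]
and argues by contradiction assuming $M_* < \infty$.

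The second step is to extract a \emph{minimal-mass blow-up solution}. Given any sequence $u_n$ of solutions with $\Vt u_{n,0} \Vt_{L^2} \to M_*$ and $\Vt u_n \Vt_{L^6_{t,x}} \to \infty$, I would apply the linear profile decomposition for the one-dimensional Schr\"odinger propagator (Carles--Keraani, B\'egout--Vargas) to decompose $u_{n,0}$ into asymptotically orthogonal profiles modulo the full symmetry group of \eqref{NLS} (scaling, spatial and temporal translation, Galilean boost, and phase). The asymptotic $L^2$-orthogonality together with the stability theory of \eqref{NLS} then forces a single profile to survive, producing a solution $u$ of mass exactly $M_*$ whose orbit $\{u(t)\}_{t \in \R}$ is precompact in $L^2_x(\R)$ modulo these symmetries, encoded by a modulation triple $t \mapsto (N(t), \xi(t), x(t))$.

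The third step refines this critical element. Following the almost-periodicity analysis of Killip--Tao--Visan, I would reduce to (i) a \emph{soliton-like} scenario in which $N(t) \equiv 1$ and $\xi(t) \equiv 0$ after a Galilean transformation, and (ii) a \emph{frequency cascade} in which $N(t) \to 0$ (or $\infty$) along some sequence of times. The cascade scenario should be tractable by the now-standard regularity propagation: precompactness combined with mass conservation upgrades $u$ from $L^2_x$ to $H^s_x$ for some $s > 0$, and then the cascade behaviour of $N(t)$ forces the energy to vanish, whence $u \equiv 0$, a contradiction.

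The main obstacle is ruling out the soliton-like scenario in the non-radial one-dimensional setting. The classical interaction Morawetz estimate at the $L^2_x$ level only controls an $\dot H^{1/4}$-type quantity, which is not coercive at the mass-critical regularity, so a direct Morawetz argument is unavailable. The route I would take is Dodson's: establish a \emph{long-time Strichartz estimate} refining Strichartz bounds on arbitrarily long intervals, with constants governed by the modulation scale $N(t)$, and then insert a high-frequency projection $\Ph u$ into a bilinear interaction Morawetz identity. Controlling the commutators between the projection and the quintic nonlinearity, and exploiting the bilinear refinement of the $\R$-Strichartz estimate, should force $\Vt \Ph u \Vt_{L^6_{t,x}} \to 0$ as the frequency cutoff tends to infinity, contradicting the precompactness of the orbit. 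The bookkeeping of frequency-localized error terms in the long-time Strichartz / Morawetz machine is where the overwhelming bulk of the technical work would lie.
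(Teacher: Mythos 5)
This theorem is not proved in the paper at all --- it is quoted as a black box from Dodson [Amer.\ J.\ Math.\ 2016] --- and your outline accurately reproduces the architecture of Dodson's actual argument (small-data theory, extraction of a minimal-mass almost periodic solution via the linear profile decomposition and stability, the cascade/quasi-soliton dichotomy, long-time Strichartz estimates, and the frequency-localized interaction Morawetz estimate). Just be aware that what you have written is a roadmap rather than a proof: essentially the entire content of Dodson's theorem resides in the long-time Strichartz and frequency-localized Morawetz machinery that your last paragraph defers.
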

	\begin{remark}
		By Theorem \ref{thm-globalbound} and Strichartz estimate, for $ 4 \le q \le \infty $ and $ 2 \le r \le \infty $ satisfying $ \frac2q + \frac1r = \frac12 $, we have that
		\begin{equation*}
			\Vert u \Vert_{L_t^q(\R, L_x^r(\R))} \lesssim C(\lambda,\|u_0\|_{L_x^2(\R)}).
		\end{equation*}
	\end{remark}
In contrast to \eqref{NLS}, the inhomogeneous NLS (see \eqref{fml-intro-NLSn})  arises in the setting of nonlinear optics, where the factor $h(nx)$
represents some inhomogeneity in the medium. 
\begin{equation}\label{fml-intro-NLSn}\tag{$\text{NLS}_n$}
	\left\{
	\begin{aligned}
		& i \partial_t u_n + \Delta u_n = h(nx) \vert u_n \vert^4 u_n, \\
		& u_n(0, x) = u_0 \in L^2(\R).
	\end{aligned}
	\right. \qquad (t, x) \in \R \times \R.
\end{equation}
In recent years, this model has been the subject of the great deal of the mathematical investigations. Typically, we focus on the following Cauchy problem
\begin{align*}
	i\partial_tu+\Delta u=\lambda|x|^{-b}|u|^{p-1}u, u(0)=u_0\in \dot{H}^s(\R^d),
\end{align*}
By the scaling transformation
\begin{align*}
	u_\lambda(t,x)=\lambda^\frac{2-b}{p-1}u(\lambda^2t,\lambda x),
\end{align*} 
the Sobolev norm is invariant 
\begin{align*}
	\|u_\lambda(t,\cdot)\|_{\dot{H}^{s_c}}=\|u(\lambda^2t,\cdot)\|_{\dot{H}^{s_c}},\quad s_c=\frac{d}{2}-\frac{2-b}{p-1}.
\end{align*}
For $s_c=1$, the global well-posedness and scattering for $\lambda=\pm1$ is well-studied. We refer to \cite{Jason3,Jason1,Jason2,Farah1,Farah2} for more details. For $s_c=0$, the global well-posedness and scattering was established in \cite{LMZ-2024} in $d\geq3$, where in the focusing case we need that the mass of initial data is below the ground state mass $M(Q)$. For $M(u_0)=M(Q)$, Merle \cite{Merle} studied the non-existence of minimal mass blow-up solution in $\R^d$.    

Next, we briefly review the classical results in symplectic non-squeezig for dispersive PDEs. The subject on non-squeezing for nonlinear Hamiltonian PDE was initiated by Kuksin \cite{Kuksin-CMP,Kuksin-GAFA}, where his approach was to develop a variant of Gromov's theorem in Hiblert space and then verify the hypotheses of his theorem for several PDE examples, including the nonlinear Klein-Gordon equations with weak nonlinearities. Symplectic non-squeezing was later proved for certain subcritical nonlinear Klein-Gordon equations in Bourgain \cite{Bourgain-GAFA}. Laterly, Bourgain extended these results to the cubic NLS in dimension one in \cite{Bourgain-IMRN}, where the argument follows from approximating the full equation by a finite dimensional flow and applying Gromov's finite dimensional non-squeezing result to this approximate flow.  Symplectic non-squeezing was also proven for the other model \cite{CKSTT-Acta,Mendelson-JFA,Kwak-JMAA,Kwak-Poin}. All non-squeezing results for nonlinear PDE until the work presented above were for problems posed on tori.  Recently, using finite-dimensional approximation strategy, Killip, Vian and Zhang \cite{KVZ-IMRN,KVZ-AJM} established the symplectic non-squeezing for the mass-critical and mass-subcritical nonlinear Schr\"odinger equation on $\R^d$, which is the first symplectic non-squeezing result for a Hamiltonian PDE in infinite volume. Inspired by \cite{KVZ-IMRN}, Miao \cite{Miao-CM} studied the symplectic non-squeezing for the mass-subcritical fourth order Schr\"odinger equation on $\R$. Later, Yang \cite{Yang} established similar result for mass-subcritical Hartree equation. For KdV equation in both $\R$ and $\T$, Ntekoume \cite{Ntekoume-PAA} established the improved result in the subspace of $H^{-1}$ via the determinant techniques developed in Killip-Visan \cite{KV-Ann}.

	\subsection{Main results}
	In this paper, we study the symplectic non-squeezing property for \eqref{NLS} and \eqref{fml-intro-NLSn}.
	
	\begin{theorem}[Symplectic non-squeezing property]\label{thm-nonsqueezing}
		Let $ z_* \in L^2(\R) $, $ \ell \in L^2(\R) $ with $ \| \ell \|_{L^2} = 1 $, $ \alpha \in \C $, $ R > 0 $ and $ T > 0 $.
		Then there exists $ u_0 \in B(z_*, r) $ such that the global solution to \eqref{NLS} with initial data $ u_0 $ satisfying
		\begin{equation*}
			\vert \langle \ell, u(T) \rangle - \alpha \vert > r, \qquad \forall ~ 0 < r < R.
		\end{equation*}
	\end{theorem}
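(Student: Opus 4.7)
The plan is to argue by contradiction along the finite-dimensional approximation route of Killip-Vișan-Zhang. Suppose the theorem fails: then there exist $z_*,\ell,\alpha$, a time $T$, and some $0<r<R$ such that every $u_0\in \overline{B(z_*,r)}$ produces a solution of \eqref{NLS} with $|\langle \ell,u(T)\rangle-\alpha|\leq r$. Equivalently, the time-$T$ flow of \eqref{NLS} squeezes a ball of radius $r$ into the cylinder $\{z:|\langle\ell,z\rangle-\alpha|\leq r\}$, which we must contradict through three successive approximations.

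First I would introduce the Fourier-truncated mass-critical NLS on $\R$,
\begin{equation*}
    i\partial_t u^L+\partial_x^2 u^L=\lambda\, \PL\bigl(|\PL u^L|^4\,\PL u^L\bigr),
\end{equation*}
with $\PL$ a Littlewood-Paley projector onto frequencies of size $\lesssim L$. This equation remains Hamiltonian and preserves the $L^2$-mass. The first main step is to establish a global $L^6_{t,x}$-bound for $u^L$ that is uniform in $L$ on any fixed $L^2$-ball of initial data. This is done via stability theory against Theorem \ref{thm-globalbound}: one shows that the Duhamel error $|u|^4 u-\PL(|\PL u^L|^4\PL u^L)$ can be made arbitrarily small in dual Strichartz norm by taking $L$ large, so that the truncated solution $u^L$ is $L^6_{t,x}$-close to the \eqref{NLS} solution with the same datum. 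A consequence is continuous dependence of $u^L(T)$ on $u_0$ in $L^2$, uniform in $L$.

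Next I would transfer the truncated equation to the rescaled torus $\TLn=\R/(L_n\Z)$, using an initial datum obtained by suitably localizing and periodizing $u_0$. Because $\PL$ has finite range on $\TLn$, this is a genuine finite-dimensional Hamiltonian system on $\PL L^2(\TLn)$. For fixed $L$, a large-torus stability argument (Strichartz on $\TLn$ together with spatial localization of $u_0$, exploiting that boundary effects for the $\PL$-filtered flow are controllable on $[0,T]$) shows that the torus flow approximates the truncated $\R$-flow in $C^0_tL^2_x$ as $L_n\to\infty$. Applying Gromov's theorem (Theorem \ref{thm-Gromv}) to the finite-dimensional torus flow forbids squeezing a ball of radius $r'<r$ into a cylinder of radius $r''>r'$ cut out by any unit linear functional on $\PL L^2(\TLn)$. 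Approximating $\ell$ by the periodization of $\PL\ell$, and $z_*$ similarly, translates the assumed squeezing of \eqref{NLS} into an approximate squeezing for the torus flow, thereby contradicting Gromov.

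The hardest point, and the one to which most of the work must be devoted, is to make the passage from \eqref{NLS} to the Fourier-truncated model \emph{uniformly} over $u_0\in\overline{B(z_*,r)}$: naive perturbation loses control because $\overline{B(z_*,r)}$ is not pre-compact in $L^2(\R)$, so one cannot extract a single modulus of continuity for the truncation error. Following Killip-Vișan-Zhang, the resolution is a nonlinear profile decomposition in the spirit of the proof of Theorem \ref{thm-globalbound}: any bounded sequence of initial data is decomposed into asymptotically orthogonal concentrations of mass plus a Strichartz-small remainder, each profile is propagated separately through both \eqref{NLS} and the truncated model on $\R$, and the pieces are recombined. This profile step must be carried out with enough precision that the total error accumulated through the three approximations (Fourier truncation, torus periodization, profile decomposition) is strictly smaller than any prescribed gap between $r$ and the cylinder radius, so that the Gromov-based contradiction survives.
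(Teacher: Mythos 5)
Your outline captures the correct architecture of the paper's argument (Fourier truncation on $\R$, periodization to a rescaled torus, Gromov's theorem in finite dimensions, and a profile decomposition to handle the non-compactness of the ball), but two of the steps as you describe them would fail.

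First, the uniform global $L^6_{t,x}$ bound for the truncated flow cannot be obtained ``via stability theory against Theorem \ref{thm-globalbound}.'' The error $|u|^4u-\PD(|\PD u|^4\PD u)$ is small in dual Strichartz norm, for a \emph{fixed} datum, only once the truncation scale exceeds the frequency scale of that datum; over an $L^2$-ball there is no uniform choice, and for data concentrated above the truncation scale the truncated flow is essentially free while \eqref{NLS} is genuinely nonlinear. So \eqref{NLSD} is not a perturbation of \eqref{NLS}, and this is precisely why the paper proves Theorem \ref{thm-GW-GWP} by a separate induction on mass (Propositions \ref{prop-GW-frequ-local-GWP} and \ref{prop-GW-GWP-promote}), exploiting the logarithmically smoothed multiplier $m_D$, frequency-localized well-posedness, persistence of high and low regularity, and bilinear Strichartz decoupling of widely separated frequency blocks.

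Second, and more fundamentally, the ``continuous dependence of $u^L(T)$ on $u_0$ in $L^2$, uniform in $L$'' that you invoke --- a norm approximation of the \eqref{NLS} flow by the truncated flow, uniform over $\overline{B(z_*,r)}$ --- is false, and the profile decomposition cannot produce it. For profiles with $\lambda_n^j\to0$ or $|\xi_n^j|\to\infty$ (Case 1), Lemma \ref{lem-NP-1} shows the truncated evolution is asymptotically linear, whereas the \eqref{NLS} evolution of the same profile is nonlinear; the $L^2$ difference of the two evolutions at time $T$ is $O(1)$ and does not shrink as the truncation scale grows. What is true, and what the paper actually proves and uses (Theorem \ref{thm:wc}), is convergence in the \emph{weak} $L^2$ topology: the problematic profiles converge weakly to zero, hence are invisible to the fixed functional $\ell$, and $\langle\ell,u_n(T)\rangle-\langle\ell,\wt{u}_n(T)\rangle\to0$ even though $\|u_n(T)-\wt{u}_n(T)\|_{L^2}$ does not. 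Your argument must be reorganized around this weak continuity: the role of the profile decomposition is to identify which bubbles disappear under the pairing with $\ell$, not to make the accumulated error small in a norm strong enough to control $\langle\ell,\cdot\rangle$ uniformly. As written, the claim that the ``total error \dots is strictly smaller than any prescribed gap'' is a gap that cannot be closed.
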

	
	The propagation characteristics of solutions are constrained by symplectic geometry.
	For example, the evolution of solutions $ u $ (such as solitons and wave packets) in phase space cannot arbitrarily compress their ``volume" or ``geometric structure", even through nonlinear interactions in \eqref{NLS}.
	
	\begin{theorem}[Homogenization]\label{thm-homogenization}
		Let $ u_0 \in L^2(\R) $ and $u_{0,n}\in \LpnR{2}$ with 
		\begin{equation*}
			\lim\limits_{n\to\infty}\|u_{0,n}-u_0\|_{\LpnR{2}}=0.
		\end{equation*}
		Suppose that $ h \in L^\infty(\R) $, $ \overline{h} \geq 0 $ such that for arbitrary $ R > 0 $, there holds
		\begin{equation*}
			\lim_{n \to \infty} \Vert (-\partial_x^2 + 1)^{-1} (h_n(x) - \overline{h}) \Vert_{L^\infty(\vert x \vert \le R)} = 0.
		\end{equation*}
		Then, for $ n \gg 1 $, there exists a global solution $ u_n $ to \eqref{fml-intro-NLSn} with initial data $ u_{0,n} $ which scatters in $ L^2(\R) $.
		More precisely, the following asymptotic behavior holds
		\begin{equation*}
			\lim_{t \to \pm \infty} \Vert u_n(t, x) - e^{i t \partial_x^2} \phi_{\pm} \|_{L_x^2(\R)} = 0.
		\end{equation*}
		Moreover, $ u_n $ converges to the solution to \eqref{NLS} in the Strichartz norm
		\begin{equation*}
			\lim_{n \to \infty} \Vert u_n - u \Vert_{\McS(\R\times\R)} = 0.
		\end{equation*}
		Especially, we can get convergence for $L^6_tL^6_x$ norms.
	\end{theorem}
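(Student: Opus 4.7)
The plan is to combine Dodson's global Strichartz bound for the limiting homogeneous NLS (Theorem \ref{thm-globalbound}) with a long-time stability argument, reducing the theorem to the statement that, along a fixed homogeneous solution, the Duhamel integral of the difference in nonlinearities vanishes in the Strichartz norm as $n\to\infty$.

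Let $u$ denote the global solution of $i\partial_t u+\partial_x^2 u=\overline{h}\,|u|^4u$ with data $u_0$, which by Theorem \ref{thm-globalbound} (applied to the trivially rescaled equation) satisfies a uniform space-time bound. Regarding $u$ as an approximate solution of \eqref{fml-intro-NLSn} with data $u_{0,n}$, the error in the equation is $e_n(t,x):=\bigl(h(nx)-\overline{h}\bigr)|u(t,x)|^4u(t,x)$, while $u_{0,n}-u_0\to 0$ in $L^2$ by hypothesis. A long-time stability theorem for the mass-critical NLS then reduces the proof of global existence for $u_n$, uniform Strichartz bounds, and the convergence $u_n\to u$ in $\mathcal{S}(\R\times\R)$ to the single estimate
\begin{equation*}
\lim_{n\to\infty}\Bigl\|\int_0^t e^{i(t-s)\partial_x^2}e_n(s)\,ds\Bigr\|_{\mathcal{S}(\R\times\R)}=0.
\end{equation*}
Scattering of $u_n$ in $L^2(\R)$ then follows from the standard Duhamel argument together with the uniform $L^6_{t,x}$ bound, and the Strichartz convergence in particular implies convergence in $L^6_tL^6_x$.

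To prove this Duhamel smallness, I would first establish it with $u$ replaced by a smooth, compactly supported spacetime test function $v$. The key move is to write
\begin{equation*}
h(nx)-\overline{h}=(1-\partial_x^2)\bigl[(1-\partial_x^2)^{-1}(h_n-\overline{h})\bigr]
\end{equation*}
and integrate by parts twice in $x$, moving the two derivatives onto $|v|^4v$. The hypothesis then guarantees that $(1-\partial_x^2)^{-1}(h_n-\overline{h})\to 0$ uniformly on the compact spatial support of $v$, which, together with dual Strichartz, forces the corresponding Duhamel integral to zero for such $v$. A density argument then extends the conclusion from $v$ to the actual NLS solution $u$: approximating $u$ in the $L^6_{t,x}\cap L^\infty_t L^2_x$ topology by Schwartz-in-spacetime functions and using
\begin{equation*}
\bigl\| |u|^4u-|v|^4v \bigr\|_{L^{6/5}_{t,x}}\lesssim \bigl(\|u\|_{L^6_{t,x}}^4+\|v\|_{L^6_{t,x}}^4\bigr)\|u-v\|_{L^6_{t,x}},
\end{equation*}
together with the uniform $L^\infty$ bound on $h(nx)-\overline{h}$, controls the approximation error independently of $n$.

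The main obstacle is precisely this density/integration-by-parts step. The hypothesis on $h$ is very weak (only two derivatives of smoothing, locally in $x$), so care is required to balance the derivative loss incurred on $|u|^4u$, the spatial localization needed to invoke the local-in-$x$ decay of $(1-\partial_x^2)^{-1}(h_n-\overline{h})$, and the global Strichartz control available for the fixed homogeneous solution $u$. Once this key lemma is in place, the remainder of the proof reduces to an application of Dodson's stability theory and the standard scattering argument, in close analogy with the strategy of Ntekoume \cite{Ntekoume-PAA} in the KdV setting.
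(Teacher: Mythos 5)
Your overall architecture is the same as the paper's: reduce, via Dodson's bound and long-time stability, to showing that the Duhamel integral of $e_n=(h(nx)-\overline h)F(u)$ vanishes in the Strichartz norm, then exploit the resolvent identity $h_n-\overline h=(1-\partial_x^2)\,(1-\partial_x^2)^{-1}(h_n-\overline h)$ together with a density argument. The reduction step, the uniform-in-$n$ transfer estimate $\|F(u)-F(v)\|_{L^{6/5}_{t,x}}\lesssim(\|u\|_{L^6_{t,x}}^4+\|v\|_{L^6_{t,x}}^4)\|u-v\|_{L^6_{t,x}}$, and the deduction of scattering and of $L^6_{t,x}$ convergence are all fine. (One small slip: you cannot approximate $u$ by compactly supported functions in $L^\infty_tL^2_x$ --- mass conservation forces $\|u-v\|_{L^\infty_tL^2_x}\ge\|u_0\|_{L^2}$ for any such $v$; the paper works in $L^6_{t,x}\cap L^{\infty-}_tL^{2+}_x$, which is all the dual Strichartz estimate requires.)

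The genuine gap is in the key step, which you yourself flag as ``the main obstacle'' but whose proposed mechanism does not close. ``Integrating by parts twice in $x$'' to move $(1-\partial_x^2)$ off of $(1-\partial_x^2)^{-1}(h_n-\overline h)$ and onto $F(v)$ necessarily produces, via the Leibniz rule, the term $F(v)\cdot\partial_x^2(1-\partial_x^2)^{-1}(h_n-\overline h)=F(v)\cdot\bigl[(1-\partial_x^2)^{-1}(h_n-\overline h)-(h_n-\overline h)\bigr]$, which regenerates $h_n-\overline h$ itself; this is merely $O(1)$ in $L^\infty$ and does not vanish. The correct maneuver (and what the paper actually does) is to keep the full second derivative on the product $F\cdot(1-\partial_x^2)^{-1}h_n$ inside the Duhamel integral, convert $e^{i(t-s)\partial_x^2}\partial_x^2$ into $\partial_s$ of the propagator, and integrate by parts in time; this produces the boundary terms $E_5,E_6$ and a term containing $\partial_s F$, which must then be rewritten using the equation for $u$ --- none of which appears in your sketch. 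Relatedly, the paper does not apply this machinery to $F(u)$ directly: it first splits $u=P_{\le N}u+P_{>N}u$, disposes of every term containing a high-frequency factor by Bernstein together with persistence of regularity for the low-frequency truncated data, and only then runs the resolvent/integration-by-parts argument on $F(P_{\le N}u)$, so that every derivative loss is an explicit power of $N$ that is beaten by choosing $\eta N^2\ll\varepsilon$. Your plan to absorb all derivative losses into a fixed smooth test function $v$ could in principle replace the frequency truncation for the purely qualitative limit, but only once the time integration by parts and the resulting $\partial_sF$ and boundary terms are actually carried out; as written, the central estimate is not established.
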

	\begin{remark}\label{rem-Gnapp-H1}
		From perturbation Proposition \ref{prop-Dis-Pertu}, Theorem \ref{thm-homogenization} still holds if we take $u_{0,n}\equiv u_0$.
	\end{remark}
	
	As an application to Theorem \ref{thm-nonsqueezing} and Theorem \ref{thm-homogenization}, we show that the non-squeezing property propagates between different equations under appropriate assumption for the nonlinearity. In fact, the non-squeezing propagation guarantees by $L^\infty_tL^2_x$ convergence. Therefore, we can find an approach to establish non-squeezing property for equations with inhomogeneous nonlinearity.
	\begin{corollary}[Propagation for non-squeezing property]\label{thm-Intro-gn-nonsqueezing}
		There exists $N_0>0$ such that for $n>N_0$, the following statement holds. Let $({z_*},{\ell},{\alpha},r,{R},{T})$ be the same as in Theorem \ref{thm-nonsqueezing}. Then there exists $u_0\in B(z_*,r)$ such that the global solution to \eqref{fml-intro-NLSn} with initial data $u_0$ obeying
		\begin{align*}
			|\langle{\ell},u_n({T})\rangle-{\alpha}|>{r}.
		\end{align*}
	\end{corollary}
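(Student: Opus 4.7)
The strategy is to directly transfer the non-squeezing property from \eqref{NLS} to \eqref{fml-intro-NLSn} via the homogenization convergence of Theorem \ref{thm-homogenization}. Fix data $(z_*, \ell, \alpha, r, R, T)$ with $0 < r < R$.

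First, apply Theorem \ref{thm-nonsqueezing} to produce some $u_0 \in B(z_*, r)$ such that the \eqref{NLS} solution $u$ with $u(0) = u_0$ satisfies the \emph{strict} inequality $|\langle \ell, u(T) \rangle - \alpha| > r$. This strictness yields a positive margin
$$\delta := |\langle \ell, u(T) \rangle - \alpha| - r > 0,$$
which is the key quantity: it will absorb the homogenization error in the next step.

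Second, take this same $u_0$ as initial datum for \eqref{fml-intro-NLSn}. By Remark \ref{rem-Gnapp-H1} one can invoke Theorem \ref{thm-homogenization} with $u_{0,n} \equiv u_0$, yielding global solutions $u_n$ to \eqref{fml-intro-NLSn} for all sufficiently large $n$ with $\|u_n - u\|_{\McS(\R \times \R)} \to 0$. Interpreting $\McS$ as the intersection of the admissible Strichartz spaces (which in one dimension includes the endpoint $L^\infty_t L^2_x$, since $\tfrac{2}{\infty} + \tfrac{1}{2} = \tfrac{1}{2}$), this convergence implies in particular $\|u_n(T) - u(T)\|_{L^2_x(\R)} \to 0$. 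Choose $N_0$ so large that $\|u_n(T) - u(T)\|_{L^2_x} < \delta/2$ for all $n > N_0$. Cauchy--Schwarz together with $\|\ell\|_{L^2} = 1$ then give
$$|\langle \ell, u_n(T) \rangle - \alpha| \ge |\langle \ell, u(T) \rangle - \alpha| - \|u_n(T) - u(T)\|_{L^2_x} > r + \delta - \tfrac{\delta}{2} = r + \tfrac{\delta}{2} > r,$$
which is the desired non-squeezing inequality for \eqref{fml-intro-NLSn} with the same $u_0 \in B(z_*, r)$.

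The only nontrivial input is the upgrade from convergence in a generic Strichartz norm to $L^\infty_t L^2_x$-convergence at the fixed time $T$. If $\McS$ does not already contain this pair, one recovers it by applying a standard one-dimensional Strichartz estimate to the Duhamel representation of $u_n - u$: the inhomogeneity $(h(nx)-\overline{h})|u|^4 u$ vanishes in a dual Strichartz norm under the assumptions on $h$ from Theorem \ref{thm-homogenization}, while the nonlinear-difference term is absorbed via a standard bootstrap using the global space-time bound of Theorem \ref{thm-globalbound}. All remaining steps are elementary, which is why this corollary is immediate from the two main results once the proper margin $\delta$ is identified.
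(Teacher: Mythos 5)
Your proposal is correct and is precisely the argument the paper intends: the paper itself only sketches this corollary, remarking that ``the non-squeezing propagation [is guaranteed] by $L^\infty_t L^2_x$ convergence,'' and since $\Vert\cdot\Vert_{\McS}$ is defined to contain the $C^0_t L^2_x$ norm, Theorem \ref{thm-homogenization} with $u_{0,n}\equiv u_0$ (Remark \ref{rem-Gnapp-H1}) gives exactly the $L^2$-closeness at time $T$ that your margin $\delta>0$ absorbs. The only point worth noting is that your $N_0$ necessarily depends on the data $(z_*,\ell,\alpha,r,R,T)$ through $\delta$, so the corollary must be read with the data fixed before $N_0$ is chosen, which is the natural interpretation.
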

	\subsection{Strategy and organization of the paper}
		Inspired by the work of Killip-Visan-Zhang \cite{KVZ-AJM},  the finite dimensional approximation can be used to prove the symplectic non-squeezing for \eqref{NLS}. And then adapting the approach in \cite{Ntekoume-CPDE}, we can show that the solution to \eqref{fml-intro-NLSn} can be approximated by the solution to \eqref{NLS} in the Strichartz norm. Using this convergence, we can show non-squeezing property also holds for \eqref{fml-intro-NLSn}. 
		
	This paper is organized as follows. In Section 2, we will briefly review the Littlewood-Paley theory, Strichartz estimates and the stability theory for \eqref{NLS}. 
	\begin{itemize}
		\item In Section 3, we state the local well-posedness and stability lemma for the Fourier truncated NLS \eqref{NLSD} on $\R$ and the stability lemma for inhomogeneous model. Also, we prove the large data global  well-posedness and scattering for \eqref{NLSD}. 
	\end{itemize}
	
	\begin{itemize}
		\item In Section 4, we use the profile decomposition technique to construct the approximate solution to \eqref{NLSD}. Also, we show the convergence result between the solution to $\eqref{NLSD}$ and $\eqref{NLS}$. 
	\end{itemize}
	\begin{itemize}
		\item In Section 5, we introduce the nonlinear Schr\"odinger equation with truncated NLS on the rescaled tori. From Bourgain's result \cite{Bourgain-IMRN}, the truncated NLS on torus is more convenient to derive the  non-squeezing property using Gromov's theorem. More precisely, we show the dispersive and Strichartz estimates for linear Schr\"odinger group in rescaled torus. By the standard argument, we have the perturbation theorem for this model.  	
	\end{itemize}
	\begin{itemize}
		\item In Section 6, we try to connect the truncated NLS on $\R$ and truncated NLS on $\T$ by embedding $\T$ in $\R$. The proof relies on various of mismatch and commutator estimates.
	\end{itemize}
	\begin{itemize}
		\item	In Section 7,  putting these things together, we prove the non-squeezing property for equation \eqref{NLS}. 
	\end{itemize}
	\begin{itemize}
		\item	In Section 8, we first prove the homogenization for \eqref{fml-intro-NLSn} by verifying the smallness condition of the Duhahamel term appearing in stability lemma. The proof relies on the frequency decomposition for the nonlinear term. Then the non-squeezing for \eqref{fml-intro-NLSn} is a corollary of the homogenization.
	\end{itemize}
	\section{Preliminary}
	Let $ \varphi : (-\infty, \infty) \mapsto [0,1] $ be a smooth, radial bump function satisfies
	\begin{equation*}
		\varphi(\xi) =
		\left\{
		\begin{aligned}
			& 1, \qquad & \vert \xi \vert \le 1, \\
			& 0, \qquad & \vert \xi \vert \ge 2.
		\end{aligned}
		\right.
	\end{equation*}
	Then, we define Littilewood-Paley projection by
	\begin{equation*}
		P_{\le N} f := \mathcal{F}^{-1} \bigg[ \varphi \bigg( \frac{\xi}N \bigg) \widehat{f}(\xi) \bigg], \qquad \text{and} \qquad P_N f = P_{\le 2N} f - P_{\le N} f.
	\end{equation*}
	
	For the compactly supported function, we have the pointwise bound for the Littlewood-Paley projection.
	
	\begin{lemma}\label{lem-P}
		Let $ R > 0 $ such that $ \operatorname{supp} f \subset B(0, R) $.
		For $ N \in 2^{\Z} $ with $ N R > 1 $, $ x \in (-\infty, -2R) \cup (2R, +\infty) $, and arbitrary $ 1 < p < \infty $, there exists a positive constant $ k > 0 $ such that
		\begin{equation*}
			\vert P_{\leq N} f(x) \vert \lesssim N^{\frac1p} \Vert f \Vert_{L^p(\R)} \big( N (\vert x \vert - R) \big)^{-k}.
		\end{equation*}
	\end{lemma}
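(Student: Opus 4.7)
The plan is to realize $P_{\leq N} f$ as convolution with a Schwartz kernel and exploit rapid decay away from the support of $f$. Setting $\Phi := \mathcal{F}^{-1}\varphi \in \mathcal{S}(\R)$ and $\Phi_N(z) := N\Phi(Nz)$, so that $P_{\leq N}$ is convolution with $\Phi_N$, we have
\begin{equation*}
P_{\leq N} f(x) = (\Phi_N * f)(x) = N\int_{|y| \leq R} \Phi\bigl(N(x-y)\bigr) f(y)\, dy.
\end{equation*}
For $|x| > 2R$ and $|y| \leq R$, the triangle inequality gives $|x-y| \geq |x|-R > 0$, and combined with the hypothesis $NR > 1$ this ensures $N|x-y| \geq N(|x|-R) > NR > 1$, so that the argument of $\Phi$ lies in the Schwartz-decay regime.

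The Schwartz property of $\Phi$ yields $|\Phi(z)| \lesssim_M (1+|z|)^{-M}$ for every $M > 0$. Applying H\"older's inequality in $y$ with dual exponents $p$ and $p' := p/(p-1)$ restricted to $B(0,R)$, I obtain
\begin{equation*}
|P_{\leq N}f(x)| \leq N \Bigl(\int_{|y|\leq R} \bigl|\Phi\bigl(N(x-y)\bigr)\bigr|^{p'}\,dy\Bigr)^{1/p'} \|f\|_{L^p(\R)}.
\end{equation*}
The change of variables $z=N(x-y)$ turns the inner integral into $N^{-1}\int_{I_x} |\Phi(z)|^{p'}\,dz$, where $I_x$ is an interval contained in $\{|z| \geq N(|x|-R)\}$. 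Choosing $M$ so that $Mp' > 1$ and integrating the tail of $(1+|z|)^{-Mp'}$ produces a bound of order $\bigl(N(|x|-R)\bigr)^{1-Mp'}$, whence
\begin{equation*}
|P_{\leq N}f(x)| \lesssim N \cdot N^{-1/p'} \bigl(N(|x|-R)\bigr)^{1/p' - M} \|f\|_{L^p} = N^{1/p} \|f\|_{L^p} \bigl(N(|x|-R)\bigr)^{-k},
\end{equation*}
with $k := M - 1/p'$; since $M$ is arbitrary, $k$ can be taken as large as desired.

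The computation is essentially routine, but the one point to be careful about is securing the exponent $N^{1/p}$ rather than the cruder $N$ that a naive $L^\infty$-bound on the kernel would give. This forces the H\"older split to be performed \emph{before} pointwise bounding of $\Phi\bigl(N(x-y)\bigr)$: the correct $N$-scaling then emerges from the Jacobian of the change of variables, which produces the exponent $1 - 1/p' = 1/p$. The role of the hypotheses $NR > 1$ and $|x|>2R$ is solely to place $N(|x|-R)$ in the region $\{|z|>1\}$ where the Schwartz tail integral converges uniformly.
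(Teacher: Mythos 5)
Your argument is correct: realizing $P_{\leq N}$ as convolution with the rescaled Schwartz kernel $N\Phi(N\cdot)$, applying H\"older over the support of $f$, and then rescaling so that the Jacobian supplies the factor $N^{-1/p'}$ (yielding $N^{1/p}$ overall) is exactly the standard route, and your bookkeeping of the tail integral $\bigl(N(|x|-R)\bigr)^{1-Mp'}$ under the hypotheses $|x|>2R$ and $NR>1$ is accurate; in fact you obtain the estimate for every $k>0$, which is stronger than the stated existence of some $k$. The paper states this lemma without proof, so there is no alternative argument to compare against.
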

	
	In the rest of paper, we shall use the following notation to represent the nonlinear terms for short:
	\begin{equation*}
		F(u) := \vert u \vert^4 u, \qquad F_n(u) := h_n(x) \vert u \vert^4 u.
	\end{equation*}
	
	Next, we define homogeneous Strichartz and inhomogeneous spaces and related bilinear estimates.
	\begin{definition}[Strichartz norms]
		We define the homogeneous Strichartz norm by
		\begin{equation*}
			\Vert u \Vert_{\mathcal{S}(I)} := \Vert u \Vert_{C_t^0(I, (L_x^2(\R))} + \Vert u \Vert_{L_t^5(I, L_x^{10}(\R))},
		\end{equation*}
		and the inhomogeneous Strichartz norm by
		\begin{equation*}
			\Vert F \Vert_{\mathcal{N}(I)} := \inf_{F = F_1 + F_2} \Vert F_1 \Vert_{L_t^1(I, L_x^2)} + \Vert F_2 \Vert_{L_t^{\frac54}(I, L_x{^\frac{10}9}(\R))}.
		\end{equation*}
		If $ I = \R $, we abbreviate the norm as $ \mathcal{S}(\R) $ and $ \mathcal{N}(\R) $.
	\end{definition}
	
	\begin{lemma}[Bilinear Strichartz]\label{lem-pre-bilinear-Lp}
		Let $ I $ be a compact interval and $ u_1, u_2 \in I \times \R \to \C $ which are frequency localized in $ \{ \vert \xi \vert \le N \} $.
		In addition, assume that for some $ c > 0 $
		\begin{equation*}
			\text{\rm dist} ~ ( \supp\widehat{u_1}, \supp\widehat{u_2} ) \ge cN.
		\end{equation*}
		Then, for $ q > \frac{d+3}{d+1} $, we have
		\begin{equation*}
			\Vert u_1 u_2 \Vert_{L_{t, x}^q(I \times \R)} \lesssim N^{d - \frac{d+2}{q}} \Vert u_1 \Vert_{\mathcal{S}_*(I)} \Vert u_2 \Vert_{\mathcal{S}_*(I)},
		\end{equation*}
		where $ \mathcal{S}_*(I) $ is defined by
		\begin{equation*}
			\Vert u \Vert_{\mathcal{S}_*(I)} := \inf_{t \in I} \Vert u(t, \cdot) \Vert_{L^2(\R)} + \Vert i \partial_t u + u_{xx} \Vert_{\mathcal{N}(I)}.
		\end{equation*}
	\end{lemma}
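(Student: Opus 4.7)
The plan is to reduce the estimate to free Schr\"odinger evolutions, prove the endpoint bilinear $L^2_{t,x}$ bound there, propagate it to the class $\mathcal{S}_*(I)$ via Duhamel's formula, and then interpolate with a Bernstein-type $L^\infty_{t,x}$ bound to reach all $q>2$. Since the paper works in $d=1$, the endpoint is $\frac{d+3}{d+1}=2$ and the target exponent reads $d-\frac{d+2}{q}=1-\frac{3}{q}$.

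First I would prove the free-case estimate for $u_j=e^{it\partial_x^2}f_j$, where $\widehat{f}_j$ lies in $\{|\xi|\le N\}$ with Fourier supports separated by at least $cN$. By Plancherel,
$$\|u_1 u_2\|_{L^2_{t,x}}^2 \;=\; \int \Bigl| \int \widehat{f_1}(\xi_1)\,\widehat{f_2}(\xi-\xi_1)\,\delta\bigl(\tau-\xi_1^2-(\xi-\xi_1)^2\bigr)\,d\xi_1 \Bigr|^2 d\tau\,d\xi .$$
The change of variables $(\xi_1,\xi_2)\mapsto(\xi,\tau)=(\xi_1+\xi_2,\xi_1^2+\xi_2^2)$ has Jacobian $|2(\xi_2-\xi_1)|\gtrsim N$ on the support, so a direct computation gives $\|u_1 u_2\|_{L^2_{t,x}}\lesssim N^{-1/2}\|f_1\|_{L^2_x}\|f_2\|_{L^2_x}$.

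Second, to upgrade from free solutions to functions in $\mathcal{S}_*(I)$, I would choose $t_j^*\in I$ nearly achieving the infimum in the definition and use Duhamel's formula
$$u_j(t)\;=\; e^{i(t-t_j^*)\partial_x^2}u_j(t_j^*)\;-\;i\int_{t_j^*}^{t} e^{i(t-s)\partial_x^2}\,G_j(s)\,ds,\qquad G_j:=i\partial_t u_j+\partial_x^2 u_j .$$
Expanding $u_1 u_2$ into four contributions, the free/free term is handled by Step 1. For forcing in the $L^1_t L^2_x$ piece of $\mathcal{N}(I)$ one passes Minkowski in $s$ through the bilinear estimate; for forcing in the $L^{5/4}_t L^{10/9}_x$ piece, one pairs $u_1 u_2$ with a test function in $L^2_{t,x}$ and invokes Christ-Kiselev together with the dual endpoint Strichartz inequality to remove the retarded cutoff without losing the $N^{-1/2}$ gain. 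The outcome is
$$\|u_1 u_2\|_{L^2_{t,x}(I\times\R)}\;\lesssim\; N^{-1/2}\,\|u_1\|_{\mathcal{S}_*(I)}\|u_2\|_{\mathcal{S}_*(I)} .$$

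Third, I would combine this with a trivial high-integrability bound: since $\widehat{u_j}$ is localized to $\{|\xi|\le N\}$, Bernstein plus the Strichartz bound $\|u_j\|_{L^\infty_t L^2_x}\lesssim \|u_j\|_{\mathcal{S}_*(I)}$ yield $\|u_j\|_{L^\infty_t L^\infty_x}\lesssim N^{1/2}\|u_j\|_{\mathcal{S}_*(I)}$, hence $\|u_1 u_2\|_{L^\infty_{t,x}}\lesssim N\,\|u_1\|_{\mathcal{S}_*(I)}\|u_2\|_{\mathcal{S}_*(I)}$. Interpolating the $L^2_{t,x}$ and $L^\infty_{t,x}$ bounds at $\theta=2/q$ produces the exponent $-\tfrac12\cdot\tfrac{2}{q}+1\cdot(1-\tfrac{2}{q})=1-\tfrac{3}{q}$ for every $q>2$, as claimed. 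The main obstacle is clearly the Duhamel step when the source lies in $L^{5/4}_t L^{10/9}_x$: a direct Minkowski-and-freeze argument fails, and transferring the $N^{-1/2}$ gain from the free bilinear estimate to this inhomogeneous setting is exactly where the Christ-Kiselev / $TT^*$ machinery above is essential.
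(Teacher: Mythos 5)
Your proposal is correct and follows essentially the same route the paper itself takes: the paper states Lemma \ref{lem-pre-bilinear-Lp} as a standard fact and, in its appendix, proves the companion $L^3$ estimate (Lemma \ref{lem-pre-Bili-Stri}) by exactly your scheme — free-case bilinear $L^2$ bound, interpolation with a Bernstein-type bound, and transference to the Duhamel class via Christ--Kiselev and dual Strichartz. The only caveat worth recording is that your interpolation between $L^2_{t,x}$ and $L^\infty_{t,x}$ only reaches $q\ge 2$, which coincides with the stated range $q>\frac{d+3}{d+1}$ precisely because $d=1$ here (for $d\ge 2$ that range dips below $2$ and requires the finer argument of Visan); since the paper only invokes the lemma at $q=\frac{30}{11}>2$, this is harmless.
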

	
	Using the interpolation and $ L^2 $ bilinear Strichartz estimate, we can obtain the $ L^3 $ bilinear Strichartz estimate for the linear solution.
	Then following the standard argument as in \cite{Visan-Duke}, we have the following $ L^3 $ bilinear estimate for nonlinear case.
	
	\begin{lemma}[$ L^3 $ bilinear estimate]\label{lem-pre-Bili-Stri}
		Let $ u, v \in I \times \R \to \C $, then for $ N \ge 10M $, we have
		\begin{equation*}
			\Vert u_M u_N \Vert_{L_{t, x}^3(I \times \R)} \lesssim \bigg( \frac{M}N \bigg)^{\frac14} \Vert u_M \Vert_{\mathcal{S}_*(I)} \Vert u_N \Vert_{\mathcal{S}_*(I)}.
		\end{equation*}
	\end{lemma}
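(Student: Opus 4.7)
The plan is to prove the claim first for free Schr\"odinger evolutions by interpolating an $L^2_{t,x}$ endpoint bound against an $L^6_{t,x}$ Bernstein-improved bound, and then promote the resulting linear bilinear estimate to general $\mathcal{S}_*$-controlled functions via the Duhamel/Christ-Kiselev argument exactly as in \cite{Visan-Duke}.

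\emph{Linear case.} Write $v_M = e^{it\partial_x^2} f_M$ and $v_N = e^{it\partial_x^2} g_N$ with $\widehat{f_M}$ supported in $|\xi|\sim M$ and $\widehat{g_N}$ supported in $|\xi|\sim N$, $N\ge 10M$. The key ingredient is the 1D endpoint estimate
\begin{equation*}
\|v_M v_N\|_{L^2_{t,x}} \lesssim N^{-1/2}\,\|f_M\|_{L^2_x}\|g_N\|_{L^2_x},
\end{equation*}
which I would derive directly by space-time Plancherel: realizing $\mathcal{F}_{t,x}(v_M v_N)$ as a convolution on the paraboloid $\tau=\eta^2+\zeta^2$, $\xi=\eta+\zeta$, the Jacobian of $(\eta,\zeta)\mapsto(\tau,\xi)$ equals $2|\eta-\zeta|$, and the frequency supports force $|\eta-\zeta|\sim N$, so Cauchy-Schwarz produces the factor $N^{-1/2}$. (This is the $q=(d+3)/(d+1)=2$ endpoint in Lemma \ref{lem-pre-bilinear-Lp}, not directly covered by that lemma.) At the other endpoint, H\"older, the 1D Strichartz pair $(6,6)$, and Bernstein applied to the low-frequency factor give
\begin{equation*}
\|v_M v_N\|_{L^6_{t,x}}\le \|v_M\|_{L^\infty_{t,x}}\|v_N\|_{L^6_{t,x}}\lesssim M^{1/2}\,\|f_M\|_{L^2_x}\|g_N\|_{L^2_x}.
\end{equation*}
Since $L^3_{t,x}=[L^2_{t,x},L^6_{t,x}]_{1/2}$, interpolating these two displays produces
\begin{equation*}
\|v_M v_N\|_{L^3_{t,x}}\lesssim \bigl(N^{-1/2}\bigr)^{1/2}\bigl(M^{1/2}\bigr)^{1/2}\|f_M\|_{L^2_x}\|g_N\|_{L^2_x}=\Bigl(\tfrac{M}{N}\Bigr)^{1/4}\|f_M\|_{L^2_x}\|g_N\|_{L^2_x},
\end{equation*}
which is the linear version of the claim.

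\emph{Nonlinear extension.} For general $u_M$ with $(i\partial_t+\partial_x^2)u_M=G_M$, choose $t_\ast\in I$ nearly minimizing $\|u_M(t_\ast)\|_{L^2_x}$ and write
\begin{equation*}
u_M(t)=e^{i(t-t_\ast)\partial_x^2}u_M(t_\ast)-i\int_{t_\ast}^{t}e^{i(t-s)\partial_x^2}G_M(s)\,ds,
\end{equation*}
and similarly for $u_N$. Decompose $G_M=G_M^{(1)}+G_M^{(2)}$ realizing the infimum in $\|G_M\|_{\mathcal{N}}$ with $G_M^{(1)}\in L^1_tL^2_x$, $G_M^{(2)}\in L^{5/4}_tL^{10/9}_x$, and likewise for $G_N$. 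The Christ-Kiselev lemma then allows each retarded propagator $\int_{t_\ast}^{t}e^{i(t-s)\partial_x^2}G_M^{(j)}(s)\,ds$ to be replaced, up to a constant depending only on the numerology of the admissible pairs, by $e^{it\partial_x^2}$ acting on an unrestricted profile. Applying the linear bound piece-by-piece to every pair of summands and taking infima produces the full nonlinear bound with $\|u_M\|_{\mathcal{S}_*}\|u_N\|_{\mathcal{S}_*}$ on the right.

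\emph{Main difficulty.} The only substantive analytic input is the 1D endpoint $L^2_{t,x}$ bilinear Strichartz, which falls at the boundary of Lemma \ref{lem-pre-bilinear-Lp} ($q>2$) and therefore must be obtained by hand via the paraboloid/Jacobian calculation above; the Bernstein step, the interpolation, and the Christ-Kiselev bookkeeping required to handle the two admissible pieces in the definition of the $\mathcal{N}$ norm are all standard.
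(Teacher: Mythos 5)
Your proposal is correct and follows essentially the same route as the paper's appendix: Hölder interpolation of the bilinear $L^2_{t,x}$ estimate (giving $N^{-1/2}$) against an $L^6_{t,x}$ bound obtained by placing the low-frequency factor in $L^\infty_{t,x}$ via Bernstein (giving $M^{1/2}$), followed by the Duhamel/Christ--Kiselev transfer to the $\mathcal{S}_*$ norm. The only cosmetic difference is that you derive the $L^2$ endpoint by the paraboloid/Jacobian computation while the paper simply invokes it as known; your observation that this endpoint is not covered by Lemma \ref{lem-pre-bilinear-Lp} (which requires $q>2$ in $d=1$) is accurate.
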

	
	Next, we collect linear profile decomposition for the Schr\"odinger propagation results for $ L^2 $ bounded sequences.
	First, we give parameters $ (\lambda_n, \xi_n, x_n, t_n) \in \R^+ \times \R \times \R \times \R $ and define the symmetry group $ G $ by
	\begin{equation}\label{symmetry group}
		G := \{ G_n ~ \vert ~ G_n f = G_{\lambda_n, \xi_n, x_n, t_n} f := g_n e^{i t_n \partial_x^2} f \},
	\end{equation}
	where
	\begin{equation*}
		g_n f = g_{\lambda_n, \xi_n, x_n} f := \lambda_n^{-\frac12} e^{ix\cdot \xi_n} f( \lambda_n^{-1}(x - x_n) ).
	\end{equation*}
	We also define operator $ T_n $ by
	\begin{equation}\label{fml-pre-def-Tn}
		T_n f(t, x) := \lambda_n^{-\frac12} e^{ix\cdot \xi_n} e^{-it|\xi_n|^2} f( t_n + t \lambda_n^2, \lambda_n^{-1}( x - x_n - 2\xi_n t ) ).
	\end{equation}
	One can find that $ e^{it\partial_x^2} G_n f = T_n[e^{it\partial_x^2} f] $.
	
	\begin{definition}[Orthogonality]
		For two quadruples $ (\lambda^j_n, \xi^j_n, x^j_n, t^j_n) $ and $ (\lambda^k_n, \xi^k_n, x^k_n, t^k_n) $, we call that they are \emph{orthogonal} if
		\begin{equation*}
			\frac{\lambda_n^k}{\lambda_n^j} + \frac{\lambda_n^j}{\lambda_n^k} + \lambda_n^j\lambda_n^k \vert \xi_n^j - \xi_n^k \vert^2 + \frac{\vert (\lambda_n^j)^2t^j_n - (\lambda_n^k)^2t^k_n \vert}{\lambda_n^j\lambda_n^k} + \frac{\vert x_n^j-x_n^k - 2t_n^j(\lambda_n^j)^2(\xi_n^j-\xi_n^k) \vert^2}{\lambda_n^j\lambda_n^k} \to 0,
		\end{equation*}
		as $ n \to \infty $.
		Then, we use the notation $ (\lambda^j_n, \xi^j_n, x^j_n, t^j_n) \perp (\lambda^k_n, \xi^k_n, x^k_n, t^k_n) $ denote they are orthogonal and we abbreviate it as $ j \perp k $ when there is no ambiguity.
	\end{definition}
	
	\begin{theorem}[Linear profile decomposition, \cite{Merle-IMRN}]\label{thm-pre-linear-profile}
		Let $ \{ u_n \}_{n \ge 1} \subset L^2(\R) $ has uniform bound.
		Then, passing to a subsequence, there exists $ J \in \Z \cup \{\infty\} $, a sequence of functions $ \{ \phi^j \}_{j=1}^J \subset L^2(\R) $ and a sequence of quadruples $ \{ (\lambda^j_n, \xi^j_n, x^j_n, t^j_n) \}_{j=1}^J $ such that
		\begin{equation*}
			u_n = \sum_{j=1}^{J} G_n^j \phi^j + r^J_n,
		\end{equation*}
		where
		\begin{gather*}
			j \perp k, \qquad \forall ~ j \ne k,\\
			\lim_{J \to \infty} \limsup_{n \to \infty} \Vert e^{it\partial_x^2} r_n^J \Vert_{L_{t, x}^6(\R \times \R)} = 0, \\
			(G_n^1)^{-1} u_n \stackrel{n \to \infty}{\rightharpoonup} \phi^1 \qquad \text{\rm and} \qquad (G_n^j)^{-1} r_n^{j-1} \stackrel{n \to \infty}{\rightharpoonup} \phi^j, \qquad \forall ~ j \ge 2, \\
			\sup_J \lim_{n \to \infty} \left[ \Vert u_n \Vert_{L^2}^2 - \sum_{j=1}^J \Vert \phi^j \Vert_{L^2}^2 - \Vert r^J_n \Vert_{L^2}^2 \right] = 0.
		\end{gather*}
		Moreover, the quadruples $ (\lambda^j_n, \xi^j_n, x^j_n, t^j_n) $ are included in the following three cases
		\begin{enumerate}[\rm (1)]
			\item $\lambda_n^j \to 0$, or $\lambda_n^j \to \infty$ and $|\xi_n^j| \to \infty$, or $\lambda_n^j \equiv 1$ and $|\xi_n^j| \to \infty$.
			\item $\lambda_n^j \to \infty$ and $\xi_n^j \to \xi^j \in \R$.
			\item $\lambda_n^j \equiv 1$ and $\xi_n^j \equiv 0$.
		\end{enumerate}
	\end{theorem}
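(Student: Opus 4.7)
The plan is to derive the decomposition by iteratively extracting the ``most concentrated bubble'' from $\{u_n\}$ and subtracting it off, in the style of Keraani and B\'egout--Vargas. The engine of the extraction is an \emph{inverse Strichartz inequality}: if $\{v_n\} \subset L^2(\R)$ is uniformly bounded by $M$ and satisfies $\limsup_{n \to \infty}\|e^{it\partial_x^2} v_n\|_{L_{t,x}^6} \geq \eta > 0$, then after passing to a subsequence there exist a quadruple $(\lambda_n, \xi_n, x_n, t_n) \in \R^+\times\R\times\R\times\R$ and a nonzero $\phi \in L^2(\R)$ with $(G_{\lambda_n,\xi_n,x_n,t_n})^{-1} v_n \rightharpoonup \phi$ and $\|\phi\|_{L^2} \gtrsim (\eta/M)^C \eta$. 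To prove this I would first establish a refined Strichartz estimate bounding $\|e^{it\partial_x^2} f\|_{L_{t,x}^6}$ by a geometric mean of $\|f\|_{L^2}$ and a sup over dyadic frequency-position cubes of $N^{1/2}\|e^{it\partial_x^2}P_N f\|_{L^\infty_{t,x}}$-type quantities; this is obtained by Whitney-decomposing the diagonal of $|e^{it\partial_x^2} f|^2$ into well-separated dyadic pairs and invoking the bilinear gain of Lemma \ref{lem-pre-Bili-Stri}. Pigeonholing over the dyadic cubes produces the scale $\lambda_n^{-1}$, the frequency center $\xi_n$, and the space-time point $(x_n,t_n)$, and the weak limit $\phi$ is the concentrating portion.

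Given this lemma, I would run the iteration: set $r_n^0 := u_n$, and as long as $\eta_J := \limsup_n \|e^{it\partial_x^2}r_n^{J-1}\|_{L_{t,x}^6} > 0$, extract $\phi^J$ and a quadruple via the lemma, then set $r_n^J := r_n^{J-1} - G_n^J \phi^J$. Since $(G_n^J)^{-1}$ is an $L^2$-isometry and $(G_n^J)^{-1} r_n^{J-1} \rightharpoonup \phi^J$, the standard weak-to-strong identity gives $\|r_n^{J-1}\|_{L^2}^2 = \|\phi^J\|_{L^2}^2 + \|r_n^J\|_{L^2}^2 + o_n(1)$. Telescoping and using $\|u_n\|_{L^2} \leq M$ forces $\|\phi^J\|_{L^2} \to 0$, and the lower bound $\|\phi^J\|_{L^2} \gtrsim (\eta_J/M)^C \eta_J$ then forces $\eta_J \to 0$, yielding the desired decay of the remainder's $L_{t,x}^6$ norm; a Cantor diagonal argument selects a single subsequence on which the decomposition holds for all $J$ simultaneously.

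For the orthogonality of extracted quadruples I argue by contradiction: if for some $j < k$ the two quadruples fail to be orthogonal, then along a subsequence $(G_n^j)^{-1} G_n^k$ converges strongly to a nontrivial unitary operator $G^\infty$. The defining weak convergence $(G_n^k)^{-1} r_n^{k-1} \rightharpoonup \phi^k$, combined with the fact that $\phi^j$ has already been subtracted from $r_n^{k-1}$, then unfolds to force $\phi^k = 0$, a contradiction. The reduction to cases (1)--(3) is carried out profile by profile using the residual Galilean and scaling symmetries: when $\lambda_n^j \equiv 1$ with $\xi_n^j$ bounded we absorb $\xi_n^j$ into $\phi^j$ via the Galilean boost $e^{ix\xi_n^j - it|\xi_n^j|^2}$, landing in case (3); when $\lambda_n^j \to \infty$ with $\xi_n^j$ bounded we pass to a convergent subsequence and obtain case (2); all remaining scenarios are collected into case (1).

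The chief technical obstacle is the refined Strichartz estimate underlying the inverse Strichartz lemma. In the 1D mass-critical setting the bilinear gain of Lemma \ref{lem-pre-Bili-Stri} requires the exponent $q > (d+3)/(d+1) = 2$, so one cannot bilinearize directly at the endpoint $L_{t,x}^6$; instead one must interpolate the off-diagonal $L_{t,x}^q$ bilinear bound with trivial $L_{t,x}^\infty$ estimates in order to extract a genuine positive power of a concentration quantity from the $L_{t,x}^6$ norm. Executing this interpolation while retaining enough quantitative control to extract parameters and a weak limit in $L^2$, rather than in a smoother Sobolev space, is the delicate point; everything else is bookkeeping of weak limits and unitary group actions.
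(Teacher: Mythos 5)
The paper does not prove this theorem at all: it is quoted from the literature (\cite{Merle-IMRN}, in the form presented in Killip--Visan's lecture notes via the refined Strichartz/inverse Strichartz machinery of Carles--Keraani and B\'egout--Vargas). Your outline --- a refined Strichartz estimate via Whitney decomposition and the bilinear gain, an inverse Strichartz lemma producing a quadruple and a nonzero weak limit, iterative extraction with $L^2$ decoupling forcing $\eta_J \to 0$, orthogonality of parameters by contradiction, and normalization of $(\lambda_n^j,\xi_n^j)$ into the three cases by absorbing residual symmetries --- is exactly that standard argument, so it takes the same approach as the source the paper relies on.
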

	We end up this section by presenting the stability theory for solution to \eqref{NLS}.
	\begin{lemma}[Stability]\label{dodson-stability}Let $I=[0,T]$ be the compact time interval and we denote $v$ by the approximate solution to \eqref{NLS} in the following sense
		\begin{align*}
			\begin{cases}
				i\partial_tv+\partial_x^2v=|v|^4v+e,\\
				v(0,x)=v_0\in L_x^2(\R)
			\end{cases}
		\end{align*}
		for some error function $e(s)$. Suppose that
		\begin{gather*}
			\|v\|_{L_t^\infty L_x^2(I\times\R)}\leq M,\\
			\|v\|_{L_{t,x}^6(I\times\R)}\lesssim L,\\
			\|u_0-v_0\|_{L^2(\R)}\lesssim M^\prime
		\end{gather*}
		hold for some $M,M^\prime,L>0$ and  $u_0\in L_x^2(\R)$ and 
		\begin{gather*}
			\big\|e^{it\partial_x^2}(u_0-v_0)\big\|_{L_{t,x}^6(I\times\R)}\leq\gamma,\\
			\big\|\int_{0}^te^{i(t-s)\partial_x^2}e(s)\big\|_{L_{t,x}^6(I\times\R)}\leq\gamma,
		\end{gather*}
		for some $\gamma<\gamma_0:=\gamma_0(M,M^\prime,L,\tilde{h})$. Then, there exists an unique solution to \eqref{NLS} such that
		\begin{gather*}
			\|u-v\|_{L_{t,x}^6(I\times\R)}\leq C(M,M^\prime, L,\tilde{h})\gamma,\\
			\|u-v\|_{S(I\times\R)}\leq C(M,M^\prime, L,\tilde{h})M^\prime,\\
			\|u\|{S(I\times\R)}\leq C(M,M^\prime, L,\tilde{h}).
		\end{gather*}
		
	\end{lemma}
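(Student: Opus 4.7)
The plan is to run the standard short-time perturbation-plus-iteration scheme tailored to the $L^6_{t,x}$ Strichartz norm, which is the unique admissible endpoint to which the scaling $u \mapsto \lambda^{1/2} u(\lambda^2 t, \lambda x)$ is tied in this 1D mass-critical setting. Writing $w := u - v$, the difference satisfies
\begin{equation*}
i \partial_t w + \partial_x^2 w = \big(F(v+w) - F(v)\big) - e, \qquad w(0) = u_0 - v_0,
\end{equation*}
so by Duhamel,
\begin{equation*}
w(t) = e^{it\partial_x^2}(u_0 - v_0) + i\int_0^t e^{i(t-s)\partial_x^2}\big(F(v+w) - F(v)\big)(s)\, ds - i\int_0^t e^{i(t-s)\partial_x^2} e(s)\, ds.
\end{equation*}
The two linear pieces are already controlled by $\gamma$ in $L^6_{t,x}$ by hypothesis, so the task is to estimate the nonlinear Duhamel term, using the pointwise bound $|F(v+w)-F(v)| \lesssim (|v|^4 + |w|^4)|w|$ and the dual Strichartz pairing $L^6_{t,x} \leftrightarrow L^{6/5}_{t,x}$.

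The first step is to partition $I$ into finitely many consecutive subintervals $I_j = [t_j, t_{j+1}]$, $j = 1, \ldots, J$, on which
\begin{equation*}
\|v\|_{L^6_{t,x}(I_j \times \R)} \leq \eta,
\end{equation*}
where $\eta = \eta(M, M', \tilde h)$ is a small absolute constant to be fixed, and $J \lesssim (L/\eta)^6$ by the global bound on $v$. On each $I_j$, applying Strichartz to the Duhamel identity yields
\begin{equation*}
\|w\|_{L^6_{t,x}(I_j)} \leq \|e^{i(t-t_j)\partial_x^2} w(t_j)\|_{L^6_{t,x}(I_j)} + C\eta^4 \|w\|_{L^6_{t,x}(I_j)} + C\|w\|_{L^6_{t,x}(I_j)}^5 + C\gamma_j,
\end{equation*}
where $\gamma_j$ is the contribution of the error on $I_j$ (which is $\leq \gamma$ once one notes that $\int_0^{t_j} e^{i(t-s)\partial_x^2} e(s)\, ds$ evaluated at $t = t_j$ can be absorbed into the linear homogeneous term by adjusting the starting data). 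Provided $\eta$ is small enough that $C\eta^4 \leq \tfrac12$, a standard continuity argument closes to give
\begin{equation*}
\|w\|_{L^6_{t,x}(I_j)} \leq 2 \|e^{i(t-t_j)\partial_x^2} w(t_j)\|_{L^6_{t,x}(I_j)} + 2C\gamma_j,
\end{equation*}
as long as the right-hand side itself is small.

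The second step is the iteration. From the Duhamel identity evaluated at $t_{j+1}$ together with Strichartz, the new linear data $w(t_{j+1})$ satisfies an estimate whose $L^6_{t,x}$-Strichartz projection onto $I_{j+1}$ is bounded by $C_j\big(\|e^{it\partial_x^2}(u_0 - v_0)\|_{L^6_{t,x}(I)} + \gamma\big) \leq C_j \gamma$, where $C_j$ depends on $j$ through the accumulation in each of the previous subintervals. Inductively, $C_{j+1} \leq K C_j$ for some absolute $K$, so that $C_j \leq K^J$. Choosing $\gamma_0 := \gamma_0(M, M', L, \tilde h)$ small enough that $K^J \gamma_0$ is smaller than the threshold required for the continuity step at every $j$, the bounds propagate through all $J$ subintervals, yielding the claimed $L^6_{t,x}$ and $\mathcal{S}(I)$ bounds on $u - v$. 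Existence of $u$ follows by an additional short-time well-posedness argument on each $I_j$ (or by a standard approximation).

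The main obstacle is the exponential-in-$J$ blow-up of constants during the iteration: because $J$ depends on $L$, the required smallness of $\gamma_0$ deteriorates extremely rapidly in $L$. One must therefore carefully choose the smallness threshold $\eta$ first (depending only on the absolute Strichartz constants), determine $J$ from $L$ and $\eta$, and then pick $\gamma_0$ in terms of the resulting $J$; the rest of the bookkeeping is routine. The $\tilde h$ dependence in the constants enters only through the dependence of the Strichartz constants on the inhomogeneous-medium analogue treated later, and does not affect the homogeneous case here.
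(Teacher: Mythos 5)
The paper does not actually prove this lemma: it is stated in the preliminaries as the standard stability theory for the mass-critical NLS on the line (going back to Tao--Visan--Zhang and used by Dodson), so there is no in-paper argument to compare against. Your proposal reproduces the standard proof of that result --- subdivide $I$ into $J\lesssim (L/\eta)^6$ intervals on which $\|v\|_{L^6_{t,x}}\le\eta$, close a bootstrap on each via the dual pairing $L^{6/5}_{t,x}\to L^6_{t,x}$, and iterate with constants growing like $K^J$, which dictates the smallness of $\gamma_0$ --- and this is the right architecture.

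Two points need more care than your write-up gives them. First, your parenthetical claim that $\int_0^{t_j}e^{i(t-s)\partial_x^2}e(s)\,ds$ ``can be absorbed into the linear homogeneous term by adjusting the starting data'' is not justified as stated: the hypothesis controls only the $L^6_{t,x}(I)$ norm of the full Duhamel integral $A(t):=\int_0^t e^{i(t-s)\partial_x^2}e(s)\,ds$, not $A(t_j)$ in $L^2_x$, so you cannot propagate it as free data. The clean fix is the substitution $\tilde w:=w-iA$, which satisfies $i\partial_t\tilde w+\partial_x^2\tilde w=F(v+\tilde w+iA)-F(v)$ with no error term; since the nonlinear estimate only ever uses $L^6_{t,x}$ norms of the arguments, the bound $\|A\|_{L^6_{t,x}(I)}\le\gamma$ suffices and the iteration runs on $\tilde w$. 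Second, the conclusion $\|u-v\|_{\mathcal S(I)}\lesssim M'$ does not follow from the stated hypotheses by one more application of Strichartz, because the Duhamel term in $e$ is controlled only in $L^6_{t,x}$ and not in the full Strichartz norm (contrast Lemma \ref{lem-pertub}, where the hypothesis is placed on $\|\int e^{i(t-s)\partial_x^2}e\,ds\|_{\mathcal S(I)}$). What your argument does yield unconditionally is $\|u\|_{\mathcal S(I)}\le C$, since $u$ solves the exact equation and $\|u\|_{L^6_{t,x}}\le L+C\gamma$; the $\mathcal S(I)$ bound on the difference requires either strengthening the hypothesis on $e$ or accepting the weaker conclusion. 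This defect sits in the lemma statement itself as much as in your proof, but it should be flagged rather than passed over with ``yielding the claimed $\mathcal S(I)$ bounds.''
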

	\section{The Fourier truncated nonlinear Schr\"odinger equations}
	\subsection{Local well-posedness and perturbation theory}
	In this section, we consider the following nonlinear Schr\"odinger equation
	\begin{equation}\label{sevNLS}\tag{$\text{NLS}_{\alpha}$}
		i\partial_t u + u_{xx} = \alpha^6 \mathcal{P} F(\mathcal{P}u),
	\end{equation}
	where $ 0 < \alpha < 1 $ and $ \mathcal{P} $ is a Mikhlin multiplier whose symbol is a bump function.
	Note that when taking $ \alpha = 1 $ and $ \mathcal{P }= I $, \eqref{sevNLS} becomes \eqref{NLS}.
	
	One can verify that \eqref{sevNLS} enjoys mass and energy conservation laws:
	\begin{gather*}
		\begin{aligned}
			M_{\alpha}(u(t)) = & ~ \int_{-\infty}^{+\infty} \vert u(t, x) \vert^2 {\rm d}x = M_{\alpha}(u_0), \\
			E_{\alpha}(u(t)) = & ~ \frac12 \int_{-\infty}^{+\infty} \vert \partial_x u(t, x) \vert^2 {\rm d}x + \frac{\alpha^6}6 \int_{-\infty}^{+\infty} \vert \mathcal{P}u(t, x) \vert^6 {\rm d}x = E_{\alpha}(u_0).
		\end{aligned}
	\end{gather*}
	
	We first state local well-posedness and perturbation results for \eqref{sevNLS}.
	Since $ \mathcal{P} \in \mathcal{L}(L^p \to L^p) $ for $ p \ge 2 $, by the similar argument with \cite{KVZ-AJM}, we obtain
	
	\begin{lemma}[Local well-posedness for \eqref{sevNLS}]\label{lem-smalldata}
		Let $ M > 0 $, there exists $\Veps_0>0$ satisfies for all $\Veps<\Veps_0$ and $u_0\in\LpnR{2}$ with 
		\begin{equation*}
			\Vt u_0\Vt_{\LpnR{2}}\le M,
		\end{equation*}
		and
		\begin{equation*}
			\Vert e^{it\partial_x^2} u_0 \Vert_{L_{t, x}^6(I \times \R)} \le \varepsilon,
		\end{equation*}
		for some time interval $I \ni 0$, such that the following statement holds. There exists an unique solution $u(t,x)$ to \eqref{sevNLS} with $u(0,x)=u_0$, satisfies
		\begin{equation*}
			\Vert u \Vert_{\mathcal{S}(I)} < \infty.
		\end{equation*}
		Moreover, the solution $ u $ enjoys
		\begin{equation*}
			\Vert u(t, x) \Vert_{L_{t, x}^6(I \times \R)} \le 2\varepsilon_0 \qquad \text{and} \qquad \Vert u \Vert_{\mathcal{S}(I)} \lesssim M.
		\end{equation*}
		If we assume that $ M \lesssim \varepsilon_0 $, then the solution $ u $ can be extended globally and satisfies
		\begin{equation*}
			\Vert u(t, x) \Vert_{L_{t, x}^6(\R \times \R)} \lesssim M.
		\end{equation*}
	\end{lemma}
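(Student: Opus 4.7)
The plan is to run a standard Picard contraction on the Duhamel formulation
\[
u(t) = e^{it\partial_x^2} u_0 - i\alpha^6 \int_0^t e^{i(t-s)\partial_x^2}\,\McP F(\McP u(s))\,\mathrm{d}s,
\]
working in the closed ball
\[
B := \Bigl\{ u \in \McS(I) : \|u\|_{\McS(I)} \le 2CM,\ \|u\|_{L_{t,x}^6(I\times\R)} \le 2\eps \Bigr\},
\]
equipped with the $L_{t,x}^6$ metric. I would first record the three ingredients we need: (i) Strichartz estimates, both in the form $\|e^{it\partial_x^2} u_0\|_{\McS(I)} \lesssim \|u_0\|_{L^2}$ and in the form $\|\int_0^t e^{i(t-s)\partial_x^2}F\,\mathrm{d}s\|_{\McS(I)\cap L_{t,x}^6} \lesssim \|F\|_{L_{t,x}^{6/5}}$, using that $(6,6)$ is an admissible pair in $1$D; (ii) the Mikhlin multiplier theorem, which gives $\|\McP f\|_{L^p} \lesssim \|f\|_{L^p}$ for every $1 < p < \infty$, so in particular the outer $\McP$ and the inner $\McP$'s can be absorbed; and (iii) the pointwise Hölder identity $\|F(\McP u)\|_{L_{t,x}^{6/5}} = \|\McP u\|_{L_{t,x}^6}^5 \le \|u\|_{L_{t,x}^6}^5$.

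Given these, define $\Phi(u)$ as the right-hand side above. Strichartz plus the hypothesis $\|e^{it\partial_x^2}u_0\|_{L^6_{t,x}(I)} \le \eps$ yields
\[
\|\Phi(u)\|_{L_{t,x}^6(I\times\R)} \le \eps + C\alpha^6 \|u\|_{L_{t,x}^6(I\times\R)}^5 \le \eps + 32 C\alpha^6 \eps^5,
\]
which is bounded by $2\eps$ as soon as $\eps_0$ is small enough (using $\alpha<1$). Similarly, combining the $\McS(I)$ Strichartz bound with the four-and-one split
\[
\|F(\McP u)\|_{\McN(I)} \lesssim \|u\|_{L_{t,x}^6(I\times\R)}^4 \|u\|_{\McS(I)}
\]
gives $\|\Phi(u)\|_{\McS(I)} \le CM + C\alpha^6 \eps^4 \|u\|_{\McS(I)} \le 2CM$ after fixing $\eps_0$ smaller still. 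The contraction estimate follows from the same nonlinear bound applied to the difference, since
\[
F(\McP u) - F(\McP v) = O\bigl( (|\McP u|^4 + |\McP v|^4)|\McP(u-v)| \bigr),
\]
so that $\|\Phi(u) - \Phi(v)\|_{L_{t,x}^6} \le C\alpha^6 \eps^4 \|u - v\|_{L_{t,x}^6} \le \tfrac12 \|u-v\|_{L_{t,x}^6}$. Banach's fixed-point theorem then produces the unique solution $u \in B$; uniqueness in the full Strichartz class follows by a standard continuity argument on the subset of $I$ where two hypothetical solutions coincide.

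For the final claim, assume $M \lesssim \eps_0$. Then Strichartz on the line gives $\|e^{it\partial_x^2}u_0\|_{L_{t,x}^6(\R\times\R)} \lesssim \|u_0\|_{L^2} \lesssim \eps_0$, so the hypothesis of the local theorem is automatically satisfied with $I = \R$, yielding a global-in-time solution with $\|u\|_{L_{t,x}^6(\R\times\R)} \le 2\eps_0 \lesssim M$. The only place where one must be a little careful is making sure that the smallness thresholds in the two steps are chosen in the right order — the threshold $\eps_0$ is absolute (independent of $M$), while the Strichartz constant in the $\McS(I)$ bound depends on $M$; but since $\alpha < 1$ and the nonlinearity carries the prefactor $\alpha^6$, none of the estimates degrade as $\alpha$ varies in $(0,1)$, which is the main reason the proof is a verbatim adaptation of the argument in Killip--Vi\c{s}an--Zhang.
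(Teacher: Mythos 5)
Your contraction-mapping argument is correct and is exactly the standard proof the paper has in mind; the paper itself omits the proof of this lemma, noting only that $\mathcal{P}$ is $L^p$-bounded and citing the analogous small-data argument in Killip--Visan--Zhang. The only cosmetic slip is the final chain ``$\le 2\varepsilon_0 \lesssim M$'': since the hypothesis is $M\lesssim\varepsilon_0$ (not the reverse), you should instead invoke the local statement with $\varepsilon := C\Vert u_0\Vert_{L^2}\lesssim M$, which yields $\Vert u\Vert_{L_{t,x}^6(\R\times\R)}\le 2\varepsilon\lesssim M$ directly.
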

	
	Using Lemma \ref{lem-smalldata}, one can obtain the local well-posedness for \eqref{fml-intro-NLSn} directly.
	
	\begin{corollary}[Local well-posedness for \eqref{fml-intro-NLSn}]
		Let $ u_0 \in L^2(\R) $ and $ I \ni 0 $ is a compact interval.
		There exists a constant $ \beta_0 := \beta_0(\Vert h_n \Vert_{L^{\infty}(\R)}) > 0 $ such that if we have
		\begin{align*}
			\big\Vert e^{it \partial_x^2} u_0 \big\Vert_{L_{t, x}^6 \cap L_t^{\infty-} L_{x}^{2+} (I \times \R)} \leqslant \beta, \qquad \forall ~ 0 < \beta < \beta_0,
		\end{align*}
		then there exists an unique solution to \eqref{fml-intro-NLSn} satisfying
		\begin{align*}
			\Vert u_n(t, x) \Vert_{L_{t, x}^6(I \times \R)} + \Vert u_n(t, x) \Vert_{L_t^{\infty}(I, L_x^2(\R))} \leqslant 2 \beta_0,
			\qquad
			\Vert u_n \Vert_{\mathcal{S}(I)} \leqslant \Vert u_0 \Vert_{L^2(\R)}.
		\end{align*}
		Moreover, for the $ L^2 $ small initial data, one can extend $ I $ to the whole line.
	\end{corollary}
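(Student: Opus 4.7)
The plan is to treat this corollary as a small-data local well-posedness statement obtained by the Banach fixed-point theorem applied to the Duhamel map, mirroring the argument behind Lemma \ref{lem-smalldata}. The only structural difference between \eqref{sevNLS} and \eqref{fml-intro-NLSn} is the replacement of the Mikhlin multiplier sandwiching the nonlinearity by a pointwise multiplier $h(nx)$, and the crucial point that makes all constants independent of $n$ is the trivial identity
\[
\|h(n\,\cdot\,)\|_{L^\infty(\R)} \;=\; \|h\|_{L^\infty(\R)}.
\]

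First I would set up the Duhamel operator
\[
\Phi(u)(t) := e^{it\partial_x^2} u_0 \;-\; i\int_0^t e^{i(t-s)\partial_x^2}\bigl[h(nx)\,|u|^4 u\bigr](s)\,\mathrm{d}s
\]
on a closed ball of the form $\{u \in \mathcal{S}(I) : \|u\|_{L_{t,x}^6(I\times\R)} \le 2\beta\}$ equipped with the $L_{t,x}^6$ metric. The pointwise bound $\bigl|h(nx)\,|u|^4 u\bigr| \le \|h\|_{L^\infty}|u|^5$ immediately yields
\[
\bigl\|h(nx)\,|u|^4 u\bigr\|_{L_{t,x}^{6/5}(I\times\R)} \;\le\; \|h\|_{L^\infty}\,\|u\|_{L_{t,x}^6(I\times\R)}^{5},
\]
and feeding this into the dual and standard Strichartz estimates produces
\[
\|\Phi(u)\|_{L_{t,x}^6} \le \|e^{it\partial_x^2} u_0\|_{L_{t,x}^6} + C\|h\|_{L^\infty}\|u\|_{L_{t,x}^6}^{5}, \qquad \|\Phi(u)\|_{\mathcal{S}(I)} \le C\|u_0\|_{L^2} + C\|h\|_{L^\infty}\|u\|_{L_{t,x}^6}^{5}.
\]
Choosing $\beta_0 = \beta_0(\|h\|_{L^\infty})>0$ so that $32\,C\|h\|_{L^\infty}\beta_0^{4}\le 1$ makes the ball stable under $\Phi$ whenever $\beta<\beta_0$, and the standard difference inequality
\[
\bigl\|h(nx)\bigl(|u|^4 u - |v|^4 v\bigr)\bigr\|_{L_{t,x}^{6/5}} \lesssim \|h\|_{L^\infty}\bigl(\|u\|_{L_{t,x}^6}^{4}+\|v\|_{L_{t,x}^6}^{4}\bigr)\|u-v\|_{L_{t,x}^6}
\]
then gives contractivity, producing a unique fixed point $u_n$ satisfying $\|u_n\|_{L_{t,x}^6}\le 2\beta\le 2\beta_0$ and $\|u_n\|_{\mathcal{S}(I)}\le \|u_0\|_{L^2}$.

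The displayed $L_t^\infty L_x^2$ bound combines two ingredients: mass conservation for \eqref{fml-intro-NLSn} (immediate from integrating by parts in $\frac{\mathrm{d}}{\mathrm{d}t}\int|u_n|^2\,\mathrm{d}x$, since $\int h(nx)|u_n|^6\,\mathrm{d}x$ is real), and the fact that the hypothesis $\|e^{it\partial_x^2} u_0\|_{L_t^{\infty-}L_x^{2+}}\le \beta$ forces $\|u_0\|_{L^2}\lesssim \beta\le\beta_0$ via the $L^2$-isometry of the free propagator. The global extension for $L^2$-small data is automatic, since Strichartz gives $\|e^{it\partial_x^2}u_0\|_{L_{t,x}^6(\R\times\R)}\lesssim \|u_0\|_{L^2}$, which can be made smaller than $\beta_0$ so that the preceding argument applies on $I=\R$. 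There is no substantive obstacle to the proof; the only thing demanding attention is the $n$-uniformity of all constants, which is settled by the fact that every nonlinear estimate above routes through $\|h_n\|_{L^\infty}=\|h\|_{L^\infty}$.
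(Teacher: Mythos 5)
Your contraction-mapping argument is exactly the route the paper intends: the paper offers no proof of this corollary beyond the remark that it follows ``directly'' from Lemma \ref{lem-smalldata}, and that lemma is itself a fixed-point argument of the type you carry out. The nonlinear estimates, the choice of $\beta_0$ depending only on $\|h(n\,\cdot\,)\|_{L^\infty}=\|h\|_{L^\infty}$ (which is what makes everything uniform in $n$), and the observation that mass is conserved because $h$ is real-valued are all fine.

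The one step that fails is your derivation of the bound $\|u_n\|_{L_t^\infty(I,L_x^2)}\le 2\beta_0$. You claim that the hypothesis $\|e^{it\partial_x^2}u_0\|_{L_t^{\infty-}L_x^{2+}(I\times\R)}\le\beta$ forces $\|u_0\|_{L^2}\lesssim\beta$ ``via the $L^2$-isometry of the free propagator.'' The isometry controls the $L_t^\infty L_x^2$ norm, not $L_t^{\infty-}L_x^{2+}$: for $r>2$ there is no lower bound of $\|e^{it\partial_x^2}u_0\|_{L_t^q(I,L_x^r)}$ in terms of $\|u_0\|_{L^2}$. For instance, $u_0=N^{-1/2}\phi(x/N)$ has $\|u_0\|_{L^2}=\|\phi\|_{L^2}$ fixed, while both $\|e^{it\partial_x^2}u_0\|_{L_{t,x}^6(I\times\R)}$ and $\|e^{it\partial_x^2}u_0\|_{L_t^{q}(I,L_x^{r})}$ with $r>2$ and $I$ compact tend to $0$ as $N\to\infty$. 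So smallness of the hypothesized norms does not bound the mass, and since mass conservation gives $\|u_n\|_{L_t^\infty L_x^2}=\|u_0\|_{L^2}$, the asserted bound $\|u_n\|_{L_t^\infty L_x^2}\le 2\beta_0$ does not follow from your argument (nor does it hold as stated without an additional smallness assumption on $\|u_0\|_{L^2}$; this is a defect of the corollary's formulation that your proof cannot repair). Everything else---existence, uniqueness, the $L_{t,x}^6$ bound, the $\mathcal{S}(I)$ bound up to a harmless constant, and the global extension for small mass---stands.
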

	
	To study the global behaviors of \eqref{sevNLS} with large data, we need the following stability property.
	
	\begin{lemma}[Perturbation theory]\label{lem-pertub}
		Let $ \widetilde{u}(t, x): I \times \R \to \C $ be a solution to
		\begin{equation*}
			i \partial_t \widetilde{u} + \widetilde{u}_{xx} = \alpha^6 \mathcal{P} F(\mathcal{P} \widetilde{u}) + e
		\end{equation*}
		on compact time interval $ I $, where $ e = e(t, x): I \times \R \to \C $.
		Assume that for $ M, L > 0 $, $ \widetilde{u} $ satisfies
		\begin{equation*}
			\Vert \widetilde{u}(t, x) \Vert_{L_t^{\infty}(I, L_x^2(\R))} \le M, \qquad \text{and} \qquad \Vert \widetilde{u}(t, x) \Vert_{L_{t, x}^6(I \times \R)} \le L.
		\end{equation*}
		Let $ u_0 \in L^2(\R) $, for some $ t_0 \in I $ and $ \delta > 0 $ such that
		\begin{equation*}
			\Vert u_0 - \widetilde{u}(t_0, \cdot) \Vert_{L^2(\R)} \le \delta.
		\end{equation*}
		We also assume that on the time interval $I$ the error term $ e(t, x) $ and $ u_0 - \widetilde{u}(t_0) $ have good control, that is
		\begin{equation*}
			\big\Vert e^{i(t - t_0) \partial_x^2}(u_0 - \widetilde{u}(t_0)) \big\Vert_{L_{t, x}^6(I \times \R)} + \bigg\Vert \int_{t_0}^t e^{i(t - s) \partial_x^2} e(s) {\rm d}s \bigg\Vert_{\mathcal{S}(I)} \le \varepsilon,
		\end{equation*}
		for some $ \varepsilon < \varepsilon_0 = \varepsilon_0(M, L, \delta) $.
		Then, there exists solution $ u : I \times \R \to \C $ to \eqref{sevNLS} with $ u(t_0) = u_0 $ and constant $ C = C(M, L, \delta) > 0 $ such that
		\begin{align*}
			& \Vert u(t, x) - \widetilde{u}(t, x) \Vert_{L_{t, x}^6(I \times \R)} \le C \varepsilon, \\
			& \Vert u - \widetilde{u} \Vert_{\mathcal{S}(I)} \le C \delta, \\
			& \Vert u \Vert_{\mathcal{S}(I)} \le C.
		\end{align*}
	\end{lemma}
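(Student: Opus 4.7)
The plan is to run the standard Tao--Visan--Zhang short-time-to-long-time perturbation scheme, adapted to \eqref{sevNLS}. The crucial observation is that the Mikhlin multiplier $\mathcal{P}$ is harmless: it is bounded on every $L^p(\R)$ with $1 < p < \infty$, and the factor $\alpha^6 < 1$ only aids smallness, so the nonlinear estimates reduce essentially to those for the ordinary quintic NLS on the line. Accordingly I would partition $I$ into finitely many consecutive subintervals on which $\widetilde{u}$ has small Strichartz norm, bootstrap the difference $w := u - \widetilde{u}$ on each piece by a contraction, and then iterate across the partition.

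First I would upgrade the a priori control: using the hypotheses $\|\widetilde{u}\|_{L^\infty_t L^2_x} \le M$ and $\|\widetilde{u}\|_{L^6_{t,x}(I)} \le L$, the approximate equation for $\widetilde{u}$, and a short bootstrap via Strichartz together with Lemma~\ref{lem-smalldata} applied on subintervals of small $L^6_{t,x}$ norm, one obtains $\|\widetilde{u}\|_{L^5_t L^{10}_x(I)} \le C(M, L)$. Then partition $I = \bigcup_{k=0}^{K-1} I_k$ into $K = K(M, L)$ consecutive intervals on which
$$\|\widetilde{u}\|_{L^5_t L^{10}_x(I_k \times \R)} \le \eta,$$
where $\eta = \eta(M) > 0$ is a small absolute parameter to be fixed in the contraction below.

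On each $I_k$, the difference $w := u - \widetilde{u}$ solves
$$i \partial_t w + w_{xx} = \alpha^6 \mathcal{P}\bigl[ F(\mathcal{P}\widetilde{u} + \mathcal{P}w) - F(\mathcal{P}\widetilde{u}) \bigr] - e.$$
Applying Strichartz to the Duhamel formula, invoking the $L^p$-boundedness of $\mathcal{P}$, and using the pointwise estimate $|F(a+b) - F(a)| \lesssim (|a|^4 + |b|^4)|b|$ together with H\"older against the pair $(5, 10)$, I would arrive at a bootstrap of the form
$$\|w\|_{\mathcal{S}(I_k)} \le C \bigl( \|w(t_k)\|_{L^2_x} + \eta^4 \|w\|_{\mathcal{S}(I_k)} + \|w\|_{\mathcal{S}(I_k)}^5 + E_k \bigr),$$
where $E_k$ denotes the restriction to $I_k$ of the Duhamel term generated by $e$. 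Choosing $\eta$ small relative to the universal Strichartz constant, and assuming $\|w(t_k)\|_{L^2_x}$ and $E_k$ are small, a standard continuity argument closes to yield $\|w\|_{\mathcal{S}(I_k)} \lesssim \|w(t_k)\|_{L^2_x} + E_k$ together with the matching bound $\|w\|_{L^6_{t,x}(I_k)} \lesssim E_k$. Existence of $u$ itself on $I_k$ is provided by Lemma~\ref{lem-smalldata} applied to initial data $u(t_k)$, whose $L^5_t L^{10}_x$ output remains near that of $\widetilde{u}$ by the same estimate.

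Finally, iterate forward from $t_0$, and symmetrically backward: at each step the offset $\|w(t_{k+1})\|_{L^2_x}$ is controlled by $\|w\|_{\mathcal{S}(I_k)}$, while the Duhamel error on any subinterval is dominated by the global hypothesis $\varepsilon$. The errors multiply by at most a fixed constant per subinterval, so after $K$ steps they remain below $C(M, L, \delta)\varepsilon$, which yields the claimed bounds once $\varepsilon_0$ is chosen sufficiently small. The main obstacle is this geometric-in-$K$ accumulation: since $K = K(M, L)$ may be large, $\varepsilon_0$ must be fixed with an exponential-in-$K$ safety margin so that the inductive hypothesis $\|w(t_k)\|_{L^2_x} \lesssim \delta + \varepsilon$ closes at every step and the a priori Strichartz norm of $u$ stays inside the window where Lemma~\ref{lem-smalldata} can be reapplied; all other ingredients are routine once $\mathcal{P}$ is absorbed via its $L^p$-boundedness and the harmless factor $\alpha^6 < 1$ is dropped.
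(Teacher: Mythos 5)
Your proposal is correct and coincides with the argument the paper itself relies on: the lemma is stated without proof, the surrounding text appealing to the standard stability scheme of \cite{KVZ-AJM} exactly as you describe (subdivide $I$ into $K(M,L)$ pieces of small Strichartz norm, close a contraction for $w=u-\widetilde u$ on each piece using the $L^p$-boundedness of $\mathcal{P}$ and $\alpha\le 1$, then iterate with an exponential-in-$K$ choice of $\varepsilon_0$). The only point to make explicit when writing this up is that on later intervals $I_k$ the free evolution $e^{i(t-t_k)\partial_x^2}w(t_k)$ must be re-expanded via Duhamel back to $t_0$ so that its $L^6_{t,x}(I_k)$ norm is controlled by the hypothesis on $e^{i(t-t_0)\partial_x^2}(u_0-\widetilde u(t_0))$ plus the inductively accumulated errors, which is what lets the $L^6_{t,x}$ bound close at $C\varepsilon$ rather than $C\delta$.
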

	
	\begin{lemma}[Persistence of high regularity]\label{lem-highregu}
		Supposing that $ u : I \times \R \to \C $ solves \eqref{sevNLS} with finite mass and satisfies
		\begin{equation*}
			\Vert u(t, x) \Vert_{L_{t, x}^6(I \times \R)} \le M,
		\end{equation*}
		for some constant $ M > 0 $.
		Let $ 0 \le s \le 1 $ be fixed and assume that there exists $ t_0 \in I $ such that $ u(t_0, \cdot) \in H^s(\R) $.
		Then
		\begin{equation*}
			\big\Vert \vert \partial_x \vert^s u \big\Vert_{\mathcal{S}(I)} \lesssim_M \Vert u(t_0, \cdot) \Vert_{H^s(\R)}.
		\end{equation*}
	\end{lemma}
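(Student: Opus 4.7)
The plan is the standard subdivision-and-bootstrap argument for persistence of regularity. Using the hypothesis $\Vert u\Vert_{L^6_{t,x}(I\times\R)} \le M$, I partition $I = \bigcup_{j=1}^{N} I_j$ into finitely many consecutive subintervals $I_j = [t_j,t_{j+1}]$, ordered so that $t_0 = t_1$, on which $\Vert u\Vert_{L^6_{t,x}(I_j \times \R)} \le \eta$ for some small constant $\eta$ to be chosen. The number of pieces $N = N(M,\eta)$ is finite by the Chebyshev-type argument on the total $L^6_{t,x}$-mass.

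On each subinterval I apply Strichartz to the equation satisfied by $v := \vert\partial_x\vert^s u$, namely
\[
i v_t + v_{xx} = \vert\partial_x\vert^s \bigl[\alpha^6 \mathcal{P} F(\mathcal{P} u)\bigr].
\]
Since $\mathcal{P}$ is a Mikhlin multiplier, it commutes with $\vert\partial_x\vert^s$ and is bounded on $L^p(\R)$ for every $1 < p < \infty$. The fractional chain rule, applied to the smooth nonlinearity $F(u)=\vert u\vert^4 u$ for $0\le s\le 1$, then yields
\[
\bigl\Vert \vert\partial_x\vert^s[\mathcal{P} F(\mathcal{P} u)]\bigr\Vert_{L^{6/5}_{t,x}(I_j\times\R)}
\lesssim \bigl\Vert \vert\partial_x\vert^s u \bigr\Vert_{L^6_{t,x}(I_j\times\R)} \, \Vert u\Vert^{4}_{L^6_{t,x}(I_j\times\R)}
\le \eta^{4} \bigl\Vert \vert\partial_x\vert^s u\bigr\Vert_{L^6_{t,x}(I_j\times\R)}.
\]
Since $(6,6)$ is a one-dimensional Strichartz admissible pair, the full Strichartz inequality (with the dual pair $(6/5,6/5)$ on the inhomogeneous side) together with absorption for $\eta$ sufficiently small gives
\[
\bigl\Vert \vert\partial_x\vert^s u\bigr\Vert_{\mathcal{S}(I_j)} + \bigl\Vert \vert\partial_x\vert^s u\bigr\Vert_{L^6_{t,x}(I_j\times\R)} \lesssim \bigl\Vert \vert\partial_x\vert^s u(t_j)\bigr\Vert_{L^2_x}.
\]

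Iterating this bound across the $N(M)$ subintervals produces a multiplicative constant at each step, hence
\[
\bigl\Vert \vert\partial_x\vert^s u\bigr\Vert_{\mathcal{S}(I)} \lesssim_M \bigl\Vert \vert\partial_x\vert^s u(t_0)\bigr\Vert_{L^2_x},
\]
which, together with mass conservation giving $\Vert u\Vert_{L^\infty_t L^2_x(I\times\R)} = \Vert u(t_0)\Vert_{L^2}$, delivers the announced $H^s$ control. I expect the only delicate point to be the fractional chain rule applied with the multiplier $\mathcal{P}$ interposed, but since $\mathcal{P}$ is a smooth Fourier multiplier one simply commutes $\vert\partial_x\vert^s$ through $\mathcal{P}$, uses its $L^p$-boundedness, and then invokes the classical Christ-Weinstein estimate on $F(\mathcal{P} u)$; no further obstacle is anticipated beyond this bookkeeping.
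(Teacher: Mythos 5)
Your argument is correct and follows essentially the same route as the paper: subdivide $I$ into $O((M/\eta)^6)$ pieces on which the $L^6_{t,x}$ norm is small, apply Strichartz with the fractional chain rule (using that $\mathcal{P}$ is a Mikhlin multiplier commuting with $\vert\partial_x\vert^s$ and bounded on $L^p$), absorb the small nonlinear contribution, and iterate across the subintervals. The only cosmetic difference is that you spell out the commutation of $\vert\partial_x\vert^s$ with $\mathcal{P}$, which the paper leaves implicit.
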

	
	\begin{proof}
		Fix a small constant $ 0 < \varepsilon \ll 1 $ that will be chosen later.
		We divide time interval $ I = \cup_{j}^J I_j $ into $ J = O \bigg( \bigg( \frac{M}{\varepsilon} \bigg)^6 \bigg) $ pieces, such that for each $ I_j := [t_j, t_{j+1}] $.
		Then
		\begin{equation*}
			\Vert u(t, x) \Vert_{L_{t, x}^6(I_j \times \R)} < \varepsilon.
		\end{equation*}
		For each $ 1 \le j \le J $, apply Strichartz inequality the fractal chain rule, we get that
		\begin{equation*}
			\begin{aligned}
				\big\Vert \vert \partial_x \vert^s u(t, x) \big\Vert_{L_{t, x}^6(I_j \times \R)}
				\lesssim & ~ \Vert u(t_j, \cdot) \Vert_{{\dot H}^s(\R)} + \Vert \mathcal{P} F(\mathcal{P} u(t, x)) \Vert_{L_{t, x}^{\frac65}(I_j \times \R)} \\
				\lesssim & ~ \Vert u(t_j, \cdot) \Vert_{{\dot H}^s(\R)} + \Vert u(t, x) \Vert_{L_{t, x}^6(I_j \times \R)}^4 \big\Vert \vert \partial_x \vert^s u(t, x) \big\Vert_{L_{t, x}^6(I_j \times \R)} \\
				\lesssim & ~ \Vert u(t_j, \cdot) \Vert_{{\dot H}^s(\R)} + \varepsilon^4 \big\Vert \vert \partial_x \vert^s u(t, x) \big\Vert_{L_{t, x}^6(I_j \times \R)}.
			\end{aligned}
		\end{equation*}
		Therefore, $ \varepsilon \ll 1 $ implies that
		\begin{equation*}
			\big\Vert \vert \partial_x \vert^s u(t, x) \big\Vert_{L_{t, x}^6(I_j \times \R)} \le (1 - \varepsilon^4)^{-1} \Vert u(t_j, \cdot) \Vert_{{\dot H}^s(\R)}.
		\end{equation*}
		Repeating this process for $ J $ times and notice that $ \varepsilon > \frac{1}{10} $ is allowed, we conclude that
		\begin{equation*}
			\big\Vert \vert \partial_x \vert^s u(t, x) \big\Vert_{L_{t, x}^6(I \times \R)} \le \varepsilon^{-6} e^{M^6} \Vert u(t_0, x) \Vert_{{\dot H}_x^s(\R)} \le C(M) \Vert u(t_0, x) \Vert_{{\dot H}_x^s(\R)}.
		\end{equation*}
		By Strichartz estimates, the proof of Lemma \ref{lem-highregu} is completed.
	\end{proof}
	
	Lemma \ref{lem-highregu} shows that \eqref{sevNLS} prevents propagation from low frequencies to high frequencies.
	To be more precisely, we have the following Lemma.
	
	\begin{lemma}[Controlling the high frequencies]\label{lem-LowtoHigh}
		For fixed $ M_1,M_2 > 0 $ and $ \varepsilon > 0 $, supposing that $ u : I \times \R \to \C $ is the solution to \eqref{sevNLS} which satisfies
		\begin{equation*}
			\Vert u(t, x) \Vert_{L_t^{\infty}(I, L_x^2(\R))} \le M_1, \qquad \text{and} \qquad \Vert u(t, x) \Vert_{L_{t, x}^6(I \times \R)} \le M_2,
		\end{equation*}
		If for some dyadic $ N := N(M_1, M_2, \varepsilon) \in 2^{\Z} $ and $t_0\in I$ such that
		\begin{equation*}
			\Vert P_{\ge N} u(t_0, \cdot) \Vert_{L^2(\R)} \le \varepsilon, \qquad \text{for some} ~ t_0 \in I,
		\end{equation*}
		then there exists $ C := C(M_1, M_2) > 0 $ satisfies
		\begin{equation*}
			\big\Vert P_{\ge \frac{N}{\varepsilon}} u \big\Vert_{\mathcal{S}(I)} \le C \varepsilon.
		\end{equation*}
	\end{lemma}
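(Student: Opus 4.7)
The goal is to propagate the smallness of $P_{\ge N} u(t_0)$ in $L^2$, known at a single time, in two directions: forward and backward to all of $I$, and upward in frequency from threshold $N$ to threshold $K := N/\varepsilon$. The plan is to combine a time subdivision (so that $\|u\|_{L^6_{t,x}}$ is small on each piece) with a frequency cascade exploiting the fact that a high-frequency output of the quintic nonlinearity requires at least one high-frequency input. The pivotal estimate will express $\|P_{\ge K'} u\|_{\mathcal{S}(I_j)}$ in terms of $\|P_{\ge K'/c} u\|_{\mathcal{S}(I_j)}$ at a lower frequency, with a small prefactor $\eta^4$; iterating $m \sim \log(1/\varepsilon)$ times in frequency brings us down to scale $N$, where the hypothesis takes effect.

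\textbf{Subdivision and per-interval estimate.} Fix $\eta = \eta(M_1,M_2) > 0$ small (to be determined) and partition $I = \bigcup_{j=1}^J I_j$, $I_j = [t_j,t_{j+1}]$, so that $\|u\|_{L^6_{t,x}(I_j)} \le \eta$; then $J \le C(M_2/\eta)^6$. For a dyadic $K' \ge N$, Duhamel's formula and Strichartz give
\begin{equation*}
\|P_{\ge K'} u\|_{\mathcal{S}(I_j)} \le C\|P_{\ge K'} u(t_j)\|_{L^2} + C \|P_{\ge K'} \mathcal{P} F(\mathcal{P} u)\|_{\mathcal{N}(I_j)}.
\end{equation*}
Decompose $\mathcal{P} u = P_{\le K'/20}\mathcal{P} u + P_{> K'/20}\mathcal{P} u$ and expand $F(\mathcal{P}u) = |\mathcal{P}u|^4\mathcal{P}u$ multilinearly. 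The pure $(P_{\le K'/20}\mathcal{P}u)^5$ term has Fourier support inside $\{|\xi|\le K'/4\}$ and so is annihilated by $P_{\ge K'}$; the surviving terms each carry at least one factor of $P_{>K'/20}\mathcal{P}u$. By Hölder and the boundedness of $\mathcal{P}$ on $L^p$,
\begin{equation*}
\|P_{\ge K'} F(\mathcal{P}u)\|_{L^{6/5}_{t,x}(I_j)} \lesssim \|u\|_{L^6_{t,x}(I_j)}^4 \|P_{>K'/20} u\|_{L^6_{t,x}(I_j)} \le \eta^4 \|P_{\ge K'/20} u\|_{\mathcal{S}(I_j)},
\end{equation*}
where the last inequality uses that $(6,6)$ is Schr\"odinger-admissible in 1D, hence $L^6_{t,x} \hookrightarrow \mathcal{S}$.

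\textbf{Frequency cascade.} Choose $m$ so that $K/20^m \le N < K/20^{m-1}$, giving $m \asymp \log(1/\varepsilon)$. Iterating the per-interval estimate in the frequency variable $K' \mapsto K'/20$ yields, on each $I_j$,
\begin{equation*}
\|P_{\ge K} u\|_{\mathcal{S}(I_j)} \le C\sum_{k=0}^{m-1} (C\eta^4)^k \|P_{\ge K/20^k} u(t_j)\|_{L^2} + (C\eta^4)^m \|u\|_{\mathcal{S}(I_j)}.
\end{equation*}
A standard Strichartz/Hölder argument using $\|u\|_{L^6_{t,x}(I_j)} \le \eta$ bounds $\|u\|_{\mathcal{S}(I_j)} \lesssim M_1$. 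Choose $\eta$ small so that $C\eta^4 \le 1/2$. I would then propagate the bound in $j$ by strong induction, simultaneously over all dyadic frequency levels $K/20^k$ with $k = 0,\dots,m$: the inductive hypothesis $\|P_{\ge K/20^k} u(t_j)\|_{L^2} \le C'\varepsilon$ feeds into the cascade to yield $\|P_{\ge K/20^k} u\|_{\mathcal{S}(I_j)} \le C''\varepsilon$, which in turn provides the same $L^2$-bound at $t_{j+1}$. At $j=0$ the hypothesis $\|P_{\ge N} u(t_0)\|_{L^2}\le\varepsilon$ handles every $k\le m$ at once, since $K/20^k \ge N$.

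\textbf{Main obstacle.} The delicate point is calibrating $\eta$, the number of subintervals $J$, the depth $m$ of the frequency cascade, and the choice of $N=N(M_1,M_2,\varepsilon)$ so that everything closes. The geometrically decaying error term $(C\eta^4)^m C_0(M_1,M_2)$ must remain below $\varepsilon$ after being amplified by the $J$-step induction in time, which propagates constants multiplicatively. The clean way to handle this is to fix $\eta$ depending only on $(M_1,M_2)$ (thereby fixing $J$), and then select $N$ large enough that $m \ge C_* \log(1/\varepsilon) + J\log C$ for an absolute constant, so the cascade decay absorbs the time-induction losses. A secondary technical point is that the cutoff ratio $1/20$ in the low/high split must leave enough of a gap so that the Fourier support of the pure low-frequency quintic expression is strictly below $K$; any fixed ratio smaller than $1/10$ suffices, and this only contributes harmless constants to $C$.
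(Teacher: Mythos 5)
Your cascade strategy is a genuinely different route from the paper's, and the per-interval estimate at its core is sound: the pure low-frequency quintic term is annihilated by $P_{\ge K'}$, and Hölder plus the admissibility of $(6,6)$ gives the factor $\eta^4\|P_{\ge K'/20}u\|_{\mathcal{S}(I_j)}$. But the calibration you flag as the "main obstacle" is where the proposal actually breaks, and your proposed resolution does not work. The cascade depth is $m=\log_{20}(K/N)=\log_{20}(1/\varepsilon)$; it is fixed by the ratio $K/N=1/\varepsilon$ and \emph{cannot} be increased by enlarging $N$, since $K=N/\varepsilon$ scales with $N$ (and in any case $N$ is handed to you by the hypothesis, not chosen by you). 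Consequently, with your choice $C\eta^4\le 1/2$ the terminal term is $(C\eta^4)^m M_1 \approx \varepsilon^{\log 2/\log 20}M_1=\varepsilon^{0.23\ldots}M_1$, which is far larger than $\varepsilon$; you would only prove $\|P_{\ge N/\varepsilon}u\|_{\mathcal{S}(I)}\lesssim \varepsilon^{0.23}$. The correct fix is to choose $\eta=\eta(M_1,M_2)$ so that $C\eta^4$ is at most the reciprocal of the per-step frequency ratio (e.g. $C\eta^4\le 20^{-2}$, leaving margin for the time induction), so that $(C\eta^4)^m\le\varepsilon^2$. A second genuine gap is the time induction: the hypothesis "$\|P_{\ge K/20^k}u(t_j)\|_{L^2}\le C'\varepsilon$ for all $k=0,\dots,m$" cannot be propagated, because at level $k$ the cascade only has depth $m-k$ and leaves a tail $(C\eta^4)^{m-k}C_0M_1$, which for $k$ near $m$ is an $\varepsilon$-independent constant (and at $k=m$ the base case already fails, since $K/20^m\le N$). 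One must instead carry a level-dependent hypothesis of the form $\|P_{\ge K/20^k}u(t_j)\|_{L^2}\lesssim C_j\big(\varepsilon+(C\eta^4)^{m-k}M_1\big)$ and check that the damping $(C\eta^4)^{l-k}$ in the cascade sum suppresses the large low-level errors before they reach level $k=0$. All of this is repairable, but as written the argument does not close.

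For contrast, the paper avoids the cascade entirely: it takes $v$ to be the solution of \eqref{sevNLS} with data $v(t_0)=P_{\le N}u(t_0)$, so $\|u(t_0)-v(t_0)\|_{L^2}\le\varepsilon$ and Lemma \ref{lem-pertub} gives $\|u-v\|_{\mathcal{S}(I)}\lesssim_{M_1,M_2}\varepsilon$; then Lemma \ref{lem-highregu} plus Bernstein give $\||\partial_x|v\|_{\mathcal{S}(I)}\lesssim N$, whence $\|P_{\ge N/\varepsilon}v\|_{\mathcal{S}(I)}\lesssim(\varepsilon/N)\cdot N=\varepsilon$. Your approach, once the constants are fixed, is self-contained and avoids invoking the stability lemma, but it is substantially more delicate; the paper's argument localizes all the difficulty into the already-proved perturbation and persistence-of-regularity lemmas.
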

	
	\begin{proof}
		We denote $ v : I \times \R \to \C $ as the solution to \eqref{sevNLS} with $ v(t_0, \cdot) = P_{\le N} u(t_0, \cdot) $.
		Hence, by Lemma \ref{lem-P}, there exists $ N \in 2^{\Z} $ such that
		\begin{equation*}
			\Vert u(t_0, \cdot) - v(t_0, \cdot) \Vert_{L^2(\R)} \le \varepsilon.
		\end{equation*}
		Then, by Lemma \ref{lem-pertub},
		\begin{equation}\label{fml-u-v}
			\Vert u - v \Vert_{L_{t, x}^6(I \times \R)} \le C(M_1, M_2) \varepsilon, \qquad \text{and} \qquad \Vert v(t, x) \Vert_{L_{t, x}^6(I \times \R)} \le C(M_1, M_2).
		\end{equation}
		Follows from Lemma \ref{lem-highregu}, we can promote the regularity of $ v $:
		\begin{equation}\label{fml-vH1}
			\big\Vert \vert \partial_x \vert v \big\Vert_{\mathcal{S}(I)} \le C(M_1, M_2) \varepsilon \Vert v(t_0, \cdot) \Vert_{{\dot H}^1(\R)} \le C(M_1, M_2) \varepsilon.
		\end{equation}
		Thus, apply \eqref{fml-u-v} and \eqref{fml-vH1}, we obtain
		\begin{equation}
			\begin{aligned}
				\big\Vert P_{\ge \frac{N}{\varepsilon}} u \big\Vert_{\mathcal{S}(I)}
				\le & ~ \big\Vert P_{\ge \frac{N}{\varepsilon}} (u - v) \big\Vert_{\mathcal{S}(I)} + \big\Vert P_{\ge \frac{N}{\varepsilon}} v \big\Vert_{\mathcal{S}(I)} \\
				\lesssim & ~ \Vert u - v \Vert_{\mathcal{S}(I)} + \frac{\varepsilon}N \big\Vert \vert \partial_x \vert v(t, x) \big\Vert_{\mathcal{S}(I)} \\
				\lesssim & ~ C(M_1, M_2) \varepsilon,
			\end{aligned}
		\end{equation}
		which finishes the proof of this lemma.
	\end{proof}
	
	We have discussed that if initial data is localized in low frequencies, then \eqref{sevNLS} remains the frequencies localization.
	In the following, we prove that the same phenomenon occurs when the initial data is localized in high frequencies.
	
	\begin{lemma}[Persistence of low regularity]\label{lem-lowregu}
		Fix $M_1,M_2>0$ and suppose that the solution $ u : I \times \R \to \C $ to \eqref{sevNLS} satisfies
		\begin{equation*}
			\Vert u(t, x) \Vert_{L_{t, x}^6(I \times \R)} \le M_1, \qquad \text{and} \qquad \Vert u \Vert_{L_t^{\infty}(I, L_x^2(\R))} \le M_2.
		\end{equation*}
		Let $ 0 < s < \frac14 $ be fixed and assume that there exists $ t_0 \in I $ satisfies
		\begin{equation*}
			\Vert P_N u \Vert_{\mathcal{S}(I)} \le L N^s,
		\end{equation*}
		for some constant $ L > 0 $ and all dyadic number $ N \in 2^{\Z} $.
		Then, there exists $ C(M_1, M_2, L) $ such that
		\begin{equation}\label{lowregu-PN-bound}
			\Vert P_N u \Vert_{\mathcal{S}(I)} \le C(M_1, M_2, L) N^s.
		\end{equation}
	\end{lemma}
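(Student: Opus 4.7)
The plan is a bootstrap on the Littlewood--Paley pieces, modeled on the persistence of low regularity arguments of Visan, but executed with the bilinear Strichartz estimate of Lemma~\ref{lem-pre-Bili-Stri} to gain a power of the frequency ratio. I read the hypothesis as providing the bound $\|P_N u(t_0)\|_{L^2(\R)} \le L N^s$ at the reference time (so that the $L^\infty_t L^2_x$ piece of $\mathcal{S}(I)$ is already controlled at $t_0$); only with this interpretation does the statement make sense.

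\textbf{Step 1: subdivision and Strichartz.} Fix a small parameter $\eta=\eta(M_1,M_2,L,s)>0$ to be chosen. Partition $I=\bigcup_{j=1}^{J} I_j$, $I_j=[t_j,t_{j+1}]$, into $J\simeq (M_1/\eta)^6$ intervals on which $\|u\|_{L^6_{t,x}(I_j\times\R)}\le\eta$. On each $I_j$, the Duhamel formula and Strichartz yield
\begin{equation*}
 \|P_N u\|_{\mathcal{S}(I_j)} \lesssim \|P_N u(t_j)\|_{L^2} + \|P_N \mathcal{P} F(\mathcal{P} u)\|_{\mathcal{N}(I_j)},
\end{equation*}
and the goal is to set $A_j:=\sup_N N^{-s}\|P_N u\|_{\mathcal{S}(I_j)}$ and close a bound $A_j\le C(A_{j,0}+\eta^{\delta} A_j)$ with an absorbable nonlinear term, so that iterating across $j=1,\dots,J$ yields the claim.

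\textbf{Step 2: paraproduct decomposition of the quintic nonlinearity.} Write $\mathcal{P} u=\sum_{K}P_K\mathcal{P} u$ and decompose $P_N|\mathcal{P} u|^4\mathcal{P} u$ according to whether the maximal frequency $N_{\max}$ is comparable to $N$ (the diagonal case) or much larger than $N$ (the high-high cancellation case). In the diagonal regime $N_{\max}\sim N$, H\"older together with the $L^6_{t,x}$ smallness on $I_j$ gives
\begin{equation*}
 \|P_N \mathcal{P} F(\mathcal{P} u)\|_{L^{6/5}_{t,x}(I_j\times\R)} \lesssim \eta^4\,\|P_N u\|_{\mathcal{S}(I_j)} + (\text{lower frequency tails}),
\end{equation*}
where the tail is estimated via $\sum_{K\le N}\|P_K u\|_{\mathcal{S}(I_j)}\lesssim A_j\sum_{K\le N}K^s\lesssim A_j N^s$ since $s>0$. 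In the high-high regime $N_{\max}\gtrsim N/\eta$, pair the two highest frequencies $P_{N_1}u\cdot \overline{P_{N_2}u}$ with $N_1\sim N_2\gg N$, and apply Lemma~\ref{lem-pre-Bili-Stri} to produce a factor $(N/N_1)^{1/4}$. Combined with H\"older on the remaining three factors (bounded in $L^{6}_{t,x}$ by $\eta$ each) and using $\|P_{N_i}u\|_{\mathcal{S}_*(I_j)}\lesssim A_j N_i^s$, this contribution is
\begin{equation*}
 \sum_{N_1\gtrsim N} \Bigl(\tfrac{N}{N_1}\Bigr)^{1/4} N_1^{2s}\,\eta^{3}\,A_j^2 \cdot (\text{lower}) \lesssim \eta^{3} A_j\cdot N^{s}\sum_{N_1\gtrsim N}\Bigl(\tfrac{N}{N_1}\Bigr)^{\frac14-s},
\end{equation*}
where the geometric series converges precisely because $s<\tfrac14$, producing the bound $\eta^{3}A_j N^s$ after summation.

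\textbf{Step 3: closing the bootstrap and iteration.} Dividing the previous display by $N^s$ and taking the supremum over $N$ yields $A_j\le C(A_{j,0}+\eta^{3}A_j)$; choosing $\eta$ small enough (depending on $M_1,M_2,L,s$) we absorb the nonlinear term to obtain $A_j\lesssim A_{j,0}$. At the next step, the endpoint value $\|P_N u(t_{j+1})\|_{L^2}$ is controlled by $\|P_N u\|_{\mathcal{S}(I_j)}\lesssim A_j N^s$, so $A_{j+1,0}\lesssim A_j$, and a finite iteration across the $J=J(M_1,M_2,s)$ subintervals yields $\sup_j A_j\le C(M_1,M_2,L)$, which is exactly \eqref{lowregu-PN-bound}.

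\textbf{Main obstacle.} The delicate point is the high-high regime of the quintic: the product of two same-scale high-frequency Littlewood--Paley pieces is \emph{not} frequency separated, and Lemma~\ref{lem-pre-Bili-Stri} strictly requires $N\ge 10 M$. I expect to handle this by a further partition of unity in Fourier space at scale $N$ (splitting each $P_{N_1}u$ into $\sim N_1/N$ pieces supported in intervals of length $N$, so that all but $O(1)$ pairs are separated), and then using almost-orthogonality to sum. It is this refined decomposition, together with the constraint $\tfrac14-s>0$ that makes the geometric sum over $N_1$ converge, that is the real engine of the proof; the threshold $s<\tfrac14$ comes from the exponent in Lemma~\ref{lem-pre-Bili-Stri}.
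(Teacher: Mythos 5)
Your overall architecture (subdivision into intervals where $\|u\|_{L^6_{t,x}}$ is small, a bootstrap on $A_j=\sup_N N^{-s}\|P_Nu\|_{\mathcal{S}(I_j)}$, a low/high frequency splitting, the $L^3$ bilinear estimate as the engine, and the threshold $s<\tfrac14$ coming from its exponent) matches the paper's proof, and your reading of the hypothesis as a bound on $\|P_Nu(t_0)\|_{L^2}$ is also what the paper intends. However, the step you yourself flag as the ``main obstacle'' is a genuine gap, and the fix you propose does not work. In the high--high regime you want to apply Lemma~\ref{lem-pre-Bili-Stri} to the pair $P_{N_1}u\cdot\overline{P_{N_2}u}$ with $N_1\sim N_2\gg N$. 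But for this pair to contribute to the output frequency $N$ (together with the remaining factors), the relevant portions of $\widehat{P_{N_1}u}$ and $\widehat{P_{N_2}u}$ must lie within $O(N_1)$ of each other with near cancellation; after your proposed partition of unity at scale $N$, the pairs that survive the output projection $P_N$ are precisely those whose Fourier supports are within $O(N)$ of each other --- i.e.\ the \emph{non-separated} diagonal pairs, for which the bilinear estimate gives no gain. Almost-orthogonality discards only the harmless off-diagonal pairs; the dominant contribution is exactly the one the bilinear lemma cannot see. In the worst case all five input factors sit at frequency $\sim N_1$ with signs arranged to cancel down to $N$, so there is also no low-frequency factor available to pair with $u_{N_1}$.

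The paper's resolution is a duality argument, which is the idea missing from your proposal. One tests the Duhamel term against $g$ with $\|g\|_{\mathcal{N}(I_j)}\le 1$, sets $G(s)=\int\chi_{\{t\in I_j,\,t>s\}}e^{i(t-s)\Delta}g(t)\,\mathrm{d}t$ (so $\|G\|_{\mathcal{S}_*(I_j)}\lesssim1$ by Strichartz), and then applies the $L^3$ bilinear estimate to the pair $\big(P_N\mathcal{P}G,\ \mathcal{P}u_{N_1}\big)$: the dual function is localized at frequency $N$ and the highest input factor at frequency $N_1>10N$, so the separation hypothesis $N_1\ge 10N$ of Lemma~\ref{lem-pre-Bili-Stri} holds automatically and yields the factor $(N/N_1)^{1/4}$. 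The second-highest factor $u_{N_2}$ then contributes $N_2^s$ via the bootstrap hypothesis, the remaining three factors contribute $\varepsilon^3$ in $L^6_{t,x}$, and the double sum $\sum_{N_1\ge N_2>10N}N_2^s(N/N_1)^{1/4}\lesssim N^s$ converges because $s<\tfrac14$. You should replace your Step~2 high--high analysis by this duality pairing; the rest of your argument (Steps~1 and~3, and the diagonal regime) then goes through essentially as in the paper.
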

	
	\begin{proof}
		Similar as the proof in Lemma \ref{lem-highregu}, we first fix a small constant $ 0 < \varepsilon \ll 1 $ that will be chosen later.
		Then, we divide time interval $ I $ into $ J = O \bigg( \big( \frac{M_1}{\varepsilon} \big)^6 \bigg) $ pieces 
		\[
		I = \bigcup_{j=1}^J I_j
		\]
		with each piece satisfies
		\begin{equation*}
			\Vert u \Vert_{L_{t, x}^6(I_j \times \R)} < \varepsilon.
		\end{equation*}
		
		On each time interval $ I_j $, we can bound the strong Strichartz norm by
		\begin{equation}\label{strongStri}
			\Vert u \Vert_{\mathcal{S}(I_j)}
			\lesssim \Vert u(t_0, \cdot) \Vert_{L^2(\R)} + \Vert u(t, x) \Vert_{L_{t, x}^6(I_j \times \R)}^5
			\lesssim \Vert u(t_0, \cdot) \Vert_{L^2(\R)} + \varepsilon^5.
		\end{equation}
		
		Note that $ \Vert P_N u \Vert_{S(I_j \times \R)} $ is bounded, thus we only need to prove \eqref{lowregu-PN-bound} under the assumption:
		\begin{equation}\label{lowregu-Assu}
			\Vert P_N u \Vert_{\mathcal{S}(I_j)} \le 2 C(M_1, M_2, L) N^s.
		\end{equation}
		
		By Strichartz estimate, we find that
		\begin{equation}
			\begin{aligned}
				\Vert P_N u \Vert_{\mathcal{S}(I_j)}
				\le & ~ \Vert P_N u (t_0, \cdot) \Vert_{L_x^2(\R)} + \alpha^6 \left\Vert \int_{t_0}^t e^{i(t-s)\Delta} \mathcal{P} F(\mathcal{P} u(s)) {\rm d}s \right\Vert_{\mathcal{S}(I_j)} \\
				\lesssim & ~ L N^s + \left\Vert \int_{t_0}^t e^{i(t-s)\Delta} \mathcal{P} F(\mathcal{P} u(s)) {\rm d}s \right\Vert_{\mathcal{S}(I_j)}.
			\end{aligned}
		\end{equation}
		Since $C_0$ can be chosen sufficiently large, the first term is harmless. We treat the second term above by duality.
		Supposing $ \Vert g \Vert_{\mathcal{N}(I_j)} \le 1 $, then
		\begin{align}
			& ~ \left\langle g, P_N \int_{t_0}^t e^{i(t-s)\Delta} \mathcal{P} F(\mathcal{P} u(s)) {\rm d}s \right\rangle_{L_{t, x}^2(I_j \times \R)} \nonumber \\
			= & ~ \int_{\R} \int_{I_j} P_N \overline{g(t, x)} {\rm d}x {\rm d}t P_N \int_{t_0}^t e^{i(t-s)\Delta} \mathcal{P} F(\mathcal{P} u(s)) {\rm d}s \nonumber \\
			= & ~ \int_{\R} \int_{I_j} \mathcal{P} P_N \overline{G(s, x)} F(\mathcal{P} u(s)) {\rm d}s {\rm d}x, \label{lowregu-Dual}
		\end{align}
		where
		\begin{equation*}
			G(s, x) := \int \chi_{\{t \in I_j, t > s\}}(t) e^{i(t-s)\Delta} g(t) {\rm d}t.
		\end{equation*}
		By Strichartz estimate, we can verify that $ \Vert G \Vert_{\mathcal{S}_*(I_j)} \lesssim 1 $.
		
		We decompose $ u $ according to high and low frequencies, which is given by $ u = u_l + u_h $, where $ P_{\le 10N} u_l = u_l $ and $ P_{> 10N} u_h = u_h $.
		Then, the right hand side of \eqref{lowregu-Dual} can be bounded by
		\begin{equation*}
			\sum_{ (\mathcal{P} u)_* \in \{ \vert \mathcal{P} u_l \vert, \vert \mathcal{P} u_h \vert \} } \int_{\R} \int_{I_j} \vert \mathcal{P} P_N G \vert (\mathcal{P} u)_*^5 {\rm d}s {\rm d}x.
		\end{equation*}
		For the case that every $ (\mathcal{P} u)_* $ takes low frequency, by H\"older's inequality and \eqref{lowregu-Assu}, we have
		\begin{equation}\label{lowregu-low}
			\begin{aligned}
				& ~ \int_{\R} \int_{I_j} \vert \mathcal{P} P_N G \vert \vert \mathcal{P} u_l \vert^6 {\rm d}s {\rm d}x \\
				\lesssim & ~ \Vert \mathcal{P} P_N G \Vert_{L_{t, x}^6(I_j \times \R)} \Vert \mathcal{P} u_l \Vert_{L_{t, x}^6(I_j \times \R)}^5 \\
				\lesssim & ~ \Vert \mathcal{P} P_N G \Vert_{L_{t, x}^6(I_j \times \R)} \Vert u \Vert_{L_{t, x}^6(I_j \times \R)}^4 \sum_{k \le 10N} \Vert P_K u \Vert_{L_{t, x}^6(I_j \times \R)} \\
				\lesssim & ~ \varepsilon^4 C(M_1, M_2, L) N^s.
			\end{aligned}
		\end{equation}
		
		For the case that every $ (\mathcal{P} u)_* $ takes high frequency, by \eqref{lowregu-Assu} and \eqref{lowregu-PN-bound}, we arrive at
		\begin{align}
			& ~ \int_{\R} \int_{I_j} \vert \mathcal{P} P_N G \vert \vert \mathcal{P} u_l \vert^6 {\rm d}s {\rm d}x \nonumber \\
			\lesssim & ~ \sum_{N_1 \ge N_2 > 10N} \Vert P_N \mathcal{P} G \mathcal{P} u_{N_1} \mathcal{P} u_{N_2} \mathcal{P} u_h \mathcal{P} u_h \mathcal{P} u_h \Vert_{L_{t, x}^1(I_j \times \R)} \nonumber \\
			\lesssim & ~ \Vert \mathcal{P} u \Vert_{L_{t, x}^6(I_j \times \R)}^3 \sum_{N_1\ge N_2 > 10N} (2 C(M_1, M_2, L) N_2^s) \bigg( \frac{N}{N_1} \bigg)^{\frac14} \Vert P_N \mathcal{P} G \Vert_{\mathcal{S}_*(I_j)} \Vert P_N \mathcal{P} u_{N_1} \Vert_{\mathcal{S}_*(I_j)} \nonumber \\
			\lesssim & ~ \varepsilon^3 C(M_1, M_2, L) \sum_{N_1 \ge N_2 > 10N} N_2^s \bigg( \frac{N}{N_1} \bigg)^{\frac14} \nonumber \\
			\lesssim & ~ \varepsilon^3 C(M_1, M_2, L) N^s. \label{lowregu-high}
		\end{align}
		
		Then, combine the techniques from obtaining \eqref{lowregu-low} and \eqref{lowregu-high}, we can handle the rest cases similarly.
		We conclude that
		\begin{equation*}
			\left\langle g, P_N \int_{t_0}^t e^{i(t-s)\Delta} \mathcal{P} F(\mathcal{P} u(s)) {\rm d}s \right\rangle_{L_{t, x}^2(I_j \times \R)} \lesssim \varepsilon C(M_1, M_2, L) N^s.
		\end{equation*}
		Therefore, it allows us to take $ \varepsilon := \varepsilon(M_1, M_2, L) \ll 1 $ small enough to suppress the constant and obtain
		\begin{equation*}
			\Vert P_N u \Vert_{\mathcal{S}(I_j)} \le C(M_1, M_2, L) N^s.
		\end{equation*}
		Recall that there are $ O \bigg( \big( \frac{M_1}{\varepsilon} \big)^6 \bigg) $ many intervals, hence we can replace $ C(M_1, M_2, L) $ by $ \big( \frac{M_1}{\varepsilon} \big)^6 C(M_1, M_2, L) $ and accomplish the proof of this lemma.
	\end{proof}
	
	As an application of Lemma \ref{lem-highregu}, we have
	
	\begin{corollary}\label{cor-HightoLow}
		For fixed $ M_1, M_2 > 0 $, there exists $ c(M_1, M_2) > 0 $ such that the following statement holds.
		Supposing the solution $ u \in I \times \R \to \C $ to \eqref{sevNLS} satisfies
		\begin{equation*}
			\Vert u(t, x) \Vert_{L_t^{\infty}(I, L_x^2(\R))} \le M_1, \qquad \text{and} \qquad \Vert u(t, x) \Vert_{L_{t, x}^6(I \times \R)} \le M_2,
		\end{equation*}
		and
		\begin{equation*}
			\Vert P_{\le N} u(t_0, \cdot) \Vert_{L^2(\R)} \le \varepsilon < c(M_1, M_2),
		\end{equation*}
		for some dyadic number $ N \in 2^{\Z} $ and $ t_0 \in I $.
		Then, the low frequency of $ u $ obeys
		\begin{equation*}
			\Vert P_{\le \varepsilon N} u \Vert_{\mathcal{S}(I)} \le C(M_1, M_2) \Vert u(t_0, \cdot) \Vert_{L^2(\R)} \varepsilon^{\frac15}.
		\end{equation*}
	\end{corollary}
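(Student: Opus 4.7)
My plan is to mirror the proof of Lemma \ref{lem-LowtoHigh}, swapping the roles of high and low frequencies and invoking Lemma \ref{lem-lowregu} in place of Lemma \ref{lem-highregu}. I would first introduce the auxiliary solution $v:I\times\R\to\C$ to \eqref{sevNLS} with initial data $v(t_0)=(I-P_{\le N})u(t_0)$, so that $u(t_0)-v(t_0)=P_{\le N}u(t_0)$ has $L^2$-norm at most $\varepsilon$. Provided the threshold $c(M_1,M_2)$ is chosen small enough, the perturbation Lemma \ref{lem-pertub} yields
\[
\|u-v\|_{\mathcal{S}(I)}\le C(M_1,M_2)\,\varepsilon \qquad\text{and}\qquad \|v\|_{\mathcal{S}(I)}\le C(M_1,M_2).
\]
Decomposing $P_{\le\varepsilon N}u=P_{\le\varepsilon N}(u-v)+P_{\le\varepsilon N}v$, the first summand is controlled by $C(M_1,M_2)\,\varepsilon$ via boundedness of the Littlewood--Paley projection, and this is dominated by the target $\varepsilon^{1/5}$ factor.

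The core of the argument is the estimate for $P_{\le\varepsilon N}v$. The key observation is that $v(t_0)$ is frequency-localized in $|\xi|\gtrsim N$, hence for every dyadic $M\in 2^{\Z}$,
\[
P_M v(t_0)=0\text{ whenever }M\le N/4,\qquad \|P_M v(t_0)\|_{L^2}\le\|u(t_0)\|_{L^2}\text{ otherwise}.
\]
In particular, the data of $v$ obeys $\|P_M v(t_0)\|_{L^2}\le L\,M^{1/5}$ uniformly in $M\in 2^{\Z}$, with $L:=C\|u(t_0)\|_{L^2}N^{-1/5}$. Applying Lemma \ref{lem-lowregu} with $s=\tfrac15\in(0,\tfrac14)$ to the solution $v$, whose mass and $L^6_{t,x}$ bounds are inherited from the perturbation step, then yields
\[
\|P_M v\|_{\mathcal{S}(I)}\le C(M_1,M_2)\,\|u(t_0)\|_{L^2}\,(M/N)^{1/5}
\]
for every dyadic $M$. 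Summing the resulting geometric series over dyadic $M\lesssim\varepsilon N$ produces $\|P_{\le\varepsilon N}v\|_{\mathcal{S}(I)}\le C(M_1,M_2)\,\|u(t_0)\|_{L^2}\,\varepsilon^{1/5}$, and combining with the perturbation estimate finishes the proof.

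The main obstacle I anticipate is that the parameter $L$ above depends on $N$, so we need the constant $C(M_1,M_2,L)$ in Lemma \ref{lem-lowregu} to scale \emph{linearly} in $L$ rather than only finitely. Inspecting that lemma's proof, the linearity is built in: the bootstrap initiates from the purely linear term $\|P_M u(t_0)\|_{L^2}\le L M^s$, and the nonlinear contribution on each of the $O((M_1/\varepsilon)^6)$ subintervals is absorbed by the $\varepsilon^4$ smallness without introducing further $L$-factors. Once this linearity is tracked throughout the bootstrap, the argument proceeds exactly as outlined and the exponent $\varepsilon^{1/5}$ emerges naturally from the choice $s=\tfrac15$.
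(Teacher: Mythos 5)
Your proposal is correct and follows essentially the same route as the paper: the same auxiliary solution $v$ with high-frequency-truncated data $P_{\ge N}u(t_0)$, the same application of the perturbation Lemma \ref{lem-pertub}, and the same use of Lemma \ref{lem-lowregu} with $s=\tfrac15$ followed by summation of the dyadic pieces $P_K v$ over $K\le\varepsilon N$. Your additional remark that the constant in Lemma \ref{lem-lowregu} must scale linearly in $L$ is a worthwhile point that the paper leaves implicit, but it does not change the argument.
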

	
	\begin{proof}
		We denote $ v : I \times \R \to \C $ as the solution of \eqref{sevNLS} with $ v(t_0, \cdot) = P_{\ge N} u(t_0, \cdot) $.
		Hence, we have
		\begin{equation*}
			\Vert u(t_0, \cdot) - v(t_0, \cdot) \Vert_{L^2(\R)} < \varepsilon.
		\end{equation*}
		For $ \varepsilon := \varepsilon(M_1, M_2) > 0 $ sufficiently small, by Lemma \ref{lem-pertub}, we find 
		\begin{equation*}
			\Vert u - v \Vert_{\mathcal{S}(I)} \le C(M_1, M_2).
		\end{equation*}
		Particularly, the $L^{6,6}_{t,x}$ difference between $ u $ and $ v $ can be well controlled:
		\begin{equation}\label{cor-u-v}
			\Vert u - v \Vert_{L_{t, x}^6(I \times \R)} \le C(M_1, M_2) \varepsilon.
		\end{equation}
		From Lemma \ref{lem-lowregu}, we find that
		\begin{equation}\label{cor-vH1}
			\sup_{K \in 2^{\Z}} K^{-\frac15} \Vert P_K v \Vert_{\mathcal{S}(I)} \le C(M_1, M_2) \sup_{K \in 2^{\Z}} K^{-\frac15} \Vert P_K v \Vert_{L^2(\R)} \le C(M_1, M_2) N^{-\frac15}.
		\end{equation}
		Finally, combine \eqref{cor-u-v} with \eqref{cor-vH1}, we can finish the proof of this corollary:
		\begin{align*}
			\Vert P_{\le \varepsilon N} u \Vert_{\mathcal{S}(I)}
			\le & ~ \Vert P_{\le \varepsilon N} (u - v) \Vert_{\mathcal{S}(I)} + \Vert P_{\le \varepsilon N} v \Vert_{\mathcal{S}(I)} \\
			\lesssim & ~ \Vert u - v \Vert_{\mathcal{S}(I)} + \sum_{K \le \varepsilon N} \Vert P_K v \Vert_{\mathcal{S}(I)} \\
			\lesssim & ~ C(M_1, M_2) \varepsilon + \sum_{K \le \varepsilon N} C(M_1, M_2) K^{-\frac15} N^{-\frac15} \\
			\lesssim & ~ C(M_1, M_2) \varepsilon^{\frac15}.
		\end{align*}
	\end{proof}
	
	\begin{lemma}\label{lem-unif-contr}
		Let $ u $ be a solution to \eqref{sevNLS} with initial data $ u(0, \cdot) = u_0 $, where $ \Vert u_0 \Vert_{L^2(\R)} \le M $ for some constant $ M > 0 $.
		Then, for fixed $ \alpha \in [0, 1] $, the solution $ u $ is global and enjoys a uniform bound
		\begin{equation*}
			\Vert u \Vert_{\mathcal{S}(\R)} \lesssim M.
		\end{equation*}
	\end{lemma}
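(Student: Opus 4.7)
The plan is to argue by an induction-on-mass and concentration-compactness scheme in the spirit of Killip-Visan-Zhang \cite{KVZ-AJM}, reducing the uniform Strichartz bound for \eqref{sevNLS} to Dodson's global bound for \eqref{NLS} (Theorem \ref{thm-globalbound}). The small data regime is already handled by Lemma \ref{lem-smalldata}, so only large $M$ requires work. Assuming toward contradiction that the conclusion fails, I would set $M^\ast := \sup\{M > 0 : \sup_{\|u_0\|_{L^2} \le M} \|u\|_{\mathcal{S}(\R)} < \infty\}$ and select a sequence of \eqref{sevNLS} solutions $u_n$ with $\|u_n(0)\|_{L^2} \to M^\ast$ and $\|u_n\|_{L^6_{t,x}(\R\times\R)} \to \infty$.

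Next I would apply the linear profile decomposition (Theorem \ref{thm-pre-linear-profile}) to $\{u_n(0)\}$, producing profiles $\phi^j$ with pairwise orthogonal symmetry parameters $(\lambda_n^j,\xi_n^j,x_n^j,t_n^j)$ and a remainder with vanishing free $L^6_{t,x}$ norm. When two or more profiles are nontrivial, $L^2$ decoupling forces $\|\phi^j\|_{L^2}^2 < M^\ast$ for every $j$, and I would associate to each profile a nonlinear profile: for static parameters the induction hypothesis supplies a Strichartz-bounded \eqref{sevNLS} solution; when $\lambda_n^j \to \infty$, the rescaled Mikhlin symbol $\varphi(\lambda_n^j\xi)$ collapses to $1$ on any fixed frequency ball, so the nonlinear profile is modeled on \eqref{NLS} and inherits Dodson's bound; when $\lambda_n^j \to 0$ or $|\xi_n^j|\to\infty$, the truncation $\mathcal{P}$ annihilates the relevant frequencies and the nonlinear profile is asymptotically linear, controlled by Strichartz; when $|t_n^j|\to\infty$, dispersion reduces to the previous cases. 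Summing the nonlinear profiles produces an approximate solution whose error tends to zero thanks to the orthogonality of parameters together with the smallness of the remainder, and the perturbation Lemma \ref{lem-pertub} then yields a uniform bound on $\|u_n\|_{L^6_{t,x}}$, contradicting the choice of the sequence.

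It therefore remains the single-profile case, which (after the symmetry group is applied) forces $u_n(0) \to \phi^\ast$ strongly in $L^2$ with $\|\phi^\ast\|_{L^2}= M^\ast$ and produces a minimal-mass blow-up solution $u^\ast$ to \eqref{sevNLS} whose orbit is precompact modulo the residual symmetries. A frequency-localization step based on Lemma \ref{lem-highregu}, Lemma \ref{lem-lowregu} and Corollary \ref{cor-HightoLow} confines $u^\ast$ to a fixed dyadic frequency shell, after which a suitably adapted interaction Morawetz inequality following Dodson, with commutator errors arising from the smooth multiplier $\mathcal{P}$ absorbed using the bounded range of frequencies, forces $u^\ast \equiv 0$ and yields the desired contradiction.

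The main obstacle I anticipate is this last rigidity step: the Dodson machinery must be ported to the regularized equation \eqref{sevNLS}, and one has to verify that the Mikhlin truncation $\mathcal{P}$ neither destroys the monotonicity underlying the interaction Morawetz quantity nor generates uncontrollable commutators. The frequency-persistence lemmas developed earlier in this section supply exactly the input needed to localize $u^\ast$ in frequency, and once that localization is in place the residual error terms are benign; the $\alpha^6 \le 1$ coupling provides further room in any such Morawetz estimate.
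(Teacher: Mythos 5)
Your proposal has a genuine gap, and it also misses the observation that makes this lemma essentially trivial given what is already available. The paper's proof is a two-case argument. For $\alpha$ small (say $\alpha^6 \lesssim M^{-4}$), the Strichartz estimate gives $\Vert u\Vert_{\mathcal S} \lesssim \Vert u_0\Vert_{L^2} + \alpha^6\Vert u\Vert_{\mathcal S}^5$, and a continuity/bootstrap argument closes directly because the coupling constant is small relative to the mass. For $\alpha \in [\alpha_0(M),1]$, the substitution $v = \alpha^{3/2}u$ converts \eqref{sevNLS} into \eqref{NLS} with data $\alpha^{3/2}u_0$ (this is the setting in which the lemma is actually invoked, e.g.\ in Case 2 of Proposition \ref{prop-GW-frequ-local-GWP}, where the nonlinearity is $\alpha^6|v|^4v$ with no truncation), so Dodson's Theorem \ref{thm-globalbound} applies verbatim and $\Vert u\Vert_{\mathcal S} = \alpha^{-3/2}\Vert v\Vert_{\mathcal S} \le \alpha_0^{-3/2}C(M)$. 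The entire point of carrying the prefactor $\alpha^6$ is to enable exactly this dichotomy; your proposal does not use it.

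Instead you propose to rerun the full induction-on-mass and concentration-compactness machinery and to terminate it with ``a suitably adapted interaction Morawetz inequality following Dodson.'' That final rigidity step is precisely where the proof would stall: Dodson's $d=1$ argument is not a soft Morawetz computation but a long construction involving long-time Strichartz estimates in atomic function spaces and a frequency-localized interaction Morawetz quantity, and you give no argument that the commutators generated by $\mathcal P$ (or by the non-smooth multipliers $m_D$ that this lemma must ultimately serve) preserve the required monotonicity. You yourself flag this as ``the main obstacle,'' which is an accurate self-assessment: the minimal-mass blow-up solution produced in your single-profile case is never actually excluded. Since the lemma is needed as an input to the later profile-decomposition analysis (Sections 3--4), an argument that itself presupposes a Dodson-type rigidity theorem for the truncated flow is both circularly heavy and incomplete; the scaling reduction to Theorem \ref{thm-globalbound} is the missing idea.
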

	
	\begin{proof}
		Since \eqref{sevNLS} is local well-posed, we only need to prove a prior estimate.
		By Strichartz estimate, we have
		\begin{equation*}
			\Vert u \Vert_{\mathcal{S}(\R)} \lesssim \Vert u_0 \Vert_{L^2(\R)} + \alpha^6 \big\Vert \vert u \vert^4 u \big\Vert_{L_{t, x}^{\frac65}(\R \times \R)}
			\lesssim \Vert u_0 \Vert_{L^2(\R)} + \alpha^6 \Vert u \Vert_{L_{t, x}^6(\R \times \R)}^5.
		\end{equation*}
		Thanks to continuous argument, there exists $ \alpha_0 := \alpha_0(M)$ small enough such that for all $ \alpha < \alpha_0 $, the solution $ u $ is global and enjoys uniform bound
		\begin{equation*}
			\Vert u \Vert_{\mathcal{S}(\R)} \lesssim M.
		\end{equation*}
		
		Now, we consider the case that $ \alpha \in [\alpha_0, 1] $.
		If $ u $ solves \eqref{sevNLS} with initial data $ u_0 $ iff $ \alpha^{\frac32} u $ solves \eqref{NLS} with initial data $ \alpha^{\frac32} u_0 $.
		By Theorem \ref{thm-globalbound}, we get that
		\begin{equation*}
			\Vert u \Vert_{\mathcal{S}(\R)} = \alpha^{-\frac32} \Vert \alpha^{\frac32} u \Vert_{\mathcal{S}(\R)} \le \alpha_0^{-\frac32}(M) C(M),
		\end{equation*}
		where $C(M):=\sup_{I\subset\R}\{\Vt u\Vt_{\McS(I)}\}<\infty$. So far, we have completed the proof of Lemma \ref{lem-unif-contr}.
	\end{proof}

	\subsection{Local well-posedness and perturbation theory for \eqref{fml-intro-NLSn}}
	Similar to the Fourier truncated NLS presented in the previous subsection, we state the local well-posedness and stability lemma of \eqref{fml-intro-NLSn}.
	\begin{proposition}[Local well-posedness for \eqref{fml-intro-NLSn}]Let $u_0\in L^2(\R)$. There exists a constant $\beta_0>0$ depending only on $\|h\|_{L^\infty(\R)}$ such that for $0<\beta<\beta_0$, there holds
		\begin{align}\label{fml-stab-cond}
			\|e^{it\partial_x^2}u_0\|_{L_{t,x}^6(I\times\R)}\leqslant\beta,
		\end{align}
		where $0\in I$ is the compact interval. Then there exists an unique  solution to \eqref{fml-intro-NLSn} satisfying
		\begin{gather*}
			\|u_n\|_{L_{t,x}^6(I\times\R)}\leqslant 2\beta,\\
			\sup_{(q,r)\in\Lambda_0}\|u_n\|_{L_t^qL_x^r(I\times\R)}\leqslant\|u_0\|_{L^2(\R)}.
		\end{gather*}
	\end{proposition}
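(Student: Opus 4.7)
The plan is to prove local well-posedness by a standard contraction mapping argument on the Duhamel formulation, where the only difference from the homogeneous case is that the inhomogeneity $h(nx)$ enters every nonlinear estimate as an $L^\infty$ multiplier contributing a factor of $\|h\|_{L^\infty(\R)}$; choosing $\beta_0$ small relative to $\|h\|_{L^\infty(\R)}$ will absorb this factor and close the fixed-point argument.

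First, I would write the solution as the Duhamel integral
\begin{equation*}
u_n(t) = e^{it\partial_x^2} u_0 - i \int_0^t e^{i(t-s)\partial_x^2} \bigl( h(nx)\, |u_n|^4 u_n \bigr)(s)\, \Ind{s},
\end{equation*}
and set up a contraction map $\Phi$ on the complete metric space
\begin{equation*}
X := \Bigl\{ v \in \McS(I) \; : \; \|v\|_{L_{t,x}^6(I\times\R)} \le 2\beta, \; \sup_{(q,r)\in\Lambda_0} \|v\|_{L_t^q L_x^r(I\times\R)} \le \|u_0\|_{L^2(\R)} \Bigr\},
\end{equation*}
endowed with the $L_{t,x}^6$ metric. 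The key nonlinear bound, obtained by pulling $h(nx)$ out in $L^\infty_x$ and applying H\"older, is
\begin{equation*}
\|h(nx)\,|v|^4 v\|_{L_{t,x}^{6/5}(I\times\R)} \le \|h\|_{L^\infty(\R)} \|v\|_{L_{t,x}^6(I\times\R)}^5.
\end{equation*}

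Next, applying the Strichartz estimates (the same inhomogeneous Strichartz pair used to bound $\McS(I)$ in terms of $\McN(I)$ via $L_t^{5/4}L_x^{10/9}$, plus the $L_{t,x}^6$ endpoint via interpolation or directly) to the Duhamel term gives
\begin{equation*}
\|\Phi(v)\|_{L_{t,x}^6(I\times\R)} \le \|e^{it\partial_x^2} u_0\|_{L_{t,x}^6(I\times\R)} + C\|h\|_{L^\infty}\|v\|_{L_{t,x}^6(I\times\R)}^5 \le \beta + C\|h\|_{L^\infty}(2\beta)^5,
\end{equation*}
and similarly for every admissible pair $(q,r)\in\Lambda_0$,
\begin{equation*}
\|\Phi(v)\|_{L_t^q L_x^r(I\times\R)} \le C\|u_0\|_{L^2(\R)} + C\|h\|_{L^\infty}(2\beta)^5.
\end{equation*}
Choosing $\beta_0=\beta_0(\|h\|_{L^\infty(\R)})$ so that $C\|h\|_{L^\infty}(2\beta_0)^4 \le \tfrac{1}{2}$ ensures both $\|\Phi(v)\|_{L_{t,x}^6}\le 2\beta$ and the Strichartz bound closes, so $\Phi:X\to X$. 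The difference estimate
\begin{equation*}
\||v|^4 v - |w|^4 w\|_{L_{t,x}^{6/5}} \le C\bigl(\|v\|_{L_{t,x}^6}^4+\|w\|_{L_{t,x}^6}^4\bigr)\|v-w\|_{L_{t,x}^6}
\end{equation*}
then yields $\|\Phi(v)-\Phi(w)\|_{L_{t,x}^6} \le 2C\|h\|_{L^\infty}(2\beta)^4 \|v-w\|_{L_{t,x}^6} \le \tfrac{1}{2}\|v-w\|_{L_{t,x}^6}$, so $\Phi$ is a contraction and the Banach fixed-point theorem produces the unique solution in $X$.

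There is no genuine obstacle here, since the result is a routine Picard iteration: the only mild point requiring care is that $\beta_0$ must depend on $\|h\|_{L^\infty(\R)}$ (not on $n$), which is immediate from the uniform bound $|h(nx)|\le\|h\|_{L^\infty(\R)}$, ensuring the constants in the local theory are $n$-independent. Uniqueness in the larger class $\{v:\|v\|_{L^6_{t,x}(I\times\R)}<\infty\}$ then follows by a standard continuity-in-time argument, subdividing $I$ into subintervals on which $\|v\|_{L^6_{t,x}}$ is small enough for the contraction estimate to apply.
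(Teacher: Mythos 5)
Your contraction-mapping argument is correct and is exactly the standard small-data local theory that the paper itself invokes (the paper omits the proof, deriving this proposition from the same Picard iteration as Lemma \ref{lem-smalldata}, with $h(nx)$ entering only through the $n$-independent factor $\|h\|_{L^\infty(\R)}$). The only cosmetic discrepancy is that your closing bound gives $\sup_{(q,r)\in\Lambda_0}\|u_n\|_{L_t^qL_x^r}\lesssim\|u_0\|_{L^2}$ with the Strichartz constant rather than the literal constant $1$ in the statement, which is a looseness of the paper's formulation rather than a gap in your proof.
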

	Before presenting the stability lemma, we  first give the short-time perturbation.
	\begin{lemma}\label{lem-stab-short}Let $h\in L^\infty(\R)$ and $I$ be the compact time interval. Assume that $\tilde{u}$ is the approximate solution in the sense that
		\begin{align*}
			i\partial_t\tilde{u}+\partial_x^2\tilde{u}=F_n(\tilde{u})+e,
		\end{align*}
		where $\tilde{u}(0,x)=\tilde u_0(x)\in L^2(\R)$. Suppose that 
		\begin{gather}
			\|u_0-\tilde{u}_0\|_{L_x^2(\R)}\leq M^\prime,\label{fml-short-1}\\
			\|\tilde{u}\|_{L_t^\infty L_x^2(I\times\R)}\leq M,\label{fml-short-2}
		\end{gather}
		hold for some $M,M^\prime>0$ and $u_0\in L_x^2(\R)$ and
		\begin{gather}
			\|\tilde{u}\|_{L_{t,x}^6(I\times\R)}\leq\varepsilon_0,\label{fml-short-3}\\
			\big\|e^{it\partial_x^2}(u_0-\tilde{u}_0)\big\|_{L_{t,x}^6(I\times\R)}\leq \varepsilon,\label{fml-short-4}\\
			\big\|\int_{0}^te^{i(t-s)\partial_x^2}e(s)ds\big\|_{L_{t,x}^6(I\times\R)}\leq\varepsilon\label{fml-short-5}
		\end{gather}
		for some $0<\varepsilon<\varepsilon_0$. There exists an unique solution  $u_n$ to inhomogeneous NLS with $u_n(0)=u_0$ obeying 
		\begin{gather*}
			\|u_n-u\|_{L_{t,x}^6(I\times\R)}\lesssim\varepsilon,\\
			\|u_n-\tilde{u}\|_{S(I\times\R)}\lesssim M^\prime,\\
			\|u_n\|_{S(I\times\R)}\lesssim M+M^\prime,\\
			\big\|F_n(u_n)-F_n(\tilde{u})\big\|_{L_{t,x}^\frac65\cap (I\times\R)}\lesssim\varepsilon.
		\end{gather*}
	\end{lemma}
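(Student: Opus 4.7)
The plan is the classical short-time perturbation argument for the mass-critical NLS, adapted to the inhomogeneous setting where the nonlinear coefficient $h_n(x)=h(nx)$ is bounded pointwise by $\|h\|_{L^\infty(\R)}$. Set $w:=u_n-\tilde u$, where $u_n$ is the solution to $\eqref{fml-intro-NLSn}$ to be constructed. Then formally
\begin{equation*}
i\partial_t w+\partial_x^2 w=F_n(u_n)-F_n(\tilde u)-e,\qquad w(0)=u_0-\tilde u_0,
\end{equation*}
and the pointwise bound $|F_n(u_n)-F_n(\tilde u)|\lesssim \|h\|_{L^\infty}\bigl(|\tilde u|^4+|w|^4\bigr)|w|$ combined with H\"older in $L^{6/5}_{t,x}$ yields
\begin{equation*}
\bigl\|F_n(u_n)-F_n(\tilde u)\bigr\|_{L^{6/5}_{t,x}(I\times\R)}\lesssim \|h\|_{L^\infty}\bigl(\|\tilde u\|_{L^6_{t,x}}^4+\|w\|_{L^6_{t,x}}^4\bigr)\|w\|_{L^6_{t,x}}.
\end{equation*}
This is the only place the inhomogeneity enters, and it is controlled uniformly in $n$.

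First I would invoke the local well-posedness proposition preceding the lemma to produce $u_n$ on a maximal subinterval $J\subseteq I$ containing $0$. On $J$, Strichartz applied to the Duhamel formula for $w$, together with the hypotheses \eqref{fml-short-4}--\eqref{fml-short-5} and the nonlinear estimate above, gives
\begin{equation*}
\|w\|_{L^6_{t,x}(J\times\R)}\le C\varepsilon+C\|h\|_{L^\infty}\bigl(\varepsilon_0^4+\|w\|_{L^6_{t,x}}^4\bigr)\|w\|_{L^6_{t,x}}.
\end{equation*}
Choosing $\varepsilon_0=\varepsilon_0(\|h\|_{L^\infty})$ small enough so that $C\|h\|_{L^\infty}\varepsilon_0^4\le 1/2$, a standard continuity/bootstrap argument on the continuous quantity $t\mapsto \|w\|_{L^6_{t,x}([0,t]\times\R)}$ closes the bound to $\|w\|_{L^6_{t,x}(J\times\R)}\lesssim \varepsilon$. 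This prevents blow-up of $\|u_n\|_{L^6_{t,x}}$ on $J$, so $J=I$, which delivers the first claimed inequality.

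To upgrade to the full Strichartz control, I would apply $\mathcal{S}(I)$-Strichartz once more to the equation for $w$: combining the nonlinear estimate, the $L^6_{t,x}$-bound just established, and $\|u_0-\tilde u_0\|_{L^2}\le M'$ from \eqref{fml-short-1},
\begin{equation*}
\|w\|_{\mathcal{S}(I)}\lesssim M'+\bigl\|F_n(u_n)-F_n(\tilde u)\bigr\|_{\mathcal{N}(I)}+\Bigl\|\int_0^t e^{i(t-s)\partial_x^2}e(s)\,ds\Bigr\|_{\mathcal{S}(I)}\lesssim M',
\end{equation*}
where the absorbed nonlinear term is controlled by the previous step and the integrated error is $O(\varepsilon)$ in $\mathcal{S}(I)$ via an additional Strichartz application from \eqref{fml-short-5}. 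The bound $\|u_n\|_{\mathcal{S}(I)}\lesssim M+M'$ then follows from the triangle inequality with \eqref{fml-short-2}, and the $L^{6/5}_{t,x}$-bound on $F_n(u_n)-F_n(\tilde u)$ is an immediate corollary of the nonlinear estimate and the $L^6_{t,x}$-control on $w$. The main obstacle I anticipate is the two-step nature of the closure: one must first obtain smallness of $w$ in $L^6_{t,x}$ using only the $L^6_{t,x}$-smallness \eqref{fml-short-3} of $\tilde u$ (since the $L^\infty_t L^2_x$-bound is too large to feed directly into the nonlinear estimate), and only then bootstrap to the full Strichartz norm where the large mass of $\tilde u$ is affordable. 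Keeping all implicit constants independent of $n$ is automatic, as they depend on $h$ only through $\|h\|_{L^\infty(\R)}$.
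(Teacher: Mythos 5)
Your proposal is the standard short-time perturbation argument and is essentially the proof the paper intends (the paper omits the proof of this lemma, which is structurally identical to its Lemma \ref{lem-pertub}): write $w=u_n-\tilde u$, estimate $F_n(u_n)-F_n(\tilde u)$ in $L^{6/5}_{t,x}$ using only $\|h\|_{L^\infty}$, close the $L^6_{t,x}$ bound for $w$ by a continuity argument exploiting \eqref{fml-short-3}, and only then upgrade to $\mathcal{S}(I)$. The nonlinear estimates, the choice of $\varepsilon_0$ depending on $\|h\|_{L^\infty}$, and the two-step closure you describe are all correct.

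One step is glossed over: you claim the integrated error is $O(\varepsilon)$ in $\mathcal{S}(I)$ ``via an additional Strichartz application from \eqref{fml-short-5},'' but \eqref{fml-short-5} only controls $\bigl\|\int_0^t e^{i(t-s)\partial_x^2}e(s)\,ds\bigr\|_{L^6_{t,x}}$, and no Strichartz estimate upgrades an $L^6_{t,x}$ bound on this Duhamel term to an $\mathcal{S}(I)$ bound (which includes $C^0_tL^2_x$ and $L^5_tL^{10}_x$). The same issue affects $\|\tilde u\|_{\mathcal{S}(I)}$, which you need for $\|u_n\|_{\mathcal{S}(I)}\lesssim M+M'$ but which does not follow from \eqref{fml-short-2}--\eqref{fml-short-3} alone without control of $\|e\|_{\mathcal{N}(I)}$. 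This gap is inherited from the hypotheses as stated (compare Lemma \ref{lem-pertub}, where the Duhamel error is assumed small in the full $\mathcal{S}(I)$ norm); to make your argument airtight you should either assume $\|e\|_{\mathcal{N}(I)}\le\varepsilon$ or assume the $\mathcal{S}(I)$ bound on the error's Duhamel integral, after which everything you wrote goes through verbatim.
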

	With this lemma in hand, we can formulate the following stability lemma.
	\begin{lemma}[Stability lemma]\label{lem-stab-inhomo}For the given $h\in L^\infty(\R)$. Let $I$ be the compact interval and $\tilde{u}$ be the approximate solution to \eqref{fml-intro-NLSn} with the form
		\begin{align*}
			i\partial_t\tilde{u}+\partial_x^2\tilde{u}=F_n(\tilde{u})+e
		\end{align*}
		where $e$ is the error term and the initial data is $\tilde{u}_0$. Suppose that
		\begin{gather*}
			\|\tilde{u}\|_{L_{t,x}^6(I\times\R)}\lesssim L,\\
			\|\tilde{u}\|_{L_t^\infty L_x^2(I\times\R)}\lesssim M,\\
			\|u_0-\tilde{u}_0\|_{L^2(\R)}\lesssim M^\prime
		\end{gather*}
		hold for some $M,M^\prime,L>0$. Also, we assume that
		\begin{gather*}
			\big\|e^{it\partial_x^2}(u_0-\tilde{u}_0)\big\|_{L_{t,x}^6(I\times\R)}\leqslant\varepsilon,\\
			\big\|\int_{0}^{t}e^{i(t-s)\partial_x^2}e(s)ds\big\|_{L_{t,x}^6(I\times\R)}\lesssim\varepsilon
		\end{gather*}
		hold for some $0<\varepsilon\leq \varepsilon_1$ where $\varepsilon_1$ depends on $M,M^\prime,L,\|h\|_{L^\infty}$. Then, we have
		\begin{align*}
			\|u_n-\tilde{u}\|_{L_t^qL_x^r(I\times\R)}\leqslant C(M,M^\prime,L)M^\prime,\quad (q,r)\in\Lambda_0 
		\end{align*}
		and
		\begin{align*}
			\|u_n\|_{S(I\times\R)}\leq C(M,M^\prime,L),\quad \|u_n-\tilde{u}\|_{L_{t,x}^6(I\times\R)}\leq C(M,M^\prime,L)\varepsilon.
		\end{align*}
	\end{lemma}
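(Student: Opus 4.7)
The plan is to reduce the long-time stability statement to a finite number of applications of the short-time perturbation Lemma \ref{lem-stab-short}. Since $\|\tilde u\|_{L^6_{t,x}(I\times\R)}\le L$, one can partition $I=\bigcup_{j=1}^J I_j$, $I_j=[t_{j-1},t_j]$, so that $\|\tilde u\|_{L^6_{t,x}(I_j\times\R)}\le \varepsilon_0/2$ on every piece, where $\varepsilon_0$ is the smallness threshold in Lemma \ref{lem-stab-short}. The number of pieces depends only on $L$, namely $J\lesssim (L/\varepsilon_0)^6+1$.

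Next, I would iterate Lemma \ref{lem-stab-short} over $I_1,\ldots,I_J$. On $I_1$ the hypotheses are exactly those of the present lemma, which yields an absolute constant $C_0>0$ (depending only on $\|h\|_{L^\infty}$) such that
\begin{align*}
\|u_n-\tilde u\|_{\McS(I_1)}\le C_0M',\quad \|u_n-\tilde u\|_{L^6_{t,x}(I_1\times\R)}\le C_0\varepsilon,\quad \|F_n(u_n)-F_n(\tilde u)\|_{L^{6/5}_{t,x}(I_1\times\R)}\le C_0\varepsilon.
\end{align*}
To initiate the next step on $I_2$, the new $L^2$ discrepancy obeys $\|u_n(t_1)-\tilde u(t_1)\|_{L^2}\le C_0 M'$, while the required smallness of the free evolution is obtained from the Duhamel decomposition
\begin{align*}
e^{i(t-t_1)\partial_x^2}(u_n(t_1)-\tilde u(t_1)) = e^{it\partial_x^2}(u_0-\tilde u_0) - i\int_0^{t_1}e^{i(t-s)\partial_x^2}[F_n(u_n)-F_n(\tilde u)](s)\,ds + i\int_0^{t_1}e^{i(t-s)\partial_x^2}e(s)\,ds.
\end{align*}
Taking the $L^6_{t,x}(I_2\times\R)$ norm and using the global hypothesis on the linear evolution of $u_0-\tilde u_0$, the assumed Duhamel bound on $e$, and the Strichartz estimate applied to $F_n(u_n)-F_n(\tilde u)$ on $I_1$, one obtains a bound $\le C_1\varepsilon$ for some universal $C_1$. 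Iterating, at step $j$ the effective $\varepsilon$ grows geometrically as $\varepsilon_j\le (1+C_1)^j\varepsilon$ while the $L^2$ discrepancy grows as $C_0^j M'$.

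For each application of Lemma \ref{lem-stab-short} to remain valid one needs $\varepsilon_j\le \varepsilon_0$ throughout the iteration, which forces the choice $\varepsilon_1:=\varepsilon_0(1+C_1)^{-J}>0$, a quantity depending on $M,M',L$ and $\|h\|_{L^\infty}$ as claimed. Summing the subinterval bounds produces $\|u_n-\tilde u\|_{L^6_{t,x}(I\times\R)}\le C(M,M',L)\varepsilon$ and $\|u_n-\tilde u\|_{\McS(I)}\le C(M,M',L)M'$, the latter giving control of all admissible Strichartz norms $L^q_t L^r_x$. The uniform bound $\|u_n\|_{\McS(I)}\le C(M,M',L)$ follows from the triangle inequality together with the Strichartz bound for $\tilde u$ itself.

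The main obstacle is keeping track of the accumulating constants across the $J$ iterations: one must ensure that the constants $C_0$ and $C_1$ emerging from the short-time lemma are genuinely universal (independent of $M,M',L$), so that the final $\varepsilon_1$ remains strictly positive. This relies on the fact that the short-time perturbation places us in a regime where the nonlinear Duhamel difference is controlled purely by the $L^6_{t,x}$ smallness of $\tilde u$ and the $L^2$ discrepancy, with $\|h\|_{L^\infty}$ entering only as a universal prefactor through the cited short-time perturbation Lemma \ref{lem-stab-short}.
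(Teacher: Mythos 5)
Your proposal is correct and is exactly the standard bootstrap the paper intends: the paper states Lemma \ref{lem-stab-inhomo} as an immediate consequence of the short-time perturbation Lemma \ref{lem-stab-short} and omits the details, which are precisely your subdivision of $I$ into $O((L/\varepsilon_0)^6)$ pieces, iteration of the short-time lemma, and geometric tracking of the constants to fix $\varepsilon_1$. The only bookkeeping point worth making explicit is that the frozen error term $\int_0^{t_j}e^{i(t-s)\partial_x^2}e(s)\,ds$ on a later subinterval is controlled by recombining it with the local piece $\int_{t_j}^{t}e^{i(t-s)\partial_x^2}e(s)\,ds$ into the global Duhamel integral covered by the hypothesis, which your decomposition already sets up.
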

	
	\subsection{Global well-posedness and scattering for Fourier truncated equations}
	Let $ D \in 2^{\Z} $, we consider the following nonlinear Schr\"odinger equation with Fourier truncation on its nonlinear term
	\begin{equation}\label{NLSD}\tag{$\text{NLS}_D$}
		\begin{cases}
			i \partial_t u + \partial_x^2 u = \PD \big( \vert \PD u \vert^4 \PD u \big), \\
			u(0, x) = u_0 \in \LpnR{2}.
		\end{cases}
	\end{equation}
	Here $ \PD $ denotes a Fourier multiplier with symbol $ m_D(\xi) $ which is given by
	\begin{equation}\label{fml-GW-mD-def}
		\begin{aligned}
			m_D(\xi) := & ~ \frac{1}{\log(2D)} \sum_{N \ge 1}^D \varphi \big( \frac{\xi}N \big) \\
			= & ~ \varphi(\xi) + \sum_{N \ge 2}^D \Lf[ \frac{\log(2D) - \log(N)}{\log(2D)} \Rt] \Lf[ \varphi \big( \frac{\xi}N \big) - \varphi \big( \frac{2\xi}N \big) \Rt],
		\end{aligned}
	\end{equation}
	where $ \varphi : \R \to \C $ is the bump function in the definition of Littlewood-Paley decomposition.
	Next, we gather basic properties for $ m_D(\xi) $:
	\begin{enumerate}
		\item
		\begin{equation}\label{fml-GW-mD-bound-supp}
			0 \le m_D(\xi) \le 1,\quad m_D(\xi) =
			\begin{cases}
				1, \qquad \xi \le \frac12 \\
				0, \qquad \xi > 2D.
			\end{cases}
		\end{equation}
		
		\item
		\begin{equation}\label{fml-GW-mD-diff}
			m_D(\xi) - m_D(k \xi) \lesssim \frac{\log(k)}{\log(D)}, \qquad \forall ~ k \in 2^{\Z}.
		\end{equation}
	\end{enumerate}
	
	It is easy to check that $ \PD $ is a Mikhlin multiplier uniformly in $ D \ge 1 $, so the local theory and small-data global theory for \eqref{NLSD} can be obtained from Lemma \ref{lem-smalldata}.
	
	Before giving global well-posedness and scattering for \eqref{NLSD} with large initial data, we present two key propositions (Proposition \ref{prop-GW-frequ-local-GWP} and Proposition \ref{prop-GW-GWP-promote}). 
	\begin{proposition}\label{prop-GW-frequ-local-GWP}
		For fixed $ M > 0 $, there exists $ \eta_0 := \eta_0(M) > 0 $ so that for arbitrary $ 0 < \eta_1 < \eta_0 $, there exists $ D_1 := D_1(M, \eta_1) $ such that the following statement holds.
		Suppose that $ u_0 \in \LpnR{2} $ satisfies $ \Vert u_0 \Vert_{\LpnR{2}} \le M $ and
		\begin{equation*}
			\big\Vert P_{ \le \frac{N_0}2} u_0 \big\Vert_{L^2(\R)} \le 2\eta_0, \qquad \big\Vert P_{ \le \frac{N_0}{\eta_1}} u_0 \big\Vert_{L^2(\R)} \le 2\eta_0, \qquad \text{for some} ~ n \in 2^{\Z}.
		\end{equation*}
		Then, for any $ D > D_1 $, there exists a global solution $ u $ to \eqref{NLSD} with initial data $ u_0 $.
		Moreover, there exists constant $ C := C(M) > 0 $ such that
		\begin{equation*}
			\Vert u \Vert_{\mathcal{S}(\R)} \le C.
		\end{equation*}
	\end{proposition}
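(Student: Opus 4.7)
The plan is to use Dodson's global space-time bound (Theorem \ref{thm-globalbound}) for the untruncated equation \eqref{NLS} as an approximate solution to \eqref{NLSD} and then close by the perturbation Lemma \ref{lem-pertub}. Concretely, let $v$ denote the solution to \eqref{NLS} (with $\lambda=1$) emanating from the same data $u_0$; by Dodson, $\|v\|_{\McS(\R)} \le C(M)$. Then $v$ satisfies
\[
i\partial_t v + \partial_x^2 v = \PD F(\PD v) + e, \qquad e := F(v) - \PD F(\PD v),
\]
and the task reduces to controlling the Duhamel integral of $e$ in $\McS(\R)$ (in particular in $L^6_{t,x}$). Once this smallness is achieved by taking $D > D_1(M, \eta_1)$ sufficiently large, Lemma \ref{lem-pertub} produces a unique global solution $u$ to \eqref{NLSD} with $\|u\|_{\McS(\R)} \le C(M)$.

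To estimate $e$, I split $e = [F(v) - F(\PD v)] + (I - \PD) F(\PD v)$. The pointwise inequality $|F(v) - F(\PD v)| \lesssim (|v|^4 + |\PD v|^4)\,|(I - \PD) v|$ together with H\"older reduces the first piece to controlling $\|(I - \PD) v\|_{L^6_{t,x}}$. The frequency localization of $u_0$ around the dyadic scale $N_0$, propagated by Lemmas \ref{lem-highregu} and \ref{lem-LowtoHigh} applied to $v$, yields the quantitative bound $\|P_{\ge N_0/(\eta_0 \eta_1)} v\|_{\McS(\R)} \lesssim C(M)\,\eta_0$. This absorbs the part of $(I - \PD) v$ living where $m_D \equiv 0$. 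On the intermediate frequencies, property \eqref{fml-GW-mD-diff}---equivalently the explicit decomposition \eqref{fml-GW-mD-def}---gives the logarithmic estimate $|1 - m_D(\xi)| \lesssim \log \langle \xi \rangle / \log D$; coupled with the controlled frequency spread of $v$ up to scale $\sim N_0/(\eta_0 \eta_1)$, this produces smallness of order $\log(N_0/(\eta_0 \eta_1))/\log D \to 0$ as $D \to \infty$. The term $(I - \PD) F(\PD v)$ is treated in the same spirit: Lemma \ref{lem-highregu} transfers the frequency decay from $v$ to the nonlinearity, after which the same logarithmic bound finishes the estimate.

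The main obstacle is precisely this interplay: since $\PD$ is not a sharp Littlewood--Paley projector, one cannot invoke support disjointness, and the only decay available for $1 - m_D$ on the transition region is logarithmic in $D$. The argument therefore requires coupling \eqref{fml-GW-mD-diff} carefully with the frequency-propagation results (Lemmas \ref{lem-highregu}, \ref{lem-LowtoHigh} and Corollary \ref{cor-HightoLow}). Choosing $D_1(M, \eta_1)$ amounts to ensuring that $\log(N_0/(\eta_0 \eta_1))/\log D_1$ is small compared with the perturbation tolerance $\varepsilon_0(M, C(M), 0)$ of Lemma \ref{lem-pertub}. A preliminary rescaling $u_0(x) \mapsto N_0^{-1/2} u_0(x/N_0)$ reducing $N_0$ to $1$ (and sending $D \mapsto D/N_0 \to \infty$) is convenient bookkeeping but not essential.
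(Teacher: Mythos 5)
There is a genuine gap, and it sits at the heart of your strategy. You compare the truncated flow to the solution $v$ of the untruncated equation \eqref{NLS} and try to show that $(I-\PD)v$ and $(I-\PD)F(\PD v)$ are small, invoking $|1-m_D(\xi)|\lesssim \log\langle\xi\rangle/\log D$. But the proposition must hold uniformly over all dyadic frequency scales $N_0$, with no relation imposed between $N_0$ and $D$. On the frequency window $[\eta_0 N_0, N_0/(\eta_0\eta_1)]$ where the data (and hence, by the persistence lemmas, the solution) lives, the symbol $m_D$ is approximately a \emph{constant} $\alpha:=m_D(\eta_0 N_0)$, but that constant can be anything in $[0,1]$: for $N_0\sim D^{1/2}$ one has $\alpha\approx 1/2$ and $\|(I-\PD)v\|_{L^6_{t,x}}\approx(1-\alpha)\|v\|_{L^6_{t,x}}$, which does not tend to zero as $D\to\infty$. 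Your quantity $\log(N_0/(\eta_0\eta_1))/\log D$ is only small if $N_0$ is essentially bounded, and the proposed rescaling does not repair this: $m_D(N_0\xi)$ is not of the form $m_{D/N_0}(\xi)$, and in particular its value near $\xi\sim 1$ is $\alpha$, not $1$. So the error term $F(v)-\PD F(\PD v)$ is simply not small in general, and Lemma \ref{lem-pertub} cannot be closed along this route.

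The idea you are missing is that \eqref{fml-GW-mD-diff} should be used only to control the \emph{variation} of $m_D$ across the window, $|m_D(\eta_0 N_0)-m_D(N_0/(\eta_0\eta_1))|\lesssim \log_2(\eta_1^{-1})/\log_2 D$, which is small because the window's relative width $\eta_1^{-1}$ is fixed while $D\to\infty$ — its absolute location $N_0$ is irrelevant. This is why the paper's proof dichotomizes on $\alpha=m_D(\eta_0 N_0)$. If $\alpha<\eta_0^{1/8}$, the multiplier essentially annihilates the solution's frequencies, the nonlinearity is negligible, and the free evolution $e^{it\partial_x^2}\Pm u_0$ is the correct approximate solution. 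If $\alpha\ge\eta_0^{1/8}$, the correct comparison equation is $iv_t+v_{xx}=\alpha^6|v|^4v$ — the quintic NLS with the \emph{rescaled} coupling $\alpha^6$, whose uniform space-time bounds are supplied by Lemma \ref{lem-unif-contr} (which interpolates between small-data theory for small $\alpha$ and Dodson's theorem after the substitution $u\mapsto\alpha^{3/2}u$) — and the error $\alpha^6|v|^4v-\PD(|\PD v|^4\PD v)$ is controlled by combining the persistence-of-regularity lemmas with the slow variation of $m_D$ across the window. Your proposal effectively treats only the special case $\alpha\approx 1$.
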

	
	\begin{proof}
		First, we define
		\begin{equation*}
			\Pl := P_{<\eta_0 N_0},\quad \Pm := P_{\eta_0 N_0 \le \cdot \le \frac{N_0}{\eta_0 \eta_1}}, \quad \Ptm := P_{N_0 \le \cdot \le \frac{N_0}{\eta_1}},\quad \Ph := P_{> \frac{N_0}{\eta_0 \eta_1}},
		\end{equation*}
		with small $ 0 < \eta_0 < 1 $.
		
		We set $ \alpha := m_D(\eta_0 N_0) $ and discuss in the following two cases.
		\begin{itemize}
			\item For the case that $ 0 \le \alpha < \eta_0^{\frac18} $, we use a linear solution to approximate the solution of \eqref{NLSD} for frequency localized initial data.
			\item For the case that $\eta_0^{\frac18} \le \alpha \le 1 $, we use $ \alpha^6 \vert u \vert^4 u $ to substitute the nonlinearity in \eqref{NLSD}.
		\end{itemize}
		
		{\bf Case 1}. If $ 0 \le \alpha < \eta_0^{\frac18} $, we set $ v := \Eidelta{t} \Pm u_0 $ and $ v $ satisfies
		\begin{equation*}
			\begin{cases}
				i v_t + \partial_x^2 v = \PD \big( \vert \PD v \vert^4 \PD v \big) + e, \\
				v(0,x) = \Pm u_0,
			\end{cases}
		\end{equation*}
		where $ e := - \PD \big( \vert \PD v \vert^4 \PD v \big) $.
		
		On one hand, by Strichartz estimates:
		\begin{equation*}
			\Vert u(0) - v(0) \Vert_{\LpnR{2}} \lesssim \eta_0, \qquad
			\Vert \Pl v \Vert_{\mathcal{S}(\R)} + \Vert \Ph v \Vert_{\mathcal{S}(\R)} \lesssim M.
		\end{equation*}
		On the other hand, note that $m_D$ is non-increasing, then it follows that
		\begin{equation*}
			\Vt \PD \Pm\Vt_{L^2 \to L^2} \lesssim m_D(\eta_0 N_0) \le \alpha \le \eta_0^{\frac18}.
		\end{equation*}
		Then, from Strichartz estimates and H\"older's inequality, we obtain
		\begin{align*}
			\Vert e \Vert_{\mathcal{N}(\R)}
			\le & ~ \big\Vert \PD \big( \vert \PD v \vert^4 \PD v \big) \big\Vert_{\mathcal{N}(\R)} \\
			\le & ~ \Vert \PD v \Vert_{\TsnR{5}{10}}^4 \Vert \PD v \Vert_{\TsnR{\infty}{2}} \\
			\lesssim & ~ M^4 \Lf( \Vert \Pl v \Vert_{\TsnR{\infty}{2}} + \Vert \Pm v \Vert_{\TsnR{\infty}{2}} + \Vert \Ph v \Vert_{\TsnR{\infty}{2}} \Rt) \\
			\lesssim & ~ M^4 ( 2\eta_0 + M \eta^{\frac18}_0).
		\end{align*}
		Thanks to Lemma \ref{lem-pertub}, we can take $ \eta_0 := \eta_0(M) $ sufficiently small to obtain: there exists unique global solution $ u $ to \eqref{NLSD}.
		Moreover, there exists constant $ C := C(M) > 0 $ such that
		\begin{equation}\label{1-}
			\Vert u \Vert_{\mathcal{S}(\R)} \le C.
		\end{equation}
		
		{\bf Case 2}. If $ \eta_0^{\frac18} < \alpha \le 1 $.
		We consider $ v $ satisfies
		\begin{equation*}
			\begin{cases}
				i v_t + \partial_x^2 v = \alpha^6 \vert v \vert^4 v, \\
				v(0) = \Ptm u_0.
			\end{cases}
		\end{equation*}
		By Lemma \ref{lem-unif-contr}, $ u $ is global and satisfies
		\begin{equation}\label{fml-GW-thm-u-SR}
			\Vert v \Vert_{\McS(\R)} \le C(M).
		\end{equation}
		Moreover, by the definition of $ v $, we also have
		\begin{equation}\label{fml-GW-Prop1-init-bound}
			\Vt u(0, \cdot) - v(0, \cdot) \Vt_{L^2(\R)} \lesssim \eta_0, \qquad \Vt \Pl v(0, \cdot) \Vt_{L^2(\R)} + \Vt \Ph v(0, \cdot) \Vt_{L^2(\R)} = 0.
		\end{equation}
		According to Lemma \ref{lem-highregu} and Lemma \ref{lem-lowregu}, the frequencies of $ v $ is concentrated at $ [\eta_0 N_0 , \frac{N_0}{\eta_0 \eta_1}] $ which leads us to
		\begin{equation*}
			\Vt \Pl v \Vt_{\mathcal{S}(\R)} \le C(M) \eta_0^{\frac15}, \qquad \Vt \Ph v \Vt_{\mathcal{S}(\R)} \le C(M) \eta_0.
		\end{equation*}
		Therefore, we can control high and low frequencies for the Duhamel part of $ v $, which is given by
		\begin{equation}
			\begin{aligned}
				\Vt \Pl G \Vt_{\mathcal{S}(\R)} & \le \Vt \Pl v \Vt_{\mathcal{S}(\R)} + \Vt \Pl v(0, \cdot) \Vt_{L^2(\R)} \lesssim C(M) \eta_0^{\frac15} + \eta_0, \\
				\Vt \Ph G \Vt_{\mathcal{S}(\R)} & \le \Vt \Ph v \Vt_{\mathcal{S}(\R)} + \Vt \Ph v(0, \cdot) \Vt_{L^2(\R)} \lesssim C(M) \eta_0 + \eta_0,
			\end{aligned}
		\end{equation}
		where
		\begin{equation*}
			G(t,x) := \alpha^6 \int_0^t \Eidelta{(t-s)} F(v)(s) \Ind{s}.
		\end{equation*}
		
		Noting that $ v $ can be regarded as the perturbation of \eqref{NLSD}:
		\begin{equation*}
			i v_t + \partial_x^2 v = \PD \big( \vert \PD v \vert^4 \PD v \big) + e,
		\end{equation*}
		where
		\begin{align*}
			e = & ~ \alpha^6 \vt v \vt^4 v - \PD \big( \vert \PD v \vert^4 \PD v \big) \\
			= & ~ \big[ \alpha^6 \vt v \vt^4 v - \PD ( \alpha^5 \vt v \vt^4 v ) \big] + \PD ( \alpha^5 \vt v \vt^4 v - \vt \PD v \vt^4 \PD v ) \\
			=: & ~ e_1 + e_2.
		\end{align*}
		Now, we need to prove that $ e_1, e_2 $ are small enough so that Lemma \ref{lem-pertub} is available.
		
		Basic calculus for $ e_1 $ yields
		\begin{align*}
			& ~ \Lf\Vt \int_0^t \Eidelta{(t-s)} e_1(s) \Ind{s} \Rt\Vt_{\mathcal{S}(\R)}
			= \Vt \alpha^{-1} (\alpha-\PD) G \Vt_{\mathcal{S}(\R)} \\
			\le & ~ \Vt \alpha^{-1} (\alpha-\PD) \Pl G \Vt_{\mathcal{S}(\R)} + \Vt \alpha^{-1} (\alpha-\PD) \Pm G \Vt_{\mathcal{S}(\R)} + \Vt \alpha^{-1} (\alpha-\PD) \Ph G \Vt_{\mathcal{S}(\R)}.
		\end{align*}
		Since $ \alpha - \PD $ keeps $ \LpnR{p} $ norms bounded for $ 1 < p < \infty $ and $ \alpha > \eta^{\frac18} $, by \eqref{fml-GW-Prop1-init-bound} we have
		\begin{align*}
			\Vt \alpha^{-1} (\alpha - \PD) \Pl G \Vt_{\mathcal{S}(\R)} + \Vt \alpha^{-1} (\alpha - \PD) \Ph G \Vt_{\mathcal{S}(\R)}
			\lesssim C(M) \eta_0^{\frac15} \eta_0^{-\frac18}
			\lesssim C(M) \eta_0^{\frac3{40}}.
		\end{align*}
		From \eqref{fml-GW-mD-diff}, we find that $ (\alpha - \PD) \Pm $ is bounded operator from $ \LpnR{2} $ to itself and
		\begin{equation*}
			\Vt (\alpha - \PD) \Pm \Vt_{\LpnR{2} \to \LpnR{2}}
			\lesssim \Lf\vt m_D(\eta_0 N_0) - m_D \big( \frac{N_0}{\eta_0 \eta_1} \big) \Rt\vt
			\lesssim \frac{\log_2(\eta_1^{-1})}{\log_2(D)}.
		\end{equation*}
		Hence, we can take small $ D_1 := D_1(\eta_0, \eta_1) \ll 1 $ such that
		\begin{equation*}
			\frac{\log_2(\eta_1^{-1})}{\log_2(D)} \le \eta_0^2, \qquad \forall ~ D > D_1.
		\end{equation*}
		Then, by Strichartz inequality and \eqref{fml-GW-thm-u-SR}, we can treat frequencies of $ P_{med}G $ as follows
		\begin{align*}
			& ~ \Vt \alpha^{-1} (\alpha - \PD) \Pm G \Vt_{\mathcal{S}(\R)}
			\le \alpha^5 \Vt (\alpha - \PD) \Pm (\vt v \vt^4 v) \Vt_{\TsnR{1}{2}} \\
			\lesssim & ~ \eta_0^2 \Vt \vt v \vt^4 v \Vt_{\TsnR{1}{2}}
			\lesssim \eta_0^2 \Vt v \Vt_{\TsnR{5}{10}}
			\lesssim C(M) \eta_0^2,
		\end{align*}
		and it implies that
		\begin{equation*}
			\Lf\Vt \int_0^t \Eidelta{(t-s)} e_1(s) \Ind{s} \Rt\Vt_{\mathcal{S}(\R)} \le C(M) \eta^{\frac3{40}}.
		\end{equation*}
		
		Next, we estimate $ e_2 $.
		Using Strichartz inequality and \eqref{fml-GW-thm-u-SR}, we get
		\begin{align*}
			\Vt e_2 \Vt_{\mathcal{N}(\R)}
			\lesssim & ~ \big\Vt \vt \alpha v \vt^4(\alpha v) - \vt \PD v \vt^4 (\alpha v) + \vt \PD v \vt^4 (\alpha v) - \vt \PD v\vt^4 (\PD v) \big\Vt_{\TsnR{\frac54}{\frac{10}9}} \\
			\lesssim & ~ \Vt (\alpha - \PD) v \Vt_{\TsnR{\infty}{2}} \big( \Vt v \Vt^4_{\TsnR{5}{10}} + \Vt \PD v \Vt^4_{\TsnR{5}{10}} \big) \\
			\lesssim & ~ C(M) \big( \Vt (\alpha - \PD) \Pm v \Vt_{\TsnR{\infty}{2}} + \Vt ( \Pl + \Ph ) v \Vt_{\TsnR{\infty}{2}} \big) \\
			\lesssim & ~ C(M) \eta_0^{\frac15}.
		\end{align*}
		
		For sufficiently small $ \eta_0 := \eta_0(M) $, we verify all conditions requires in Lemma \ref{lem-pertub}, thus $ u $ to \eqref{NLSD} with initial data $ u_0 $ is global. Moreover, we have
		\begin{equation*}
			\Vt u \Vt_{\mathcal{S}(\R)} \lesssim_M C,
		\end{equation*}
		together with \eqref{1-}, the proof of Proposition \ref{prop-GW-frequ-local-GWP} is completed.
	\end{proof}
	
	\begin{proposition}\label{prop-GW-GWP-promote}
		For fixed $ M > 0 $ and $ \eta_0 > 0 $ and assume Theorem \ref{thm-GW-GWP} holds for all $\Vt u_0\Vt_{\LpnR{2}}<M-\eta_0$. To be more precisely, there exist two constants $ D_2 := D_2(M, \eta_0) > 0 $, $ C_0 := C_0(M) > 0 $ such that for all $D>D_2$ and solutions $ u $ to \eqref{NLSD} with initial data $ u_0 $ with $ \Vt u \Vt_{\LpnR{2}} \le M-\eta_0 $ is global in time and satisfies $\Vt u\Vt_{\mathcal{S}(\R)}\le B$. 
		Then, we can find $ \eta_1 := \eta_1(M, C_0) > 0 $ such that the following statement holds. If the initial data $ u_0 $ satisfies $\Vt u_0\Vt_{\LpnR{2}}\le M$ and
		\begin{equation}\label{eta_1}
			\Vt P_{\le N_0} u_0 \Vt_{L^2(\R)} \ge \eta_0, \qquad \big\Vt P_{> \frac{N_0}{\eta_1}} u_0 \big\Vt_{L^2(\R)} \ge \eta_0,
		\end{equation}
		for some $ N_0 \in 2^{\Z} $, there exists an unique solution $ u $ to \eqref{NLSD} with initial data $ u_0 $ and a constant $ C := C(M, C_0) $ such that
		\begin{equation*}
			\Vt u \Vt_{\mathcal{S}(\R)} \le C.
		\end{equation*}
	\end{proposition}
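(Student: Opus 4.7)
The plan is a frequency decomposition argument driven by the inductive hypothesis: exploit assumption \eqref{eta_1} to split $u_0$ into a low-frequency piece and a high-frequency piece, each strictly below mass $M$; evolve each by the inductive hypothesis to produce a global solution; then glue the two solutions together as an approximate solution to \eqref{NLSD} and close via the stability lemma.

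Fix an intermediate dyadic scale $N_\ast := N_0 \eta_1^{-1/2}$ and set $v_0 := P_{\le N_\ast} u_0$, $w_0 := P_{> N_\ast} u_0$, so that $u_0 = v_0 + w_0$ with the two summands orthogonal in $L^2$. By Pythagoras and \eqref{eta_1},
$$\|v_0\|_{L^2}^2 \le \|u_0\|_{L^2}^2 - \|P_{> N_0/\eta_1} u_0\|_{L^2}^2 \le M^2 - \eta_0^2,$$
and symmetrically $\|w_0\|_{L^2}^2 \le M^2 - \eta_0^2$. Both pieces lie strictly below mass $M$ by a quantitative margin in $\eta_0$, so the inductive hypothesis furnishes global solutions $v, w$ of \eqref{NLSD} with $v(0)=v_0$, $w(0)=w_0$, and $\|v\|_{\mathcal{S}(\R)} + \|w\|_{\mathcal{S}(\R)} \le 2B$ for some $B = B(M, C_0)$. (A minor bookkeeping adjustment is needed to land the reduced masses $\sqrt{M^2 - \eta_0^2}$ inside the level $M - \eta_0$ allowed by the inductive hypothesis; this can be arranged by inducting with step size $O(\eta_0^2)$ and shrinking $\eta_1$ accordingly.)

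Take $\widetilde u := v + w$ as the approximate solution. Using Lemma \ref{lem-lowregu} and Corollary \ref{cor-HightoLow}, one shows that for any auxiliary small parameter $\eta_2$, the Strichartz norms of $P_{> N_\ast/\eta_2} v$ and $P_{\le \eta_2 N_\ast} w$ are controlled by positive powers of $\eta_2$; thus the bulks of $v$ and $w$ remain frequency-separated by a factor $\gtrsim \eta_2^{-2}$ uniformly in time. A direct computation gives
$$i\partial_t \widetilde u + \partial_x^2 \widetilde u = \mathcal{P}_D F(\mathcal{P}_D \widetilde u) + e,\qquad e := \mathcal{P}_D\bigl[F(\mathcal{P}_D v) + F(\mathcal{P}_D w) - F(\mathcal{P}_D v + \mathcal{P}_D w)\bigr],$$
and expanding $F$ exhibits $e$ as a finite linear combination of cross terms, each containing at least one factor of $\mathcal{P}_D v$ paired with at least one factor of $\mathcal{P}_D w$. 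Splitting $v, w$ into their well-localized bulks and their small tails, each bulk-bulk pairing is handled by the bilinear Strichartz estimate (Lemma \ref{lem-pre-Bili-Stri}), producing a gain of a positive power of $\eta_2$ (hence of $\eta_1$); each pairing involving a tail is absorbed by the smallness of that tail's Strichartz norm. Summing finitely many contributions yields
$$\Bigl\|\int_0^t e^{i(t-s)\partial_x^2} e(s)\,ds\Bigr\|_{\mathcal{S}(\R)} \le \kappa(M, C_0) \cdot \eta_1^{\,c}$$
for some $c > 0$.

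Finally, choose $\eta_1 = \eta_1(M, C_0)$ so small that $\kappa(M, C_0) \eta_1^c$ falls below the perturbation threshold $\varepsilon_0(2B, 2B, 0)$ in Lemma \ref{lem-pertub} (the initial-data error $u_0 - \widetilde u(0)$ vanishes by construction). The stability lemma then produces a unique global solution $u$ to \eqref{NLSD} with $u(0) = u_0$ satisfying $\|u\|_{\mathcal{S}(\R)} \le C(M, C_0)$. The principal obstacle is Step 3: the frequency-cascade gain from the bilinear estimate must survive the (possibly very large) Strichartz constant $B = B(M, C_0)$ supplied by the inductive hypothesis, which is what forces $\eta_1$ to depend on $C_0$ as in the statement, and also why $D$ must be taken large enough that the multiplier $\mathcal{P}_D$ does not destroy the frequency separation produced by Lemma \ref{lem-lowregu} and Corollary \ref{cor-HightoLow}.
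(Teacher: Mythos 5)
Your overall architecture — split $u_0$ in frequency into two pieces each of mass strictly below $M$, evolve each by the inductive hypothesis, glue, and close with Lemma \ref{lem-pertub} using persistence of regularity plus the bilinear Strichartz estimate on the cross terms — is exactly the paper's strategy. But there is a genuine gap at the splitting step, and it is not the "minor bookkeeping" you flag. You cut at a single fixed scale $N_*=N_0\eta_1^{-1/2}$, so the data of $v$ and of $w$ are frequency-\emph{adjacent}: $v_0$ lives on $\{|\xi|\lesssim N_*\}$ and $w_0$ on $\{|\xi|\gtrsim N_*\}$ with no gap between them. Lemma \ref{lem-LowtoHigh} and Corollary \ref{cor-HightoLow} then tell you only that $P_{>N_*/\eta_2}v$ and $P_{\le \eta_2 N_*}w$ are small; the complementary ``bulks'' $P_{\le N_*/\eta_2}v$ and $P_{>\eta_2 N_*}w$ therefore \emph{overlap} on the frequency window $[\eta_2 N_*,\,N_*/\eta_2]$ — your claim that they are ``frequency-separated by a factor $\gtrsim\eta_2^{-2}$'' is exactly backwards. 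A cross term such as $P_{N_*}v\cdot P_{N_*}w$ has both factors at comparable frequency, neither factor is a small tail, and Lemma \ref{lem-pre-Bili-Stri} requires $N\ge 10M$ so it gives no gain there. If $\widehat{u_0}$ carries mass $\sim\eta_0$ spread across $[N_*/2,2N_*]$, this contribution to the Duhamel term is $O(1)$ in $M$ and $\eta_0$, independent of $\eta_1$ and $\eta_2$, so the perturbation threshold cannot be met by shrinking $\eta_1$.

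The paper closes this gap with a pigeonhole step that your argument is missing: partition $[N_0,\eta_1^{-1}N_0]$ into $\sim\Veps^{-2}$ dyadic blocks $I_j$ of relative width $\Veps^{-4}$ and select one with $\Vt P_{I_j}u_0\Vt_{L^2}\lesssim\Veps$; then split into $\Ptl u_0=P_{<\inf I_j}u_0$ and $\Pth u_0=P_{\ge\sup I_j}u_0$, discarding the small middle piece. Now the two data sets are separated by a factor $\Veps^{-4}$, so Lemmas \ref{lem-highregu} and \ref{lem-lowregu} yield $\Vt |\partial_x|^s\Utl\Vt_{\McS(\R)}\lesssim(\Veps N_j)^s$ and $\Vt P_N\Uth\Vt_{L^6_{t,x}}\lesssim N^s(\Veps^{-1}N_j)^{-s}$, and pivoting the dyadic sum at $\Veps^{-1/2}N_j$ makes \emph{every} regime of the cross term (including comparable frequencies) gain a positive power of $\Veps$. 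The price is that $\wt{u}(0)-u_0=-P_{I_j}u_0$ no longer vanishes, but it has $L^2$ norm $\lesssim\Veps$ and is absorbed by the $\delta$-hypothesis of Lemma \ref{lem-pertub}. Without this step your error estimate does not close; with it, the rest of your outline (mass reduction via Pythagoras, inductive hypothesis, bilinear estimate on the genuinely separated regime, stability) matches the paper's proof.
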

	
	\begin{proof}
		We introduce a small parameter $ 0 < \Veps \ll 1 $ with $ \Veps^{-4([\Veps^{-2}] + \frac12)} < \eta_1^{-1} $.
		Then, for $\eta_1>0$ sufficiently small we can define $ \Veps^{-2} $ disjoint sub-intervals $ I_j $ of $ [N_0, \eta_1^{-1} N_0] $ as
		\begin{equation*}
			I_j := [\Veps^{-4(j-\frac12)} N_0, \Veps^{-4(j+\frac12)} N_0 ), \qquad 1 \le j \le \lfloor \Veps^{-2}\rfloor+1.
		\end{equation*}
		According to the definition, there exists a sub-interval $ I_j $ such that
		\begin{equation}\label{fml-GW-prop-eps-interv}
			\Vt P_{I_j} u_0 \Vt_{L^2(\R)} \lesssim \Veps.
		\end{equation}
		
		For the $ I_j $ fixed above, we define
		\begin{equation*}
			\Ptl := P_{< \inf I_j}, \qquad \Pth := P_{\ge \sup I_j},
		\end{equation*}
		and denote $ \Utl $ and $ \Uth $ the solutions to \eqref{NLSD} with initial data $ \Ptl u_0 $ and $ \Pth u_0 $ respectively.
		Thus, \eqref{eta_1} yields
		\begin{align*}
			\Vt \Uth(0, \cdot) \Vt_{L^2(\R)} \le M - \Vt P_{\le N_0} u_0 \Vt_{L^2(\R)} \le M - \eta_0, \\
			\Vt \Utl(0, \cdot) \Vt_{L^2(\R)} \le M - \big\Vt P_{> \frac{N_0}{\eta_1}} u_0 \big\Vt_{L^2(\R)} \le M - \eta_0.
		\end{align*}
		From the assumption in this proposition, $ \Utl $ and $ \Uth $ are global and satisfy
		\begin{equation}\label{fml-GW-prop-U-HL-R}
			\Vt \Utl \Vt_{\mathcal{S}(\R)} \le C_0, \qquad \Vt \Uth \Vt_{\mathcal{S}(\R)} \le C_0.
		\end{equation}
		
		By Bernstein's inequality, the frequency localized initial data has positive or negative regularity:
		\begin{equation*}
			\Vt \vt \partial_x \vt^s \Utl(0, \cdot) \Vt_{L^2(\R)} \lesssim (\Veps N_j)^s, \qquad \Vt \vt \partial_x \vt^{-s} \Uth(0, \cdot) \Vt_{L^2(\R)} \lesssim (\Veps^{-1} N_j)^{-s}, \qquad \forall ~ s > 0.
		\end{equation*}
		Thus, by the Lemma \ref{lem-highregu} and Lemma \ref{lem-lowregu}, the regularity persists under \eqref{NLSD} gives
		\begin{equation}\label{fml-GW-prop-pos-reg}
			\Vt \vt \partial_x \vt^s \Utl \Vt_{\mathcal{S}(\R)} \lesssim (\Veps N_j)^s,
		\end{equation}
		and
		\begin{align}
			\Vt P_N \Utl \Vt_{L_{t, x}^6(\R \times \R)} \lesssim & ~ \Vt P_N \Utl(0, \cdot) \Vt_{L^2(\R)} \sim N^s \Vt \vt \partial_x \vt^{-s} P_N u_0 \Vt_{L^2(\R)} \nonumber \\
			\lesssim & ~ N^2 (\Veps^{-1} N_j)^{-s}, \qquad \forall ~ 0 < s < \frac14. \label{fml-GW-prop-neg-reg}
		\end{align}
		
		Next, we use Lemma \ref{lem-pertub} to prove that $ \Utl + \Uth $ is the approximation solution to \eqref{NLSD} with initial data $ u_0 $.
		Let $ \wt{u} := \Utl + \Uth $, then it satisfies
		\begin{equation*}
			\begin{cases}
				i \wt{u}_t + \partial_x^2 \wt{u} = \PD F(\PD \wt{u}) + e, \\
				\wt{u}(0) = ( \Ptl + \Pth )u_0,
			\end{cases}
		\end{equation*}
		where $ e := \PD F(\PD \Utl) + \PD F(\PD \Uth) - \PD F(\PD \wt{u}) $.
		Thanks to \eqref{fml-GW-prop-eps-interv} and \eqref{fml-GW-prop-U-HL-R}, we have
		\begin{equation*}
			\Vt \wt{u}(0, \cdot) - u(0, \cdot) \Vt_{L^2(\R)} \lesssim \Veps \qquad \text{and} \qquad \Vt \wt{u} \Vt_{\McS(\R)} \lesssim 1.
		\end{equation*}
		
		Therefore, it remains to deal with the error term $ e $.
		Basic calculus gives that $ \Vt e \Vt_{L_{t, x}^{\frac65}(\R \times \R)} $ can be bounded by linear combination of the form
		\begin{equation*}
			\sum_{j=1}^3 C_j \big\Vt \vt \PD \Utl \vt^{4-j} \vt \PD \Uth \vt^j \big\Vt_{L_{t, x}^{\frac65}(\R \times \R)}.
		\end{equation*}
		By \eqref{fml-GW-prop-U-HL-R}, it holds
		\begin{equation*}
			\big\Vt \vt \PD \Utl \vt^{4-j} \vt \PD \Uth \vt^j \big\Vt_{L_{t, x}^{\frac65}(\R \times \R)} \lesssim \big\Vt \vt \PD \Utl \vt \vt \PD \Uth \vt \big\Vt_{L_{t, x}^3(\R \times \R)}, \qquad \forall ~ j = 1, 2, 3.
		\end{equation*}
		Then, apply dyadic decomposition to the term $ \PD \Uth $, we arrive at
		\begin{equation*}
			\begin{aligned}
				\Vt e \Vt_{L_{t, x}^{\frac65}(\R \times \R)}
				\lesssim & ~ \big\Vt \vt \PD \Utl \vt \vt \PD \Uth \vt \big\Vt_{L_{t, x}^3(\R \times \R)} \\
				\lesssim & ~ \sum_{N \le \Veps^{-1/2}N_j} \Vt (P_N \PD \Uth) \PD \Utl \Vt_{L_{t, x}^2(\R \times \R)} \\
				& \qquad + \sum_{N > \Veps^{-1/2}N_j} \Vt (P_N \PD \Uth) P_{\ge N/8} \PD \Utl \Vt_{L_{t, x}^2(\R \times \R)} \\
				& \qquad + \sum_{N > \Veps^{-1/2}N_j} \Vt (P_N \PD \Uth) P_{< N/8} \PD \Utl \Vt_{L_{t, x}^2(\R \times \R)} \\
				:= & ~ \mathcal{A} + \mathcal{B} + \mathcal{C}.
			\end{aligned}
		\end{equation*}
		
		For $ \mathcal{A} $, by \eqref{fml-GW-prop-U-HL-R} and \eqref{fml-GW-prop-neg-reg}:
		\begin{align*}
			\mathcal{A} \lesssim & ~ \sum_{N \le \Veps^{-1/2}N_j} \Vt P_N \PD \Uth \Vt_{L_{t, x}^6(\R \times \R)} \Vt \PD \Utl \Vt_{L_{t, x}^6(\R \times \R)} \\
			\lesssim & ~ (\Veps^{-1} N_j)^{-s} \sum_{N \le \Veps^{-1/2} N_j} N^s \lesssim \Veps^{-\frac{s}2}.
		\end{align*}
		
		For $ \mathcal{B} $, by \eqref{fml-GW-prop-U-HL-R} and \eqref{fml-GW-prop-pos-reg}:
		\begin{align*}
			\mathcal{B} \lesssim & ~ \sum_{N > \Veps^{-1/2}N_j} \Vt \PD \Uth \Vt_{L_{t, x}^6(\R \times \R)} N^{-s} \big\Vt \vt \partial_x \vt^s \Utl \big\Vt_{L_{t, x}^6(\R \times \R)} \\
			\lesssim & ~ \sum_{N > \Veps^{-1/2} N_j} N^{-s} (\Veps N_j)^s \lesssim \Veps^{-\frac{3s}2}.
		\end{align*}
		
		For the last term, we take $ q = \frac{30}{11} $ in Lemma \ref{lem-pre-bilinear-Lp}, then use H\"older's inequality and Sobolev embedding to obtain
		\begin{align*}
			\mathcal{C} \lesssim & ~ \sum_{N > \Veps^{-1/2}N_j} \Vt (P_N \PD \Uth) P_{\le N/8} \PD \Utl \Vt_{L_{t, x}^{\frac{30}{11}}(\R \times \R)}^{\frac12} \Vt P_{\le N/8} \Utl \Vt_{L_{t, x}^{\frac{15}2}(\R \times \R)}^{\frac12} \Vt P_N \Uth \Vt_{L_{t, x}^6(\R \times \R)}^{\frac12} \\
			\lesssim & ~ \sum_{N > \Veps^{-1/2}N_j} N^{-\frac1{20}} \big\Vt \vt \partial_x \vt^{\frac1{10}} P_{\le N/8} \Utl \big\Vt_{\TsnR{\frac{15}2}{\frac{30}7}}^{\frac12} \\
			\lesssim & ~ \sum_{N > \Veps^{-1/2}N_j} N^{-\frac1{20}} (\Veps N_j)^{\frac1{20}} \\
			\lesssim & ~ \Veps^{\frac1{10}}.
		\end{align*}
		
		Since the above estimates are uniformly in $ \Veps $ and $ u_0 $, we can take $ \Veps $ sufficiently small to meet the requirements in Lemma \ref{lem-pertub}.
		Thus, there is a global solution $ u $ to \eqref{NLSD} with initial data $ u_0 $ and a constant $ C := C(M, C_0) > 0 $ such that
		\begin{equation*}
			\Vt u \Vt_{\mathcal{S}(\R)} \le C,
		\end{equation*}
		and the proof here is completed.
	\end{proof}
	
	Next, we prove \eqref{NLSD} is global well-posed and scattering even for large initial data.
	
	\begin{theorem}[Global well-posedness and scattering for \eqref{NLSD}]\label{thm-GW-GWP}
		For fixed $ M > 0 $, we assume that there exists constant $ C := C(M) $ and $ D_0 := D_0(M) $ such that for any $ u_0 $ with $ \Vert u_0 \Vert_{L^2(\R)} \le M $ and $ D > D_0 $, there exists a unique global solution to \eqref{NLSD} with initial data $ u_0 $.
		Then, $ u $ satisfies
		\begin{equation}
			\Vert u \Vert_{\mathcal{S}(\R)} \lesssim C.
		\end{equation}
		Moreover, there exists $ u_{\pm} \in L^2(\R) $ such that
		\begin{equation*}
			\lim_{t \to \pm \infty} \big\Vert u(t, \cdot) - e^{it \partial_x^2} u_{\pm} \big\Vert_{L^2(\R)} = 0.
		\end{equation*}
	\end{theorem}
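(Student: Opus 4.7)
The plan is to prove the theorem by induction on the mass level $M$. Let $\mathcal{A}$ denote the set of $M > 0$ for which there exist $D_0(M), C(M) > 0$ with the following property: for every $D > D_0(M)$ and every $u_0 \in \LpnR{2}$ with $\Vt u_0 \Vt_{\LpnR{2}} \le M$, the solution to \eqref{NLSD} with initial data $u_0$ is global and satisfies $\Vt u \Vt_{\McS(\R)} \le C(M)$. Lemma \ref{lem-smalldata} shows $\mathcal{A}$ contains an interval $(0, M_*]$ for some small $M_* > 0$, with implicit constants independent of $D \ge 1$. The target is $\mathcal{A} = (0, \infty)$; supposing for contradiction that $M^\ast := \sup \mathcal{A} < \infty$, it suffices to push $\mathcal{A}$ slightly past $M^\ast$.

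Fix $M = M^\ast$ and let $\eta_0 := \eta_0(M) > 0$ be the constant from Proposition \ref{prop-GW-frequ-local-GWP}. The inductive hypothesis provides the theorem at mass level $M - \eta_0$ with constant $C_0 := C(M - \eta_0)$, which in turn fixes $\eta_1 := \eta_1(M, C_0) > 0$ from Proposition \ref{prop-GW-GWP-promote}. For any $u_0$ with $\Vt u_0 \Vt_{\LpnR{2}} \le M$, I would run a frequency dichotomy on the cumulative mass profile $f(N) := \Vt P_{\le N} u_0 \Vt_{\LpnR{2}}$. Either
\begin{enumerate}[(i)]
\item there exists a dyadic $N_0 \in 2^{\Z}$ with $f(N_0) \ge \eta_0$ and $\Vt P_{> N_0/\eta_1} u_0 \Vt_{\LpnR{2}} \ge \eta_0$, in which case Proposition \ref{prop-GW-GWP-promote} directly produces a global solution with $\Vt u \Vt_{\McS(\R)} \le C(M, C_0)$; or
\item no such $N_0$ exists, and a pigeonhole on the jumps of the monotone function $f$ produces a dyadic $N_0$ for which $\Vt P_{\le N_0/2} u_0 \Vt_{\LpnR{2}} \le 2\eta_0$ and $\Vt P_{\le N_0/\eta_1} u_0 \Vt_{\LpnR{2}} \le 2\eta_0$, at which point Proposition \ref{prop-GW-frequ-local-GWP} furnishes $\Vt u \Vt_{\McS(\R)} \le C(M)$.
\end{enumerate}
Setting $D_0(M) := \max\{ D_1(M, \eta_1), D_2(M, \eta_0) \}$ handles both cases uniformly in $D > D_0(M)$, contradicting the maximality of $M^\ast$.

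For the scattering conclusion, given the uniform global Strichartz bound $\Vt u \Vt_{\McS(\R)} \le C(M)$, I would define
$$ u_+ := u_0 - i \int_0^\infty e^{-is \partial_x^2} \PD F(\PD u(s)) \, \Ind{s}, $$
with the analogous $u_-$ at $t \to -\infty$. Strichartz together with $u \in L_t^5 L_x^{10}(\R \times \R)$ yields
$$ \big\Vt u(t) - e^{it\partial_x^2} u_+ \big\Vt_{\LpnR{2}} \lesssim \Vt u \Vt_{L_t^5 L_x^{10}((t,\infty)\times\R)}^4 \Vt u \Vt_{L_t^\infty L_x^2(\R \times \R)} \longrightarrow 0 $$
as $t \to \infty$, by dominated convergence applied to the tail in $t$. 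The symmetric argument handles $t \to -\infty$.

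The main obstacle I anticipate is the bookkeeping in the inductive step: the parameters must be chosen in a rigid order ($\eta_0$ first, then $C_0$, then $\eta_1$, finally $D_0$), and one must verify that the negation of the concentration hypothesis in case (i) at \emph{every} dyadic scale really does force the simultaneous smallness of $f$ at the two scales $N_0/2$ and $N_0/\eta_1$ needed by Proposition \ref{prop-GW-frequ-local-GWP}. A secondary subtlety is ensuring $D_0(M^\ast)$ can be chosen uniformly over all $u_0$ with $\Vt u_0\Vt_{\LpnR{2}}\le M^\ast$; this rests on the fact that the pigeonhole only sees mass thresholds determined by $\eta_0, \eta_1$, not the specific profile of $u_0$.
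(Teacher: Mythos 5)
Your proposal is correct and follows essentially the same route as the paper: an induction on the mass level in increments of $\eta_0$, with the base case from Lemma \ref{lem-smalldata} and the inductive step split into the same frequency dichotomy — concentration in a dyadic window handled by Proposition \ref{prop-GW-frequ-local-GWP}, and mass spread across separated scales handled by Proposition \ref{prop-GW-GWP-promote} — with the parameters ordered exactly as you describe. The scattering statement, which the paper leaves implicit, is indeed the standard Cauchy-criterion consequence of the global $L_t^5L_x^{10}$ bound that you wrote down.
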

	
	\begin{proof}
		We fix $ \wt{M} > 0 $ and prove Theorem \ref{thm-GW-GWP} by induction.
		
		\textbf{Step1.}
		Thanks to Lemma \ref{lem-smalldata}, there exists $ \varepsilon > 0 $ such that Theorem \ref{thm-GW-GWP} holds for $ u_0 $ with $ \Vt u_0 \Vt_{L^2(\R)} < \varepsilon $.
		
		\textbf{Step2.}
		We can take $ M = \wt{M} $ and small $ \eta_0 := \eta_0(\wt{M}) > 0 $ satisfying Proposition \ref{prop-GW-frequ-local-GWP}.
		We may assume that $ 2\eta_0 < \varepsilon $.
		
		\textbf{Step3.}
		Supposing that Theorem \ref{thm-GW-GWP} holds for initial data $ u_0 $ with $ \Vt u_0 \Vt_{L^2(\R^d)} \le M - \eta_0 $, we can find constant $ D_2 $ and $ C_0 $ which are introduced by Proposition \ref{prop-GW-GWP-promote}.
		We chose $ \eta_1 := \eta_1(M, C_0) $ and $ D_0 := D_1 + D_2 $ where $ D_1 $ is introduced by Proposition \ref{prop-GW-frequ-local-GWP}.
		We now turn to prove Theorem \ref{thm-GW-GWP} in the following two cases.
		
		\begin{itemize}
			\item If
			\[
			\inf_{N \in 2^{\Z}} \Vt P_{\le \frac{N}2} u_0 \Vt_{L^2(\R)} + \inf_{N \in 2^{\Z}} \Vt P_{> \frac{N}{\eta_1}} u_0 \Vt_{L^2(\R)} < 2\eta_0,
			\]
			we can find $ N_0 \in 2^{\Z} $ such that
			\begin{equation*}
				\Vt P_{\le \frac{N_0}2} u_0 \Vt_{L^2(\R)} + \inf_{N \in 2^{\Z}} \Vt P_{> \frac{N_0}{\eta_1}} u_0 \Vt_{L^2(\R)} \le 2\eta_0.
			\end{equation*}
			Therefore, Proposition \ref{prop-GW-frequ-local-GWP} and Theorem \ref{thm-GW-GWP} is available for initial data $ u_0 $ with $ \Vt u_0 \Vt_{L^2(\R)} \le \min \{M, \wt{M} \} $ and $ D > D_0 $.
			Moreover, the solution $ u $ to \eqref{NLSD} has a uniformly bound $ \Vt u \Vt_{\McS(I \times \R)} \le C $ for $ C := C(\wt{M}) $.
			\item If
			\[
			\sup_{N \in 2^{\Z}} \Vt P_{\le \frac{N}2} u_0 \Vt_{L^2(\R)} + \inf_{N \in 2^{\Z}} \Vt P_{> \frac{N}{\eta_1}} u_0 \Vt_{L^2(\R)} \ge 2\eta_0.
			\]
			We may assume $ \Vt u_0 \Vt_{L^2(\R)} \ge 2\eta_0 $, otherwise, it can be treated by Lemma \ref{lem-smalldata}.
			Noting that
			\begin{equation*}
				\lim_{N \to 0} \Vt P_{\ge N} u_0 \Vt_{L^2(\R)} = 0 \qquad \text{and} \qquad \lim_{N \to 0} \Vt P_{\ge N} u_0 \Vt_{L^2(\R)} \ge 2\eta_0,
			\end{equation*}
			hence there exists $ N_0 $ such that
			\begin{equation*}
				\Vt P_{\ge \frac{N_0}2} u_0 \Vt_{L^2(\R)} \le \eta_0 \qquad \text{and} \qquad \Vt P_{\ge N_0} u_0 \Vt_{L^2(\R)} \ge \eta_0.
			\end{equation*}
			It follow that
			\begin{equation*}
				\Vt P_{\ge N_0} u_0 \Vt_{L^2(\R)} \ge \eta_0 \qquad \text{and} \qquad \Vt P_{< \frac{N_0}{\eta_1}} u_0 \Vt_{L^2(\R)} \ge \eta_0,
			\end{equation*}
			then Proposition \ref{prop-GW-GWP-promote} is available.
			Therefore, there exists a unique global solution $ u $ to \eqref{NLSD} satisfying $ \Vt u \Vt_{\McS(I	\times \R)} \le C(\wt{M}, C_0(\wt{M})) $.
		\end{itemize}
	\end{proof}
	
	\section{Nonlinear profile decomposition and approximation for \eqref{NLSD}}
	In this section, we study profiles for the solutions to nonlinear equation \eqref{NLSD} and then obtain the nonlinear profiles decomposition.
	As an application, we establish a weak $ L^2 $-topology approximation theorem.
	
	\subsection{Profile decomposition for the solutions to \eqref{NLSD}}\label{S:5}
	Recall that in Theorem \ref{thm-pre-linear-profile}, we have the following three cases associated to the linear profile decomposition:
	\begin{enumerate}
		\item[Case1:] \label{case1}
		\begin{itemize}
			\item $ \lambda_n^j \to 0 $,
			\item $ \lambda_n^j \to \infty $ and $ \vert \xi_n^j \vert \to \infty $,
			\item $ \lambda_n^j \equiv 1 $ and $ \vert \xi_n^j \vert \to \infty $.
		\end{itemize}
		\item[Case2:] $ \lambda_n^j \to \infty $ and $ \xi_n^j \to \xi^j \in \R $.
		\item[Case3:] $ \lambda_n^j \equiv 1 $ and $ \xi_n^j \equiv 0 $.
	\end{enumerate}
	
	\begin{lemma}\label{lem-NP-1}
		Let $ M \ge 0 $ and $ D \ge D_0(M) $ be as in Theorem \ref{thm-GW-GWP}.
		We further assume that $\Fpara{n}$ satisfies \textbf{Case1} in Subsection \ref{case1} and sequence $ \{ t_n \}_{n=1}^{\infty} $ satisfies $ t_n \to \pm \infty $ or $ t_n \equiv 0 $.
		Let $ u_n $ denotes the solution of \eqref{NLSD} with initial data $ \phi_n := G_n \phi $ and $ v_n := T_n [e^{it\partial_x^2} \phi] $ where $ T_n $ is as in \eqref{fml-pre-def-Tn}. Then, we have
		\begin{equation}\label{app}
			\lim_{n \to \infty} \Vt u_n - v_n \Vt_{\mathcal{S}(\R)} = 0,
		\end{equation}
		and
		\begin{equation}\label{small}
			\lim_{n \to \infty} \Vt \PD u_n \Vt_{\mathcal{S}(\R)} = 0.
		\end{equation}
	\end{lemma}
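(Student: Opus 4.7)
My strategy is to view $v_n := T_n[e^{it\partial_x^2}\phi]$ as an approximate solution of \eqref{NLSD} and invoke the perturbation Lemma~\ref{lem-pertub}. Since $T_n$ intertwines $G_n$ with the free Schr\"odinger group, $v_n = e^{it\partial_x^2}\phi_n$ is a genuine free solution whose initial data matches $u_n(0) = \phi_n$ exactly. Consequently
\[
i\partial_t v_n + \partial_x^2 v_n = \PD F(\PD v_n) + e_n, \qquad e_n := -\PD F(\PD v_n).
\]
Strichartz together with the invariance of $\|\cdot\|_{L^6_{t,x}}$ under the Galilean-scaling operator $T_n$ yield the uniform controls
\[
\|v_n\|_{L^\infty_t L^2_x(\R\times\R)} \le \|\phi\|_{L^2}, \qquad \|v_n\|_{L^6_{t,x}(\R\times\R)} \le C(\|\phi\|_{L^2}),
\]
so that both the mass $M$ and the $L^6_{t,x}$-bound $L$ appearing in Lemma~\ref{lem-pertub} are uniform in $n$, while the initial-data gap vanishes identically.

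The heart of the argument is the claim $\lim_{n\to\infty}\|\PD\phi_n\|_{L^2(\R)} = 0$. Indeed, by Plancherel and a change of variables,
\[
\|\PD\phi_n\|_{L^2}^2 = \int_\R \bigl|m_D(\xi_n + \eta/\lambda_n)\bigr|^2 \,|\hat\phi(\eta)|^2\,d\eta.
\]
In each Case~1 subcase one has $|\xi_n + \eta/\lambda_n|\to\infty$ for almost every $\eta$: if $\lambda_n\to 0$ then $|\eta/\lambda_n|\to\infty$ for $\eta\ne 0$ irrespective of the behaviour of $\xi_n$; if $\lambda_n\to\infty$ together with $|\xi_n|\to\infty$ then $\eta/\lambda_n\to 0$ while $|\xi_n|\to\infty$; the case $\lambda_n\equiv 1$, $|\xi_n|\to\infty$ is analogous. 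Since $0\le m_D\le 1$ and $m_D$ is supported in $\{|\xi|\le 2D\}$, dominated convergence closes the claim. As $\PD$ commutes with $e^{it\partial_x^2}$, the Strichartz estimate upgrades the convergence to $\|\PD v_n\|_{\mathcal{S}(\R)}\to 0$. The intuitive content is that in Case~1 the linear profile escapes the frequency window on which the multiplier $\PD$ is active.

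To finish, I would estimate the error in the dual Strichartz norm by a standard H\"older computation,
\[
\|e_n\|_{\mathcal{N}(\R)} \lesssim \|\PD v_n\|_{L^6_{t,x}}^5 \lesssim \|\PD \phi_n\|_{L^2}^{5}\to 0,
\]
and apply Lemma~\ref{lem-pertub} to obtain $\|u_n - v_n\|_{\mathcal{S}(\R)}\to 0$, which is \eqref{app}. Finally, \eqref{small} follows from the triangle inequality, the $L^p$-boundedness of $\PD$, and the convergence $\|\PD v_n\|_{\mathcal{S}(\R)}\to 0$ established above. The only substantive step is the dominated-convergence computation in the middle paragraph; everything else is routine Strichartz/perturbation bookkeeping.
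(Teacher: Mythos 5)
Your argument is correct and is essentially the paper's own proof: both treat $v_n=e^{it\partial_x^2}\phi_n$ as an approximate solution with error $e_n=-\PD F(\PD v_n)$, reduce everything to the vanishing of $\Vert \PD v_n\Vert_{\mathcal S(\R)}$ (your Plancherel computation of $\Vert \PD\phi_n\Vert_{L^2}$ is the same change of variables the paper performs before invoking dominated convergence), and close with Lemma \ref{lem-pertub} plus the triangle inequality for \eqref{small}. One small remark: in the subcase $\lambda_n\to0$ the pointwise claim that $|\xi_n+\eta/\lambda_n|\to\infty$ for a.e.\ $\eta$ ``irrespective of $\xi_n$'' is not automatic; the clean justification is that the integrand is supported in $\{\eta:|\eta+\lambda_n\xi_n|\le 2D\lambda_n\}$, a set of measure $O(D\lambda_n)\to0$, so the integral vanishes by absolute continuity of $\int|\hat\phi|^2$.
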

	
	\begin{proof}
		First of all, we use the change of variables to find that
		\begin{align*}
			\Vt P_{\le 2D} v_n \Vt_{\mathcal{S}(\R)}
			= & ~ \lambda_n^{-\frac12} \big\Vt P_{\vt \xi - \xi_n \vt \le 2D} \{ [\Eidelta{(t_n + t\lambda_n^{-2} t)} \phi] (\lambda_n^{-1} (y - x_n - 2 \xi_n t)) \} \big\Vt_{\mathcal{S}(\R)} \\
			= & ~ \lambda_n^{-\frac{1}{2}} \big\Vt \{\Eidelta{(t_n + t\lambda_n^{-2} t)} [ P_{\vt \lambda_n^{-1} \wt{\xi} - \xi_n\vt \le 2D} \phi ] \}(\lambda_n^{-1} x) \big\Vt_{\mathcal{S}(\R)} \\
			\lesssim & ~ \Vt P_{\vt \xi + \lambda_n \xi_n \vt \le 2D \lambda_n} \phi \Vt_{L_x^2(\R)}.
		\end{align*}
		By Lesbegue dominating convergence theorem, the behavior of $ \Fpara{n} $ yields
		\begin{equation*}
			\lim_{n \to \infty} \Vt P_{\vt \xi + \lambda_n \xi_n \vt \le 2D \lambda_n} \phi \Vt_{L_x^2(\R)} = 0,
		\end{equation*}
		which implies that
		\begin{equation}\label{claim}
			\lim_{n \to \infty} \Vt P_{\le 2D} v_n \Vt_{\mathcal{S}(\R)} = 0.
		\end{equation}
		
		Then, we define $ e_n := - \PD \big( \vert \PD v_n \vert^4 \PD v_n \big) $, which solves the equation
		\begin{equation}\label{NLSD-v}
			i \partial_t v_n + \partial_x^2 v_n = \PD \big( \vert \PD v_n \vert^4 \PD v_n \big) + e_n.
		\end{equation}
		By Strichartz estimates, we have
		\begin{equation*}
			\Vt e_n \Vt_{\mathcal{N}(I)} \le \Vt \PD v_n \Vt_{L_{t, x}^6(I \times \R)}^5 \le \Vt P_{\le 2D} v_n \Vt_{S(I)}^5 \to 0, \qquad \text{as} ~ n \to \infty.
		\end{equation*}
		Thus, \eqref{app} holds from perturbation theory (Lemma \ref{lem-pertub}) for \eqref{NLSD} and \eqref{NLSD-v}.
		
		Furthermore, \eqref{small} can be obtained by \eqref{app} and \eqref{claim} directly.
	\end{proof}
	
	\begin{lemma}\label{lem-NP-2}
		For a given $ M > 0 $ and let $ D \ge D_0(M) $ be as in Theorem \ref{thm-GW-GWP}.
		Assume that $ \phi \in L^2 $ with $ \| \phi \|_2 \le M $ and $ (N_n, \xi_n, x_n, t_n) \in \R^+ \times \R \times \R \times \R $ satisfies the following condition
		\begin{enumerate}[\rm (1)]
			\item $ t_n \equiv 0 $ or $ t_n \to \pm\infty $,
			\item $ N_n \to 0 $ and $ \xi_n \to \xi_\infty \in \R $.
		\end{enumerate}
		We denote $ u_n $ the solution to \eqref{NLSD} with initial data $ \phi_n := G_n \phi $, then there exists $ v \in \mathcal{S}(\R) $ with $ \| v \|_{\mathcal{S}(\R)} \leq C(M) $ such that for $ v_n := T_n v $, we have
		\begin{align}\label{fml-NP-lem2-app}
			\lim_{n \to \infty} \| u_n - v_n \|_{\mathcal{S}(\R)} = 0, \qquad \text{and}  \qquad \lim_{n \to \infty} \| [P_D - m_D(\xi_\infty)] u_n \|_{\mathcal{S}(\R)} = 0.
		\end{align}
		Furthermore, if $ t_n \equiv 0 $ and $ \xi_n \equiv 0 $, then one can just choose $ v $ to be the solution to \eqref{NLS} with initial data $ \phi $.
	\end{lemma}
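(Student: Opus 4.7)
My plan is to identify the correct limiting equation governing the nonlinear profile, construct $v_n$ accordingly as an approximate solution to \eqref{NLSD}, and then invoke the perturbation Lemma \ref{lem-pertub}. In the regime $N_n \to 0$ (so the profile is spatially spread out) with $\xi_n \to \xi_\infty$, the rescaled datum $G_n\phi$ has Fourier support concentrated near $\xi_n$ on a vanishing frequency scale. Consequently, for $n$ large, the Mikhlin multiplier $\mathcal{P}_D$ acts on $T_n w$ essentially as multiplication by the constant $\alpha := m_D(\xi_\infty)$. This suggests comparing \eqref{NLSD} to the limit equation
\begin{equation*}
i\partial_t w + \partial_x^2 w = \alpha^6 |w|^4 w,
\end{equation*}
which is an instance of \eqref{sevNLS} with $\mathcal{P} = I$. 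Lemma \ref{lem-unif-contr} supplies a global solution with $\|w\|_{\mathcal{S}(\R)} \le C(M)$ for any $L^2$ datum of mass $\le M$.

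For the construction of $v$, I would choose $v := w$ to be the solution to the limit equation with the following initial condition: if $t_n \equiv 0$, take $w(0) = \phi$; if $t_n \to \pm\infty$, take $w$ to be the unique scattering solution satisfying $\|w(t) - e^{it\partial_x^2}\phi\|_{L^2(\R)} \to 0$ as $t \to \pm\infty$, whose existence follows from a standard wave-operator construction combined with Lemma \ref{lem-unif-contr}. Setting $v_n := T_n w$, the intertwining identity $e^{it\partial_x^2} G_n f = T_n[e^{it\partial_x^2} f]$ together with the above choice of $w$ yields $\|v_n(0) - u_n(0)\|_{L^2(\R)} \to 0$. Moreover, since $T_n$ is built from Galilean boosts, scalings, and space-time translations, which all preserve the pure power mass-critical nonlinearity, $v_n$ itself solves $i\partial_t v_n + \partial_x^2 v_n = \alpha^6|v_n|^4 v_n$, so substituting into \eqref{NLSD} gives $i\partial_t v_n + \partial_x^2 v_n = \mathcal{P}_D F(\mathcal{P}_D v_n) + e_n$ with
\begin{equation*}
e_n = \bigl[\alpha^6 |v_n|^4 v_n - \mathcal{P}_D\bigl(\alpha^5 |v_n|^4 v_n\bigr)\bigr] + \mathcal{P}_D\bigl[\alpha^5 |v_n|^4 v_n - |\mathcal{P}_D v_n|^4 \mathcal{P}_D v_n\bigr].
\end{equation*}
By H\"older's inequality, the Mikhlin boundedness of $\mathcal{P}_D$, and the uniform bound $\|v_n\|_{\mathcal{S}(\R)} \lesssim 1$, both pieces of $e_n$ reduce to controlling $\|(\mathcal{P}_D - \alpha)v_n\|_{L_t^\infty L_x^2 \cap L_{t,x}^6(\R \times \R)}$, which is the crux of the argument.

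The core frequency estimate proceeds by exploiting that $v_n(t)$ has Fourier support concentrated near $\xi_n$ on scale $\lambda_n^{-1} \to 0$. I would first approximate $w$ by $w_K$ whose spatial Fourier transform is supported in $|\xi| \le K$, controlling the tail $w - w_K$ in $L_t^\infty L_x^2 \cap L_{t,x}^6(\R\times\R)$ by choosing $K$ large, using the $L^2$ conservation and a Strichartz-based argument. Then $T_n w_K$ has Fourier support in $\{|\xi - \xi_n| \le K/\lambda_n\}$, and on this shrinking neighborhood of $\xi_\infty$ one has $|m_D(\xi) - m_D(\xi_\infty)| \to 0$ uniformly by continuity of $m_D$. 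Combining the two yields $\|(\mathcal{P}_D - \alpha)v_n\|_{\mathcal{S}(\R)} \to 0$, hence $\|e_n\|_{\mathcal{N}(\R)} \to 0$, and Lemma \ref{lem-pertub} delivers $\|u_n - v_n\|_{\mathcal{S}(\R)} \to 0$. The second assertion of \eqref{fml-NP-lem2-app} then follows by splitting $[\mathcal{P}_D - \alpha]u_n = [\mathcal{P}_D - \alpha](u_n - v_n) + [\mathcal{P}_D - \alpha]v_n$ and using boundedness of $\mathcal{P}_D$ on Strichartz spaces together with the two smallness statements.

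The main obstacle is the core frequency estimate, since $w$ is only $C_t L_x^2 \cap L_{t,x}^6$ and need not be Schwartz; the truncation $\chi_{|\xi|\le K}$ argument above is the device that resolves this. Finally, for the concluding sentence of the lemma: when $t_n \equiv 0$ and $\xi_n \equiv 0$ one has $\xi_\infty = 0$, and hence $\alpha = m_D(0) = 1$ by \eqref{fml-GW-mD-bound-supp}; the limit equation thus reduces to \eqref{NLS}, and one may simply take $v$ to be the \eqref{NLS}-solution with initial data $\phi$.
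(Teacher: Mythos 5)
Your proposal is correct and follows essentially the same route as the paper: the same limiting equation $i\partial_t w+\partial_x^2 w=\alpha^6|w|^4w$ with $\alpha=m_D(\xi_\infty)$, the same splitting of the error into $e_1+e_2$, and the same mechanism (frequency truncation of $v$ plus smallness of $\mathcal{P}_D-\alpha$ on the shrinking Fourier support of $T_nw_K$ near $\xi_\infty$, with the tail handled by dominated convergence) before invoking Lemma \ref{lem-pertub}. The only cosmetic difference is that the paper quantifies $\|(\alpha-m_D(-i\partial_x+\xi_n))P_{\le\varepsilon}\|_{L^2\to L^2}\lesssim(\varepsilon+|\xi_n-\xi_\infty|)/\log_2(2D)$ via \eqref{fml-GW-mD-diff} where you appeal to continuity of $m_D$; for fixed $D$ both suffice.
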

	
	\begin{proof}
		Let $ \alpha := m_D(\xi_\infty) $.
		\begin{itemize}
			\item If $ t_n \equiv 0 $, denote $ v $ the global solution to \eqref{sevNLS} with initial data $ v(0, \cdot) = \phi $.
			\item If $ t_n \to \pm\infty $, denote $ v $ the global solution to \eqref{sevNLS} scattering to $ e^{it \partial_x^2} \phi $.
		\end{itemize}
		By Lemma \ref{lem-unif-contr}, we have
		\begin{align*}
			\| v_n \|_{\mathcal{S}(\R)} \lesssim M, \qquad \text{and} \qquad \lim_{n \to \infty} \| v_n(0, \cdot) - \phi_n \|_{L^2(\R)} = 0,
		\end{align*}
		Moreover, the error term
		\begin{align*}
			e_n = & ~ \alpha^6 \vert v_n \vert^4 v_n - \PD \big( \vert \PD v_n \vert^4 \PD v_n \big) \\
			= & ~ (\alpha - \PD) \alpha^5 | v_n |^4 v_n + \PD \big[ \alpha^5 | v_n |^4 v_n - | \PD v_n |^4 \PD v_n \big] \\
			=: & ~ e_1 + e_2.
		\end{align*}
		
		Commuting the Galilei boost outermost, we have
		\begin{align}
			\| e_1 \|_{L_t^1 L_x^2}
			= & ~ \alpha^5 N_n^{\frac52} \big\| [\alpha - m_D(-i \partial_x + \xi_n)] \big\{ \big( | v |^4 v \big)(t_n + t \lambda_n^{-2}, \lambda_n^{-1} x) \big\} \big\|_{L_t^1 L_x^2} \notag \\
			\le & ~ N_n^{\frac52} \big\| [\alpha - m_D(-i {\partial_x} + \xi_n)] P_{\le \varepsilon} \big\{ \big( | v |^4 v \big)(t_n + t \lambda_n^{-2}, \lambda_n^{-1} x) \big\} \|_{L_t^1 L_x^2} \label{fml-NP-lem2-e11} \\
			& \quad + N_n^{\frac52} \big\| [\alpha - m_D(-i{\partial_x} + \xi_n)] P_{> \varepsilon} \big\{ \big( | v |^4 v \big)(t_n + t \lambda_n^{-2}, \lambda_n^{-1} x) \big\} \|_{L_t^1 L_x^2}. \label{fml-NP-lem2-e12}
		\end{align}
		Utilizing the bound for the multiplier $ m_D $, then we have
		\begin{align*}
			\| (\alpha - m_D(-i\partial_x + \xi_n)) P_{\le \varepsilon} \|_{L^2 \to L^2} \lesssim \frac{\varepsilon + | \xi_n - \xi_\infty |}{\log_2(2D)}.
		\end{align*}
		Then, by the change of variable, we obtain
		\begin{align*}
			\eqref{fml-NP-lem2-e11} \lesssim & ~ \frac{\varepsilon + | \xi_n - \xi_\infty |}{\log_2(2D)} N_n^{\frac52} \| (| v |^4 v)(t \lambda_n^{-2} + t_n, \lambda_n^{-1} x) \|_{L_t^1 L_x^2} \\
			\lesssim & ~ \frac{\varepsilon + | \xi_n - \xi_\infty |}{\log_2(2D)} \| v \|_{L_t^5 L_x^{10}}^5 \lesssim \frac{\varepsilon + | \xi_n - \xi_\infty |}{\log_2(2D)}.
		\end{align*}
		Using the property of frequency support, we can estimate \eqref{fml-NP-lem2-e12} in a similar way:
		\begin{align*}
			\eqref{fml-NP-lem2-e12}	\lesssim & ~ N_n^3 \big\| P_{>\varepsilon} \big[(| v |^2 v)(t_n + t N_n^2, N_n x) \big] \big\|_{L_t^1 L_x^2}
			\lesssim \| P_{> \lambda_n \varepsilon} (| v |^4 v) \|_{L_t^1 L_x^2} \\
			\lesssim & ~ \| P_{>\frac1{32} \lambda_n \varepsilon} v \|_{L_t^5, L_x^{10}} \| v \|_{L_t^5, L_x^{10}}^4
			\lesssim \| P_{>\frac1{32} \lambda_n \varepsilon} v \|_{L_t^5, L_x^{10}}.
		\end{align*}
		As a consequence, we obtain
		\begin{equation}\label{e_1}
			\| e_1 \|_{\mathcal{N}(\R)} \lesssim \varepsilon + | \xi_n - \xi_\infty | + \| P_{>\frac1{32} \lambda_n \varepsilon} v \|_{L_t^5, L_x^{10}}.
		\end{equation}
		
		Next, we focus on estimating the term $ e_2 $.
		By using the Strichartz estimate and the uniform bound for $ v_n $, we get
		\begin{align}
			\| e_2 \|_{\mathcal{N}(\R)} \lesssim & ~ \| \alpha^5 | v_n |^4 v_n - | \PD v_n |^4 \PD v_n \|_{L_t^{\frac54} L_x^{\frac{10}9}} \lesssim \| (\alpha - \PD) v_n \|_{L_t^{\infty} L_x^2} \| v_n \|_{L_t^5 L_x^{10}}^4 \nonumber \\
			\lesssim & ~ \|(\alpha - \PD) v_n \|_{L_t^\infty L_x^2}. \label{fml-NP-lem2-e2}
		\end{align}
		Similarly,
		\begin{align*}
			\| (\alpha - \PD) v_n \|_{L_t^\infty L_x^2} \lesssim \varepsilon + | \xi_n - \xi_\infty | + \| P_{> \lambda_n \varepsilon} v_n \|_{L_t^\infty L_x^2}.
		\end{align*}
		
		Substituting the above estimate in \eqref{fml-NP-lem2-e2} and putting $ e_1 $ and $ e_2 $ together, we finally have
		\begin{align*}
			\| e_n \|_{\mathcal{N}(\R)} \lesssim \varepsilon + | \xi_n - \xi_\infty | + \| P_{>\frac1{32} \lambda_n \varepsilon} v \|_{L_t^5, L_x^{10}}.
		\end{align*}
		Choosing $ n \to \infty $ and $ \varepsilon \to 0 $, then we get
		\begin{align*}
			\lim_{ n \to \infty} \| e_n \|_{\mathcal{S}(\R)} = 0.
		\end{align*}
		By using the perturbation theory (Lemma \ref{lem-pertub}), we find that
		\begin{align*}
			\lim_{n \to \infty} \| u_n - v_n \|_{\mathcal{S}(\R)} = 0.
		\end{align*}
		Finally, using Strichartz estimates, \eqref{e_1}, and the fact that
		\[
		\lim_{n \to \infty} \Vert (\alpha - \PD) \phi_n \Vert_{L^2} = 0,
		\]
		we derive
		\[
		\lim_{n \to \infty} \Vert (\alpha - \PD) v_n \Vert_{\mathcal{S}(\R)} = 0,
		\]
		from which \eqref{fml-NP-lem2-app} follows.
		This completes the proof of Lemma \ref{lem-NP-2}.
	\end{proof}
	
	\begin{lemma}\label{lem-NP-3}
		For a given $ M > 0 $ and let $ D \ge D_0(M) $ be as in Theorem \ref{thm-GW-GWP}.
		Assume that $ \phi \in L^2 $ with $ \| \phi \|_2 \le M $ and $ (\lambda_n, t_n, x_n, \xi_n) \in \R^+ \times \R \times \R \times \R $ satisfies that
		\begin{enumerate}[\rm (1)]
			\item $ N_n \equiv 1 $, $ \xi_n \equiv 0 $, $ x_n \in \R^2 $,
			\item $ t_n \in \R $ with $ t_n \equiv 0 $ or $ t_n \to \pm\infty $.
		\end{enumerate}
		We denote $ u_n $ the solution to \eqref{NLSD} with initial data $ \phi_n := G_n \phi $.
		Then, there exists $ v \in \mathcal{S}(\R) $ with $ \| v \|_{\mathcal{S}(\R)} \leq C(M) $ such that for $ v_n := T_n v $, we have
		\begin{align}\label{fml-NP-lem3-approxi3}
			\lim_{n \to \infty} \| u_n - v_n \|_{\mathcal{S}(\R)} = 0.
		\end{align}
	\end{lemma}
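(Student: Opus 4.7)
The plan is to exploit the translation invariance of \eqref{NLSD}: since in Case 3 we have $\lambda_n \equiv 1$ and $\xi_n \equiv 0$, the operator $T_n$ reduces to a pure space-time translation
\[
T_n v(t,x) = v(t + t_n,\, x - x_n),
\]
which commutes with $\PD$ and $\partial_x^2$. Consequently, if $v$ solves \eqref{NLSD}, then $v_n := T_n v$ also solves \eqref{NLSD}. The strategy is therefore to construct a single global solution $v$ to \eqref{NLSD} whose initial data matches $\phi_n$ after translation, and then use Lemma \ref{lem-pertub} with zero error.

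First I would choose $v$ according to the behavior of $t_n$. In the case $t_n \equiv 0$, take $v$ to be the global solution to \eqref{NLSD} with $v(0,\cdot) = \phi$; Theorem \ref{thm-GW-GWP} provides $\|v\|_{\mathcal{S}(\R)} \lesssim C(M)$. In the case $t_n \to \pm\infty$, take $v$ to be the global \eqref{NLSD}-solution scattering to $e^{it\partial_x^2}\phi$ as $t \to \pm\infty$; existence of the wave operators is a standard consequence of the small-data theory (Lemma \ref{lem-smalldata}) combined with $\|e^{it\partial_x^2}\phi\|_{L^6_{t,x}(|t|>T)} \to 0$, and the global bound $\|v\|_{\mathcal{S}(\R)} \lesssim C(M)$ again follows from Theorem \ref{thm-GW-GWP}. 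In both cases define $v_n := T_n v$; translation invariance gives $\|v_n\|_{\mathcal{S}(\R)} = \|v\|_{\mathcal{S}(\R)} \lesssim C(M)$ and $v_n$ is an exact solution to \eqref{NLSD}.

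Next I would check that the initial data match asymptotically. Recall $\phi_n = G_n \phi = (e^{it_n \partial_x^2}\phi)(\cdot - x_n)$, while $v_n(0,x) = v(t_n, x - x_n)$. If $t_n \equiv 0$ then $v_n(0) = \phi(\cdot - x_n) = \phi_n$ identically. If $t_n \to \pm\infty$, the scattering property gives $\|v(t_n,\cdot) - e^{it_n \partial_x^2}\phi\|_{L^2(\R)} \to 0$, and since translation is an $L^2$ isometry,
\[
\|v_n(0,\cdot) - \phi_n\|_{L^2(\R)} = \|v(t_n,\cdot) - e^{it_n \partial_x^2}\phi\|_{L^2(\R)} \xrightarrow{n \to \infty} 0.
\]

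Finally, since both $u_n$ and $v_n$ solve \eqref{NLSD} exactly, the stability Lemma \ref{lem-pertub} applies with error $e \equiv 0$ and with $\delta_n := \|u_n(0) - v_n(0)\|_{L^2} \to 0$, yielding $\|u_n - v_n\|_{\mathcal{S}(\R)} \to 0$, which is \eqref{fml-NP-lem3-approxi3}. There is no real obstacle in this case; the only nontrivial ingredient is the construction of the scattering solution $v$ for $t_n \to \pm\infty$, which is routine given Theorem \ref{thm-GW-GWP} and the small-data global theory. This is why Case 3 is strictly simpler than Cases 1 and 2: the symmetry parameters are frozen so the approximating profile is itself an exact \eqref{NLSD}-solution, and no commutator estimates between $\PD$ and the symmetry group are needed.
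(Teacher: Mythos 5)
Your proposal is correct and follows essentially the same route as the paper: choose $v$ to be the \eqref{NLSD}-solution with data $\phi$ (resp.\ scattering to $e^{it\partial_x^2}\phi$), observe that $v_n=T_nv$ is again an exact solution with $\|v_n(0)-\phi_n\|_{L^2}\to 0$, and conclude by the perturbation theory of Lemma \ref{lem-pertub}. Your added remarks on the commutation of $T_n$ with $\PD$ and on the construction of the wave operator make explicit two points the paper leaves implicit, but the argument is the same.
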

	
	\begin{proof}
		As the proof of Lemma \ref{lem-NP-2},
		\begin{itemize}
			\item If $ t_n \equiv 0 $, denote $ v $ the global solution to \eqref{NLSD} with initial data $ \phi $.
			\item If $ t_n \to \pm\infty $, denote $ v $ the global solution to \eqref{NLSD} which scatters to $ e^{it \partial_x^2} \phi $.
		\end{itemize}
		In the first case, $ T_n v $ and $ v_n $ enjoys the same initial data.
		Hence, \eqref{fml-NP-lem3-approxi3} holds naturally.
		In the second case, notice that both $ T_n v $ and $ u_n $ solve the same equation.
		Using the scattering property, we have
		\[
		\| u_n(0, \cdot) - (T_n v)(0, \cdot) \|_{L^2(\R)} = \| e^{i t_n \partial_x^2} \phi - v(t_n, \cdot) \|_{L^2(\R)} \to 0, \qquad \text{as} ~ n \to \infty.
		\]
		Thus, \eqref{fml-NP-lem3-approxi3} holds as a consequence of the perturbation theory.
	\end{proof}
	
	With the three cases of nonlinear profile decomposition above (Lemma \ref{lem-NP-1}, Lemma \ref{lem-NP-2}, and Lemma \ref{lem-NP-3}) in hand, we are able to study the asymptotic principle of superposition for \eqref{NLSD}.
	
	Let $ \{ u_{0, n} \}_{n=1}^{\infty} \subset L^2(\R) $ be a uniformly bounded sequence and denote $ u_n $ the solution to \eqref{NLSD} with initial data $ u_{0, n} $.
	By the linear profile decomposition in Theorem \ref{thm-pre-linear-profile} to $ u_n(0, \cdot) $ and passing to a subsequence, we have
	\begin{equation}\label{fml-NP-initial}
		u_{0, n} = \sum_{j=1}^J G_n^j \phi_j + r_n^J =: \sum_{j=1} \phi_n^j + r_n^J,
	\end{equation}
	where the unitary transformations $ G_n^j $ are defined in \eqref{symmetry group} and the bubbles and the error term satisfy the properties in Theorem \ref{thm-pre-linear-profile}.
	As a consequence, we have the asymptotic decoupling for the mass:
	\begin{equation}\label{nfp:1}
		\lim_{n \to \infty} \bigg( \| u_{0, n} \|_{L^2}^2 - \sum_{j=1}^J \| \phi^j \|_{L^2}^2 - \| r_n^J \|_{L^2}^2 \bigg) = 0, \qquad \forall ~ 1 \leq J \leq J^*.
	\end{equation}
	
	Now, we consider $ \{ u_n \}_{n=1}^{\infty} $ the sequence of solutions to \eqref{NLSD} with the initial data $ u_n^j(0, \cdot) := \phi_n^j $.
	There are three cases associated to the linear profile decomposition.
	\begin{enumerate}
		\item Let the profile $ \phi_n^j $ is associated with the first case in Theorem \ref{thm-pre-linear-profile}, Lemma \ref{lem-NP-1} yields that
		\begin{equation}\label{fml-NP-lem1-prop}
			\lim_{n \to \infty} \| u_n^j - T_n^j v^j \|_{\mathcal{S}(\R)} + \| \PD T_n^j v^j \|_{\mathcal{S}(\R)} + \| \PD u_n^j \|_{\mathcal{S}(\R)} = 0.
		\end{equation}
		\item Let the profile $ \phi_n^j $ is associated with the second case in Theorem \ref{thm-pre-linear-profile}, Lemma \ref{lem-NP-2} implies that
		\begin{equation}\label{fml-NP-lem2-prop}
			\lim_{n \to \infty} \| u_n^j - T_n^j v^j \|_{\mathcal{S}(\R)} + \| (\PD - \alpha) T_n^j v^j \|_{\mathcal{S}(\R)} + \| (\PD - \alpha) u_n^j \|_{\mathcal{S}(\R)} = 0.
		\end{equation}
		\item Let the profile $ \phi_n^j $ is associated with the second case in Theorem \ref{thm-pre-linear-profile}, Lemma \ref{lem-NP-3} guarantees that
		\begin{equation}\label{fml-NP-lem3-prop}
			\lim_{n \to \infty} \| u_n^j - T_n^j v^j \|_{\mathcal{S}(\R)} = 0.
		\end{equation}
	\end{enumerate}
	
	To show the global well-posedness of \eqref{NLSD} with the initial data $ \phi_n^j $ via Theorem \ref{thm-GW-GWP}, we also need the following asymptotic orthogonality of the nonlinear profiles.
	
	\begin{lemma}[Decoupling of nonlinear profiles]\label{lem-NP-decoupling}
		For any $ j \neq k $, it holds
		\begin{align}
			\lim_{n \to \infty} \| T_n^j v^j T_n^k v^k \|_{L_{t, x}^3(\R \times \R)} = & ~ 0, \label{fml-NP-decoupling-1} \\
			\lim_{n \to \infty} \| u_n^j u_n^k \|_{L_{t, x}^3(\R \times \R)} = & ~ 0, \label{fml-NP-decoupling-2} \\
			\lim_{n \to \infty} \| \PD(u_n^j) \PD(u_n^k) \|_{L_{t, x}^3(\R \times \R)} = & ~ 0. \label{fml-NP-decoupling-3}
		\end{align}
	\end{lemma}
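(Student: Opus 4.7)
The plan is to treat (1) as the principal statement and derive (2), (3) from it by a triangle-inequality argument combined with the nonlinear-profile approximations \eqref{fml-NP-lem1-prop}--\eqref{fml-NP-lem3-prop}, which say that $u_n^j$ is close to $T_n^j v^j$ in $\mathcal{S}(\R)$ and hence in $L^6_{t,x}(\R\times\R)$ via interpolation between $L^\infty_t L^2_x$ and $L^5_t L^{10}_x$.

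For (1), I would first perform a density reduction: since $v^j, v^k \in \mathcal{S}(\R) \cap L^6_{t,x}$, for every $\varepsilon>0$ one can find compactly supported Schwartz approximants $\tilde v^j, \tilde v^k$ with $\|v^j-\tilde v^j\|_{L^6_{t,x}} + \|v^k-\tilde v^k\|_{L^6_{t,x}} < \varepsilon$. The induced error contributions to $\|T_n^j v^j\, T_n^k v^k\|_{L^3_{t,x}}$ are controlled uniformly in $n$ by Hölder, using the uniform $\mathcal{S}(\R)$-boundedness of the nonlinear profiles $T_n^j v^j$, so it is enough to prove $\|T_n^j \tilde v^j\, T_n^k \tilde v^k\|_{L^3_{t,x}} \to 0$. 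Changing variables to normalize the $j$-th profile (via $s = t_n^j + t(\lambda_n^j)^2$, $y = (\lambda_n^j)^{-1}(x - x_n^j - 2\xi_n^j t)$) turns the product integrand into $\tilde v^j(s,y)$ times $\tilde v^k$ precomposed with an affine map $\Psi_n$ whose coefficients are read off from the two parameter tuples. The orthogonality hypothesis guarantees that at least one of the nonnegative summands in
\[
\frac{\lambda_n^k}{\lambda_n^j} + \frac{\lambda_n^j}{\lambda_n^k} + \lambda_n^j\lambda_n^k|\xi_n^j - \xi_n^k|^2 + \frac{|(\lambda_n^j)^2 t_n^j - (\lambda_n^k)^2 t_n^k|}{\lambda_n^j\lambda_n^k} + \frac{|x_n^j - x_n^k - 2t_n^j(\lambda_n^j)^2(\xi_n^j - \xi_n^k)|^2}{\lambda_n^j\lambda_n^k}
\]
blows up, and I would split into subcases accordingly: incompatible scales are handled by a rescaling-Hölder argument; comparable scales with separating spatial or temporal centers force the supports of $\tilde v^j$ and $\tilde v^k\circ\Psi_n$ to be eventually disjoint; and if only the frequency shift $\lambda_n^j\lambda_n^k|\xi_n^j-\xi_n^k|^2$ diverges, I invoke the bilinear Strichartz estimate (Lemma \ref{lem-pre-bilinear-Lp}) applied to frequency-localized pieces centered near $\xi_n^j$ and $\xi_n^k$ to extract the required decay.

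For (2), write
\[
u_n^j u_n^k = (u_n^j - T_n^j v^j)\, u_n^k + T_n^j v^j\,(u_n^k - T_n^k v^k) + T_n^j v^j\, T_n^k v^k,
\]
take $L^3_{t,x}$ norm, and apply $\|fg\|_{L^3_{t,x}} \le \|f\|_{L^6_{t,x}}\|g\|_{L^6_{t,x}}$. The first two terms vanish as $n \to \infty$ from the uniform $L^6_{t,x}$ bounds on $\{u_n^k\}$ and $\{T_n^j v^j\}$ combined with $\|u_n^j - T_n^j v^j\|_{L^6_{t,x}} \to 0$, while the last term vanishes by (1). Claim (3) follows from the same decomposition after inserting $\PD$ on each factor and invoking the uniform-in-$D$ boundedness of the Mikhlin multiplier $\PD$ on $L^6_x$.

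The main obstacle is the frequency-separation subcase of (1): the spatial and temporal supports of the two compactly supported approximants may still overlap, and a direct $L^6 \times L^6$ Hölder bound only yields uniform boundedness, not decay. Extracting the genuine decay requires the bilinear Strichartz estimate applied to frequency-localized free-solution approximants of $T_n^j v^j$ and $T_n^k v^k$; verifying that these approximants inherit the orthogonality of $(\lambda_n^j,\xi_n^j,x_n^j,t_n^j)$ and $(\lambda_n^k,\xi_n^k,x_n^k,t_n^k)$ in a form compatible with Lemma \ref{lem-pre-bilinear-Lp} is the technical heart of the argument.
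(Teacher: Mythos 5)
Your treatment of \eqref{fml-NP-decoupling-1} and \eqref{fml-NP-decoupling-2} is the standard argument (density reduction, case analysis on the diverging orthogonality term, bilinear Strichartz for pure frequency separation, then a triangle inequality using $\|u_n^j - T_n^j v^j\|_{\mathcal S(\R)} \to 0$); the paper simply outsources these two claims to \cite{Killip-Visan-note}, so there is no disagreement there.

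The gap is in your derivation of \eqref{fml-NP-decoupling-3}. Inserting $\PD$ into the decomposition and using the uniform $L^6_x$-boundedness of the Mikhlin multiplier controls the two error terms, but the main term it leaves behind is $\|\PD (T_n^j v^j)\,\PD (T_n^k v^k)\|_{L^3_{t,x}}$, and this is \emph{not} an instance of \eqref{fml-NP-decoupling-1}: $\PD$ does not commute with the rescaling and Galilei boost inside $T_n^j$, so $\PD T_n^j v^j = T_n^j(\mathcal Q_n^j v^j)$ where $\mathcal Q_n^j$ is the $n$-dependent multiplier with symbol $m_D((\lambda_n^j)^{-1}\eta + \xi_n^j)$, whereas \eqref{fml-NP-decoupling-1} is a statement about \emph{fixed} profiles. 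Closing this requires exactly the case analysis the paper carries out: for Case 1 frames one shows $\|\PD u_n^j\|_{\mathcal S(\R)}\to 0$ (formula \eqref{fml-NP-lem1-prop}, coming from Lemma \ref{lem-NP-1}), so the product dies by H\"older alone; for Case 2 frames $\PD$ acts asymptotically as the constant $m_D(\xi^j)$ (formula \eqref{fml-NP-lem2-prop}), reducing \eqref{fml-NP-decoupling-3} to \eqref{fml-NP-decoupling-2}; and only for Case 3 frames ($\lambda_n^j\equiv 1$, $\xi_n^j\equiv 0$) does $\PD$ genuinely commute with $T_n^j$, so that $\PD u_n^j \approx T_n^j(\PD v^j)$ and \eqref{fml-NP-decoupling-1} applies with the fixed profile $\PD v^j$. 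You cite \eqref{fml-NP-lem1-prop}--\eqref{fml-NP-lem3-prop} in your opening paragraph but never deploy them where they are actually indispensable, so as written the proof of \eqref{fml-NP-decoupling-3} does not go through.
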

	
	\begin{proof}
		The first two properties can be proved using the standard argument, see \cite{Killip-Visan-note} for details. 	
		It suffices to prove \eqref{fml-NP-decoupling-3}.
		Without loss of generality, we consider the following four cases step by step.
		
		{\bf Case 1}: $ u_n^j $ is associated with Case 1 in nonlinear profiles.
		By  \eqref{fml-NP-decoupling-2},
		\begin{align*}
			\| (\PD u_n^j) (\PD u_n^k) \|_{L_{t, x}^3(\R \times \R)}
			& \leq\| \PD u_n^j \|_{L_{t, x}^6(\R \times \R)} \| u_n^k \|_{L_{t, x}^6(\R \times \R)} \\
			& \lesssim_M\| \PD u_n^j \|_{L_{t, x}^6(\R \times \R)}
			\to 0, \qquad \text{as} ~ n \to \infty.
		\end{align*}
		
		{\bf Case 2}: Both $ u_n^j $ and $ u_n^k $ are associated to the Case 2 in nonlinear profiles.
		By \eqref{fml-NP-lem2-prop}, we only need to show
		\begin{equation}\label{PD-1}
			\lim_{n \to \infty} \| (\PD u_n^j) (\PD u_n^k) \|_{L_{t, x}^3} - m_D(\xi^j) m_D(\xi^k) \| u_n^j u_n^k \|_{L_{t, x}^3(\R \times \R)} = 0.
		\end{equation}
		Therefore, we can compute
		\begin{align*}
			& ~ \big\vert \| (\PD u_n^j) (\PD u_n^k) \|_{L_{t, x}^3} - m_D(\xi^j) m_D(\xi^k) \| u_n^j u_n^k \|_{L_{t, x}^3(\R \times \R)} \big\vert \\
			\leq & ~ \| [\PD - m_D(\xi^j)] u_n^j (\PD u_n^k) \|_{L_{t, x}^3(\R \times \R)} + m_D(\xi^j) \| u_n^j [\PD - m_D(\xi^k)] u_n^k \|_{L_{t, x}^3} \\
			\le & ~ C(M) \| [\PD - m_D(\xi^j)] u_n^j \|_{\mathcal{S}(\R)} + C(M) \| [\PD - m_D(\xi^k)] u_n^k \|_{\mathcal{S}(\R)},
		\end{align*}
		which together with \eqref{fml-NP-lem2-prop} implies that \eqref{PD-1} holds.
		
		{\bf Case 3}: The profile $ u_n^j $ is associated to Case 2 while $ u_n^k $ is associated to Case 3.
		It follows from \eqref{fml-NP-lem3-prop} and the operator $ \PD $ that
		\begin{align*}
			\lim_{n \to \infty} \| \PD u_n^k - T_n^k (\PD v^k) \|_{L_{t, x}^6(\R \times \R)} = 0.
		\end{align*}
		Then using the decoupling property \eqref{fml-NP-decoupling-1} and \eqref{fml-NP-lem3-prop}, we obtain
		\begin{align*}
			& ~ \| \PD u_n^j \PD u_n^k \|_{L_{t, x}^3(\R \times \R)} \\
			\leqslant & ~ \| (\PD - m_D(\xi^j)) u_n^j \PD u_n^k \|_{L_{t, x}^3(\R \times \R)} + m_D(\xi^j) \| (u_n^j - T_n^j v^j) \PD u_n^k \|_{L_{t, x}^3(\R \times \R)} \\
			& \quad + m_D(\xi^j) \| T_n^j v^j (\PD u_n^k - T_n^k (\PD v^k)) \|_{L_{t, x}^3(\R \times \R)}
			+ m_D(\xi^j) \| T_n^j v^j T_n^k (\PD v^k) \|_{L_{t, x}^3(\R \times \R)} \\
			\leqslant & ~ C(M) \| [\PD - m_D(\xi^j)] u_n^j \|_{L_{t, x}^6(\R \times \R)} + C(M) m_D(\xi^j) \| u_n^j - T_n^j v^j \|_{L_{t, x}^6(\R \times \R)} \\
			& \quad + C(M) m_D(\xi^j) \| \PD u_n^k - T_n^k (\PD v^k) \|_{L_{t, x}^6(\R \times \R)} + m_D(\xi^j) \| T_n^j v^j T_n^k (\PD v^k) \|_{L_{t, x}^3(\R \times \R)} \\
			\to & ~ 0, \qquad \text{as} ~ n \to \infty.
		\end{align*}
		
		{\bf Case 4}: The profiles $ u_n^j $ and $ u_n^k $ are associated to the Case 3.
		In this setting, we have
		\begin{align*}
			\lim_{n \to \infty} \| \PD u_n^j - T_n^j(\PD v^j) \|_{L_{t, x}^6(\R \times \R)} + \lim_{n \to \infty} \| \PD u_n^k - T_n^k(\PD v^k) \|_{L_{t, x}^6(\R \times \R)} = 0.
		\end{align*}
		Arguing like Case 3, we have the desired convergence result.
		
		Putting the four cases above together, we finish the proof of Lemma \ref{lem-NP-decoupling}.
	\end{proof}
	
	\begin{theorem}\label{thm-NP-main}
		Let $ M > 0 $ and $ D > D_0(M) $, where $ D_0(M) $ is the same as in Theorem \ref{thm-GW-GWP}.
		Take a uniformly bounded sequence $ \{ u_{0, n} \}_{n=1}^{\infty} \subset L^2(\R) $ with $ \Vt u_{0, n} \Vt_{L^2(\R)} \le M $ and denote $ u_n $ the solution to \eqref{NLSD} with initial data $ u_{0, n} $.
		By the linear profile decomposition in Theorem \ref{thm-pre-linear-profile}, assume that $ u_{0, n} $ has the form
		\begin{equation*}
			u_{0,n} = \sum_{j=1}^J G_n^j \phi^j + r_n^J,
		\end{equation*}
		where the unitary transformations $ G_n^j $ are defined in \eqref{symmetry group}.
		Therefore, let $ u_n^j $ be the solution to \eqref{NLSD} with initial data $ G_n^j \phi^j $, we have the following asymptotic property of decomposition
		\begin{equation*}
			\lim_{J \to \infty} \limsup_{n \to \infty} \Lf\Vt u_n - \bigg( \sum_{j=1}^J u^j_n + \Eidelta{t} r_n^J \bigg) \Rt\Vt_{\McS(\R)} = 0.
		\end{equation*}
	\end{theorem}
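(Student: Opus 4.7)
The plan is to construct the approximate solution
\[
\tilde{u}_n^J := \sum_{j=1}^{J} u_n^j + e^{it\partial_x^2} r_n^J,
\]
verify that it satisfies the hypotheses of the perturbation Lemma \ref{lem-pertub}, and then conclude. There are three ingredients to check: a uniform Strichartz bound on $\tilde{u}_n^J$, closeness of the initial data, and smallness of the resulting forcing term $e_n^J$ defined by
\[
i\partial_t \tilde{u}_n^J + \partial_x^2 \tilde{u}_n^J = \PD F(\PD \tilde{u}_n^J) + e_n^J, \qquad e_n^J = \PD F(\PD \tilde{u}_n^J) - \sum_{j=1}^{J} \PD F(\PD u_n^j).
\]

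First I would establish the uniform bound on $\Vt \tilde{u}_n^J \Vt_{\mathcal{S}(\R)}$. By mass decoupling \eqref{nfp:1}, only finitely many profiles, say those with $j \le J_0 = J_0(M,\varepsilon_0)$, carry mass above the small-data threshold $\varepsilon_0$ from Lemma \ref{lem-smalldata}; for $j > J_0$, the local theory provides small Strichartz bounds $\Vt u_n^j \Vt_{\mathcal{S}(\R)} \lesssim \Vt \phi^j \Vt_{L^2}$. For $j \le J_0$, Theorem \ref{thm-GW-GWP} provides a bound $\Vt u_n^j \Vt_{\mathcal{S}(\R)} \le C(\Vt \phi^j \Vt_{L^2})$. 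The pairwise decoupling \eqref{fml-NP-decoupling-2} together with the $L^6_{t,x}$-smallness of $e^{it\partial_x^2} r_n^J$ allows us to upgrade the trivial sum bound to a uniform-in-$J$ bound on $\tilde{u}_n^J$ via a standard square-function / bilinear argument using Lemma \ref{lem-pre-Bili-Stri}.

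The closeness of the initial data $\tilde{u}_n^J(0,\cdot) - u_{0,n}=0$ is automatic from the decomposition \eqref{fml-NP-initial}. The heart of the matter is estimating $e_n^J$ in $\McN(\R)$. I would split
\[
e_n^J = \PD\Bigl[F\Bigl(\PD\tilde{u}_n^J\Bigr) - F\Bigl(\sum_{j=1}^J \PD u_n^j\Bigr)\Bigr] + \PD\Bigl[F\Bigl(\sum_{j=1}^J \PD u_n^j\Bigr) - \sum_{j=1}^J F(\PD u_n^j)\Bigr] =: e_{n,1}^J + e_{n,2}^J.
\]
For $e_{n,1}^J$, the difference involves $\PD e^{it\partial_x^2} r_n^J$, which tends to $0$ in $L^6_{t,x}$ as $n\to\infty$ then $J\to\infty$ by Theorem \ref{thm-pre-linear-profile}, times four factors that are uniformly bounded in $L^6_{t,x}$; H\"older then gives smallness in $L^{6/5}_{t,x}\hookrightarrow \McN(\R)$. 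For $e_{n,2}^J$, expanding $F(\sum_j a_j) - \sum_j F(a_j)$ produces a finite sum of quintic monomials in which at least two distinct indices $j\neq k$ appear. By H\"older and pairing two such factors into an $L^3_{t,x}$ norm, Lemma \ref{lem-NP-decoupling} yields that every such cross term tends to $0$ as $n\to\infty$, and summability in $J$ follows from the small-data tail bound on $\sum_{j>J_0}\Vt u_n^j \Vt_{\mathcal{S}(\R)}^2$ provided by mass decoupling.

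The main obstacle I anticipate is controlling the infinite-sum tail uniformly in $J$: naively expanding $F(\sum_{j=1}^J \PD u_n^j)$ produces $O(J^5)$ terms, which explodes with $J$. The standard remedy is to fix $\eta>0$ and split the profiles into ``big'' ($j\le J_0(\eta)$) and ``small'' ($j>J_0(\eta)$), treat cross interactions among big profiles individually using Lemma \ref{lem-NP-decoupling}, and absorb all contributions from the small profiles into a single square-function bound controlled by $\sum_{j>J_0}\Vt\phi^j\Vt_{L^2}^2 \le M^2$. Putting these estimates together, one sends first $n\to\infty$ (killing the cross terms and the $L^6$-norm of $e^{it\partial_x^2} r_n^J$), then $J\to\infty$, and finally applies Lemma \ref{lem-pertub} to conclude.
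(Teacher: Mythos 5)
Your proposal is correct and follows essentially the same route as the paper: the same approximate solution $\sum_{j\le J}u_n^j+e^{it\partial_x^2}r_n^J$, the same verification of a uniform $L^6_{t,x}$ bound via mass decoupling, small-data theory for the tail, and the bilinear decoupling of Lemma \ref{lem-NP-decoupling}, and the same splitting of the error into the cross-term part and the part carrying $e^{it\partial_x^2}r_n^J$, closed by the perturbation Lemma \ref{lem-pertub}. Your remark on handling the $O(J^5)$ expansion by isolating finitely many large profiles is the standard refinement that the paper leaves implicit.
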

	
	\begin{proof}
		Let $ u_{0, n} $ be an $ L^2 $ sequence such that
		\begin{align*}
			\sup_n \| u_{0, n} \|_{L^2} \leq M.
		\end{align*}
		For the given $ D \geq D_0(M) $,  let $ u_n $ be the global solutions to \eqref{NLSD} with initial data $ u_n(0, \cdot) \stackrel{\triangle}{=} u_{0, n} $.
		Using Theorem \ref{thm-GW-GWP}, the solution $ u_n $ satisfies
		\begin{align*}
			\sup \| u_n \|_{\mathcal{S}(\R)} \leqslant M.
		\end{align*}
		
		Considering the approximate solution with the form
		\begin{align}\label{fml-NP-nl}
			u_n^J := \sum_{j=1}^J u_n^j + e^{it\partial_x^2} r_n^J.
		\end{align}
		With the help of perturbation theory (Lemma \ref{lem-pertub}), we claim that
		\begin{enumerate}
			\item
			\[
			\limsup _{n \to \infty} \| u_n^J \|_{L_{t, x}^6(\R \times \R)} \lesssim_M 1, \qquad \text{uniformly to} ~ J.
			\]
			\item
			\[
			\lim_{J \to \infty} \limsup_{n \to \infty} \| (i\partial_t + \partial_x^2) u_n^J - \PD F(\PD u_n^J) \|_{\mathcal{N}(\R)} = 0.
			\]
		\end{enumerate}
		
		Let us verify claim (1) above.
		First, we show that
		\begin{align}\label{fml-NP-uniform}
			\sum_{j=1}^J \| u_n^j \|_{L_{t, x}^6(\R \times \R)}^6 \lesssim_M 1, \qquad \text{uniformly to} ~ J.
		\end{align}
		From the asymptotic decoupling of mass, we obtain
		\begin{align}\label{fml-NP-mass}
			\sum_{j=1}^{J^*} M(u_n^j) = \sum_{j=1}^{J^*} M(\phi^j) < \infty.
		\end{align}
		Then, there exists $ J_0 \ge 1 $ such that
		\[
		M(\phi^j)< \varepsilon_0, \qquad \forall ~ j > J_0,
		\]
		where $ \varepsilon_0 > 0 $ is the constant in the small data theory presented in Lemma \ref{lem-smalldata}.
		By the small data scattering theory, there exists a uniform constant depending only on $ M $ such that
		\begin{align}\label{fml-NP-small}
			\| u_n^j \|_{L_{t, x}^6(\R \times \R)}^6 \lesssim_M M(\phi^j), \qquad \forall ~ j \geqslant 1.
		\end{align}
		It follows from the decoupling property \eqref{fml-NP-decoupling-1},
		\begin{align*}
			\| u_n^J \|_{L_{t, x}^6(\R \times \R)}^6
			& \lesssim \bigg\| \sum_{j=1}^J u_n^j \bigg\|_{L_{t, x}^6(\R \times \R)}^6 + \big\| e^{it\partial_x^2} r_n^J \big\|_{L_{t, x}^6(\R \times \R)}^6 \\
			& \lesssim \bigg( \sum_{j=1}^J \big\| u_n^j \big\|_{L_{t, x}^6(\R \times \R)}^2 + \sum_{j \neq k} \big\| u_n^j u_n^k \big\|_{L_{t, x}^3(\R \times \R)} \bigg)^3 + M^6 \\
			& \lesssim \big( \mathcal{M}(\phi^j) + o(1) \big)^3 + M^6 \\
			& \lesssim_M 1 + o_J(1) \to 0, \qquad \text{as} ~ n \to \infty.
		\end{align*}
		This finishes the proof of Claim (1).
		
		It remains to verify Claim (2).
		Recalling the definition of $ u_n^J $ in \eqref{fml-NP-nl}, we decompose the error term into
		\begin{align*}
			e_n^J & := (i\partial_t + \partial_x^2) u_n^J - \PD (\vert \PD u_n^J \vert^4 \PD u_n^J) \\
			& = \sum_{j=1}^J \PD \big( \vert \PD u_n^j \vert^4 \PD u_n^j \big) - \PD \bigg( \bigg\vert \PD \sum_{j=1}^J u_n^j \bigg\vert^4 \PD \sum_{j=1}^J u_n^j \bigg) \\
			& \qquad + \PD (\vert \PD(u_n^J - e^{it\partial_x^2} r_n^J) \vert^4 \PD(u_n^J - e^{it\partial_x^2} r_n^J)) - \PD \big( \vert \PD u_n^J \vert^4 \PD u_n^J \big).
		\end{align*}
		First, we consider the term
		\begin{align*}
			\mathcal{A} := \sum_{j=1}^J \PD \big( \vert \PD u_n^j \vert^4 \PD u_n^j \big) - \PD \bigg( \bigg\vert \PD \sum_{j=1}^J u_n^j \bigg\vert^4 \PD \sum_{j=1}^J u_n^j \bigg).
		\end{align*}
		By Lemma \ref{lem-NP-decoupling},
		\begin{align*}
			\| \mathcal{A} \|_{\mathcal{N}(\R)} &\lesssim_J \sum_{j\ne k} \Lf\Vt \PD u_n^j \vt\PD u_n^k \vt^4 \Rt\Vt_{\LpnR{\frac{6}{5}}}
			\lesssim_J \sum_{j \neq k} \|\PD u_n^j \PD u_n^k\|_{L_{t,x}^3}\|\PD u_n^k\|^3_{L_{t,x}^6} \\
			&\lesssim o_{J, M}(1), \qquad \text{as} ~ n \to \infty.
		\end{align*}
		Next, we consider the term
		\begin{align*}
			\mathcal{B} := \PD (\vert \PD(u_n^J - e^{it\partial_x^2} r_n^J) \vert^4 \PD(u_n^J - e^{it\partial_x^2} r_n^J)) - \PD \big( \vert \PD u_n^J \vert^4 \PD u_n^J \big).
		\end{align*}
		By the H\"older inequality, we have
		\begin{align*}
			\| \mathcal{B} \|_{\mathcal{N}(\R)}
			&\leq \|\PD e^{it\partial_x^2}r_n^J\|_{L_{t,x}^6}\bigl (\|u_n^J\|_{L_{t,x}^6}^4+\|e^{it\partial_x^2}r_n^J\|_{L_{t,x}^6}^4\bigr)\\
			&\lesssim_M \|e^{it\partial_x^2} r_n^J\|_{L_{t,x}^6} \to 0, \qquad \text{as} ~ n \to \infty.
		\end{align*}
		
		Therefore, we prove the two claims above and complete the proof of Theorem \ref{thm-NP-main}.
	\end{proof}
	
	\subsection{Well-posedness and approximation in the weak $L^2$ topology}\label{S:6}
	In this subsection, we will prove the well-posedness and convergence result in the sense of weak $ L^2 $-topology.
	Our method was inspired by the work of Killip-Visan-Zhang in \cite{KVZ-AJM}.
	
	\begin{theorem}[Approximation in the weak topology]\label{thm:wc}
		For given $ M > 0 $, $ D \ge D_0(M) $, and a parameter sequence $ M_n \to \infty $.
		Suppose that $ \{ u_{0, n} \}_{n=1}^{\infty} \subset L^2(\R) $ satisfy
		\begin{align}\label{fml-appro-thm-condition}
			\| u_{0, n} \|_{L^2(\R)} \leq M, \qquad  u_{0, n} \rightharpoonup u_{0, \infty} \quad \text{weakly in} ~ L_x^2(\R).
		\end{align}
		We denote $ u_n $ the solutions to
		\begin{align*}\label{fml-appro-Mn}
			\begin{cases}
				i\partial_t u_n + \partial_x^2 u_n = \mathcal{P}_{M_n} \big( \vert \mathcal{P}_{M_n} u_n \vert^4 \mathcal{P}_{M_n} u_n \big), \\
				u_n(0, x) = u_{0, n}(x),
			\end{cases}
		\end{align*}
		where $ \mathcal{P}_{M_n} $ denotes the Fourier multiplier with the rescaled symbol $ m_D(\xi/M_n) $.
		Let $ u_\infty $ be the global solution to \eqref{NLS} with initial data $ u_{0, \infty} $.
		Then $ u_n $ is global and for any $ t \in \R $,
		\begin{align}
			u_n(t) \rightharpoonup u_\infty(t) \qquad \text{weakly in} ~ L_x^2(\R).
		\end{align}
	\end{theorem}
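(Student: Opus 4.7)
The plan is to combine linear and nonlinear profile decompositions with the observation that the symbol $m_D(\xi/M_n)$ converges pointwise to $1$, so $\mathcal{P}_{M_n} \to I$ strongly on $L^2(\R)$. First, apply Theorem \ref{thm-pre-linear-profile} to $\{u_{0,n}\}$, giving
$$u_{0,n} = \sum_{j=1}^{J} G_n^j \phi^j + r_n^J.$$
The weak convergence $u_{0,n} \rightharpoonup u_{0,\infty}$ lets me arrange, after reindexing, that $\phi^1 = u_{0,\infty}$ with trivial parameters $(\lambda_n^1, \xi_n^1, x_n^1, t_n^1) \equiv (1, 0, 0, 0)$, so that $G_n^1 \phi^1 \equiv u_{0,\infty}$; the asymptotic orthogonality condition then forces, for every $j \geq 2$, at least one of $\lambda_n^j, |\xi_n^j|, |x_n^j|, |t_n^j|$ to escape to infinity, placing the $j$th profile in one of the three cases of Theorem \ref{thm-pre-linear-profile}.

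For the distinguished first profile, let $u_n^1$ be the solution to the $\mathcal{P}_{M_n}$-truncated equation with initial data $u_{0,\infty}$. The aim is to show $\|u_n^1 - u_\infty\|_{\mathcal{S}(\R)} \to 0$, where $u_\infty$ is the global solution of \eqref{NLS} furnished by Theorem \ref{thm-globalbound}. Viewing $u_\infty$ as an approximate solution of the truncated equation produces the error
$$e_n^\infty := |u_\infty|^4 u_\infty - \mathcal{P}_{M_n}\bigl(|\mathcal{P}_{M_n} u_\infty|^4 \mathcal{P}_{M_n} u_\infty\bigr).$$
Splitting $u_\infty$ into low and high frequencies $P_{\leq N} u_\infty$ and $P_{>N} u_\infty$, the high-frequency part is controlled by the uniform operator norm of $\mathcal{P}_{M_n}$ and the global Strichartz bounds for $u_\infty$, while the low-frequency part is handled via pointwise convergence $m_D(\xi/M_n) \to 1$ and dominated convergence, giving $\|e_n^\infty\|_{\mathcal{N}(\R)} \to 0$. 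Lemma \ref{lem-pertub} then yields the desired strong convergence $u_n^1 \to u_\infty$ in $\mathcal{S}(\R)$.

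For $j \geq 2$, I would adapt Lemmas \ref{lem-NP-1}, \ref{lem-NP-2}, \ref{lem-NP-3} to the $n$-dependent multiplier $\mathcal{P}_{M_n}$ and approximate each nonlinear profile $u_n^j$ in $\mathcal{S}(\R)$ by $T_n^j v^j$ for a suitable $v^j$. Since the parameters escape, a direct change of variables together with a Riemann--Lebesgue type argument shows that for every fixed $t \in \R$ and $\psi \in L^2(\R)$, $\langle T_n^j v^j(t), \psi\rangle \to 0$, hence $\langle u_n^j(t), \psi\rangle \to 0$. For the remainder, $\langle e^{it\partial_x^2} r_n^J, \psi\rangle = \langle r_n^J, e^{-it\partial_x^2}\psi\rangle \to 0$ by the weak convergence $r_n^J \rightharpoonup 0$ inherited from the construction. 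Assembling these ingredients via the superposition formula of Theorem \ref{thm-NP-main} (with $\mathcal{P}_D$ replaced by $\mathcal{P}_{M_n}$) gives
$$\langle u_n(t) - u_\infty(t), \psi\rangle \to 0, \qquad \forall \, t \in \R, \, \psi \in L^2(\R),$$
which is precisely the asserted weak convergence.

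The main obstacle is the $n$-dependence of the Fourier truncation: the global well-posedness result Theorem \ref{thm-GW-GWP} and the entire nonlinear-profile machinery of Section 4 are formulated for a fixed cutoff $\mathcal{P}_D$, whereas here the cutoff $\mathcal{P}_{M_n}$ scales with $n$. One must therefore verify that the uniform Strichartz bounds, the approximation Lemmas \ref{lem-NP-1}--\ref{lem-NP-3}, and the decoupling Lemma \ref{lem-NP-decoupling} survive when the multiplier varies in $n$. This in turn reduces to the uniform bound $\|\mathcal{P}_{M_n}\|_{L^p \to L^p} \lesssim 1$ supplied by the Mikhlin criterion (the rescaling $\xi \mapsto \xi/M_n$ leaves the Mikhlin seminorms invariant), combined with the key simplifying feature $M_n \to \infty$, which ensures that on every fixed frequency window the multiplier $\mathcal{P}_{M_n}$ eventually acts as the identity, thereby restoring the full (untruncated) nonlinearity in the limit.
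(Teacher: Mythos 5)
Your overall skeleton (profile-decompose the data, show a distinguished profile converges to $u_\infty$, show the remaining profiles and the remainder vanish weakly) matches the paper's, and your treatment of the distinguished profile via perturbation off $u_\infty$ with error $e_n^\infty$ is sound: since $m_D(\xi/M_n)\equiv 1$ for $|\xi|\le M_n/2$, the low-frequency part of the error vanishes identically for large $n$ and the high-frequency tail is small by Dodson's bound. However, there is a genuine gap in your handling of the profiles $j\ge 2$, and it is precisely the point the paper's proof is engineered to avoid. You classify the profiles by the trichotomy of Theorem \ref{thm-pre-linear-profile}, i.e.\ relative to the identity frame, and then propose to ``adapt'' Lemmas \ref{lem-NP-1}--\ref{lem-NP-3}. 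But for the $\mathcal{P}_{M_n}$-truncated equation the correct classification is relative to the frame $(M_n,0,0,0)$: a profile concentrated at frequency scale $(\lambda_n^j)^{-1}$ near $\xi_n^j$ sees the cutoff only through the ratios $(\lambda_n^j M_n)^{-1}$ and $\xi_n^j/M_n$. For instance, a profile with $\lambda_n^j\to 0$ is Case 1 for a fixed cutoff (linear flow dominates), yet if $\lambda_n^j M_n\to\infty$ the truncation is invisible on its frequency support and it evolves by the full \eqref{NLS}; a profile with $\lambda_n^j\sim M_n$ sees the entire shape of $m_D$ and its limiting dynamics is the fixed-$D$ truncated equation, not $\alpha^6|u|^4u$. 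Your stated justification --- uniform Mikhlin bounds plus the fact that $m_D(\xi/M_n)\to 1$ on every \emph{fixed} frequency window --- cannot resolve this, because the profiles live on $n$-dependent windows. Without the correct limiting equations the approximate-solution error estimate in the superposition theorem (claim (2) in the proof of Theorem \ref{thm-NP-main}) cannot be verified, so the assembly step is unsupported. You also do not establish that each frame is asymptotically orthogonal to $(M_n,0,0,0)$, which is what rules out a residual ``resonant'' interaction with the truncation scale; the paper proves exactly this by contradiction from the weak convergence hypothesis.

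The paper's route sidesteps all of this with one move: conjugate by $G_n^0 f:=M_n^{-1/2}f(M_n^{-1}\cdot)$, so that the $\mathcal{P}_{M_n}$-equation becomes the \emph{fixed}-$D$ equation \eqref{NLSD}, to which Theorem \ref{thm-GW-GWP} (giving globality of $u_n=(T_n^0)^{-1}w_n$, which you never address), Lemmas \ref{lem-NP-1}--\ref{lem-NP-3} and Theorem \ref{thm-NP-main} apply verbatim. It then decomposes $f_n:=G_n^0(u_{0,n}-u_{0,\infty})$, adjoins $\phi^0:=u_{0,\infty}$ as an extra profile at scale $M_n$ after proving $j\perp 0$ for all $j\ge1$, and reads off $u_\infty$ as the surviving term $(T_n^0)^{-1}T_n^0v^0=v^0$ via the ``furthermore'' clause of Lemma \ref{lem-NP-2}. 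To repair your argument you would either need to adopt this rescaling, or carry out the full re-derivation of Section 4 with the trichotomy taken relative to $M_n$ --- which amounts to the same computation in different coordinates.
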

	
	\begin{proof}
		In the proof, we will use the following parameters:
		\[
		\lambda_n^0 := M_n, \quad \xi_n^0 \equiv 0, \quad x_n^0 \equiv 0, \quad \text{and} \quad t_n^0 \equiv 0,
		\]
		and the symmetry group
		\[
		G_n^0 f := M_n^{-\frac12} f(M_n^{-1}x) \qquad \text{and} \qquad (T_n^0 v)(t, x) :=  M_n^{-\frac12} v(M_n^{-2} t, M_n^{-1}x).
		\]
		
		Let $ w_n $ be the solution to
		\begin{align*}
			\begin{cases}
				i\partial_t w_n + \partial_x^2 w_n = \PD (\vert \PD w_n \vert^4 \PD w_n), \\
				w_n(0, x) = G_n^0 u_{n}(0, x).
			\end{cases}
		\end{align*}
		By the assumption $ \| u_n(0, \cdot) \|_{L^2(\R)} \leq M $, we obtain that $ w_n $ is global via Theorem \ref{thm-GW-GWP}.
		Consequently, we have
		\[
		u_n := (T_n^0)^{-1} w_n
		\]
		is the global solution to the Cauchy problem \eqref{fml-appro-Mn}.
		
		We denote $ f_n := G_n^0 (u_{0, n} - u_{0, \infty}) = w_n(0) - G_n^0 u_{0, \infty} $.
		Then, \eqref{fml-appro-thm-condition} implies that
		\begin{align}\label{fml-appro-wc-initial}
			(G_n^0)^{-1} f_n \rightharpoonup 0 \quad \text{weakly in} ~ L^2(\R).
		\end{align}
		We decompose the sequence $ \{ f_n \}_{n=1}^{\infty} $ by the linear profile decomposition:
		\begin{align}\label{fml-appro-profile}
			f_n = \sum_{j=1}^J G_n^j \phi^j + r_n^J.
		\end{align}
		We denote $ r_n^0 := f_n $ and $ \phi^0 := u_\infty(0) $.
		For $ J \geq 1 $ and each $ \phi^j $, using the definition of $ \phi^j $ in \eqref{fml-appro-profile}, we have
		\begin{align}\label{wc:5.2}
			\phi^j = \wwlim_{n \to \infty} (G_n^j)^{-1} r_n^{j-1}.
		\end{align}
		Next, we will show that the following similar decomposition holds for $ w_n(0) $.
		\begin{align}\label{wc:7.0}
			w_n(0) = \sum_{j=0}^J G_n^j \phi^j + r_n^J.
		\end{align}
		The proof can be reduced to show the following asymptotic orthogonality of parameters.
		\begin{align}\label{fml-appro-parameter}
			(N_n^j, \xi_n^j, x_n^j, t_n^j) \perp (N_n^0, \xi_n^0, x_n^0, t_n^0) \qquad \text{for each} ~ j \geq 1.
		\end{align}
		
		By contradiction, we assume that $ k $-frame is the first one that \eqref{fml-appro-parameter} fails.
		More precisely, \eqref{fml-appro-parameter} hold for $ j < k $, while \eqref{fml-appro-parameter} fails for $ j = k $.
		Then, for non-trivial $ \phi^k $, we have
		\begin{align*}
			\phi^k & = \wwlim_{n \to \infty}(G_n^{k})^{-1} r_n^{k-1}
			= \wwlim_{n \to \infty} (G_n^{k})^{-1} \Big( f_n - \sum_{j=1}^{k-1} G_n^j \phi^j \Big) \\
			& = \wwlim_{n \to \infty} (G_n^{k})^{-1} G_n^0 \big[ (G_n^0)^{-1} f_n \big] - \sum_{j=1}^{k-1} \wwlim_{n \to \infty} (G_n^{k})^{-1} G_n^j \phi^j.
		\end{align*}
		The first term converges to zero by using \eqref{fml-appro-wc-initial} and the strong-$ * $ convergence and the fact that $ (G_n^0)^{-1} f_n \rightharpoonup 0 $.
		The second term also converges to zero since the operators $ (G_n^{k})^{-1} G_n^j $ converge weakly to zero by the assumption.
		Putting these things together, we have that $ \phi^k \equiv 0 $ which is the contradiction to that $ \phi^k $ is non-trivial.
		
		From the profile decomposition of $ w_n $ and adding some transformations, we claim that the following asymptotic formula holds:
		\begin{align}\label{fml-appro-un}
			u_n(t, x) = \sum_{j=0}^J (T_n^0)^{-1} T_n^j v^j + (T_n^0)^{-1} e^{it\partial_x^2} r_n^J + o_{\mathcal{S}(\R)}(1).
		\end{align}
		To prove the convergence $ u_n(t, x) \rightharpoonup u_\infty(t, x) $.
		We need to show that the profiles for $ j \geq 1 $ and the error term weakly converge to zero as $ n \to \infty $.
		Using the orthogonality of parameters, we have that
		\begin{align*}
			\wwlim_{n \to \infty} (T_n^0)^{-1} T_n^j v^j(t, x) = 0 \qquad \text{in} ~ L_x^2(\R) \qquad \text{as} ~ n \to \infty.
		\end{align*}
		For the second term, by the change of variables, we have
		\[
		(T_n^0)^{-1} e^{it\partial_x^2} r_n^J = e^{it\partial_x^2} (G_n^0)^{-1} r_n^J = e^{it\partial_x^2} \bigg( (G_n^0)^{-1} f_n - \sum_{j=1}^J (G_n^0)^{-1} G_n^j \phi^j \bigg) \rightharpoonup 0
		\]
		weakly in $ L^2(\R) $ by using \eqref{fml-appro-wc-initial} and the orthogonal property for parameters.
	\end{proof}
	
	\section{Dispersive estimates on rescaled tori}
	For $ L > 0 $, we denote $ \T_L := \R / (L\Z)$ as the rescaled tori and $ P_{\le N}^L $ as the Fourier multiplier on $ \T_L $. From conservation of mass and H\"older's inequality, we know that the global dispersive estimates fails on tori.
	However, in this section, we will show that for any given $ T > 0 $, we can find a large rescaled tori (i.e., with parameter $ L \gg 1 $ sufficiently large) such that the dispersive estimates hold uniformly on $ [-T, T] $.
	
	First, we give a technical lemma to bound summation of the form $ e^{i\Phi(n)} f(n) $, which can be regarded as the modification of Van der Corput lemma.
	\begin{lemma}\label{lem-Dis}
		Let $ n \in \Z $, $ (t, x) \in \R \times \T_L $.
		Suppose function $ \Phi(t, x, n) : \R \times \T_L \times \Z \to \C $ satisfies
		\begin{align*}
			\Lf\vt \Phi(t, x, n + 1) - \Phi(t, x, n) \Rt\vt \gtrsim & ~ s_1, \\
			\Lf\vt \Phi(t, x, n + 1) - 2 \Phi(t, x, n) + \Phi(t, x, n - 1) \Rt\vt \lesssim & ~ s_2,
		\end{align*}
		for some $ 0 < s_1, s_2 \ll 1 $ uniformly in $ (t, x) $ and $n$.
		Let $ f(\xi) $ be a smooth function which is compactly supported in $ \{ \xi \in \R ~ \vert ~ \vt \xi \vt \lesssim 1 \} $.
		Then, we have
		\begin{equation}\label{equ:lem-Dis}
			\Lf\vt \frac1L \sum_{\vt n \vt \lesssim M} e^{i\Phi(t, x, n)} f \Lf( \frac{n}M \Rt) \Rt\vt \lesssim \frac1{L s_1} + \frac{M s_2}{L s_1^2}.
		\end{equation}
	\end{lemma}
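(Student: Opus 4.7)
The plan is to establish \eqref{equ:lem-Dis} via a discrete integration by parts, which is the standard discrete analogue of the Van der Corput lemma with non-stationary phase; the first-difference hypothesis provides one level of oscillatory cancellation while the second-difference hypothesis controls the error in the resummation.

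The first step I would carry out is to convert the pointwise lower bound on the first difference into a telescoping identity. Writing $\Delta\Phi(n) := \Phi(t,x,n+1) - \Phi(t,x,n)$, the assumption $|\Delta\Phi(n)| \gtrsim s_1$ combined with $s_1 \ll 1$ yields $|e^{i\Delta\Phi(n)} - 1| = 2|\sin(\Delta\Phi(n)/2)| \gtrsim s_1$ uniformly in $(t,x,n)$. This allows the key identity $e^{i\Phi(n)} = (e^{i\Phi(n+1)} - e^{i\Phi(n)})/(e^{i\Delta\Phi(n)} - 1)$. Setting $g(n) := f(n/M)/(e^{i\Delta\Phi(n)} - 1)$, the sum becomes
$$S := \sum_{|n|\lesssim M} e^{i\Phi(n)} f(n/M) = \sum_n \bigl[e^{i\Phi(n+1)} - e^{i\Phi(n)}\bigr] g(n).$$

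The second step is Abel summation: shifting the difference operator onto $g$ yields $S = -\sum_n e^{i\Phi(n)}[g(n) - g(n-1)] + (\text{boundary})$. The boundary terms contribute at most $O(1/s_1)$ since $\|f\|_\infty \lesssim 1$ and $|e^{i\Delta\Phi}-1|^{-1} \lesssim s_1^{-1}$. For the bulk sum I would split
\begin{align*}
g(n) - g(n-1) = \frac{f(n/M) - f((n-1)/M)}{e^{i\Delta\Phi(n)} - 1} + f\!\left(\tfrac{n-1}{M}\right)\!\left[\frac{1}{e^{i\Delta\Phi(n)}-1} - \frac{1}{e^{i\Delta\Phi(n-1)}-1}\right].
\end{align*}
The first piece is $O(1/(Ms_1))$ pointwise by the mean value theorem applied to $f$, contributing $O(s_1^{-1})$ after summing over the $\sim M$ relevant indices. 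For the second piece I would use the algebraic identity $\frac{1}{a} - \frac{1}{b} = \frac{b-a}{ab}$ together with the second-difference hypothesis $|\Delta\Phi(n) - \Delta\Phi(n-1)| \lesssim s_2$ and the lower bound $|e^{i\Delta\Phi}-1| \gtrsim s_1$, obtaining a pointwise estimate of $O(s_2/s_1^2)$ and a summed contribution of $O(Ms_2 s_1^{-2})$. Combining and dividing by $L$ recovers \eqref{equ:lem-Dis}.

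The main subtlety lies in securing the denominator bound $|e^{i\Delta\Phi(n)}-1| \gtrsim s_1$ uniformly in $(t,x,n)$: strictly, one needs $\Delta\Phi(n)$ to stay in a fundamental domain around zero modulo $2\pi$, so that $|\sin(\theta/2)|$ is genuinely comparable to $|\theta|$. The standing assumption $s_1 \ll 1$ (and, in the intended applications, the explicit form of $\Phi$ arising from the Schr\"odinger phase $t\xi^2+x\xi$ at rescaled lattice points) handles this. Once this is in place, the rest of the argument is bookkeeping: the $s_1^{-1}$ gain records one summation by parts, while the additional $s_2/s_1^2$ factor records the curvature refinement extracted from differentiating the phase denominator once more.
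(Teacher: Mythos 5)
Your proof is correct and follows essentially the same route as the paper: the identity $e^{i\Phi(n)}=(e^{i\Phi(n+1)}-e^{i\Phi(n)})/\Psi(n)$ with $\Psi(n)=e^{i\Delta\Phi(n)}-1$, one Abel summation, and the split of the shifted difference into the $f$-increment piece (giving $O(1/(Ls_1))$) and the $1/\Psi$-increment piece controlled by the second difference of $\Phi$ (giving $O(Ms_2/(Ls_1^2))$). Your remark that $|\Psi(n)|\gtrsim s_1$ requires $\Delta\Phi$ to stay near zero modulo $2\pi$ is a point the paper glosses over but which indeed holds in the intended application.
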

	
	\begin{proof}
		We abbreviate $ \Phi(t, x, n) $ to $ \Phi(n) $ and write that
		\begin{equation*}
			e^{i\Phi(n+1)} - e^{i\Phi(n)} = e^{i\Phi(n)} \Lf( e^{i(\Phi(n+1) - \Phi(n))} - 1 \Rt).
		\end{equation*}
		Let $ \Psi(n) := e^{i(\Phi(n+1) - \Phi(n))} - 1 $ and apply Abel's sum, we obtain 
		\begin{align}
			\sum_{n \in \Z} e^{i \Phi(n)} f \Lf( \frac{n}{M} \Rt)
			& = \sum_{\vt n \vt \lesssim M} \frac{\phi(n/M)}{\Psi(n)} \Lf( e^{i\Phi(n+1)} - e^{i\Phi(n)} \Rt) \label{fml-Dis-lem-sum} \\
			& = \sum_{\vt n \vt \lesssim M} e^{i\Phi(n)} \Lf( \frac{\phi((n+1)/M)}{\Psi(n+1)} - \frac{\phi(n/M)}{\Psi(n)} \Rt) \nonumber \\
			& = \sum_{\vt n \vt \lesssim M} e^{i\Phi(n)} \frac1{\Psi(n+1)} \Lf( \phi \Lf( \frac{n+1}{M} \Rt) - \phi \Lf( \frac{n}{M} \Rt) \Rt) \nonumber \\
			& \hspace{2ex} + \sum_{\vt n \vt \lesssim M} e^{i\Phi(n)} \phi \Lf( \frac{n}{M} \Rt) \Lf( \frac1{\Psi(n+1)} - \frac1{\Psi(n)} \Rt) \nonumber.
		\end{align}
		
		By assumptions on $ \Phi(n) $, we can find
		\begin{equation*}
			\vt \Psi(n) \vt \gtrsim \vt \Phi(n+1) - \Phi(n) \vt \gtrsim s_1, \qquad \text{uniformly in} ~ n.
		\end{equation*}
		Hence, apply Newton-Leibniz's rule, we arrive at
		\begin{align*}
			\Lf\vt \phi \Lf( \frac{n+1}{M} \Rt) - \phi\Lf(\frac{n}{M}\Rt) \Rt\vt & \lesssim \frac1M, \\
			\Lf\vt \frac1{\Psi(n+1)} - \frac{1}{\Psi(n)} \Rt\vt
			= \Lf\vt \frac{\Phi(n+2) - 2\Phi(n+1) + \Phi(n)}{\Psi(n+1) \Psi(n)} \Rt\vt
			& \lesssim \frac{s_2}{s_1^{2}}.
		\end{align*}
		Combine this with \eqref{fml-Dis-lem-sum} and \eqref{equ:lem-Dis} follows immediately, which end the proof of this lemma.
	\end{proof}

	Then, we establish the local-in-time dispersive and Strichartz estimates via above technical lemma.
	\begin{proposition}\label{prop-Dis-1}
		For fixed $ T > 0 $ and $ N \in 2^{\Z} $, there exists $ L_0 := L_0(T, N) > 0 $ such that for any $ L > L_0 $ and $ u_0 \in L^2(\T_L) $,
		\begin{align}\label{fml-Dis-prop-inf-1}
			\Vt \Eidelta{t} P^L_{\le N} u_0 \Vt_{L_x^{\infty}(\T_L)} \lesssim \vt t \vt^{-\frac12} \Vt u_0 \Vt_{L^1(\T_L)}, \qquad \text{uniformly for} ~ t \in [-T, T]\backslash \{0\}.
		\end{align}
		Moreover, let $ (q, r) $ be an admissible pair satisfying $ \frac2q + \frac1r = \frac12 $, we have
		\begin{equation}\label{fml-Dis-prop-qr-2}
			\Vt \Eidelta{t} P^L_{\le N} u_0 \Vt_{L_t^q([-T, T], L_x^r(\T_L))} \lesssim \Vt u_0 \Vt_{L^2(\T_L)}.
		\end{equation}
	\end{proposition}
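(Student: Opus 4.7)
The plan is to reduce both claims to a pointwise bound on the kernel of $e^{it\partial_x^2}P^L_{\le N}$ on $\T_L$, namely
\[
K_L(t,x)=\frac{1}{L}\sum_{n\in\Z}\varphi\!\left(\tfrac{2\pi n}{LN}\right)e^{i\Phi(t,x,n)},\qquad \Phi(t,x,n):=\tfrac{2\pi nx}{L}-\tfrac{4\pi^2 n^2 t}{L^2}.
\]
Once I establish $|K_L(t,x)|\lesssim |t|^{-1/2}$ uniformly on $([-T,T]\setminus\{0\})\times\T_L$, the dispersive estimate \eqref{fml-Dis-prop-inf-1} follows from Young's inequality, and \eqref{fml-Dis-prop-qr-2} follows from the abstract $TT^*$/Keel--Tao machinery combined with the trivial unitarity $\|e^{it\partial_x^2}P^L_{\le N}u_0\|_{L^2(\T_L)}\le\|u_0\|_{L^2(\T_L)}$. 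Indeed, Riesz--Thorin interpolation between the dispersive and unitary bounds yields $L^{r'}\to L^r$ decay at rate $|t|^{-(1-2/r)/2}$, and the admissibility relation $\tfrac{2}{q}+\tfrac{1}{r}=\tfrac{1}{2}$ is precisely what the Keel--Tao theorem needs to produce the $L^q_tL^r_x$ bound.

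The forward differences of $\Phi$ in $n$ read
\[
\Phi(n+1)-\Phi(n)=\tfrac{2\pi}{L}x-\tfrac{4\pi^2(2n+1)t}{L^2},\qquad \Phi(n+1)-2\Phi(n)+\Phi(n-1)=-\tfrac{8\pi^2 t}{L^2},
\]
so $\Phi$ has a unique stationary point $n_*(t,x)\approx \tfrac{xL}{4\pi t}$ and obeys $|\Phi(n+1)-\Phi(n)|\gtrsim \tfrac{|t|}{L^2}|n-n_*|$ once $|n-n_*|\gtrsim 1$. I set the critical scale $R:=L/\sqrt{|t|}$. When $R\ge LN$, i.e.\ $|t|\le N^{-2}$, the entire Fourier support lies within the near-stationary regime, and the trivial bound $|K_L(t,x)|\le L^{-1}\#\{|n|\lesssim LN\}\lesssim N\lesssim |t|^{-1/2}$ already suffices.

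For $N^{-2}\le |t|\le T$, I partition the summation index as
\[
\Z=\{|n-n_*|\le R\}\ \sqcup\ \bigsqcup_{0\le j\le j_{\max}}A_j,\qquad A_j:=\{2^jR\le |n-n_*|<2^{j+1}R\},
\]
with $2^{j_{\max}}R\sim LN$. The near-stationary block contributes $\lesssim R/L=|t|^{-1/2}$ by counting. On each shell $A_j$ I insert a smooth bump of width $2^jR$ and invoke Lemma \ref{lem-Dis} with $M_j=2^jR$, $s_1\sim \tfrac{\sqrt{|t|}\,2^j}{L}$, $s_2\sim \tfrac{|t|}{L^2}$; a direct calculation shows that the two contributions in \eqref{equ:lem-Dis} coincide, giving shell bound
\[
\frac{1}{Ls_1}+\frac{M_js_2}{Ls_1^2}\sim \frac{1}{\sqrt{|t|}\,2^j}.
\]
Summing the resulting geometric series over $0\le j\le j_{\max}$ returns the desired estimate $|K_L(t,x)|\lesssim |t|^{-1/2}$.

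The main technical obstacle is verifying the smallness hypotheses $s_1,s_2\ll 1$ uniformly on every dyadic shell, especially on the outermost shell $A_{j_{\max}}$ where $s_1\sim N|t|/L$; this forces $L\gg NT$ and yields the threshold $L_0(T,N)$ in the statement. One must also choose the dyadic cutoffs compatibly with the outer $\varphi$-cutoff at scale $LN$ and verify the discrete derivative bound $|f(n+1)-f(n)|\lesssim 1/M_j$ implicit in Lemma \ref{lem-Dis}; this is routine for smooth Littlewood--Paley bumps at the scale $2^jR$. With the pointwise kernel estimate in hand, both conclusions of the proposition follow.
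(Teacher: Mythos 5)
Your proposal is correct, and the reduction of both claims to the pointwise kernel bound $|K_L(t,x)|\lesssim |t|^{-1/2}$ followed by Young's inequality and the $TT^*$/Keel--Tao machinery matches the paper. The kernel estimate itself, however, is obtained by a genuinely different route. The paper dichotomizes in the \emph{spatial} variable: for $|x|\le 100R$ with $R=1+NT$ it compares the Riemann sum to the Euclidean oscillatory integral term by term (a mean-value estimate on each interval $Q_n$, which is what forces the stronger threshold $L\gg \langle T\rangle^{5/2}N^3$) and quotes the continuum stationary-phase bound, while for $|x|>100R$ it applies Lemma \ref{lem-Dis} once globally with $s_1=R/L$, $s_2=T/L^2$, $M=NL$. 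You instead perform a discrete stationary-phase analysis directly in the summation variable: trivial counting on the block $|n-n_*|\le L/\sqrt{|t|}$ around the stationary point $n_*\approx xL/(4\pi t)$, then Lemma \ref{lem-Dis} on dyadic shells with shell-dependent parameters, summing a geometric series; your computation of $\tfrac1{Ls_1}+\tfrac{M_js_2}{Ls_1^2}\sim (2^j\sqrt{|t|})^{-1}$ is correct, and the degenerate regime $|t|\le N^{-2}$ is correctly disposed of by the trivial bound $N\le |t|^{-1/2}$. Your approach buys a self-contained argument with no comparison to the Euclidean kernel and a milder largeness requirement on $L$ (essentially $L\gg NT+\sqrt{T}$, coming only from $s_1,s_2\ll1$ on the outermost shell), at the cost of having to re-center the bump in Lemma \ref{lem-Dis} at $n_*$ and split each shell into its two one-sided pieces so the lower bound on $|\Phi(n+1)-\Phi(n)|$ holds with a genuinely smooth cutoff; both points are routine, as you note. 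The paper's approach is shorter where it can lean on the known $\R$-kernel bound but pays for it with the cruder sum-versus-integral comparison.
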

	\begin{proof}
		From $ TT^*$ argument, \eqref{fml-Dis-prop-inf-1} implies \eqref{fml-Dis-prop-qr-2}, thus it is sufficient to prove \eqref{fml-Dis-prop-inf-1}.
		
		For $ u_0 \in L^2(\T_L) $, we rewrite $ \Eidelta{t} P^L_{\le N} u_0 $ as
		\begin{equation*}
			\Eidelta{t} P^L_{\le N} u_0 = (K_N^L * u_0) (x),
		\end{equation*}
		where the kernel $ K_N^L $ is given by
		\begin{equation*}
			K_N^L(t, x) = \frac1L \sum_{n \in \Z} e^{i \Phi(t, x, n)} \phi\Lf(\frac{n}{L N}\Rt), \quad \text{with} ~ \Phi(t, x, n) := \frac{2\pi x n}{L} - \frac{4\pi^2 t n^2}{L^2}.
		\end{equation*}
		We reduce \eqref{fml-Dis-prop-inf-1} to prove that
		\begin{equation}\label{fml-Dis-prop-Kerest}
			\vt K_N^L(t, x) \vt \lesssim \vt t\vt^{-\frac12},
		\end{equation}
		uniformly in $ t \in [-T, T] $ and $ x \in [-\frac{L}2, \frac{L}2] $. For $\vt x\vt\le 100R$, we show that $K_N^L$ can be compared with Euclidean kernel and for $\vt x\vt> 100R$, we use Lemma \ref{lem-Dis} to get desired bound. 
		
		For the case $ \vt x \vt \le 100R $ with $ R := 1 + NT $.
		Let $ Q_n := \Lf[ \frac{n - 1}{L}, \frac{n + 1}{L} \Rt] $ and we decompose $K^L_N$ as follows
		\begin{align*}
			K_N^L(t,x) &= \Lf( \frac1L \sum_{n \in \Z} e^{2\pi i x \frac{n}{L} - 4\pi^2 i t \frac{n^2}{L^2}} \phi\Lf( \frac{n}{LN} \Rt) - \int_{Q_n} e^{2\pi i x \xi - 4\pi^2 i t \xi^2} \phi \Lf( \frac{\xi}{N} \Rt) \Ind{\xi} \Rt) \\
			&\hspace{2ex} + \int_{\R} e^{2\pi i x \xi - 4\pi^2 i t \xi^2} \phi \Lf( \frac{\xi}{N} \Rt) \Ind{\xi} \\
			& =: I_1 + I_2.
		\end{align*}
		By the standard oscillator integral argument, we have $ \vt I_2\vt \lesssim \vt t\vt^{-\frac12} $.
		
		Next, we treat $ I_1 $.
		Let $ f $ be a smooth function which is defined in $ \T_L $, by the mean value theorem, we have
		\begin{equation}\label{fml-Dis-prop-discont-app}
			\Lf\vt \int_{Q_n} f(\xi) \Ind{\xi} - \frac1L f \Lf( \frac{n}{LN} \Rt) \Rt\vt \lesssim \frac1{L^3} \big\Vt \partial^2_\xi f \big\Vt_{L^\infty(Q_n)}.
		\end{equation}
		Therefore, we can take $ L \gg \langle T \rangle^{\frac52} N^3 $ and $ \vt I_1\vt $ has the bound
		\begin{equation*}
			\frac1{L^3} \sum_{\vt n\vt \lesssim NT} \Lf( \frac1{R^2} + \frac{T^2 n^2}L + T + \frac1{N^2} \Rt) \lesssim \frac{N^3 T^2}{L^2} + \frac{T^2 N^3}L \lesssim T^{-\frac{1}{2}}.
		\end{equation*}
		
		For case when $ \vt x \vt > 100R $.
		Noting that
		\begin{gather*}
			\Phi(n+1) - \Phi(n) = \frac{2\pi x}{L} - \frac{4\pi^2  t (2n+1)}{L}, \\
			\Phi(n+1) - 2\Phi(n) + \Phi(n-1) = -\frac{8 \pi^2 t}{L^2},
		\end{gather*}
		we have
		\begin{gather*}
			\vt \Phi(n+1) - \Phi(n) \vt \gtrsim \frac{R}{L},\\
			\vt \Phi(n+1) - 2\Phi(n) + \Phi(n-1) \vt \lesssim \frac{T}{L^2},
		\end{gather*}
		provided $ \vt x \vt > 100(1+NT) $.
		Thus, apply lemma \ref{lem-Dis} with $ s_1 := \frac{R}{L} $, $ s_2 := \frac{T}{L^2} $ and $ M := NL $, we have
		\begin{align*}
			\vt K_N^L(t,x)\vt &= \Lf\vt \frac1L \sum_{n \in \Z}e^{i \Phi(t,x,n)} \phi\Lf(\frac{n}{L N}\Rt)\Rt\vt \lesssim \frac1{L s_1} + \frac{M s_2}{L s_1^2} \\
			&= \frac1L \bigg( \frac{L}R + NL \frac{L^2}{R^2} \frac{T}{L^2} \bigg) = \frac{1}{R} + \frac{NT}{R^2} \ll T^{-\frac{1}{2}}.
		\end{align*}
		Consequently, we get desired estimate for Proposition \ref{prop-Dis-1}.
	\end{proof}
	
	By the standard argument as in Lemma \ref{lem-pertub} and Proposition \ref{prop-Dis-1}, we have the perturbation theory for the \eqref{NLS} on rescaled torus with Fourier truncated nonlinearity:
	\begin{equation}\label{fml-Dis-TL}
		\begin{cases}
			i\partial_t u +  u_{xx} = P^L_{\le N} \PL \big( \vert \PL u \vert^4 \PL u \big), \\
			u(0) = P_{\le N}^L u_0.
		\end{cases}
	\end{equation}
	Here $ \PL $ is a Mikhlin multiplier on $ \T_L $.
	\begin{corollary}[Proposition 7.3 in \cite{KVZ-AJM}]\label{prop-Dis-Pertu}
		For fixed $ T > 0 $, $ n \in 2^{\Z} $ and $ L \ge L_0 $, where $ L_0 $ is given in Proposition \ref{prop-Dis-1}.
		Suppose $ \widetilde{u}(t, x): [-T, T] \times \T_L \to \C $ is a solution to
		\begin{equation*}
			\begin{cases}
				i\partial_t \wt{u} + \wt{u}_{xx} = P^L_{\le N} \PL \big( \vert \PL \wt{u} \vert^4 \PL \wt{u} \big) + e, \\
				\wt{u}(0) = P_{\le N}^L \wt{u}_0,
			\end{cases}
		\end{equation*}
		where $ e(t, x): [-T, T] \times \T_L \to \C $ and $ \wt{u}_0 \in L^2(\T_{\lambda}) $.
		Assume that there exist $ M > 0 $ and $ \Veps_0 := \Veps_0(M) > 0 $ such that
		\begin{equation*}
			\Vt \wt{u} \Vt_{\McS([-T,T])} \le M,
		\end{equation*}
		and
		\begin{equation*}
			\Vt u_0 - \wt{u}(t_0) \Vt_{L^2(\T_{\lambda})} \le \Veps, \qquad \Vt e \Vt_{\McN([-T,T])} \le \Veps, \qquad \text{for some} ~ 0 < \Veps < \Veps_0.
		\end{equation*}
		Then, there exists a unique solution $ u : I \times \R \to \C $ to \eqref{fml-Dis-TL} and constant $ C = C(M) > 0 $ such that
		\begin{align*}
			\Vt u - \wt{u} \Vt_{\McS([-T, T])} \le C(M) \Veps.
		\end{align*}
	\end{corollary}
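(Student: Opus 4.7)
The plan is to run the standard short-time perturbation argument followed by iteration across finitely many subintervals, with all Strichartz input supplied by Proposition \ref{prop-Dis-1}. Two structural remarks make this possible on the rescaled torus. First, both $u$ and $\wt u$ solve equations whose nonlinearities are projected to frequencies $\le N$ by the outer $P^L_{\le N}$, so every Duhamel integral lies in that range and we may apply \eqref{fml-Dis-prop-qr-2} at each step with constants independent of $L$ as soon as $L \ge L_0(T,N)$. Second, since $\PL$ is a Mikhlin multiplier, it is bounded on $L^p(\T_L)$ for $1<p<\infty$ uniformly in $L$, which lets us discard it in all nonlinear H\"older estimates.

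The first step is to partition $[-T,T]=\bigcup_{j=1}^J I_j$ into $J=J(M)$ closed subintervals on which $\Vt \wt u\Vt_{L^5_t L^{10}_x(I_j\times\T_L)}\le \eta$ for a small constant $\eta=\eta(M)$ to be fixed. Such a partition exists because the $\McS$-bound on $\wt u$ controls the $L^5_tL^{10}_x$ norm. On each slab $I_j=[a_j,a_{j+1}]$, introduce $w:=u-\wt u$, which satisfies
\[
i\partial_t w + \partial_x^2 w = P^L_{\le N}\PL\bigl(F(\PL u) - F(\PL \wt u)\bigr) - e, \qquad F(z):=|z|^4 z,
\]
with $\Vt w(a_j)\Vt_{L^2(\T_L)}\le \Veps_j$ (and $\Veps_1\le \Veps$). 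Combining the torus Strichartz estimate with the elementary bound $|F(z_1)-F(z_2)|\lesssim (|z_1|^4+|z_2|^4)|z_1-z_2|$ and H\"older, one derives
\[
\Vt w\Vt_{\McS(I_j)} \le C\Veps_j + C\Veps + C\bigl(\eta^4 + \Vt w\Vt_{L^5_t L^{10}_x(I_j)}^4\bigr)\Vt w\Vt_{L^5_t L^{10}_x(I_j)}.
\]
Choosing $\eta$ small depending only on $C$, a continuity bootstrap yields $\Vt w\Vt_{\McS(I_j)}\le 2C(\Veps_j+\Veps)$, and in particular $\Veps_{j+1}\le 2C(\Veps_j+\Veps)$.

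Iterating this recursion $J(M)$ times produces $\Vt w\Vt_{\McS([-T,T])}\le C(M)\Veps$, provided $\Veps_0=\Veps_0(M)$ is taken small enough that the bootstrap hypothesis $\Vt w\Vt_{\McS(I_j)}\le \eta$ remains valid at every stage. Existence of $u$ is obtained by running the contraction on each slab in order and gluing, while uniqueness follows by applying the same contraction to any two candidate solutions. The main technical hurdle is not the algebra but the verification that the Strichartz constants on each slab are independent of both $L$ and the slab in question; this is precisely what Proposition \ref{prop-Dis-1} provides once $L\ge L_0(T,N)$. Beyond that, the argument is a direct transcription of the Euclidean stability lemma already proved as Lemma \ref{lem-pertub}.
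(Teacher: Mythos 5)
Your argument is correct and is exactly the ``standard argument'' the paper invokes for this corollary: subdivide $[-T,T]$ into $J(M)$ slabs on which $\wt{u}$ is small in $L^5_tL^{10}_x$, run a short-time perturbation on each slab using the local-in-time Strichartz estimates of Proposition \ref{prop-Dis-1} (uniform in $L\ge L_0(T,N)$), and iterate the resulting recursion for $\Veps_j$. The one point worth flagging is that Proposition \ref{prop-Dis-1} only provides Strichartz bounds for $P^L_{\le N}$-truncated data, so controlling the Duhamel contribution of $e$ in $L^5_tL^{10}_x(\T_L)$ requires $e$ itself to be frequency-localized (as it is in the paper's application in Theorem \ref{thm-App-app}); your remark that ``every Duhamel integral lies in that range'' covers the nonlinearity but should be checked for the error term as well.
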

	
	\section{Approximating the nonlinear problem on torus}
	For fixed $ M > 0 $ and $ T > 0 $, we can find $ D_0 := D_0(M) > 0 $ which meet the requirement in Theorem \ref{thm-GW-GWP}.
	We also set two sequences $ \{ K_n \}_{n = 1}^{\infty} $ and $ \{ \Veps_n \}_{n = 1}^{\infty} $ such that $ K_n \to \infty $ and $ \Veps_n \to 0 $ as $ n \to \infty $.
	Then, we take $ L_n \ge \wt{L} = \wt{L}(M, D, K_n, \Veps_n, T) $ which will be determined in the rest part of this section.
	Finally, we set $ \TLn := \T / (L_n \Z) $ and denote the Fourier multiplier on $ \TLn $ with symbol $ m_D(\cdot/K_n) $ by $ \PLKn $.
	
	Next, we consider nonlinear Schr\"odinger equations posed on rescaled torus $ \TLn $ with Fourier truncated nonlinearity
	\begin{equation}\label{fml-App-NLS}
		\left\{
		\begin{aligned}
			& i\partial_t u_n + \partial_x^2 u_n = \PLKn \big( \vert \PLKn u_n \vert^4 \PLKn u_n \big), \\
			& u_n(0, x) = u_{0, n},
		\end{aligned}
		\right.
		\qquad (t, x) \in \R \times \TLn.
	\end{equation}
	Here, the initial data $ u_{0, n} \in \mathcal{H}_n $ and $ \Vt u_{n, 0} \Vt_{L^2(\T_{\lambda})} \le M $, where
	\begin{equation*}
		\mathcal{H}_n := \big\{ f \in L^2(\T_{\lambda}) ~ \big\vert ~ P^{L_n}_{>2D K_n} f = 0 \big\}.
	\end{equation*}
	
	Since \eqref{fml-App-NLS} is a finite-dimensional Hamiltonian system, it has global solutions.
	Our main goal here is to prove that for $ n \gg 1 $ large enough, a solution to \eqref{fml-App-NLS} can be approximated by a series solutions to \eqref{NLS} with Fourier truncated nonlinearity in fixed time interval $ [-T, T] $.
	To compare the solution between \eqref{NLS} posed on $ \TLn $ and $ \R $, we need proper choice of cut functions which restrict functions defined on $ \R $ to $ \TLn $.
	
	\subsection{Construction of the cut off functions}\label{SubS:cut-off}
	Let $ \eta_n := \Veps_n $, we construct five types of cut off functions $ \chi_n^j(x) : \R \to [0, 1] $ with $ j = 0, 1, 2, 3, 4 $ such that the following properties hold
	\begin{equation}\label{fml-App-cutoff-funs-proper}
		\begin{cases}
			\Vt \partial_x \chi_n^j \Vt_{L^{\infty}(\R)} \le \frac{\eta_n}{D K_n T}, \\
			\chi_n^j \equiv 1, \qquad \text{on} ~ \bigcup_{i=1}^{j-1} \{ \supp \chi_n^i \} ~ \text{for} ~ j \ge 2, \\
			\Dist(\supp \chi_n^i, \supp (1-\chi^j_n)) \ge \frac{D K_n T}{\eta_n}, \qquad \text{for} ~ i < j, \\
			\Vt (1 - \chi^j_n) u_{0, n} \Vt_{L^2(\R)} \le \Veps_n, \qquad \text{for} ~ j = 0, 1, 2, 3, 4.
		\end{cases}
	\end{equation}
	
	Let $ L_n \gg \frac{D M^2 K_n T}{\eta^2_n} $, we can divide the interval $ [\frac{L_n}4, \frac{L_n}2] $ into $ \frac{16 M^2}{\eta_n} $ many sub-intervals with each length of $ \frac{D K_n T}{64 \eta_n} $.
	There exists at least one sub-interval $ I_n $ centered at $ c_n $, which can be written as
	\begin{equation*}
		I_n := \Lf[ c_n - \frac{D K_n T}{128 \eta_n}, c_n + \frac{D K_n T}{128 \eta_n} \Rt],
	\end{equation*}
	such that
	\begin{equation*}
		\Lf\Vt u_{0, n}(x + L_n \Z) \chi_{I_n}(x) \chi_{[-\frac{L_n}2, \frac{L_n}2]}(x) \Rt\Vt_{\LpnR{2}} \le \frac{\Veps_n}4.
	\end{equation*}
	Next, for $ j = 0, 1, 2, 3, 4 $, we define smooth functions $ \chi_n^j $ satisfy
	\begin{equation*}
		\chi_n^j(x) :=
		\begin{cases}
			1, \qquad x \in [ c_n - L_n + \frac{10-2j}{\eta_n} D K_n T, c_n - \frac{10-2j}{\eta_n} D K_n T ] \\
			0, \qquad x \in (-\infty, c_n - L_n + \frac{10-2j-1}{\eta_n} D K_n T) \cup ( c_n - \frac{10-2j-1}{\eta_n} D K_n T, \infty).
		\end{cases}
	\end{equation*}
	We can verify that $\chi_n^j$ satisfy \eqref{fml-App-cutoff-funs-proper}.
	
	\subsection{Truncated equation on the line}
	Let $ \wt{u}_n $ be solutions to the following Cauchy problem
	\begin{equation}\label{fml-App-NLSR}
		\left\{ 
		\begin{aligned}
			& i\partial_t \wt{u}_n + \partial_x^2 \wt{u}_n = \PKn \big( \vert \PKn \wt{u}_n \vert^4 \PKn \wt{u}_n \big), \\
			& \wt{u}_n(0, x) = \chi^0_n(x) u_{0, n}(x + L_n \Z),
		\end{aligned} 
		\right. \qquad (t, x) \in \R \times \R, 
	\end{equation}
	where the initial data $ u_{0, n} $ are the same as in \eqref{fml-App-NLS}. 
	\eqref{fml-App-NLSR}, which can be regarded as the rescaling in frequency space of \eqref{NLSD}, is globally well-posed. 
	Then, there exists $ C(M) > 0 $ such that 
	\begin{equation}\label{fml-App-NLSR-bound}
		\Vt \wt{u}_n \Vt_{\McS(\R)} \le C(M), \qquad \text{uniformly in} ~ n. 
	\end{equation}
	Furthermore, the persistence of higher regularity also holds for $ \tilde{u}_n $.
	
	\begin{lemma}\label{lem-App-NLSR-control}
		Let $ 0 \le s \le 1 $ and $ n \gg 1 $, if $ \tilde u_n $ is the global solution to \eqref{fml-App-NLSR}, then there exists a constant $ C := C(M) > 0 $ such that 
		\begin{equation*}
			\Vt \vt \partial_x \vt^s \tilde u_n \Vt_{\McS(\R)} \le C (D K_n)^s, \qquad \text{uniformly in} ~ n. 
		\end{equation*}
	\end{lemma}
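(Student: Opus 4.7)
The plan is to reduce Lemma \ref{lem-App-NLSR-control} to the already-established Lemma \ref{lem-highregu} (persistence of high regularity), which applies because \eqref{fml-App-NLSR} is precisely of the form \eqref{sevNLS} with $\alpha = 1$ and $\mathcal{P} = \PKn$, noting that $m_D(\xi/K_n)$ is a Mikhlin multiplier with constants independent of $n$ and $K_n$. The uniform space-time bound \eqref{fml-App-NLSR-bound} together with interpolation between $L^\infty_t L^2_x$ and $L^5_t L^{10}_x$ (the two components of $\mathcal{S}(\R)$) gives $\Vt \tilde u_n \Vt_{L^6_{t,x}(\R \times \R)} \lesssim C(M)$, so the hypotheses of Lemma \ref{lem-highregu} are satisfied once we control the $\dot H^s$ norm of the initial data.

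The key computational step is therefore to show that
\begin{equation*}
	\bigVt \tilde u_n(0, \cdot) \bigVt_{\dot H^s(\R)} = \bigVt \chi_n^0(x) u_{0,n}(x + L_n \Z) \bigVt_{\dot H^s(\R)} \lesssim (D K_n)^s M, \qquad 0 \le s \le 1.
\end{equation*}
For $s = 0$ this is trivial from $\Vt u_{0,n} \Vt_{L^2(\T_{L_n})} \le M$ together with the fact that $\chi_n^0$ is supported in a single fundamental domain of length less than $L_n$. For $s = 1$, I would apply the Leibniz rule
\begin{equation*}
	\partial_x \bigl( \chi_n^0 u_{0,n} \bigr) = (\partial_x \chi_n^0) \, u_{0,n} + \chi_n^0 \, \partial_x u_{0,n}.
\end{equation*}
The first term is bounded in $L^2(\R)$ by $\Vt \partial_x \chi_n^0 \Vt_{L^\infty} \cdot M \lesssim \frac{\eta_n M}{D K_n T}$, which is negligible. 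For the second term, since $u_{0,n} \in \mathcal{H}_n$ has Fourier support in $\{ \vt \xi \vt \le 2 D K_n \}$ on the torus, Bernstein's inequality on $\T_{L_n}$ yields $\Vt \partial_x u_{0,n} \Vt_{L^2(\T_{L_n})} \lesssim D K_n M$, and restricting to the support of $\chi_n^0$ (contained in one period) gives $\Vt \chi_n^0 \partial_x u_{0,n} \Vt_{L^2(\R)} \lesssim D K_n M$. The intermediate range $0 < s < 1$ then follows by complex interpolation between the $s = 0$ and $s = 1$ endpoints.

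Once the initial datum bound is in hand, Lemma \ref{lem-highregu} delivers
\begin{equation*}
	\bigVt \vt \partial_x \vt^s \tilde u_n \bigVt_{\mathcal{S}(\R)} \lesssim_M \bigVt \tilde u_n(0, \cdot) \bigVt_{\dot H^s(\R)} \lesssim C(M) (D K_n)^s,
\end{equation*}
with the implicit constant depending only on $M$ through the uniform bound \eqref{fml-App-NLSR-bound}, which is itself independent of $n$. This completes the proof.

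The main obstacle I anticipate is not any deep analytic point but rather the bookkeeping at the interpolation step: one must verify that the $\dot H^s$ estimate propagates cleanly through the cutoff, and in particular that the crude Leibniz bound above does not lose powers of $L_n$ (it does not, because $\chi_n^0$ is supported in a fundamental domain so the periodic $L^2$ norm directly controls the $L^2(\R)$ norm of the restricted function). All the other ingredients — the Mikhlin character of $\PKn$, the uniform Strichartz bound, and the invocation of Lemma \ref{lem-highregu} — are already packaged for immediate use.
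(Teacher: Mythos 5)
Your proof is correct, and the first half (the bound $\Vert \tilde u_n(0)\Vert_{\dot H^1(\R)} \lesssim_M D K_n$ via the Leibniz rule, the smallness of $\Vert\partial_x\chi_n^0\Vert_{L^\infty}$ from \eqref{fml-App-cutoff-funs-proper}, Bernstein on $\T_{L_n}$ using $u_{0,n}\in\mathcal{H}_n$, and interpolation for $0<s<1$) is exactly what the paper does. Where you diverge is in how the regularity is propagated forward in time: you invoke Lemma \ref{lem-highregu}, whose proof subdivides $\R$ into $O((M/\varepsilon)^6)$ intervals on which the $L^6_{t,x}$ norm is small and absorbs the nonlinear term via the fractional chain rule, whereas the paper closes the estimate in a single global Strichartz application by exploiting the fact that the nonlinearity in \eqref{fml-App-NLSR} carries an outer $\PKn$, so that $\vert\partial_x\vert^s$ acting on it costs only a factor $(DK_n)^s$ and the remaining $L^1_tL^2_x$ norm is controlled by $\Vert\tilde u_n\Vert_{L^5_tL^{10}_x}^5$ from \eqref{fml-App-NLSR-bound} --- no iteration and no derivative of $\tilde u_n$ appears on the right-hand side. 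Your route is legitimate (the equation is \eqref{sevNLS} with $\alpha=1$ and $\mathcal{P}=\PKn$, which is Mikhlin uniformly in $n$ and $D$, and the hypothesis $\Vert\tilde u_n\Vert_{L^6_{t,x}}\le C(M)$ follows from \eqref{fml-App-NLSR-bound} by Strichartz interpolation as you say), and it has the merit of reusing an already-proved lemma; the paper's version is slightly more economical and makes transparent that the frequency truncation, not a chain-rule bootstrap, is what makes the derivative estimate free here.
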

	
	\begin{proof}
		Note that $ u_{n, 0} $ is frequency localized, by Bernstein's inequality and \eqref{fml-App-cutoff-funs-proper}, 
		\begin{equation*}
			\Vt \partial_x \wt{u}_{n}(0) \Vt_{\LpnR{2}} 
			\lesssim \Vt \partial_x \chi^0_n \Vt_{L^{\infty}(\R)} \Vt u_{0, n} \Vt_{L^2(\T_{L_n})} + \Vt \chi^0_n \Vt_{L^{\infty}(\R)} \Vt \partial_x u_{0, n} \Vt_{L^2(\T_{L_n})} 
			\lesssim_M D K_n. 
		\end{equation*}
		Then, combining with the Strichartz estimate, interpolation and \eqref{fml-App-cutoff-funs-proper}, we have 
		\begin{align*}
			\Vt \vt \partial_x \vt^s \wt{u}_n \Vt_{\McS(\R)} 
			& \lesssim_M \Vt \vt \partial_x \vt^s \wt{u}_{0, n} \Vt_{L^2(\R)} + \big\Vt \vt \partial_x \vt^s \PLKn \big( \vert \PLKn \wt{u}_n \vert^4 \PLKn \wt{u}_n \big) \big\Vt_{\TsnR{1}{2}} \\
			& \lesssim_M \Vt \wt{u}_n \Vt^{1 - s}_{\LpnR{2}} \Vt \vt \partial_x \vt \wt{u}_n \Vt^s_{\LpnR{2}} + (D K_n)^s \Vt \wt{u}_n \Vt^5_{\TsnR{5}{10}} \\
			& \lesssim_M (D K_n)^s, \qquad \forall ~ 0 \le s \le 1, 
		\end{align*}
		which proves this lemma. 
	\end{proof} 
	
	In the following, we will  state several lemmas including the mismatch estimate and some commutator estimates, which will be crucial in establishing our approximating result.
	
	\begin{lemma}\label{lem-App-mismatch}
		Let $ E, F $ be two disjoint sets in $ \R $ satisfying $ \Dist(E, F) \ge C_0 \ge 1 $. 
		Then, for any fixed $ 1 < p < \infty $, we have 
		\begin{equation*}
			\Vt \chi_E \PKn \chi_F \Vt_{\LpnR{p} \to \LpnR{p}} \lesssim \frac1{C_0 K_n}, \qquad \text{uniformly in} ~ D. 
		\end{equation*}
	\end{lemma}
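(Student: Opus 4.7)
My plan is to realize $\PKn$ as a convolution operator and then deploy Schur's test, which reduces the bound on $\Vert \chi_E\PKn\chi_F\Vert_{L^p\to L^p}$ to a good $L^1$ estimate on the restriction of the convolution kernel to the far regime $\{|x-y|\ge C_0\}$. Writing $\PKn f = \mathcal{K}_n * f$ with $\mathcal{K}_n(z) = K_n\check m_D(K_n z)$, the integral kernel of $\chi_E\PKn\chi_F$ is $\chi_E(x)\mathcal{K}_n(x-y)\chi_F(y)$, which is supported where $|x-y|\ge C_0$ thanks to $\mathrm{dist}(E,F)\ge C_0$. I may assume without loss of generality that $K_n C_0 \ge 1$, since otherwise $1/(C_0 K_n)\gtrsim 1$ and the bound is automatic from the uniform Mikhlin boundedness of $\PKn$.

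The core of the argument is a uniform-in-$D$ pointwise estimate
\begin{equation*}
|\check m_D(z)|\lesssim |z|^{-2}, \qquad |z|\ge 1,
\end{equation*}
which I would obtain by integrating by parts twice to get $|\check m_D(z)|\le |z|^{-2}\Vert m_D''\Vert_{L^1(\R)}$. Differentiating the definition \eqref{fml-GW-mD-def} of $m_D$ twice yields
\begin{equation*}
m_D''(\xi) = \frac{1}{\log(2D)}\sum_{N=1}^{D} N^{-2}\varphi''(\xi/N),
\end{equation*}
and a change of variables gives
\begin{equation*}
\Vert m_D''\Vert_{L^1(\R)} \le \frac{\Vert\varphi''\Vert_{L^1}}{\log(2D)}\sum_{N\le D}N^{-1}\lesssim 1,
\end{equation*}
since the dyadic sum $\sum_N N^{-1}$ is a bounded geometric series independent of $D$. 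Rescaling gives $|\mathcal{K}_n(z)|\lesssim (K_n z^2)^{-1}$ on $|z|\ge K_n^{-1}$, a regime containing $\{|z|\ge C_0\}$ under our standing assumption.

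To finish, for each $x\in E$ I obtain
\begin{equation*}
\int_F |\mathcal{K}_n(x-y)|\,dy \le \int_{|z|\ge C_0}|\mathcal{K}_n(z)|\,dz \lesssim \frac{1}{K_n}\int_{|z|\ge C_0}\frac{dz}{z^2}\lesssim \frac{1}{C_0 K_n},
\end{equation*}
and the symmetric bound holds when $y\in F$ is fixed. Schur's test then delivers the asserted $L^p\to L^p$ norm estimate for every $1\le p\le\infty$ (in particular for the range $1<p<\infty$ of the lemma), with constants independent of $D$. The only genuinely delicate point is the uniform-in-$D$ control $\Vert m_D''\Vert_{L^1}\lesssim 1$; the normalization factor $\log(2D)^{-1}$ hard-wired into the definition of $m_D$ is precisely what absorbs the dyadic sum, so I do not anticipate any substantive obstacle beyond tracking this cancellation carefully.
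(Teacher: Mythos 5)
Your proof is correct and follows essentially the same route as the paper: write $\chi_E\PKn\chi_F$ as a kernel operator, establish the pointwise decay $|K_n\check m_D(K_n(x-y))|\lesssim (K_n|x-y|^2)^{-1}$ uniformly in $D$, and conclude by Schur's test using the separation of $E$ and $F$. Your explicit computation of $\Vert m_D''\Vert_{L^1}\lesssim 1$ is in fact a more careful justification of the uniformity in $D$ than the paper's brief appeal to the smoothness and compact support of $m_D$.
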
 
	
	\begin{proof}
		We denote the kernel of $ \chi_E \PKn \chi_F $ by $ \widetilde K(x, y) $. 
		Under the Fourier inverse transformation, it can be expressed as
		\begin{equation*}
			\wt{K}(x, y) = \chi_E(x) \chi_F(y) K_n \check{m}_D(K_n(x - y)). 
		\end{equation*}
		Since $ m_D $ is smooth and compactly supported, $ \check{m}_D(\cdot) $ decays rapidly at infinity. 
		Thus, the kernel enjoys point-wise bound uniformly in $D$  
		\begin{equation*}
			\vt \wt{K}(x,y)\vt \le C \frac{\chi_E(x) \chi_F(y)}{K_n \vt x - y \vt^2},
		\end{equation*}
		where $ C := C(\Vt m^\prime_D \Vt_{\LpnR{\infty}}, \Vt m^{\prime \prime}_D \Vt_{\LpnR{\infty}}) > 0 $ 
		Note that the support of $ E $ and $ F $ is strictly separated, we have 
		\begin{equation*}
			\begin{aligned}
				\sup_x \int_{\R} \vt \wt{K}(x, y) \vt \Ind{y} \lesssim & ~ \sup_x \int_{\vt x - y \vt \ge C_0} \frac{\Ind{y}}{K_n \vt x - y \vt^2} \le \frac{1}{C_0 K_n}, \\ 
				\sup_y \int_{\R} \vt \wt{K}(x, y) \vt \Ind{x} \lesssim & ~ \sup_y \int_{\vt x - y \vt \ge C_0} \frac{\Ind{x}}{K_n \vt x - y \vt^2} \le \frac{1}{C_0 K_n}.
			\end{aligned}
		\end{equation*}
		Finally, using Schur's test lemma, we have completed the proof of this lemma. 
	\end{proof}
	
	\begin{lemma}\label{lem-App-commu}
		Let $ \{ \chi^j_n \}_{j = 0}^4 $ be the same as in \eqref{fml-App-cutoff-funs-proper}, then the following commutator estimates hold uniformly w.r.t. $ D $,
		\begin{align}
			\Vt [\chi^j_n , \PKn]\Vt_{\LpnR{2} \to \LpnR{2}} &\lesssim \frac{\eta_n}{K_n^2 D T},\label{fml-App-commu-Chi-PKn}\\
			\Vt [(1-\chi^j_n)^2 , \PKn]\Vt_{\LpnR{2} \to \LpnR{2}} &\lesssim \frac{\eta_n}{K_n^2 D T}\label{fml-App-commu-Chi-1-PKn}.
		\end{align}
	\end{lemma}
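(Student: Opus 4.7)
The plan is to write $\PKn$ as a normalized average of smooth low-pass projections and reduce to a single-scale commutator estimate. From \eqref{fml-GW-mD-def} applied to $m_D(\xi/K_n)$, we have
\[
\PKn = \frac{1}{\log(2D)} \sum_{\substack{1 \le N \le D \\ N \text{ dyadic}}} P_{\le N K_n}^{(\varphi)},
\]
where $P_{\le M}^{(\varphi)}$ is the smooth frequency localization with symbol $\varphi(\xi/M)$ and convolution kernel $M\check{\varphi}(M\,\cdot\,)$. Since commutators are linear, it suffices to bound $\|[\chi_n^j, P_{\le M}^{(\varphi)}]\|_{L^2 \to L^2}$ at a single scale and then sum.

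At scale $M$, the commutator kernel is $(\chi_n^j(x)-\chi_n^j(y))\,M\check{\varphi}(M(x-y))$. By the mean value theorem, $|\chi_n^j(x)-\chi_n^j(y)| \le \|\partial_x \chi_n^j\|_{L^\infty}|x-y|$, so the kernel is pointwise dominated by $\|\partial_x \chi_n^j\|_{L^\infty}$ times the translation-invariant kernel $|z|\,M|\check{\varphi}(Mz)|$ with $z=x-y$. Since $\check{\varphi}$ is Schwartz, a change of variables yields $\int_\R |z|\,M|\check{\varphi}(Mz)|\,dz \lesssim M^{-1}$, and Schur's test (equivalently, Young's inequality with an $L^1$ kernel) gives
\[
\|[\chi_n^j, P_{\le M}^{(\varphi)}]\|_{L^2\to L^2} \lesssim \frac{\|\partial_x \chi_n^j\|_{L^\infty}}{M}.
\]
Summing over dyadic $N \le D$ and using $\sum_{N \text{ dyadic},\,N \le D} 1/N \le 2$, one concludes
\[
\|[\chi_n^j, \PKn]\|_{L^2\to L^2} \lesssim \frac{\|\partial_x \chi_n^j\|_{L^\infty}}{K_n \log(2D)} \lesssim \frac{\|\partial_x \chi_n^j\|_{L^\infty}}{K_n}.
\]
Plugging in the derivative bound $\|\partial_x \chi_n^j\|_{L^\infty} \le \eta_n/(DK_n T)$ from \eqref{fml-App-cutoff-funs-proper} gives \eqref{fml-App-commu-Chi-PKn}, uniformly in $D$ (there is even a $\log(2D)$ gain that is not required).

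For \eqref{fml-App-commu-Chi-1-PKn} I would expand $(1-\chi_n^j)^2 = 1 - 2\chi_n^j + (\chi_n^j)^2$ and use $[1,\PKn]=0$ to reduce to estimating $[\chi_n^j,\PKn]$ and $[(\chi_n^j)^2,\PKn]$. Since $0 \le \chi_n^j \le 1$, we have $\|\partial_x(\chi_n^j)^2\|_{L^\infty} \le 2\|\partial_x \chi_n^j\|_{L^\infty}$, so the identical single-scale Schur argument applies. The main point requiring care is the uniformity in $D$: the naive estimate $\|[\chi_n^j, P_{\le DK_n}^{(\varphi)}]\|_{L^2\to L^2} \lesssim \|\partial_x\chi_n^j\|_{L^\infty}/(DK_n)$ at the top scale alone is not enough, because the symbol of $\PKn$ is supported up to $\sim DK_n$ but $\PKn$ is not itself that single projection. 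Writing $\PKn$ as the normalized logarithmic average in the first display above, so that each dyadic scale contributes a geometric weight $1/N$ and the normalization is precisely $\log(2D)$, resolves this cleanly and yields the desired uniform bound.
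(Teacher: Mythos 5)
Your proposal is correct, and its engine is the same as the paper's: bound the commutator kernel pointwise via the mean value theorem by $\Vt \partial_x\chi_n^j\Vt_{L^\infty}\vt x-y\vt$ times the multiplier kernel, then conclude with Schur's test, and obtain \eqref{fml-App-commu-Chi-1-PKn} from the same derivative bound for $(1-\chi_n^j)^2$. The difference is in how the multiplier kernel is handled. The paper works directly with the full kernel $K_n\check m_D(K_n(x-y))$ and splits the Schur integral at $\vt x-y\vt= K_n^{-1}$, using boundedness near the diagonal and the decay $\vt\check m_D(w)\vt\lesssim \vt w\vt^{-3}$ away from it; uniformity in $D$ then rests on the (true but unstated) fact that $\Vt \partial_\xi^k m_D\Vt$ is controlled uniformly in $D$. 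You instead exploit the explicit structure \eqref{fml-GW-mD-def} to write $\PKn$ as the normalized logarithmic average of single-scale projections $P^{(\varphi)}_{\le NK_n}$ with a fixed Schwartz kernel, prove the clean bound $\Vt[\chi_n^j,P^{(\varphi)}_{\le M}]\Vt_{L^2\to L^2}\lesssim \Vt\partial_x\chi_n^j\Vt_{L^\infty}/M$ at each scale, and sum the geometric series $\sum_{N\le D}1/N$. This sidesteps any verification of $D$-uniform decay for $\check m_D$, makes the uniformity manifest, and even yields an extra factor $1/\log_2(2D)$ that the lemma does not require; the paper's route is shorter once the kernel bounds for $\check m_D$ are granted (they are also used in Lemma \ref{lem-App-mismatch}). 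Both arguments give the stated bound $\frac{\eta_n}{K_n^2DT}$.
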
 
	
	\begin{proof}
		We only prove \eqref{fml-App-commu-Chi-PKn} and  \eqref{fml-App-commu-Chi-1-PKn} can be obtained similarly. 
		
		Denote the kernel of $ [\chi^j_n , \PKn] $ by $ \wt{K}_c(x, y) $, it can be written as
		\begin{equation*}
			\wt{K}_c(x,y) = (\chi^j_n(x) - \chi^j_n(y))K_n \check{m}_D(K_n(x-y)).
		\end{equation*}
		Using the mean value theorem,  $\wt{K}_c(x,y)$ enjoys the point-wise bound
		\begin{equation*}
			\vt \wt{K}_c(x,y)\vt \lesssim \Vt \partial_x \chi^j_n\Vt_{\LpnR{\infty}} \vt x-y\vt K_n \big|\check{m}_D(K_n(x-y))\big|.
		\end{equation*}
		Then it follows that
		\begin{align*}
			&\quad\qquad		\sup_x \int_{\R} \vt \wt{K}_c(x,y)\vt dy \le \sup_x K_n \Vt \partial_x \chi^j_n\Vt_{\LpnR{\infty}} \int_{\R} \vt x-y\vt \big|\check{m}_D(K_n(x-y))\big| \Ind{y}\\
			&\lesssim \frac{\eta_n}{K_nDT} K_n \Lf( \int_{\vt x-y\vt\le K_n^{-1}} \vt x-y\vt \Ind{y} + \int_{\vt x-y\vt\ge K_n^{-1}} K_n^{-3} \vt x-y\vt^{-2} \Ind{y} \Rt)\\
			&\lesssim \frac{\eta_n}{K_n^2 D T},
		\end{align*}
		where we use the rapid decay  in the exterior region and the boundedness in the interior region.
		Similarly, $\sup_y \int_{\R} \vt \wt{K}_c(x,y)\vt dx$ also can be bounded by $\frac{\eta_n}{K_n^2 D T}$. Using Schur's test lemma, we have proved \eqref{fml-App-commu-Chi-PKn}.
	\end{proof}
	
	\begin{lemma}[Almost mass concentration of $\wt{u}_n$]\label{fml-App-mass-concentration}
		Let $\wt{u}_n$ be  solutions to \eqref{fml-App-NLSR} and $\chi^j_n$ be the smooth cut off functions defined as above. Then, we have
		\begin{equation*}
			\Vt (1-\chi^j_n)\wt{u}_n\Vt_{L_t^\infty([-T,T],L_x^2(\R))} \lesssim \Veps,\quad j=
			1,2,3,4,
		\end{equation*}
		where the implicit constant depends on $D$ and $M$.
	\end{lemma}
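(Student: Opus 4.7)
The plan is to run a mass energy estimate on $w_j := (1-\chi^j_n)\wt u_n$. The nested structure of the cut-offs in Subsection~\ref{SubS:cut-off} gives $\chi^j_n\equiv 1$ on $\supp\chi^0_n$ for every $j\ge 1$, hence $w_j(0)=(1-\chi^j_n)\chi^0_n u_{0,n}\equiv 0$, and the task reduces to bounding the growth of $\|w_j(t)\|_{L^2}^2$ on $[-T,T]$.

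Differentiate $\|w_j(t)\|_{L^2}^2=\int(1-\chi^j_n)^2|\wt u_n|^2\,dx$ in $t$ using \eqref{fml-App-NLSR}. Integration by parts in $x$ kills the real diagonal $\int(1-\chi^j_n)^2|\partial_x\wt u_n|^2$, leaving the boundary term $-4\Im\int(1-\chi^j_n)(\partial_x\chi^j_n)\overline{\wt u_n}\,\partial_x\wt u_n\,dx$. For the nonlinear contribution, the self-adjointness of $\PKn$ shifts the projector onto the factor $(1-\chi^j_n)^2\overline{\wt u_n}$; the main piece that results equals $\int(1-\chi^j_n)^2|\PKn\wt u_n|^6\,dx$, which is real and so has vanishing imaginary part, leaving only the commutator $2\Im\int[\PKn,(1-\chi^j_n)^2]\overline{\wt u_n}\,F(\PKn\wt u_n)\,dx$.

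The boundary term is controlled by combining $\|\partial_x\chi^j_n\|_{L^\infty}\le \eta_n/(DK_nT)$ with the higher-regularity bound $\|\partial_x\wt u_n\|_{L^\infty_t L^2_x}\lesssim_M DK_n$ from Lemma~\ref{lem-App-NLSR-control}, which gives a pointwise-in-$t$ size of order $\eta_n/T$ and hence a contribution $\lesssim_M \eta_n=\Veps_n$ after integrating on $[-T,T]$. The commutator term is bounded via Lemma~\ref{lem-App-commu}, $\|[\PKn,(1-\chi^j_n)^2]\|_{L^2\to L^2}\lesssim \eta_n/(K_n^2DT)$, together with the Strichartz bound $\|\PKn\wt u_n\|_{L^5_t L^{10}_x}\lesssim_M 1$ from \eqref{fml-App-NLSR-bound} and mass conservation $\|\wt u_n\|_{L^\infty_t L^2_x}\le M$; this contribution is of order $\eta_n/(K_n^2 D)$, negligible relative to $\Veps_n$ for large $K_n$. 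Collecting terms yields $\|w_j\|_{L^\infty_t([-T,T];L^2_x)}^2\le C(D,M)\Veps_n$.

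The delicate point is that this mass-type argument only yields $\|w_j\|_{L^\infty_t L^2_x}\lesssim \sqrt{\Veps_n}$, half a power short of the $\Veps$ claimed. I would close this gap by inducting on $j=1,2,3,4$ and exploiting the nested cut-offs one further time: because $\chi^j_n\equiv 1$ on $\supp\chi^{j-1}_n$, the support of $\partial_x\chi^j_n$ is disjoint from $\supp\chi^{j-1}_n$, so $\|\mathbf{1}_{\supp\partial_x\chi^j_n}\wt u_n\|_{L^2}\le \|(1-\chi^{j-1}_n)\wt u_n\|_{L^2}$. This sharpens the boundary term at level $j$ to $\lesssim \eta_n\|(1-\chi^{j-1}_n)\wt u_n\|_{L^\infty_t L^2_x}/T$, and feeding the bound from the previous layer in repeatedly pushes the exponent of $\Veps_n$ from $1/2$ toward $1$ across the four indices, giving the stated estimate.
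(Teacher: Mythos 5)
Your core argument is the paper's: differentiate $\McM_j(t):=\int(1-\chi^j_n)^2|\wt{u}_n|^2\,dx$, integrate by parts so that only the boundary term $-4\Im\int(1-\chi^j_n)\,\partial_x\chi^j_n\,\overline{\wt{u}_n}\,\partial_x\wt{u}_n\,dx$ survives from the Laplacian, use self-adjointness of $\PKn$ to reduce the nonlinear contribution to the commutator $2\Im\int F(\PKn\wt{u}_n)\,[\PKn,(1-\chi^j_n)^2]\overline{\wt{u}_n}\,dx$, and close with Lemma \ref{lem-App-NLSR-control}, the bound $\Vt\partial_x\chi^j_n\Vt_{L^\infty}\le\eta_n/(DK_nT)$, and Lemma \ref{lem-App-commu}. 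The only structural difference is that the paper avoids any induction on $j$ by noting $\Vt(1-\chi^i_n)\wt{u}_n\Vt_{L^2}\le\Vt(1-\chi^j_n)\wt{u}_n\Vt_{L^2}$ for $i>j$ and treating only $j=1$, starting from $\McM_1(0)\le\Veps_n^2$ (your observation that $w_j(0)\equiv 0$ is an equally good starting point).

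Your final paragraph, however, does not repair the half-power loss. The bottleneck is not the boundary term but the commutator term: the bound $|\McM_2|\lesssim C(M)\,D\,\eta_n\,T^{-1}$ is uniform in $j$ and carries no factor of $\Vt(1-\chi^{j-1}_n)\wt{u}_n\Vt_{L^2}$ — the commutator acts on all of $\wt{u}_n$, and you have not localized its kernel to the region where $\chi^{j-1}_n$ vanishes. Integrating in time, this term alone caps the conclusion at $\sup_t\McM_j\lesssim_{D,M}\eta_n$, hence $\Vt(1-\chi^j_n)\wt{u}_n\Vt_{L_t^\infty L_x^2}\lesssim_{D,M}\Veps_n^{1/2}$ at every level of your induction; sharpening only $\McM_1$ cannot push the exponent past $1/2$. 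This is in fact exactly what the paper's own computation yields (its closing line $\McM\lesssim\eta_n=\Veps_n^2$ is inconsistent with the definition $\eta_n:=\Veps_n$ in Subsection \ref{SubS:cut-off}), and the discrepancy is harmless: downstream only the vanishing of this norm as $n\to\infty$ is used, and one can recover the literal statement by redefining $\eta_n:=\Veps_n^2$ in the construction of the cut-offs. So you should either accept the $\Veps_n^{1/2}$ bound or rescale $\eta_n$, rather than rely on the proposed bootstrap.
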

	\begin{proof}
		Note that for $i > j$, there holds
		\begin{equation*}
			\Vt (1 - \chi^i_n)\wt{u}_n\Vt_{L_t^\infty([-T,T]\times L_x^2(\R))} \le \Vt (1 - \chi^j_n)\wt{u}_n\Vt_{L_t^\infty([-T,T]\times L_x^2(\R))},
		\end{equation*}
		thus we only need to consider a special case $j = 1$.
		
		We define
		\begin{equation*}
			\McM(t) := \int_{\R} \vt 1 - \chi^1_n(x)\vt^2 \vt \wt{u}_n(t,x)\vt^2 \Ind{x}.
		\end{equation*}
		By \eqref{fml-App-cutoff-funs-proper}, we have $\McM(0) \le \Veps_n^2$. Direct computation yields
		\begin{align*}
			\frac{d}{dt} \McM(t) &= 2\Re \int_{\R} \vt 1 - \chi^1_n(x)\vt^2 \overline{\wt{u}_n} \partial_t \wt{u}_n \Ind{x}\\
			&= -2\Im \int_{\R} \vt 1 - \chi^1_n(x)\vt^2 \overline{\wt{u}_n} \partial_x^2 \wt{u}_n \Ind{x} + 2\Im \int_{\R} \vt 1 - \chi^1_n(x)\vt^2 \overline{\wt{u}_n} \PKn F(\PKn \wt{u}_n) \Ind{x}\\
			:&= \McM_1 + \McM_2.
		\end{align*}
		
		By \eqref{fml-App-cutoff-funs-proper}, integrating by parts, H\"older's inequality and Lemma \ref{lem-App-NLSR-control}, $\McM_1$ can be bounded by
		\begin{align*}
			\vt \McM_1(t)\vt &\le \Lf\vt 4\Im \int_{\R}(1 - \chi^1_n)\overline{\wt{u}_n} \cdot \partial_x\chi_1^n \partial_x \wt{u}_n \Ind{x}\Rt\vt\\
			&\lesssim \Vt (1 - \chi^1_n)\wt{u}_n\Vt_{\LpnR{2}} \Vt \partial_x\chi_1^n\Vt_{\LpnR{\infty}} \Vt \partial_x \wt{u}_n\Vt_{L_t^\infty L_x^2}\\
			&\lesssim C(M)T^{-1}\eta_n \McM^{\frac{1}{2}}(t).
		\end{align*}
		
		Since $\PKn$ is self-adjoint, adding the commutator term, we rewrite
		\begin{align*}
			\McM_2(t) &= 2\Im \int_{\R} F(\PKn \wt{u}_n)\cdot \PKn((1-\chi^1_n)^2\overline{\wt{u}_n}) \Ind{x}\\
			&= 2\Im \int_{\R} F(\PKn \wt{u}_n)\cdot [\PKn,(1-\chi^1_n)^2]\overline{\wt{u}_n} \Ind{x}\\
			&\hspace{4ex} + 2\Im \int_{\R} F(\PKn \wt{u}_n)\cdot (1-\chi^1_n)^2 \overline{\PKn\wt{u}_n} \Ind{x}\\
			&= 2\Im \int_{\R} F(\PKn \wt{u}_n)\cdot [\PKn,(1-\chi^1_n)^2]\overline{\wt{u}_n} \Ind{x}.
		\end{align*}
		Then using H\"older's inequality, Sobolev embedding, Lemma \ref{lem-App-commu} and \eqref{fml-App-cutoff-funs-proper}, $\mathcal{M}_2$ has the bound
		\begin{align*}
			\vt \McM_2(t)\vt &\lesssim \Vt [\PKn,(1-\chi^1_n)^2] \wt{u}_n\Vt_{\LpnR{2}} \Vt F(\PKn \wt{u}_n)\Vt_{\LpnR{2}}\\
			&\lesssim \Vt [\PKn,(1-\chi^1_n)^2]\Vt_{\LpnR{2} \to \LpnR{2}} \Vt \wt{u}_n\Vt_{\LpnR{2}} \Vt \PKn \wt{u}_n\Vt^5_{\LpnR{10}}\\
			&\lesssim \eta_n (K_n^2 D T)^{-1} \Vt \wt{u}_n\Vt_{\LpnR{2}} \Vt \PKn \vt \partial_x\vt^{\frac{2}{5}}\wt{u}_n\Vt^5_{\LpnR{2}}\\
			&\lesssim C(M) \eta_n (K_n^2 D T)^{-1} (K_n D)^2\\
			&\lesssim C(M) D \eta_n T^{-1}.
		\end{align*}
		
		Bounds for $\mathcal{M}_1$ and $\mathcal{M}_2$ gives
		\begin{equation*}
			\Lf\vt\frac{d}{dt}\McM(t)\Rt\vt \lesssim C(M) D \eta_n T^{-1} + C(M) \eta_n T^{-1} \McM^{\frac{1}{2}}.
		\end{equation*}
		Then, using the Gronwall inequality, we get
		\begin{equation*}
			\McM(t) \lesssim_{D,M} \eta_n = \Veps^2_n,
		\end{equation*}
		and the proof here is completed.
	\end{proof}
	\subsection{Key estimates in connecting the solution to Fourier-truncated NLS in $\R$ and $\T$}
	In this subsection, we prove that $\PLKn$ also enjoys mismatch estimates and commutator estimates as we have exhibited in Lemma \ref{lem-App-mismatch} and Lemma \ref{lem-App-commu}. Moreover, for sufficiently large $n$, we show that operator $\PLKn$ can be regard as approximation of $\PKn$ in the sense of $\LpnR{p}\to\LpnR{p}$ norms with $1\le p \le \infty$.
	
	Let $K^{L_n}(x,y)$ be the kernel of $\PLKn$ and it has the form
	\begin{equation*}
		K^{L_n}(x,y) = L_n^{-1} \sum_{\ell \in \Z}e^{2\pi i(x-y)\ell/L_n} m_D\big(\frac{\ell}{K_nL_n}\big).
	\end{equation*}
	For $x,y\in \T_{L_n}$, we define the distance function as:
	\begin{equation*}
		\Dist(x,y) := \Dist(x-y,L_n\Z),
	\end{equation*}
	and we have
	\begin{lemma}\label{lem-App-kernel-R}
		Fixing  $C > 0$, then we obtain
		\begin{equation*}
			\int_{\Dist(x,y) \ge C} \vt K^{L_n}(x,y)\vt \Ind{x} \lesssim \frac{1}{CDK_n}.
		\end{equation*}
	\end{lemma}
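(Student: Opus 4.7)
The plan is to reduce the torus kernel bound to a Euclidean kernel estimate by viewing $K^{L_n}$ as the periodization of the convolution kernel of $\PKn$ on $\R$. Concretely, let $\mathcal{K}(w) := K_n\check{m}_D(K_n w)$ denote the Euclidean kernel, so that Poisson summation applied to the function $\xi\mapsto m_D(\xi/(K_nL_n))$ yields the identity
\begin{equation*}
K^{L_n}(x,y) \;=\; \sum_{k\in\Z} \mathcal{K}(x-y+kL_n).
\end{equation*}
This turns the problem on $\TLn$ into (i) a pointwise decay estimate for $\mathcal{K}$ on $\R$ and (ii) control of the lattice translates $k\ne0$.

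For step (i), the key input is that $m_D$ is a Mikhlin-type multiplier uniformly in $D$. Indeed, from the dyadic decomposition \eqref{fml-GW-mD-def}, each piece $\varphi(\xi/N)-\varphi(2\xi/N)$ contributes derivatives of scale $N^{-k}$ at the shell $|\xi|\sim N$, and the logarithmic weights $\log(2D/N)/\log(2D)$ are bounded by $1$. Two integrations by parts in $\check{m}_D(w)=\int m_D(\xi)e^{2\pi i\xi w}\,\Ind{\xi}$ then give $|\check{m}_D(w)|\lesssim \langle w\rangle^{-2}$, so that $|\mathcal{K}(w)|\lesssim (K_n w^2)^{-1}$ for $|w|\gtrsim K_n^{-1}$. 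A refinement using the finer estimate $|m_D(\xi)-m_D(k\xi)|\lesssim \log(k)/\log(D)$ from \eqref{fml-GW-mD-diff} upgrades this to yield the sharper gain needed for the factor $1/D$ on the right-hand side.

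For step (ii), restrict $z=x-y$ to the fundamental domain $[-L_n/2,L_n/2]$ so that $\Dist(x,y)=|z|$, and split
\begin{equation*}
\int_{\Dist(x,y)\ge C}|K^{L_n}(x,y)|\,\Ind{x} \;\le\; \int_{\substack{|z|\ge C \\ |z|\le L_n/2}}|\mathcal{K}(z)|\,\Ind{z} \;+\; \sum_{k\ne 0}\int_{|z|\le L_n/2}|\mathcal{K}(z+kL_n)|\,\Ind{z}.
\end{equation*}
The first term follows directly from the pointwise decay estimate for $\mathcal{K}$ and gives the target bound. For the second, changing variables $w=z+kL_n$ places $|w|\ge (|k|-\tfrac12)L_n$, and the higher-order decay $|\check{m}_D(w)|\lesssim\langle w\rangle^{-N}$ obtained by further integrations by parts ensures the lattice tails are summable in $k$ and negligible compared to $1/(CDK_n)$, provided $L_n$ is chosen large relative to $C$ and $K_n$ (a choice permitted by the freedom in selecting $L_n$ throughout Section~6).

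The main obstacle is precisely pinning down the factor $1/D$: the naive Mikhlin bookkeeping produces only $1/(CK_n)$, and harnessing the extra $1/D$ decay requires exploiting that $m_D$ varies only logarithmically slowly across dyadic shells, so that its Fourier side enjoys an additional smallness that propagates through the periodization. Everything else—the Poisson summation identity, the integration-by-parts, and the splitting into main term plus lattice tails—is routine once this refined decay of $\check m_D$ is in hand.
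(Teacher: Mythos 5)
Your reduction via Poisson summation to the Euclidean kernel $\mathcal{K}(w)=K_n\check{m}_D(K_nw)$ is a legitimate alternative to the paper's argument, which stays entirely on the torus: the paper writes $K^{L_n}(x,y)=L_n^{-1}\sum_{\ell}e^{i\Phi(\ell)}m_D(\ell/(K_nL_n))$, performs two discrete summations by parts (Abel summation), uses $|\Psi(x,y)|=|e^{2\pi i(x-y)/L_n}-1|\gtrsim \Dist(x,y)/L_n$, and bounds the second difference of the symbol by $\|\partial_\xi^2(m_D(\xi/(K_nL_n)))\|_{L^\infty}$. The two routes are morally the same (second-order integration by parts, discrete versus continuous), and your treatment of the lattice translates $k\ne 0$ is acceptable given the hierarchy $L_n\gg D,K_n,T,\eta_n^{-1}$ in force throughout Section 6.

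The gap is exactly where you flag it: the factor $1/D$. You defer it to a ``refinement using \eqref{fml-GW-mD-diff}'' but never execute it, and the mechanism you invoke cannot deliver it. From \eqref{fml-GW-mD-def} one has $\check{m}_D(w)=\frac{1}{\log(2D)}\sum_{N=1}^{D}N\check{\varphi}(Nw)$, and for $|w|\gtrsim 1$ this sum is dominated by the $N=1$ term, so integration by parts gives only $|\check{m}_D(w)|\lesssim \frac{1}{\log(2D)}\langle w\rangle^{-2}$ (indeed $|\check{m}_D(w)|\gtrsim \frac{1}{\log(2D)}$ for $|w|\sim 1$, so no bound of the form $D^{-1}|w|^{-2}$ can hold). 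The slow logarithmic variation of $m_D$ across dyadic shells, which is precisely what \eqref{fml-GW-mD-diff} encodes, buys a factor $1/\log(2D)$, not $1/D$; consequently your step (i) yields $\int_{|z|\ge C}|\mathcal{K}(z)|\,\Ind{z}\lesssim \frac{1}{CK_n\log(2D)}$ rather than the stated $\frac{1}{CDK_n}$. For what it is worth, the paper's own proof extracts the $1/D$ by asserting $\|\partial_\xi^2(m_D(\xi/(K_nL_n)))\|_{L^\infty}\lesssim (DK_nL_n)^{-2}$, i.e., by treating $m_D$ as a single bump at frequency scale $DK_nL_n$ --- the discrete analogue of the very estimate you could not justify, and one that the explicit formula \eqref{fml-GW-mD-def} does not support (the true bound is $\lesssim (K_nL_n)^{-2}/\log(2D)$, saturated on the shell $|\xi|\sim K_nL_n$). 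So your instinct that this factor is the delicate point is sound, but identifying the obstacle is not the same as overcoming it: as written, your argument proves the lemma only with $1/D$ replaced by $1/\log(2D)$, and you should either supply a genuinely new source of $D^{-1}$ decay or weaken the stated constant accordingly.
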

	\begin{proof}
		We set $\Phi(j) := \frac{2\pi(x-y)\ell}{L_n}$, $\wt{m}_D(\ell) := m_D(\frac{\ell}{K_nL_n})$ and rewrite the kernel as
		\begin{equation*}
			K^{L_n}(x,y) = L_n^{-1} \sum_{\ell \in \Z} e^{i\Phi(\ell)}\wt{m}_D(\ell).
		\end{equation*}
		Noticing that
		\begin{equation*}
			e^{i\Phi(\ell)} = \frac{e^{i\Phi(\ell+1)} - e^{i\phi(\ell)}}{e^{i(\Phi(\ell+1) - \Phi(\ell))} - 1} = \frac{e^{i\Phi(\ell+1)} - e^{i\phi(\ell)}}{e^{2\pi i(x-y)/L_n} - 1} := \frac{e^{i\Phi(\ell+1)} - e^{i\phi(\ell)}}{\Psi(x,y)},
		\end{equation*}
		and $K^{L_n}(x,y)$ becomes
		\begin{align*}
			K^{L_n}(x,y) &= L_n^{-1} \sum_{\ell \in \Z} e^{i\Phi(\ell)}\wt{m}_D(\ell)\\
			&= L_n^{-1} \frac{1}{\Psi(x,y)} \sum_{\ell \in \Z} e^{i\Phi(\ell)}(\wt{m}_D(\ell+1) - \wt{m}_D(\ell))\\
			& = L_n^{-1} \frac{1}{\Psi^2(x,y)} \sum_{\ell \in \Z} e^{i\Phi(\ell)}(\wt{m}_D(\ell+2) - 2\wt{m}_D(\ell+1) + \wt{m}_D(\ell)).
		\end{align*}
		
		On one hand, note that $x$ and $y$ is strictly separated, we get
		\begin{equation*}
			\vt \Psi(x,y)\vt = \vt e^{2\pi i(x-y)/L_n} - 1\vt \sim \Dist(\frac{x-y}{L_n},\Z) \gtrsim \frac{\Dist(x,y)}{L_n}.
		\end{equation*}
		On the other hand, we can verify that
		\begin{equation*}
			\vt \wt{m}_D(\ell+2) - 2\wt{m}_D(\ell+1) + \wt{m}_D(\ell)\vt \lesssim \big\| \partial_\xi^2 ( m_D(\frac{\xi}{K_nL_n}) )\big\|_{\LpnR{\infty}} \lesssim (DK_nL_n)^{-2}.
		\end{equation*}
		Consequently, we get a point-wise bound for $K^{L_n}(x,y)$
		\begin{equation*}
			\vt K^{L_n}(x,y)\vt \lesssim \frac{1}{DK_n} \Dist(x,y)^{-2},
		\end{equation*}
		which implies
		\begin{equation*}
			\int_{\Dist(x,y) \ge C} \vt K^{L_n}(x,y)\vt \Ind{x} \lesssim \frac{1}{DK_nC}.
		\end{equation*}
	\end{proof}
	Similar to the proof in Lemma \ref{lem-App-mismatch}, we obtain the following mismatch and commutator estimates via Lemma \ref{lem-App-kernel-R} and Schur's test.
	\begin{lemma}\label{lem-App-mismatch-R}
		Let $E,F$ be two disjoint sets of $\TLn$ with $\Dist(E,F) \ge C  $ for some $C\geq1$. Then, for any $1 \le p \le \infty$, the following inequality holds
		\begin{equation*}
			\Vt \chi_E \PLKn \chi_F\Vt_{\LpnTLn{p} \to \LpnTLn{p}} \lesssim \frac{1}{C K_n D}.
		\end{equation*}
	\end{lemma}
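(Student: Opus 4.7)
The plan is to mirror the proof of Lemma \ref{lem-App-mismatch} in the line case, but now using the periodic kernel bound from Lemma \ref{lem-App-kernel-R} and Schur's test on the torus $\TLn$. The operator $\chi_E \PLKn \chi_F$ has integral kernel
\[
\widetilde K_{E,F}(x,y) = \chi_E(x)\,K^{L_n}(x,y)\,\chi_F(y),
\]
so on its support we always have $\Dist(x,y) \ge \Dist(E,F) \ge C$. In particular, the kernel vanishes in the near-diagonal region where the pointwise bound from Lemma \ref{lem-App-kernel-R} is not useful, and what remains is exactly the far-from-diagonal tail controlled by that lemma.

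First I would fix $y \in F$ and integrate in $x$: using $\chi_E(x) \le 1$, $\chi_F(y) \le 1$, and the fact that on the support of the kernel $\Dist(x,y) \ge C$, Lemma \ref{lem-App-kernel-R} gives
\[
\sup_{y \in \TLn} \int_{\TLn} |\widetilde K_{E,F}(x,y)|\,\Ind{x} \le \int_{\Dist(x,y)\ge C} |K^{L_n}(x,y)|\,\Ind{x} \lesssim \frac{1}{C D K_n}.
\]
Next I would do the symmetric estimate, integrating in $y$ with $x$ fixed. The kernel $K^{L_n}(x,y)$ depends only on $x-y$ (it is a convolution kernel on $\TLn$), so the proof of Lemma \ref{lem-App-kernel-R} applies verbatim after interchanging the roles of the variables, yielding
\[
\sup_{x \in \TLn} \int_{\TLn} |\widetilde K_{E,F}(x,y)|\,\Ind{y} \lesssim \frac{1}{C D K_n}.
\]

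Finally, Schur's test on $\TLn$ gives the operator bound on $L^p(\TLn)$ for all $1 \le p \le \infty$ by interpolating between the $L^1 \to L^1$ and $L^\infty \to L^\infty$ endpoints (both of which follow directly from the two displayed Schur bounds), yielding the claimed constant $\tfrac{1}{C D K_n}$.

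I do not expect any real obstacle: the entire content is that the periodic kernel obeys the same tail estimate as its Euclidean counterpart, which is precisely what Lemma \ref{lem-App-kernel-R} records. The only minor care point is to note that the bound in Lemma \ref{lem-App-kernel-R} is uniform in $y$ (not just a single integral), which is immediate from translation invariance of $K^{L_n}(x,y)$ in $x-y$; this legitimizes the $\sup_y$ in the first Schur bound and, together with the analogous $\sup_x$ bound obtained by swapping variables, completes the argument.
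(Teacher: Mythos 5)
Your proposal is correct and follows essentially the same route as the paper, which likewise deduces Lemma \ref{lem-App-mismatch-R} from the kernel tail bound of Lemma \ref{lem-App-kernel-R} together with Schur's test, in direct analogy with the Euclidean mismatch estimate of Lemma \ref{lem-App-mismatch}. The observation that $K^{L_n}(x,y)$ depends only on $x-y$, so that both Schur integrals obey the same bound, is exactly the point the paper relies on implicitly.
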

	\begin{lemma}\label{lem-App-commu-R}
		Let $\chi^j_n,j=0,1,2,3,4$ be defined as in \eqref{fml-App-cutoff-funs-proper}, then the following commutator estimate holds uniformly in $D$
		\begin{align}
			\Vt [\chi^j_n , \PLKn]\Vt_{\LpnTLn{2} \to \LpnTLn{2}} &\lesssim \frac{1}{K_n}.\label{fml-App-commu-Chi-PLKn}
		\end{align}
	\end{lemma}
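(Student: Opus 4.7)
The plan is to parallel the proof of Lemma~\ref{lem-App-commu} on the line, with the rapid decay of $\check m_D$ replaced by the periodic kernel estimate derived inside the proof of Lemma~\ref{lem-App-kernel-R}, and then to apply Schur's test. Writing the commutator as
\begin{equation*}
	([\chi_n^j, \PLKn] f)(x) = \int_{\TLn} \wt K_c(x, y) f(y) \Ind{y}, \qquad \wt K_c(x, y) := (\chi_n^j(x) - \chi_n^j(y)) K^{L_n}(x, y),
\end{equation*}
where $K^{L_n}(x, y)$ is the kernel of $\PLKn$ written out just before Lemma~\ref{lem-App-kernel-R}, the mean value theorem together with the slope bound in \eqref{fml-App-cutoff-funs-proper} yields
\begin{equation*}
	\vt \chi_n^j(x) - \chi_n^j(y) \vt \lesssim \min \Lf\{ 1,\ \tfrac{\eta_n}{D K_n T} \Dist(x, y) \Rt\},
\end{equation*}
with the two bounds coinciding at the scale $R := D K_n T / \eta_n$.

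For the kernel itself, the Abel-summation argument already carried out inside the proof of Lemma~\ref{lem-App-kernel-R} gives the pointwise bound $\vt K^{L_n}(x, y) \vt \lesssim (D K_n \Dist(x, y)^2)^{-1}$ for $x \ne y$, while the trivial bound $\vt m_D \vt \le 1$ combined with the size of the support of $m_D(\cdot/(K_n L_n))$ gives $\vt K^{L_n}(x, y) \vt \lesssim D K_n$ uniformly. Splitting the $y$-integration at the scales $(DK_n)^{-1}$ and $R$, and inserting the matching bound from each regime,
\begin{align*}
	\sup_{x \in \TLn} \int_{\TLn} \vt \wt K_c(x, y) \vt \Ind{y}
	& \lesssim \tfrac{\eta_n}{DK_nT} \int_{\Dist \le (DK_n)^{-1}} \Dist(x, y)\cdot DK_n\,\Ind{y} \\
	& \quad + \tfrac{\eta_n}{DK_nT} \int_{(DK_n)^{-1} < \Dist \le R} \tfrac{\Ind{y}}{DK_n \Dist(x,y)} + \int_{\Dist > R} \tfrac{\Ind{y}}{DK_n \Dist(x,y)^2} \\
	& \lesssim \tfrac{\eta_n \log(DK_nT/\eta_n)}{D^2 K_n^2 T} \ \lesssim \ \tfrac{1}{K_n}
\end{align*}
uniformly in $D$ and $L_n$. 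An identical computation with the roles of $x$ and $y$ reversed, followed by Schur's test, yields the claimed operator-norm estimate.

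The one subtlety absent from the $\R$-version is that the torus kernel only decays like $\Dist^{-2}$ rather than like a Schwartz function, so a naive Schur integral (using mean value theorem all the way out to $\Dist \sim L_n$) would produce an unbounded $\log L_n$. The remedy is precisely to switch to the trivial bound $\vt \chi_n^j(x) - \chi_n^j(y) \vt \le 1$ beyond the scale $R = D K_n T/\eta_n$; this truncation replaces $\log L_n$ by $\log(DK_nT/\eta_n)$, which is comfortably absorbed by the factor $1/(D^2 K_n^2 T)$ in the denominator thanks to $\eta_n = \Veps_n \to 0$.
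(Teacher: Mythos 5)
Your proof is correct, and it follows the route the paper itself intends: the paper gives no written proof of this lemma, merely asserting that it follows from the kernel bound inside Lemma \ref{lem-App-kernel-R} together with Schur's test, "similar to" the line-case Lemma \ref{lem-App-commu}. Your argument fills this in faithfully — commutator kernel $(\chi_n^j(x)-\chi_n^j(y))K^{L_n}(x,y)$, the pointwise bounds $\vt K^{L_n}(x,y)\vt\lesssim DK_n$ and $\vt K^{L_n}(x,y)\vt\lesssim (DK_n\Dist(x,y)^2)^{-1}$, and Schur's test — and, importantly, you identify the one point where a verbatim transcription of the $\R$-proof would fail: the torus kernel decays only like $\Dist^{-2}$, so the mean-value bound must be capped by $\vt\chi_n^j(x)-\chi_n^j(y)\vt\le 1$ beyond the scale $R=DK_nT/\eta_n$ to avoid a $\log L_n$ divergence. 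This truncation is exactly why the torus commutator bound is stated as $K_n^{-1}$ rather than the stronger $\eta_n/(K_n^2DT)$ of \eqref{fml-App-commu-Chi-PKn}, and your final bookkeeping, yielding $\eta_n\log(DK_nT/\eta_n)/(D^2K_n^2T)\lesssim K_n^{-1}$, is sound.
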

	
	Next, we will connect two multipliers $\mathcal{P}_{K_n}$ and $\mathcal{P}_{K_n}^{L_n}$, which are defined on different manifolds. To do this,  we define the push-forward and pullback function as follows:
	\begin{equation}\label{fml-App-pushpull-def}
		[p_* f](x + L_n\Z) := \sum_{x \sim y}f(y),\quad [p^* g](x) := g(x + L_n\Z),
	\end{equation}
	where $a\sim b$ means $a = b+ k L_n$ for some $k \in \Z$. Recall that $\chi_n^j$ is supported on the interval of length $L_n$, then \eqref{fml-App-pushpull-def} is well defined. Thus, for $f(x):\R\to\C$, we may define
	\begin{equation*}
		\chi_n^j(x)\big( \mathcal{P}_{K_n}-\mathcal{P}_{K_n}^{L_n} \big)\chi_n^j(x)f(x):= \chi_n^j(x)\mathcal{P}_{K_n}\chi_n^j(x)f(x)-\chi_n^j(x)\big[p_*\circ\mathcal{P}_{K_n}^{L_n}(p^*\circ\chi_n^j f)\big](x):\R\to\C,
	\end{equation*}
	and prove that $\mathcal{P}_{K_n}$ is close to $\mathcal{P}_{K_n}^{L_n}$ enough.
	\begin{lemma}\label{lem-App-operator-app}
		For $j = 0,1,2,3,4$ and $1 \le p \le \infty$, we have
		\begin{equation*}
			\Vt \chi^j_n (\mathcal{P}_{K_n}-\mathcal{P}_{K_n}^{L_n})\chi_n^j\Vt_{\LpnR{p} \to \LpnR{p}} \lesssim \frac{1}{K_n}.
		\end{equation*}
	\end{lemma}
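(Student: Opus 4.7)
The plan is to reduce the estimate to a direct kernel comparison via Poisson summation. Writing the kernel of $\PKn$ on the line as $\mathcal{K}(z):=K_n\check{m}_D(K_n z)$, Poisson summation identifies the kernel of $\PLKn$ on $\TLn$ with the periodization $K^{L_n}(z)=\sum_{k\in\Z}\mathcal{K}(z+kL_n)$, in agreement with the representation already exploited in Lemma \ref{lem-App-kernel-R}. Because $\supp\chi_n^j$ lies in an open interval of length strictly less than $L_n$, the push-forward $p_{\ast}$ restricted to $\chi_n^j f$ simply identifies a function supported in a single fundamental domain with its counterpart on the torus. Consequently, unfolding the torus operator back to $\R$ via $p^{\ast}$ and multiplying by $\chi_n^j$, the difference operator $\chi_n^j(\PKn-\PLKn)\chi_n^j$ has integral kernel on $\R\times\R$ given by
\[
D(x,y)=\chi_n^j(x)\chi_n^j(y)\sum_{k\neq 0}\mathcal{K}(x-y+kL_n).
\]

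The next step is to exploit the geometric support separation guaranteed by Subsection \ref{SubS:cut-off}. According to \eqref{fml-App-cutoff-funs-proper}, $\supp\chi_n^j$ sits inside an interval of length $L_n-2\beta_j$ with $\beta_j\sim DK_n T/\eta_n$. Therefore, for $x,y\in\supp\chi_n^j$ and $\vt k\vt=1$ one has
\[
\vt x-y+kL_n\vt\geq L_n-\vt x-y\vt\gtrsim \frac{DK_n T}{\eta_n},
\]
and for $\vt k\vt\geq 2$ one has $\vt x-y+kL_n\vt\geq (\vt k\vt-1)L_n$. Since $m_D$ is smooth and uniformly compactly supported in $D$, repeated integration by parts yields the rapid decay $\vt\mathcal{K}(z)\vt\lesssim_N K_n(1+K_n\vt z\vt)^{-N}$ for every $N\geq 0$, with implicit constant independent of $D$.

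Combining these inputs and picking $N$ sufficiently large, a direct computation gives, for each $x\in\supp\chi_n^j$,
\[
\int_{\R}\sum_{k\neq 0}\vt\mathcal{K}(x-y+kL_n)\vt\chi_n^j(y)\,\Ind{y}\lesssim \frac{\eta_n}{DK_n^2 T}+\sum_{\vt k\vt\geq 2}(K_n\vt k\vt L_n)^{-(N-1)},
\]
together with the symmetric bound on $\sup_y\int\vt D(x,y)\vt\,\Ind{x}$. Provided $L_n\geq\wt{L}(M,D,K_n,\Veps_n,T)$ is taken large, both terms are much smaller than $1/K_n$, and Schur's test then yields the $L^p\to L^p$ operator norm bound uniformly in $p\in[1,\infty]$.

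The main obstacle I expect is the careful bookkeeping of the push--pull identifications: one must verify that $\chi_n^j(\PKn-\PLKn)\chi_n^j$ indeed corresponds to the integral kernel $D(x,y)$ above, crucially exploiting that $\supp\chi_n^j$ fits inside a single fundamental domain so that the push-forward acts as the identity. Once this identification is justified, the rest of the argument is a routine Schur-type estimate built on the rapid decay of $\check{m}_D$ and the geometric separation enforced by \eqref{fml-App-cutoff-funs-proper}.
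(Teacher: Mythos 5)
Your proof is correct, but it takes a genuinely different route from the paper's. The paper also reduces to a Schur-test bound on the kernel $\chi_n^j(x)(K(x,y)-K^{L_n}(x,y))\chi_n^j(y)$, but it then splits into a near-diagonal region $\vt x-y\vt\le DK_nT\eta_n^{-1}$ and a far region: near the diagonal it compares the Fourier integral defining $K(x,y)$ with its Riemann sum defining $K^{L_n}(x,y)$ interval by interval via the mean value theorem (which is where the hypothesis that $L_n$ be large compared to powers of $D,K_n,T,\eta_n^{-1}$ enters), while in the far region it estimates $K$ and $K^{L_n}$ separately, the latter through the Abel-summation bound $\vt K^{L_n}(x,y)\vt\lesssim (DK_n)^{-1}\Dist(x,y)^{-2}$ of Lemma \ref{lem-App-kernel-R}. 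Your Poisson-summation identity $K^{L_n}(z)=\sum_{k}\mathcal{K}(z+kL_n)$ sidesteps both steps: the difference kernel is exactly the sum of the nonzero translates, and the buffer of width $\ge DK_nT/\eta_n$ between $\supp\chi_n^j$ and the endpoints of the fundamental domain (guaranteed by \eqref{fml-App-cutoff-funs-proper}) keeps every translate at distance $\gtrsim DK_nT/\eta_n$, so rapid decay of $\check{m}_D$ finishes the argument. This is cleaner and makes the dependence on $L_n$ essentially disappear. Two small points to tidy up: (i) the claim $\vt\mathcal{K}(z)\vt\lesssim_N K_n(1+K_n\vt z\vt)^{-N}$ with constant independent of $D$ is false for $N=0$ near $z=0$ (since $\Vt m_D\Vt_{L^1}\lesssim D$); you only invoke it for $\vt z\vt\gtrsim DK_nT/\eta_n$ with $N\ge 2$, where integration by parts gives a $D$-uniform constant because $\Vt\partial_\xi^Nm_D\Vt_{L^1}\lesssim_N 1$, so state it in that regime. (ii) The push--pull identification you flag is indeed the only sensible reading of the paper's definition, and your observation that $\supp\chi_n^jf$ sits in a single fundamental domain is exactly what makes $p_*$ act as the identity there; this is worth one explicit sentence but is not a gap.
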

	\begin{proof}
		Denote $K(x,y)$ and $K^{L_n}(x,y)$ by the kernel function of $\PKn$ and $\PLKn$ respectively, then apply Schur's test lemma and symmetry, this lemma follows from
		\begin{equation}\label{fml-App-operator-app-ker}
			\sup_x \int_{\R} \vt\chi^j_n(x) ( K(x,y) - K^{L_n}(x,y))\chi^j_n(y) \vt \Ind{y} \lesssim K_n^{-1}.
		\end{equation}
		We write
		\begin{equation*}
			H(x,y) := \chi^j_n(x) ( K(x,y) - K^{L_n}(x,y))\chi^j_n(y),
		\end{equation*}
		and decompose \eqref{fml-App-operator-app-ker} into two parts:
		\begin{align*}
			\sup_x \int_{\R} \vt H(x,y)\vt \Ind{y} &= \sup_x \int_{\vt x-y\vt \le DK_nT\eta^{-1}} \vt H(x,y)\vt \Ind{y} + \sup_x \int_{DK_nT\eta^{-1} \le \vt x-y\vt \le L_n} \vt H(x,y)\vt \Ind{y}\\
			&:= I_c + I_f.
		\end{align*}
		
		For $I_c$, we make further decomposition:
		\begin{align*}
			&\vt H(x,y)\vt \\
			&\hspace{4ex}= \chi^j_n(x)\chi^j_n(y) \sum_{\ell\in\Z} \int_{B_{\ell}} \Lf[e^{2\pi i(x-y)\xi}m_D\Big(\frac{\xi}{K_n}\Big)-\frac{1}{L_n}e^{2\pi i (x-y)\ell/L_n}m_D\Big(\frac{\ell}{K_nL_n}\Big)\Rt] \Ind{\xi},
		\end{align*}
		where $B_{\ell} := [(2\ell-1)/2L_n,(2\ell+1)/2L_n]$. On each $B_{\ell}$, we use mean value Theorem to obtain
		\begin{equation*}
			\vt H(x,y)\vt \lesssim \frac{1}{K_n} \frac{D^2K_n^4T}{\eta_nL_n^2}.
		\end{equation*}
		Consequently, for sufficiently large $L_n$, we arrive at
		\begin{equation*}
			\vt I_c\vt \lesssim \frac{1}{K_n} \frac{D^3K_n^5T^2}{\eta_n^2L_n^2} \lesssim \frac{1}{K_n}.
		\end{equation*}
		
		For the term $I_f$, we treat $K(x,y)$ and $K^{L_n}(x,y)$ respectively. On one hand, note that $\check{m}_D$ has fast decay away from the origin:
		\begin{align*}
			\int_{DK_nT\eta^{-1} \le \vt x-y\vt \le L_n} \vt \chi^j_n(x)K(x,y)\chi^j_n(y)\vt \Ind{y} &\lesssim \int_{DK_nT\eta^{-1} \le \vt x-y\vt} \vt K(x,y)\vt \Ind{y}\\
			&\lesssim \int_{DK_nT\eta^{-1} \le \vt x-y\vt} K_n \vt \check{m}_D(K_n(x-y))\vt \Ind{x}\\
			&\lesssim K_n^2 K_n^{-3} \int_{DK_nT\eta^{-1} \le \vt x-y\vt} \vt x-y\vt^{-3} \Ind{y}\\
			&\lesssim K_n^{-1} \Lf(\frac{\eta_n}{DK_nT}\Rt)^{-3} \lesssim \frac{1}{K_n}.
		\end{align*}
		On the other hand, by Lemma \ref{lem-App-kernel-R} we get that
		\begin{equation*}
			\int_{DK_nT\eta^{-1} \le \vt x-y\vt \le L_n} \vt \chi^j_n(x)K(x,y)\chi^j_n(y)\vt \Ind{y} \lesssim \frac{1}{K_n},
		\end{equation*}
		which implies
		\begin{equation*}
			\vt I_f\vt \lesssim \frac{1}{K_n}.
		\end{equation*}
		Thus, we have proved \eqref{fml-App-operator-app-ker} and  this lemma.
	\end{proof}
	
	\subsection{Comparing solution to \eqref{fml-App-NLS} with that \eqref{fml-App-NLSR}}
	Let $u_{0,n} \in \mathcal{H}_n$ be the solution to \eqref{fml-App-NLS} with initial data $u_{0,n}$ by $u_n$. We also denote $\wt{u}_n$ by the solution to \eqref{fml-App-NLSR} with initial data $\chi^0_n u_{0,n}(x+L_n\Z)$. Via the following push-forward and pull-back maps:
	\begin{equation*}
		[p_* \circ (\chi^2_n\wt{u}_n)](x) := \TLn \to \C,
	\end{equation*}
	we can compare $u_n$ with $\wt{u}_n$. For convenience, in the rest part of this subsection, we abbreviate $[p_* \circ (\chi^2_n\wt{u}_n)](x)$ to $\chi^2_n\wt{u}_n$. Then, the approximating Theorem here is
	\begin{theorem}\label{thm-App-app}
		For fixed $M > 0$,$D_0(M)>0$ and $T>0$, where $D_0(M)$ satisfies Theorem \ref{thm-GW-GWP}. Let $D \ge D_0(M),T > 0$, $K_n \to \infty$ and $\Veps_n \to 0$. Also let $L_n = L_n(D,M,T,K_n,\Veps_n)$ such that Theorem \ref{thm-App-app} holds. Suppose $u_{0,n} \in \mathcal{H}_n$ such that $\Vt u_{0,n}\Vt_{\LpnTLn{2}}\le M$. Denote $u_n$ and $\wt{u}_n$ by the solutions to \eqref{fml-App-NLSR} and \eqref{fml-App-NLS}, respectively. Then,
		\begin{equation}\label{fml-App-app-eq}
			\lim\limits_{n\to \infty}\Vt P^{L_n}_{\le 2DK_n}(\chi^2_n u_{0,n}) - u_n\Vt_{\McS([-T,T]\times \TLn)} = 0.
		\end{equation}
	\end{theorem}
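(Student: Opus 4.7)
The plan is to apply the stability statement (Corollary \ref{prop-Dis-Pertu}) on $\TLn$ with the comparison function
\[
v_n(t,x) := P^{L_n}_{\leq 2DK_n}\bigl(p_*(\chi^2_n\wt u_n(t,\cdot))\bigr)(x),
\]
and to show two things: first, that $\|v_n(0)-u_{0,n}\|_{L^2(\TLn)}\to 0$, and second, that the equation error $e_n:=(i\partial_t+\partial_x^2)v_n-\PLKn F(\PLKn v_n)$ satisfies $\|e_n\|_{\McN([-T,T])}\to 0$ as $n\to\infty$. A uniform Strichartz bound for $v_n$ on $[-T,T]$ follows from \eqref{fml-App-NLSR-bound}, boundedness of $P^{L_n}_{\leq 2DK_n}$ on $\LpnTLn{p}$, and Proposition \ref{prop-Dis-1}. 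The initial data claim is the easier of the two: since $u_{0,n}\in\H_n$ gives $u_{0,n}=P^{L_n}_{\leq 2DK_n}u_{0,n}$, while $\wt u_n(0)=\chi^0_n u_{0,n}(\cdot+L_n\Z)$ and $\chi^0_n\chi^2_n=\chi^0_n$, the difference reduces to $P^{L_n}_{\leq 2DK_n}\bigl[(\chi^0_n-1)u_{0,n}\bigr]$, which is $\le\Veps_n$ by \eqref{fml-App-cutoff-funs-proper}.

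For the error, I differentiate and use that $\wt u_n$ solves \eqref{fml-App-NLSR}:
\begin{align*}
(i\partial_t+\partial_x^2)v_n = P^{L_n}_{\leq 2DK_n}\bigl(p_*\bigl(\chi^2_n\PKn F(\PKn\wt u_n)\bigr)\bigr) + P^{L_n}_{\leq 2DK_n}\bigl(p_*\bigl([\partial_x^2,\chi^2_n]\wt u_n\bigr)\bigr).
\end{align*}
The commutator term $[\partial_x^2,\chi^2_n]=2(\partial_x\chi^2_n)\partial_x+\partial_x^2\chi^2_n$ is controlled in $\Tsn{1}{2}{}$ by \eqref{fml-App-cutoff-funs-proper} (which gives $\|\partial_x\chi^2_n\|_\infty\lesssim \eta_n/(DK_nT)$) combined with Lemma \ref{lem-App-NLSR-control} (which bounds $\|\partial_x\wt u_n\|_{L^\infty_tL^2_x}\lesssim DK_n$), producing an error of size $O(\eta_n)=O(\Veps_n)$. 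The main task is therefore to show
\[
P^{L_n}_{\leq 2DK_n}\bigl(p_*\bigl(\chi^2_n\PKn F(\PKn\wt u_n)\bigr)\bigr) = \PLKn F(\PLKn v_n) + o_{\McN([-T,T])}(1).
\]

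To replace $\PKn$ by $\PLKn$ and the continuous-line convolution by the torus one, I would insert the cutoffs $\chi^3_n$ and $\chi^1_n$ in a telescoping chain. Using the mass concentration Lemma \ref{fml-App-mass-concentration}, $\wt u_n = \chi^3_n\wt u_n+O_{L^\infty_tL^2_x}(\Veps_n)$; using Lemma \ref{lem-App-commu}, $\PKn(\chi^3_n\wt u_n)=\chi^3_n\PKn\wt u_n+O(\eta_n/(K_n^2DT))$, and since $\chi^3_n$ is supported where $\chi^2_n\equiv 1$ with distance $\ge DK_nT/\eta_n$ from $\supp(1-\chi^2_n)$, Lemma \ref{lem-App-operator-app} lets me replace $\chi^3_n\PKn\chi^3_n$ by $\chi^3_n\PLKn\chi^3_n$ at cost $O(K_n^{-1})$. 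Iterating this with one further outer $\PKn$ (and one further outer $\chi^2_n$), and using the mismatch estimates Lemmas \ref{lem-App-mismatch} and \ref{lem-App-mismatch-R} to discard the pieces of the convolutions that live far from the cutoff supports, I convert the entire $\R$-side nonlinearity into its $\TLn$-counterpart acting on $v_n=P^{L_n}_{\leq 2DK_n}(p_*(\chi^2_n\wt u_n))$; each substitution contributes $O(K_n^{-1}+\Veps_n+\eta_n)$ in $\McN([-T,T])$, multiplied by $\McS$-norms that stay bounded by $C(M)$ thanks to \eqref{fml-App-NLSR-bound} and Lemma \ref{lem-App-NLSR-control}. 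Summing finitely many such errors and sending $n\to\infty$ makes the total error tend to $0$, and Corollary \ref{prop-Dis-Pertu} then yields \eqref{fml-App-app-eq}.

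The main obstacle is organizational rather than analytical: one has to keep careful track of the hierarchy $\chi^0_n,\ldots,\chi^4_n$ and ensure that at every stage in the telescoping chain, whichever cutoff is currently outside has a protective layer between itself and the next one, so that the separation hypothesis of the mismatch estimates is actually in force. The choice of five cutoffs (rather than two or three) is precisely to accommodate the successive insertions of $\chi^j_n$ needed to bring $\PKn$ into $\PLKn$ on both sides of the nonlinearity; once the bookkeeping is set up, the numerical gains $K_n^{-1}\to 0$ and $\Veps_n\to 0$ close the argument for any choice $L_n\gg\wt L(D,M,T,K_n,\Veps_n)$.
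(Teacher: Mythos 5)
Your proposal is correct and follows essentially the same route as the paper: the same comparison function $P^{L_n}_{\le 2DK_n}(p_*(\chi^2_n\wt u_n))$, the same three verifications (uniform Strichartz bound, initial-data closeness via $(1-\chi^0_n)u_{0,n}$, and equation error split into the $[\partial_x^2,\chi^2_n]$ commutator plus the nonlinearity mismatch), and the same telescoping chain through the cutoffs using the mass-concentration, mismatch, commutator, and operator-approximation lemmas before invoking Corollary \ref{prop-Dis-Pertu}. The only cosmetic difference is that you correctly make the time dependence of the comparison function explicit, whereas the theorem statement in the paper writes $\chi^2_n u_{0,n}$ where $\chi^2_n\wt u_n(t)$ is intended.
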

	\begin{proof}
		We introduce the notation $U_n := P^{L_n}_{\le 2DK_n}(\chi^2_n u_{0,n})$ first. Thanks to the Proposition \ref{prop-Dis-Pertu}, it is enough to prove that $P^{L_n}_{\le 2DK_n}(\chi^0_n u_{0,n})$ can be regarded as the perturbation of $u_n$. We need to verify that
		\begin{gather}
			\Vt U_n\Vt_{\McS([-T,T]\times \TLn)} \le C(M),\label{fml-App-per-boundness}\\
			\lim\limits_{n\to \infty} \Vt U_n(0) - u_n(0)\Vt_{\LpnTLn{2}} = 0,\label{fml-App-per-ini-close}\\
			\lim\limits_{n\to \infty} \Vt (i\partial_t + \partial_x^2)U_n(0) - \PLKn F(\PLKn U_n)\Vt_{\McN([-T,T]\times \TLn)} = 0.\label{fml-App-per-nonli-close}
		\end{gather}
		
		By Theorem \ref{thm-GW-GWP}, for all $n$ we obtain a uniformly bound
		\begin{equation}
			\Vt U_n\Vt_{\McS([-T,T]\times \TLn)} \lesssim \Vt \chi^2_n\wt{u}_n\Vt_{\McS([-T,T]\times \TLn)} \lesssim \Vt \wt{u}_n\Vt_{\McS([-T,T]\times \R)} \le C(M),
		\end{equation}
		and \eqref{fml-App-per-boundness} implies.
		
		Note that $P^{L_n}_{\le 2DK_n}u_{0,n} = u_{0,n}$, apply \eqref{fml-App-cutoff-funs-proper} and \eqref{fml-App-per-ini-close} follows from the bound:
		\begin{align*}
			\Vt U_n(0) - u_n(0)\Vt_{\LpnTLn{2}} &= \Vt P^{L_n}_{\le 2DK_n}(\chi^2_n\chi^0_nu_{0,n} - u_{0,n})\Vt_{\LpnTLn{2}}\\
			&\lesssim \Vt (1-\chi^0_n)u_{0,n}\Vt_{\LpnTLn{2}}\\
			&\lesssim \Veps_n.
		\end{align*}
		
		Finally, we turn to \eqref{fml-App-per-nonli-close} which require to control the nonlinearity. Note that $P^{L_n}_{\le 2DK_n} \PLKn = \PLKn$, we have
		\begin{align*}
			(i\partial_t + \partial_x^2)U_n(0) - \PLKn F(\PLKn U_n) &= P^{L_n}_{\le 2DK_n} \Lf[ 2\partial_x \chi^2_n \cdot \partial_x \wt{u}_n + \partial_x^2 \chi^2_n \wt{u}_n\Rt]\\
			&\hspace{2ex}+ P^{L_n}_{\le 2DK_n} \Lf[ \chi^2_n\PKn F(\PKn \wt{u}_n) - \PLKn F(\PLKn (\chi^2_n\wt{u}_n))\Rt].
		\end{align*}
		Since $P^{L_n}_{\le 2DK_n}$ is $L^p$ bounded, thus we need to show
		\begin{gather*}
			J_1:=\Vt \partial_x \chi^2_n \cdot \partial_x \wt{u}_n\Vt_{\McN([-T,T]\times \TLn)}\to0\\
			J_2:=\Vt \partial_x^2 \chi^2_n \wt{u}_n\Vt_{\McN([-T,T]\times \TLn)}\to0\\
			J_3:=\Vt \chi^2_n\PKn F(\PKn \wt{u}_n) - \PLKn F(\PLKn (\chi^2_n\wt{u}_n))\Vt_{\McN([-T,T]\times \TLn)}\to0
		\end{gather*}
		as $n \to \infty$.
		
		Applying Lemma \ref{lem-App-NLSR-control}, \eqref{fml-App-cutoff-funs-proper} and H\"older's inequality, $J_1 + J_2$ enjoys the bound
		\begin{align*}
			J_1 + J_2 &\lesssim \Vt \partial_x \chi^2_n\cdot \partial_x\wt{u}_n\Vt_{\Tsnself{1}{2}{[-T,T]}} + \Vt\wt{u}_n \partial_x^2 \chi^2_n \Vt_{\Tsnself{1}{2}{[-T,T]}}\\
			&\lesssim T \Lf[ \Vt \partial_x\chi^2_n\Vt_{\LpnR{\infty}} \Vt \partial_x\wt{u}_n\Vt_{\LpnR{2}} + \Vt \partial_x^2\chi^2_n\Vt_{\LpnR{\infty}} \Vt \wt{u}_n\Vt_{\LpnR{2}}\Rt]\\
			&\lesssim C(M) \Lf( \eta_n + \frac{\eta_n^2}{DK_n^2}\Rt).
		\end{align*}
		Hence, $J_1+J_2\to0$ as $n\to\infty$.
		
		For $J_3$, we make further decomposition, it gives that
		\begin{align*}
			J_3 &\le \Vt \chi_n^2\PKn\big(F(\PKn \wt{u}_n) - F(\PKn(\chi^2_n \wt{u}_n))\big)\Vt_{\McN([-T,T]\times \TLn)}\\
			&\hspace{2ex}+ \Vt \chi^2_n\PKn(1 - \chi^3_n)F(\PKn (\chi^2_n \wt{u}_n))\Vt_{\McN([-T,T]\times \TLn)}\\
			&\hspace{2ex}+ \Vt \chi^2_n\PKn\chi^3_n \big(F(\PKn (\chi^2_n \wt{u}_n)) - F(\PLKn (\chi^2_n \wt{u}_n))\big)\Vt_{\McN([-T,T]\times \TLn)}\\
			&\hspace{2ex}+ \Vt \chi^2_n(\PKn - \PLKn)\chi^3_nF(\PLKn(\chi^2_n\wt{u}_n))\Vt_{\McN([-T,T]\times \TLn)}\\
			&\hspace{2ex}+ \Vt [\chi^2_n,\PLKn]\chi^3_nF(\PLKn(\chi^2_n\wt{u}_n))\Vt_{\McN([-T,T]\times \TLn)}\\
			&\hspace{2ex}+ \Vt \PLKn (\chi^2_n - 1)F(\PLKn(\chi^2_n\wt{u}_n))\Vt_{\McN([-T,T]\times \TLn)}\\
			&:= J_3^1 + J_3^2 + J_3^3 + J_3^4 + J_3^5 + J_3^6.
		\end{align*}
	\end{proof}

	We control $J_3^1$ by using H\"older's inequality, Theorem \ref{thm-GW-GWP} and Lemma \ref{lem-App-NLSR-control}:
	\begin{align*}
		J_3^1 &\lesssim \Vt F(\PKn \wt{u}_n) - F(\PKn(\chi^2_n \wt{u}_n))\Vt_{\Tsnself{\frac{3}{2}}{\frac{6}{5}}{[-T,T]}}\\
		&\lesssim \Vt (1 - \chi^2_n)\wt{u}_n\Vt_{\Tsnself{\infty}{2}{[-T,T]}}  \Vt \wt{u}_n\Vt^4_{\Tsnself{5}{10}{[-T,T]}}\\
		&\lesssim C(M) \Veps_n.
	\end{align*}
	
	Thanks to \eqref{fml-App-cutoff-funs-proper}, we find that
	\begin{equation*}
		\Dist ( \supp \chi^2_n, \supp(1 - \chi^3_n) ) \ge \frac{DK_nT}{\eta_n}.
	\end{equation*}
	Combining the support separation property above with Theorem \ref{thm-GW-GWP} and Lemma \ref{lem-App-mismatch}, we bound $J^2_3$ as follows
	\begin{align*}
		J_3^2 &\lesssim \Vt \chi^2_n \PKn(1 - \chi^3_n)\Vt_{\LpnR{2} \to \LpnR{2}} \Vt F(\PKn(\chi^2_n \wt{u}_n))\Vt_{\Tsnself{1}{2}{[-T,T]}}\\
		&\lesssim \frac{\eta_n}{DK_nT}\Vt \wt{u}_n\Vt^4_{\Tsnself{5}{10}{[-T,T]}}\\
		&\lesssim C(M)\frac{\eta_n}{DK_nT}.
	\end{align*}
	
	For the term $J_3^3$, by \eqref{fml-App-cutoff-funs-proper}, Theorem \ref{thm-GW-GWP} and Lemma \ref{lem-A} we can find
	\begin{align*}
		J_3^3 &\lesssim \Vt \chi^3_n(\PKn - \PLKn)\chi^3_n\Vt_{\LpnR{2}\to\LpnR{2}} \Vt \chi^2_n \wt{u}_n\Vt_{\Tsnself{\infty}{2}{[-T,T]}}\\
		&\hspace{2ex}\times \big( \Vt \PKn(\chi^2_n\wt{u}_n)\Vt^4_{\Tsnself{5}{10}{[-T,T]}} + \Vt \chi_n^4\PLKn(chi^2_n\wt{u}_n)\Vt^4_{\Tsnself{5}{10}{[-T,T]}} \big)\\
		&\le C(M)K_n^{-1}.
	\end{align*}
	
	Then, from Theorem \ref{thm-GW-GWP} and Lemma \ref{lem-App-operator-app}, we obtain
	\begin{align*}
		J_3^4 &\lesssim  \Vt \chi^3_n(\PKn - \PLKn)\chi^3_n\Vt_{\LpnR{2}\to\LpnR{2}}\Vt \chi^4_nF(\PLKn(\chi^2_n\wt{u}_n))\Vt_{\Tsnself{1}{2}{[-T,T]}}\\
		&\lesssim K_n^{-1}\Vt \wt{u}_n\Vt^5_{\Tsnself{5}{10}{[-T,T]}}\\
		&\le C(M)K_n^{-1}.
	\end{align*}
	
	Using \eqref{lem-App-commu-R}, the smallness of $J_3^5$ as $n\to0\infty$ follows from
	\begin{align*}
		J_3^5 &\lesssim \Vt [\chi^2_n,\PLKn]\Vt_{\LpnTLn{2}\to\LpnTLn{2}}\Vt \chi^4_n\PLKn(\chi^2_n\wt{u}_n)\Vt^5_{\Tsnself{5}{10}{[-T,T]}}\\
		&\lesssim C(M)K_n^{-1}.
	\end{align*}
	
	Rewriting $\wt{u}_n = \chi^1_n\wt{u}_n + (1-\chi^1_n)\wt{u}_n$, using Lemma \ref{lem-App-NLSR-control} and Lemma \ref{lem-App-commu}, $J_3^6$ enjoys the bound
	\begin{align*}
		J_3^6 &\lesssim \Vt (1-\chi^2_n)\PLKn\chi^1_n\Vt_{\LpnTLn{2}\to\LpnTLn{2}} \Vt \wt{u}_n\Vt_{\Tsnself{5}{10}{[-T,T]}}^5\\
		&\hspace{2ex} + \Vt (1-\chi^1_n)\wt{u}_n\Vt_{\Tsnself{\infty}{2}{[-T,T]}}\Vt \wt{u}_n\Vt_{\Tsnself{5}{10}{[-T,T]}}^4\\
		&\lesssim C(M)\Lf(\Veps_n + \frac{\eta_n}{DK_n^2T}\Rt).
	\end{align*}
	
	Consequently, for $n\to\infty$ we have
	\begin{equation*}
		J_3 \lesssim C(M)\Lf(K_n^{-1} + \Veps_n + \frac{\eta_n}{DK_n^2T} \Rt) \to 0.
	\end{equation*}
	So far, we have verified all the conditions required in Proposition \ref{prop-Dis-Pertu}, thus we complete the proof.
	
	\section{Proof of the main result}
	\subsection{Proof of Theorem \ref{thm-nonsqueezing}}
	Since $T$ and $M$ is fixed, we take $D > D_0(M)$ and $\{L_n\}_{n\to\infty}$ which is determined by Theorem \ref{thm-GW-GWP} and Theorem \ref{thm-App-app}. Let $u_n$ be the solution to \eqref{NLS} and $\wt{u}_n$ solves the truncated $(NLS)$: 
	\begin{equation*}
		i\partial_t \wt{u}_n + \partial_x^2 \wt{u}_n = \PKn \big( \vert \PKn \wt{u}_n \vert^4 \PKn \wt{u}_n \big),
	\end{equation*}
	with initial data
	\begin{equation*}
		u_n(0) = \wt{u}_n(0) = \chi^0_n(x) v_n(0,x+L_n\Z).
	\end{equation*}
	Here the cut off functions $\chi^0_n$ is defined in Subsection \ref{SubS:cut-off} and depend on $v_n(0)$. Thanks to Theorem \ref{thm-globalbound} and Theorem \ref{thm-GW-GWP}, the solutions $u_n$ and $\wt{u}_n$ are globally well-posed. We also denote the solution to \eqref{fml-App-NLS} with initial data $v_n(0)$ by $v_n$. Therefore, for fixed compactly supported $\ell\in L^2(\R)$, it remains to prove for $n\to\infty$
	\begin{equation}\label{fml-pro-induc}
		\Lf\vt \Jap{p_*\ell,v_n(t)} - \Jap{\ell,u_n(t)}\Rt\vt \to 0.
	\end{equation}
	To make this approach easier, we rewrite:
	\begin{equation*}
	\begin{aligned}
		\Lf\vt \Jap{p_*\ell,v_n(t)} - \Jap{\ell,u_n(t)}\Rt\vt \le \Lf\vt \Jap{p_*\ell,v_n(t)} - \Jap{\ell,\wt{u}_n(t)}\Rt\vt + \Lf\vt \Jap{\ell,\wt{u}_n(t)} - \Jap{\ell,u_n(t)}\Rt\vt.
	\end{aligned}
	\end{equation*}
	
	By passing to a subsequence, there exists $u_{\infty,0} \in \LpnR{2}$ such that
	\begin{equation*}
		u_n(0) = \wt{u}_n(0) \rightharpoonup u_{\infty,0},
	\end{equation*}
	in $\LpnR{2}$ weak topology. Then, we denote the solution to \eqref{NLS} with initial data $u_{\infty,0}$ by $u_{\infty}$. Then, by Theorem \ref{thm:wc}, as $n\to \infty$ we have
	\begin{equation*}
		\vt \Jap{\ell,\wt{u}_n(t)} - \Jap{\ell,u_n(t)}\vt \le \vt \Jap{\ell,\wt{u}_n(t)} - \Jap{\ell,u_\infty(t)}\vt + \vt \Jap{\ell,u_n(t)} - \Jap{\ell,u_\infty(t)}\vt\to 0.
	\end{equation*}
	We conclude that 
	\begin{equation}\label{fml-pro-app2}
		\lim\limits_{n\to\infty} \Lf\vt \Jap{\ell,\wt{u}_n(t)} - \Jap{\ell,u_n(t)}\Rt\vt = 0.
	\end{equation}
	
	Next, we turn to $\Lf\vt \Jap{p_*\ell,v_n(t)} - \Jap{\ell,\wt{u}_n(t)}\Rt\vt$. For sufficiently large $n$, we find that $p_*\ell = \chi^2_n\ell = \ell$ which implies
	\begin{equation*}
		\begin{aligned}
			&\Lf\vt \Jap{p_*\ell,v_n(t)} - \Jap{\ell,\wt{u}_n(t)}\Rt\vt\\
			&\le \Lf\vt \Jap{\ell,v_n(t) - P^{L_n}_{\le2DK_n}(\chi^2_n \wt{u}_n(t))} \Rt\vt - \Lf\vt\Jap{\chi_n^2P^{L_n}_{\le2DK_n} p_*\ell-\ell,\wt{u}_n(t)} \Rt\vt\\
			&\lesssim \Vt \ell\Vt_{\LpnR{2}} \Vt v_n(t) - P^{L_n}_{\le2DK_n}(\chi^2_n \wt{u}_n(t))\Vt_{\Tsnself{\infty}{2}{[-T,T]}} + \Vt \wt{u}_n\Vt_{\Tsnself{\infty}{2}{[-T,T]}}\Vt \chi_n^2P^{L_n}_{\le2DK_n} p_*\ell-\ell\Vt_{\LpnR{2}}.
		\end{aligned}
	\end{equation*}
	
	On one hand, by Theorem \ref{thm-App-app}, we have
	\begin{equation*}
		\lim\limits_{n\to\infty} \Vt v_n(t) - \PLKn(\chi^2_n \wt{u}_n(t))\Vt_{\Tsnself{\infty}{2}{[-T,T]}} = 0.
	\end{equation*}
	On the other hand, by Lemma \ref{lem-App-operator-app}, triangle inequality and Dominated Convergence Theorem, we obtain
	\begin{equation*}
		\begin{aligned}
			&\Vt \chi_n^2P^{L_n}_{\le2DK_n} p_*\ell-\ell\Vt_{\Tsnself{\infty}{2}{[-T,T]}} \\
			&\hspace{12ex}\le \Vt \chi^2_n(1 - P_{\le 2DK_n})\ell\Vt_{\LpnR{2}} + \Vt \chi^2_n(P_{\le 2DK_n} - P^{L_n}_{\le 2DK_n})\chi^2_n \ell\Vt_{\LpnR{2}}\\
			&\hspace{12ex}\lesssim \Vt P_{>2DK_n}\ell\Vt_{\LpnR{2}} + K_n^{-1}.
		\end{aligned}
	\end{equation*}
	Consequently, we get
	\begin{equation*}
		\lim\limits_{n\to\infty} \Lf\vt \Jap{p_*\ell,v_n(t)} - \Jap{\ell,\wt{u}_n(t)}\Rt\vt = 0,
	\end{equation*}
	combine this with \eqref{fml-pro-app2} we have proved \eqref{fml-pro-induc} and this Theorem follows.
	\hfill $\square$
	
	\subsection{Proof of Theorem \ref{thm-homogenization}}
	In this section, we prove the homogenization theorem, i.e. Theorem \ref{thm-homogenization}.
	
	For the given $u_0\in L^2(\R)$, $h\in L^\infty(\R)$, we suppose there exists $\overline{h}\geq0$ satisfying the condition
	\begin{align*}
		\lim_{n\to\infty}\big\|(-\partial_x^2+1)^{-1}(h(nx)-\overline{h})\big\|_{L^\infty(|x|\leq R)}=0.
	\end{align*}
	From Dodson's result, we can find a unique and global solution to \eqref{NLS} which scatters in $L^2$ space. In the following, we will show that  $u_n$ can be approximated by $u$ via the stability theory  when $n$ sufficiently large. As a consequence, $u_n$ is global and its $L_{t,x}^6$ norm is globally bounded.
	
	Rewriting $u$ as the solution to
	\begin{align*}
		\begin{cases}
			i\partial_tu+\partial_x^2u=F_n(u)+e_n(u),\\
			u(0,x)=u_0\in L^2,
		\end{cases}
	\end{align*}
	where $e_n(u)=\overline{h}F(u)-F_n(u)=(\overline{h}-h(nx))F(u).$ Notice that the solution $u$ satisfies \eqref{fml-short-1}-\eqref{fml-short-4}, it remains to show that
	\begin{align}\label{fml-homo-reduce}
		\left\|\int_{0}^{t}e^{i(t-s)\partial_x^2}(h(nx)-\overline{h})F(u)ds\right\|_{L_{t,x}^6(\R\times\R)}<\varepsilon, \qtq{for}n\qtq{large enough.}
	\end{align}
	For convenience, we denote $K_n=h(nx)-\overline{h}$. Before presenting the proof, we state several lemmas which are useful in handling the inhomogeneous term. The proof of these lemmas can be found in \cite{Ntekoume-CPDE}.
	
	\begin{lemma}\label{lem-homo-property}
		Let $h\in L^\infty(\R)$ and for any $R>0$, the following holds
		\begin{align*}
			\lim_{n\to\infty}\big\|(-\partial_x^2+1)^{-1}h(nx)\big\|_{L^\infty(|x|\leq R)}=0,
		\end{align*}
		then we have
		\begin{enumerate}
			\item $\sup\limits_{n}\big\|(-\partial_x^2+1)^{-1}h(nx)\big\|_{L^\infty(\R)}\lesssim\|h\|_{L^\infty(\R)}$,
			\item For every $R>0$, $\lim\limits_{n\to\infty}\big\|\partial_x(-\partial_x^2+1)^{-1}h(nx)\big\|_{L^\infty(|x|\leq R)}=0$,
			\item $\sup\limits_{n}\big\|\partial_x(-\partial_x^2+1)^{-1}h(nx)\big\|_{L^\infty(R)}\lesssim\|h\|_{L^\infty(\R)}$.
		\end{enumerate}
	\end{lemma}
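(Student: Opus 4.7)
\textbf{Proof plan for Lemma \ref{lem-homo-property}.} The operator $(-\partial_x^2+1)^{-1}$ is convolution with the one-dimensional Bessel kernel $G(x) = \tfrac{1}{2} e^{-|x|}$, whose (distributional) derivative $G'(x) = -\tfrac{1}{2}\operatorname{sgn}(x)\,e^{-|x|}$ also lies in $L^1(\R)$, with $\|G\|_{L^1} = \|G'\|_{L^1} = 1$. Since $\|h(n\,\cdot)\|_{L^\infty(\R)} = \|h\|_{L^\infty(\R)}$ for every $n$, Young's convolution inequality gives both
\begin{equation*}
\bigl\|(-\partial_x^2+1)^{-1} h(n\,\cdot)\bigr\|_{L^\infty(\R)}
\le \|G\|_{L^1} \|h\|_{L^\infty(\R)}
\end{equation*}
and the analogous estimate with $G$ replaced by $G'$, which are precisely assertions (1) and (3).

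For part (2), set $u_n(x) := [(-\partial_x^2+1)^{-1} h(n\,\cdot)](x)$, so that $u_n$ solves the ODE
\begin{equation*}
-u_n'' + u_n = h(n x) \qquad \text{on } \R.
\end{equation*}
From (1) and the ODE we deduce the uniform bound
\begin{equation*}
\|u_n''\|_{L^\infty(\R)} \le \|u_n\|_{L^\infty(\R)} + \|h\|_{L^\infty(\R)} \lesssim \|h\|_{L^\infty(\R)}.
\end{equation*}
By the hypothesis, $\|u_n\|_{L^\infty(|x|\le R+1)} \to 0$ as $n \to \infty$. We now upgrade this $C^0$-smallness to $C^1$-smallness on $[-R,R]$ by a standard Landau--Kolmogorov type interpolation. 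Fix $x\in[-R,R]$ and $\delta\in(0,1)$. Taylor expanding at $x$ with integral remainder gives
\begin{equation*}
u_n(x+\delta) = u_n(x) + \delta\, u_n'(x) + \int_x^{x+\delta} (x+\delta-s)\, u_n''(s)\, ds,
\end{equation*}
and solving for $u_n'(x)$ yields
\begin{equation*}
|u_n'(x)| \le \frac{2\|u_n\|_{L^\infty([-R,R+1])}}{\delta} + \frac{\delta}{2}\|u_n''\|_{L^\infty(\R)}.
\end{equation*}

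Given $\varepsilon>0$, the final step is a two-parameter optimization: pick $\delta = \varepsilon/(C\|h\|_{L^\infty})$ so that the second term is bounded by $\varepsilon/2$, then use the hypothesis to choose $n$ large enough that $\|u_n\|_{L^\infty([-R,R+1])} < \delta\varepsilon/4$, forcing the first term below $\varepsilon/2$ as well. Taking the supremum over $x\in[-R,R]$ proves (2). There is no real obstacle: parts (1) and (3) are one-line applications of Young's inequality, and (2) reduces to an elementary interpolation once one reads $u_n''$ directly off the defining elliptic equation.
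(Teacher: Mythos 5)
Your proof is correct. Note that the paper itself does not prove this lemma at all: it simply defers to \cite{Ntekoume-CPDE}, where the analogous statement is established in two dimensions using properties of the Bessel potential kernel. Your one-dimensional argument is a clean, self-contained substitute. Parts (1) and (3) are exactly the expected Young's-inequality computation with the explicit kernel $G(x)=\tfrac12 e^{-|x|}$ and its $L^1$ derivative (both of norm $1$), using the scale invariance of the $L^\infty$ norm; in particular you correctly observe that these two assertions need only $h\in L^\infty$ and not the vanishing hypothesis. For part (2), reading $u_n''=u_n-h(n\cdot)$ off the resolvent equation to get a uniform $W^{2,\infty}$ bound and then running the Landau--Kolmogorov interpolation $\|u_n'\|_{L^\infty([-R,R])}\le 2\delta^{-1}\|u_n\|_{L^\infty([-R,R+1])}+\tfrac{\delta}{2}\|u_n''\|_{L^\infty}$ is valid: $u_n=G*h(n\cdot)$ lies in $W^{2,\infty}_{\mathrm{loc}}$, so the Taylor formula with integral remainder applies, and the hypothesis at radius $R+1$ supplies the smallness needed after fixing $\delta$. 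The only cosmetic caveat is that $h(nx)$ is defined only a.e., so the identity $u_n''=u_n-h(n\cdot)$ holds in the distributional/a.e. sense, which is all your interpolation requires.
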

Now, we start to prove Theorem \ref{thm-homogenization}.
\begin{proof}[Proof of Theorem \ref{thm-homogenization}]
		Without loss of generality, we assume $\overline{h}=0$. Indeed, if $\overline{h}\neq0$, we set $k(x)=h(x)-\overline{h}$. Then, we have $h=k(x)+\overline{h}$ where $k(x)$ satisfies the condition and $\overline{k}=0$.
			Following the strategy in \cite{Ntekoume-CPDE}, we split the solution $u$ into low and frequency part: $u=P_{\leq N}u+P_{>N}u$, where $N$ is the frequency parameter to be chosen later.
		
		Assume that $\gamma_0=\gamma_0(\|u_0\|_{L^2},\|u_0\|_{L^2},C(\|u_0\|_{L^2}),\overline{h})$ be the constant in Lemma \ref{dodson-stability}.   Taking $0<\gamma\ll\min\{\varepsilon,\gamma_0\}$ and $M\in 2^{\Bbb Z}$ such that $$\|P_{>M}u_0\|_{L^2}<\gamma.$$
		By Theorem \ref{thm-globalbound},  $v=P_{\leq N}u$ is global and satisfies the global-in-time bound
		\begin{align*}
			\|v\|_{L_{t,x}^6(\R\times\R)}\leqslant C(\|u_0\|_{L^2(\R)}).
		\end{align*}
		Furthermore, we can control its higher regularity 
		\begin{align*}
			\|\partial_x v\|_{L_{t,x}^6(\R\times\R)}\lesssim\|\partial_xv_0\|_{L^2(\R)}\lesssim M,
		\end{align*}
		where $v_0:=P_{\leq M}u_0$. By Lemma \ref{dodson-stability}, we have $\|u-v\|_{L_{t,x}^6(\R\times\R)}\lesssim\gamma$ since $\|P_{>M}u_0\|_{L^2(\R)}\lesssim\gamma$.
		
		The Bernstein estimate yields that
		\begin{align}\label{fml-homo-high}
			\|P_{>N}u\|_{L_{t,x}^6(\R\times\R)}&\leqslant\|P_{>N}(u-v)\|_{L_{t,x}^6(\R\times\R)}+\|P_{>N}v\|_{L_{t,x}^6(\R\times\R)}\notag\\
			&\lesssim \gamma+N^{-1}\|\partial_xv_0\|_{L^2(\R)}\lesssim\gamma.
		\end{align}
		Here, we take $N^{-1}M\ll\gamma$.
		Furthermore, we can control this norm using the general Strichartz norm
		\begin{align*}
			\|P_{>N}u\|_{L_t^{q}L_x^r}\lesssim\gamma,\quad \|v\|_{L_{t}^qL_x^r}\leqslant C(\|u_0\|_{L^2})
		\end{align*}
		where $\frac{2}{q}=\frac{1}{2}-\frac{1}{r}$.
		
		To show \eqref{fml-homo-reduce}, we first decompose the nonlinear term into
		\begin{align*}
			F(u)=F(P_{\leq N}u)+E_N(u),
		\end{align*}
		where $E_N(u)=P_{>N}u(|u|^4+|u|^2\overline{u}P_{\leq N}u)+\overline{P_{>N}u}|u|^2(P_{\leq N}u)^2$. Observing that each term in $E_N(u)$ contains at least one high-frequency part, then using the Strichartz estimate and \eqref{fml-homo-high}, we obtain
		\begin{align*}
			\left\|\int_{0}^te^{i(t-s)\partial_{x}^2}h(nx)E_N(u(s))ds\right\|_{L_t^qL_x^r(\R\times\R)}\lesssim\|P_{>N}u\|_{L_{t,x}^6(\R\times\R)}\|u\|_{L_{t,x}^6(\R\times\R)}^4\lesssim_{C(\|u_0\|_{L^2})}\gamma<\varepsilon.
		\end{align*}
		Therefore, it remains to show that
		\begin{align*}
			\left\|\int_{0}^th(nx)F(P_{\leq N}u(s))ds\right\|_{X(\R\times\R)}<\varepsilon,
		\end{align*}
		where $X(\R\times\R)=L_t^{\infty-}L_x^{2+}\cap L_{t,x}^6$. 
		Denoting $L=-\partial_x^2+1$, we have the following identity
		\begin{align*}
			L^{-1}(FG)=FL^{-1}G+L^{-1}(\partial_x^2FL^{-1}G)+2L^{-1}\big(\partial_xF\partial_x(L^{-1}G)\big).
		\end{align*}
		Combining this with integrating by parts, we have
		\begin{align*}
			\left|\int_{0}^te^{i(t-s)\partial_x^2}h(nx)F(P_{\leq N}u)(s)ds\right|\lesssim E_1+E_2+E_3+E_4+E_5+E_6,
		\end{align*}
		where
		\begin{gather*}
			E_1=\left|\int_{0}^te^{i(t-s)\partial_x^2}(-\partial_x^2+1)^{-1}[h(nx)]\cdot F(P_{\leq N}u)(s)ds\right|,\\
			E_2=\left|\int_{0}^te^{i(t-s)\partial_x^2}(-\partial_x^2+1)^{-1}[h(nx)]\cdot \frac{d}{ds}F(P_{\leq N}u)(s)ds\right|,\\
			E_3=\left|\int_{0}^te^{i(t-s)\partial_x^2}(-\partial_x^2+1)^{-1}[h(nx)]\cdot\partial_x^2F(P_{\leq N}u)(s)ds\right|,\\
			E_4=\left|\int_{0}^te^{i(t-s)\partial_x^2}\partial_x(-\partial_x^2+1)^{-1}[h(nx)] \partial_xF(P_{\leq N}u)(s)ds\right|,\\
			E_5=\left|F(P_{\leq N}u)(-\partial_x^2+1)^{-1}[h(nx)]\right|,\quad E_6=\left|e^{it\partial_x^2}F(P_{\leq N}u)(0)(-\partial_x^2+1)^{-1}[h(nx)]\right|.
		\end{gather*}
		By Minkowski's inequality, we have
		\begin{align*}
			\eqref{fml-homo-reduce}\lesssim\sum_{j=1}^{6}\|E_j\|_{X(\R\times\R)}.
		\end{align*}
		It is sufficient to prove that the $X(\R\times\R)$ norm of each $E_j$ is less than $\varepsilon$.
		To control the low-frequency cut-off,  inspired by \cite{Ntekoume-CPDE}, we introduce the point-wise bound given in Section 2. To apply Lemma \ref{lem-stab-inhomo},   we  need the function to be compactly supported. By density, let $v\in C_0^\infty(R\times\R)$ and $v_0\in C_0^\infty(\R)$ obey
		\begin{align}\label{fml-homo-diff}
			\|u-v\|_{L_{t,x}^6(\R\times\R)\cap L_t^{\infty-}L_x^{2+}}<\eta,\quad\|u_0-v_0\|_{L_x^2(\R)}<\eta.
		\end{align}
		Moreover, we have that
		\begin{gather*}
			\|v\|_{L_{t,x}^6(\R\times\R)}<\|u\|_{L_{t,x}^6(\R\times\R)}+\eta\\
			\|v\|_{L_{t}^{\infty-}L_{x}^{2+}(\R\times\R)}<\|u\|_{L_{t}^{\infty-}L_{x}^{2+}(\R\times\R)}+\eta
		\end{gather*}
		We will see that $u\in L_t^{\infty-}L_x^{2+}$ comes from the fact that $C_0^\infty$ is not dense in $L^\infty$.
		Since $v$ and $v_0$ are compactly supported, we can assume that $\operatorname{supp}v\in B_{\R^2}(0,R)$ and $\operatorname{supp}v_0\in B_{\R}(0,R)$ for some $R>1$. Following from Lemma \ref{lem-homo-property}, for $0<\eta\ll1$, there holds
		\begin{align}\label{fml-homo-h-bound}
			\big\|(-\partial_x^2+1)^{-1}h(nx)\big\|_{L_x^\infty(\R)}+ \big\|\partial_x(-\partial_x^2+1)^{-1}h(nx)\big\|_{L_x^\infty(\R)}\lesssim\|h\|_{L^\infty(\R)}.
		\end{align}
		Using the Strichartz estimate, we have
		\begin{align*}
			\|E_1\|_{X(\R\times\R)}&\lesssim\big\|(-\partial_x^2+1)^{-1}[h(nx)]F(P_{\leq N}u)\big\|_{L_{t,x}^\frac{6}{5}(\R\times\R)}\\
			&\lesssim\big\|(-\partial_x^2+1)^{-1}[h(nx)]\big[F(P_{\leq N}u)-F(P_{\leq N}v)\big]\big\|_{L_{t,x}^\frac{6}{5}(\R\times\R)}\\&\quad+\big\|(-\partial_x^2+1)^{-1}[h(nx)]F(P_{\leq N}v)\big\|_{L_{t,x}^\frac{6}{5}(\R\times\R)}\\
			&\stackrel{\triangle}{=}E_1^1+E_1^2.
		\end{align*}
		By \eqref{fml-homo-h-bound}, \eqref{fml-homo-diff}, H\"older's inequality and the $L^p$ boundedness of the projection operator, we have
		\begin{align*}
			E_1^1\lesssim\|u-v\|_{L_{t,x}^6(\R\times\R)}\big(\|u\|_{L_{t,x}^6(\R\times\R)}^4+\|u\|_{L_{t,x}^6(\R\times\R)}^4\big)\lesssim\eta(\|u\|_{L_{t,x}^6}+\eta)^4<\varepsilon.
		\end{align*}
		For the term $E_1^2$, first we note that for $C_0>2$, it holds
		\begin{align}\label{fml-homo-h-interior}
			\big\|(-\partial_x^2+1)^{-1}[h(nx)]\big\|_{L^\infty(|x|\leq C_0R)}<\eta.
		\end{align}
		Applying Lemma \ref{lem-P} with $c=10$, \eqref{fml-homo-diff}, we obtain
		\begin{align}\label{fml-homo-bernstein}
			\|P_{\leq N}v\|_{L_{t,x}^6(\R\times\{|x|>C_0(\R)\})}\lesssim N^{-\frac{59}{6}}(C_0-1)^{-9}R^{-9}(\|u\|_{L_{t,x}^6(\R\times\R)}+\eta)\lesssim N^{-\frac{59}{6}}(\|u\|_{L_{t,x}^6(\R\times\R)}+\eta)
		\end{align}
		and
		\begin{align}\label{fml-homo--bern-infty}
			\|P_{\leq N}v\|_{L_t^{\infty-}L_{x}^{2+}(\R\times\{|x|>C_0R\})}\lesssim N^{-\frac{11}{2}}(\|u\|_{L_t^{\infty-}L_x^{2+}(\R\times\R)}).
		\end{align}
		Then combining  with \eqref{fml-homo-h-bound}, \eqref{fml-homo-h-interior}, we get
		\begin{align*}
			E_1^2&\lesssim\big\|(-\partial_x^2+1)^{-1}[h(nx)]F(P_{\leq N}v)\big\|_{L_{t,x}^\frac{6}{5}(\R\times\{|x|\leq C_0R\})}+\big\|(-\partial_x^2+1)^{-1}[h(nx)]F(P_{\leq N}v)\big\|_{L_{t,x}^\frac{6}{5}(\R\times\{|x|> C_0R\})}\\
			&\lesssim\eta\|P_{\leq N}u\|_{L_{t,x}^6}^5+\|h\|_{L^\infty(\R)}N^{-\frac{295}{6}}\|v\|_{L_{t,x}^6(\R\times\R)}^5\\
			&\lesssim(\eta+N^{-\frac{295}{6}}\|h\|_{L^\infty(\R)})(\eta+\|u\|_{L_{t,x}^6(\R\times\R)})^5.
		\end{align*}
		Taking sufficiently small $\eta$ and $N$ larger, we finish the proof of $E_1$.
		
		Next, we turn to prove $E_2$. First, observing that
		\begin{align*}
			\frac{d}{dt}F(P_{\leq N}u)&=\mathcal{O}(|P_{\leq N}u|^4\partial_tP_{\leq N}u)=\mathcal{O}\Big(\big[i\partial_{x}^2P_{\leq N}u-iP_{\leq N}(F(u))\big]|P_{\leq N}u|^4\Big)
		\end{align*}
		Then, we have
		\begin{align*}
			E_2&\lesssim\big\|(-\partial_{x}^2+1)^{-1}[h(nx)]\partial_{x}^2P_{\leq N}u|P_{\leq N}u|^4\big\|_{L_{t,x}^\frac65(\R\times\R)}+\big\|(-\partial_{x}^2+1)^{-1}[h(nx)]P_{\leq N}F(u)|P_{\leq N}u|^4\big\|_{L_{t,x}^\frac65(\R\times\R)}\\
			&\stackrel{\triangle}{=}E_2^1+E_2^2.
		\end{align*}
		For the term $E_2^1$, following the way in estimating $E_1$, we decompose it into
		\begin{align*}
			E_2^1&\lesssim\big\|(-\partial_{x}^2+1)^{-1}[h(nx)]\partial_{x}^2P_{\leq N}u\big(|P_{\leq N}u|^4-|P_{\leq N}v|^4\big)\big\|_{L_{t,x}^\frac65(\R\times\R)}\\&\quad\qquad+\big\|(-\partial_{x}^2+1)^{-1}[h(nx)]\partial_{x}^2P_{\leq N}u|P_{\leq N}v|^4\big\|_{L_{t,x}^\frac65(\R\times\R)}\\
			&\stackrel{\triangle}{=}E_2^{1,1}+E_2^{1,2}.
		\end{align*}
		H\"older's inequality, Bernstein's inequality, \eqref{fml-homo-diff} and \eqref{fml-homo-h-bound} yields
		\begin{align*}
			E_2^{1,1}&\lesssim N^2\|u\|_{L_{t,x}^6(\R\times\R)}\|u-v\|_{L_{t,x}^6(\R\times\R)}(\|u\|_{L_{t,x}^6(\R\times\R)}^3+\|v\|_{L_{t,x}^6(\R\times\R)}^3)\lesssim N^2\eta(\|u\|_{L_{t,x}^6(\R\times\R)}+\eta)^3.
		\end{align*}
		Bernstein's inequality, \eqref{fml-homo-diff}, \eqref{fml-homo-h-bound}, \eqref{fml-homo-h-interior} and \eqref{fml-homo-bernstein} implies
		\begin{align*}
			E_2^{1,2}&\lesssim\big\|(-\partial_{x}^2+1)^{-1}[h(nx)]\partial_{x}^2P_{\leq N}u|P_{\leq N}v|^4\big\|_{L_{t,x}^\frac65(\R\times\{|x|\leq C_0R\})}\\&\quad+\big\|(-\partial_{x}^2+1)^{-1}[h(nx)]\partial_{x}^2P_{\leq N}u|P_{\leq N}v|^4\big\|_{L_{t,x}^\frac65(\R\times\{|x|> C_0R\})}\\
			&\lesssim\eta N^2(\|u\|_{L_{t,x}^6}+\eta)+N^{-\frac{112}{3}}\|u\|_{L_{t,x}^6(\R\times\R)}(\|u\|_{L_{t,x}^6(\R\times\R)}+\eta)^4.
		\end{align*}
		To finish the proof of $E_2$, it remains to control the term $E_2^2$. Similar to the previous case, we decompose
		\begin{align*}
			E_2^2&\lesssim\big\|(-\partial_{x}^2+1)^{-1}[h(nx)]P_{\leq N}F(u)|P_{\leq N}u|(|P_{\leq N}u|^3-|P_{\leq N}v|^3)\big\|_{L_{t,x}^\frac65(\R\times\R)}\\&\qquad+\big\|(-\partial_{x}^2+1)^{-1}[h(nx)]P_{\leq N}F(u)|P_{\leq N}u||P_{\leq N}v|^3\big\|_{L_{t,x}^\frac65(\R\times\R)}\\
			&\lesssim E_2^{2,1}+E_{2}^{2,2}.
		\end{align*}
		First, we estimate the $L_{t}^{\frac65+} L_x^{\frac65}$ norm of $P_{\leq N}(|u|^4u)P_{\leq N}u$ via the Strichartz estimate and Bernstein's inequality,
		\begin{align}\label{fml-homo-e22}
			\big\|P_{\leq N}(|u|^4u)P_{\leq N}u\big\|_{L_{t}^{\frac65+} L_x^{\frac{6}{5}}}&\lesssim\|P_{\leq N}(|u|^4u)\|_{L_{t}^{\frac43+}L_x^{\frac65}(\R\times\R)}\|P_{\leq N}u\|_{L_t^{12}L_x^\infty(\R\times\R)}\notag\\
			&\lesssim N^{\frac{1}{6}+}\||u|^4u\|_{L_{t}^{\frac43+}L_x^{1-}(\R\times\R)}N^\frac13\|u\|_{L_t^{12}L_x^3}\notag\\
			&\lesssim N^{\frac{1}{2}+}\|u\|_{L_t^{\frac{20}{3}+}L_x^{5-}(\R\times\R)}^5\|u\|_{L_t^{12}L_x^3(\R\times\R)}.
		\end{align}
		Here, we take $(\frac{20}{3}+\varepsilon,5-\varepsilon)\in\Lambda_0$.
		Then using \eqref{fml-homo-h-bound} and \eqref{fml-homo-e22}, we have
		\begin{align*}
			E_{2}^{2,1}&\lesssim N^{\frac{1}{2}+}\|u\|_{L_t^\frac{20}{3}L_x^5(\R\times\R)}^5\|u\|_{L_t^{12}L_x^3(\R\times\R)}\|P_{\leq N}(u-v)\|_{L_t^{\infty-} L_x^{\infty-}(\R\times\R)}\\&\quad\qquad\times(\|P_{\leq N}u\|_{L_t^{\infty-} L_x^{\infty-}(\R\times\R)}^2+\|P_{\leq N}v\|_{L_t^{\infty-} L_x^{\infty-}(\R\times\R)}^2)\\
			&\lesssim \eta N^2\|u\|_{L_t^\frac{20}{3}L_x^5(\R\times\R)}^5\|u\|_{L_t^{12}L_x^3(\R\times\R)}(\eta+\|u\|_{L_{t}^{\infty-} L_x^{2+}(\R\times\R)})^2.
		\end{align*}
		For $E_{2}^{2,2}$, we will use \eqref{fml-homo-h-bound}, \eqref{fml-homo-h-interior},  \eqref{fml-homo-bernstein}, \eqref{fml-homo-e22} and Bernstein to get
		\begin{align*}
			E_{2}^{2,2}&\lesssim\big\|(-\partial_{x}^2+1)^{-1}[h(nx)]F(P_{\leq N}u)|P_{\leq N}u||P_{\leq N}v|^3\big\|_{L_{t,x}^\frac65(\R\times\{|x|\leq C_0R\})}\\&\qquad+\big\|(-\partial_{x}^2+1)^{-1}[h(nx)]F(P_{\leq N}u)|P_{\leq N}u||P_{\leq N}v|^3\big\|_{L_{t,x}^\frac65(\R\times\{|x|>C_0R\})}\\
			&\lesssim\eta N\|u\|_{L_{t}^\frac{20}{3}L_x^5(\R\times\R)}^5\|u\|_{L_t^{12}L_x^3(\R\times\R)}(\eta+\|u\|_{L_{t,x}^6(\R\times\R)})^3+ N^{-\frac{57}{2}}\|u\|_{L_{t}^\frac{20}{3}L_x^5(\R\times\R)}^5\|u\|_{L_t^{12}L_x^3(\R\times\R)}<\varepsilon.
		\end{align*}
		Then, we focus on the estimate of $E_3$. Using the Lebniz rule, we have
		\begin{align*}
			\|E_3\|_{X(\R\times\R)}&\lesssim\big\|(-\partial_{x}^2+1)^{-1}[h(nx)]\partial_{x}^2F(P_{\leq N}u)\big\|_{L_{t,x}^\frac65(\R\times\R)}\\
			&=E_2^1+\big\|(-\partial_{x}^2+1)^{-1}[h(nx)]|\partial_{x}P_{\leq N}u|^2|P_{\leq N}u|^2P_{\leq N}u\big\|_{L_{t,x}^\frac65(\R\times\R)}.
		\end{align*}
		Since we have estimated $E_2^1$, we only need to consider
		\begin{align*}
			E_{3}^1\stackrel{\triangle}{=}\big\|(-\partial_{x}^2+1)^{-1}[h(nx)]|\partial_{x}P_{\leq N}u|^2|P_{\leq N}u|^2P_{\leq N}u\big\|_{L_{t,x}^\frac65(\R\times\R)}.
		\end{align*}
		Similar to the previous case, we decompose
		\begin{align*}
			E_3^1&\lesssim\big\|(-\partial_{x}^2+1)^{-1}[h(nx)]|\partial_{x}P_{\leq N}u|^2(|P_{\leq N}u|^2P_{\leq N}u-|P_{\leq N}v|^2P_{\leq N}v)\big\|_{L_{t,x}^\frac65(\R\times\R)}\\
			&\qquad+\big\|(-\partial_{x}^2+1)^{-1}[h(nx)]|\partial_{x}P_{\leq N}u|^2|P_{\leq N}v|^2P_{\leq N}v\big\|_{L_{t,x}^\frac65(\R\times\R)}\\
			&\stackrel{\triangle}{=}E_3^{1,1}+E_3^{1,2}.
		\end{align*}
		Bernstein's inequality and \eqref{fml-homo-diff} implies that
		\begin{align*}
			E_{3}^{1,1}&\lesssim\|\partial_xP_{\leq N}u\|_{L_{t,x}^6(\R\times\R)}^2\|u-v\|_{L_{t,x}^6(\R\times\R)}(\eta+\|u\|_{L_{t,x}^6(\R\times\R)})^2\lesssim \eta N^2\|u\|_{L_{t,x}^6(\R\times\R)}^2(\eta+\|u\|_{L_{t,x}^6(\R\times\R)})^2.
		\end{align*}
		Further, we use \eqref{fml-homo-h-interior} and \eqref{fml-homo-bernstein} to obtain
		\begin{align*}
			E_3^{1,2}&\lesssim \big\|(-\partial_{x}^2+1)^{-1}[h(nx)]|\partial_{x}P_{\leq N}u|^2|P_{\leq N}v|^2P_{\leq N}v\big\|_{L_{t,x}^\frac65(\R\times\{|x|\leq C_0R\})}\\
			&\qquad+\big\|(-\partial_{x}^2+1)^{-1}[h(nx)]|\partial_{x}P_{\leq N}u|^2|P_{\leq N}v|^2P_{\leq N}v\big\|_{L_{t,x}^\frac65(\R\times\{|x|>C_0R\})}\\
			&\lesssim \eta N^2\|u\|_{L_{t,x}^6(\R\times\R)}^2(\eta+\|u\|_{L_{t,x}^6(\R\times\R)})^2+N^{-\frac{55}{2}}\|u\|_{L_{t,x}^6(\R\times\R)}^2(\eta+\|u\|_{L_{t,x}^6(\R\times\R)})^2.
		\end{align*}
		For the term $E_4$, acting the derivatives to $F(P_{\leq N}u)$ and making the similar decomposition, we reduce it to
		\begin{align*}
			\|E_4\|_{X(\R\times\R)}&\lesssim\big\|\partial_x(-\partial_x^2+1)^{-1}[h(nx)] \partial_xP_{\leq N}u\big[(P_{\leq N}u)^4-(P_{\leq N}v)^4\big]\big\|_{L_{t,x}^\frac65(\R\times\R)}\\
			&\qquad+\big\|\partial_x(-\partial_x^2+1)^{-1}[h(nx)] \partial_xP_{\leq N}u(P_{\leq N}v)^4\big\|_{L_{t,x}^\frac65(\R\times\R)}\\
			&\lesssim\eta N\|u\|_{L_{t,x}^6(\R\times\R)}\big(\eta+\|u\|_{L_{t,x}^6(\R\times\R)}\big)^3+N^{-\frac{115}{3}}\|u\|_{L_{t,x}^6}(\eta+\|u\|_{L_{t,x}^6(\R\times\R)})^3.
		\end{align*}
		For the term $E_5$ and $E_6$, we first estimate their $L_{t,x}^6$ norm. After applying the decomposition, we have
		\begin{align*}
			\|E_5\|_{L_{t,x}^6(\R\times\R)}&\lesssim\big\|(-\partial_{x}^2+1)^{-1}[h(nx)](P_{\leq N}u-P_{\leq N}v)|P_{\leq N}u|^4\big\|_{L_{t,x}^6(\R\times\R)}\\
			&\qquad+\big\|(-\partial_{x}^2+1)^{-1}[h(nx)]P_{\leq N}v|P_{\leq N}u|^4\big\|_{L_{t,x}^6(\R\times\R)}\\
			&\stackrel{\triangle}{=}E_5^1+E_5^2.
		\end{align*}
		For $E_5^1$, we have
		\begin{align*}
			E_5^1&\lesssim\|P_{\leq N}u-P_{\leq N}v\|_{L_t^6L_x^{30}(\R\times\R)}\|P_{\leq N}u\|_{L_t^\infty L_x^{30}}^4\lesssim\eta N^2\|u\|_{L_t^\infty L_x^2}^4(\eta+\|u\|_{L_{t,x}^6(\R\times\R)}).
		\end{align*}
		For $E_5^2,$ we have
		\begin{align*}
			E_5^2&\lesssim\big\|(-\partial_{x}^2+1)^{-1}[h(nx)]P_{\leq N}v|P_{\leq N}u|^4\big\|_{L_{t,x}^6(\R\times\{|x|\leq C_0R\})}\\
			&\qquad+\big\|(-\partial_{x}^2+1)^{-1}[h(nx)]P_{\leq N}v|P_{\leq N}u|^4\big\|_{L_{t,x}^6(\R\times\{|x|>C_0R\})}\\
			&\lesssim\eta N^2\|u\|_{L_t^\infty L_x^2(\R\times\R)}^4(\eta+\|u\|_{L_{t,x}^6(\R\times\R)})+N^{-\frac{239}{30}}\|u\|_{L_t^\infty L_x^2(\R\times\R)}^4(\eta+\|u\|_{L_{t,x}^6(\R\times\R)}).
		\end{align*}
		Since $E_6$ is similar to $E_5$, we omit the details. Here, the control of $\|P_{\leq N}u\|_{L_{t}^6L_x^{30}(\R\times\{|x|>C_0R\})}$ is similar to the $L_{t,x}^6(\R\times\{|x|>C_0R\})$ norm.
		
		Next, we estimate the $L_{t}^{\infty-}L_{x}^{2+}$ norm. Using the Strichartz estimate and Bernstein's estimate, we obtain
		\begin{align*}
			\big\|(-\partial_{x}^2+1)^{-1}[h(nx)](P_{\leq N}u-P_{\leq N}v)|P_{\leq N}u|^4\big\|_{L_{t}^{\infty-}L_{x}^{2+}(\R\times\R)}&\lesssim \|P_{\leq N}u\|_{L_t^{\infty-}L_{x}^{10+}}^4\|P_{\leq N}(u-v)\|_{L_t^{\infty-}L_x^{10+}}\\&\lesssim\eta N^{2}\|u\|_{L_t^{\infty-}L_{x}^{2+}(\R\times\R)}^4.
		\end{align*}
		Similarly, we have
		\begin{align*}
			&\qquad\big\|(-\partial_{x}^2+1)^{-1}[h(nx)]P_{\leq N}v|P_{\leq N}u|^4\big\|_{L_{t}^{\infty-}L_{x}^{2+}(\R\times\R)}\\&\lesssim	\big\|(-\partial_{x}^2+1)^{-1}[h(nx)]P_{\leq N}v|P_{\leq N}u|^4\big\|_{L_{t}^{\infty-}L_{x}^{2+}(\R\times\{|x|\leq C_0R\})}\\&\qquad\quad+	\big\|(-\partial_{x}^2+1)^{-1}[h(nx)]P_{\leq N}v|P_{\leq N}u|^4\big\|_{L_{t}^{\infty-}L_{x}^{2+}(\R\times\{|x|>C_0R\})}\\
			&\lesssim\eta N^2\|u\|_{L_t^{\infty-}L_{x}^{2+}}^4(\|u\|_{L_t^{\infty-}L_x^{2+}}+\eta)+N^{-\frac{19}{10}}\|u\|_{L_t^{\infty-}L_{x}^{2+}}^4(\|u\|_{L_t^{\infty-}L_x^{2+}}.
		\end{align*}
		The estimate of $\|E_6\|_{L_{t}^{\infty-}L_x^{2+}}$ is almost the same as $E_5$, so we omit the details.
		
		To close the proof, we take that $N^{-1}\ll\varepsilon$ and $\eta N^2\ll\varepsilon$. Hence, we complete the proof of Theorem \ref{thm-homogenization}.
\end{proof}

	\section{Appendix: The proof of $L^3$ bilinear Strichartz estimate}
	For the sake of completeness, we give the details of the $L_{t,x}^3$ bilinear Strichartz estimates. Here, we prove the general version.
	
	First, we prove the homogeneous part. For $u_0,v_0\in L^2(\R)$ and $M\ll N$, we Let $u_M:=e^{it\Delta}P_Mu_0$ and $v_N=e^{it\partial_x^2}P_Nv_0$. From the $L^2$ bilinear estimate, Bernstein's inequality, we have
	\begin{align}
		\left\|u_Mv_N\right\|_{L_{t,x}^{3}(I\times\mathbb{R})} &
		\lesssim\|u_Mv_N\|_{L_{t,x}^{2}(I\times\mathbb{R})}^{\frac{1}{2}}\|u_Mv_N\|_{L_{t,x}^{6}(I\times\mathbb{R})}^{\frac{1}{2}} \\
		&\lesssim\|u_Mv_N\|_{L_{t,x}^{2}(I\times\mathbb{R})}^{\frac{1}{2}}\|u_N\|_{L_{t,x}^6(I\times\R)}^\frac{1}{2}\|v_M\|_{L_{t,x}^\infty(I\times\R)}^\frac{1}{2}\\
		&\lesssim\big(\frac{M}{N}\big)^\frac{1}{4}\|u_0\|_{L^2(\R)}\|v_0\|_{L^2(\R)}.
	\end{align}
	For the space-time function on $I\times\R$, we define the norm
	\begin{align*}
		\|F\|_{q,r}:=\|F(t_0)\|_{L^2(\R)}+\|(i\partial_t+\partial_x^2)F\|_{L_t^{q^\prime }L_x^{r^\prime}},\quad (q,r)\in\Lambda.
	\end{align*}
	It is equivalent to prove
	\begin{align*}
		\|u_Mv_N\|_{L_{t,x}^3(I\times\R)}\lesssim\big(\frac{M}{N}\big)^\frac{1}{4}\|u\|_{q,r}\|v\|_{\tilde{q},\tilde{r}},\quad\forall (q,r),(\tilde{q},\tilde{r})\in\Lambda.
	\end{align*}
	Denote $F,G$ by
	$$F(t)=(i\partial_t+\partial_x^2)u_M,\quad G(t)=(i\partial_t+\partial_x^2)v_N.$$
	From the Duhamel formula, we obtain
	\begin{align*}
		u_M&=e^{i(t-t_0)\partial_x^2}u_M(t_0)-i\int_{t_0}^{t}e^{i(t-t^\prime)\partial_x^2}F(t^\prime)dt^\prime,\\
		v_N&=e^{i(t-t_0)\partial_x^2}v_N(t_0)-i\int_{t_0}^{t}e^{i(t-t^\prime)\partial_x^2}G(t^\prime)dt^\prime.
	\end{align*}
	Applying the Duhamel formula to $u_M$, we have
	\begin{align*}
		\|u_Mv_N\|_{L_{t,x}^3}&\lesssim\big\|e^{i(t-t_0)\partial_x^2}u_M(t_0)v\big\|_{L_{t,x}^3}
		+\Big\|\int_{t_0}^te^{i(t-t^\prime)\partial_x^2}F(t^{\prime\prime})ds\cdot v\Big\|_{L_{t,x}^3}\\
		&=I_1+I_2.
	\end{align*}
	For the term $I_1$, by the dual Strichartz estimate and Minkowski inequality, we have
	\begin{align*}
		I_1&\lesssim\big\|e^{i(t-t_0)\partial_x^2}u_M(t_0)e^{i(t-t_0)\partial_x^2}v_N(t_0)\big\|_{L_{t,x}^3}\\
		&\quad+\big\|e^{i(t-t_0)\partial_x^2}u_M(t_0)\int_{t_0}^te^{i(t-t^\prime)\partial_x^2}G(t^{\prime\prime})ds\big\||_{L_{t,x}^3}\\
		&\lesssim\big(\frac{M}{N}\big)^\frac14\big(\|u_M(t_0)\|_{L_x^2}\|v_N(t_0)\|_{L_x^2}+\|u_M(t_0)\|_{L_x^2}\big\|\int_{\R}e^{i(t-s)\partial_x^2}F(s)ds\big\|_{L_x^2}\big)\\
		&\lesssim\big(\frac{M}{N}\big)^\frac14\big(\|u_M(t_0)\|_{L_x^2}\|v_N(t_0)\|_{L_x^2}+\|u_M(t_0)\|_{L_x^2}\|v_N\|_{\tilde q,\tilde r}\big).
	\end{align*}
	It remains to estimate the inhomogeous term $I_2$. By the Christ-Kiselev lemma, it  can  be reduced  to prove
	\begin{align*}
		I_3=\Big\|\int_{\R}e^{i(t-s)\partial_x^2}F(s)ds\cdot v_N\Big\|_{L_{t,x}^3(I\times\R)}\lesssim\big(\frac MN\big)^\frac14\big\|u_M\big\|_{q,r}\big\|G\big\|_{\tilde{q},\tilde{r}}.
	\end{align*}
	By using the Strichartz estimate and the bound of $I_1$, we obtain
	\begin{align*}
		I_3&=\Big\|e^{it\partial_x^2}\int_{\R}e^{-is\partial_x^2}F(s)ds\cdot v_N\Big\|_{L_{t,x}^3}\\
		&\lesssim\big(\frac MN\big)^\frac14\big\|\int_{\R}e^{-is\partial_x^2}F(s)ds\Big\|_{L_x^2(\R)}\|v\|_{\tilde{q},\tilde{r}}\\
		&\lesssim\|u_M\|_{q,r}\|v_N\|_{\tilde{q},\tilde{r}}.
	\end{align*}
	By the definition of the norm $S_*(I\times\R)$, we complete the proof of Lemma \ref{lem-pre-Bili-Stri}.

\end{document}